\numberwithin{equation}{section}
\theoremstyle{plain}
\newtheorem{theorem}{Theorem}[section]
\newtheorem{proposition}[theorem]{Proposition}
\newtheorem{lemma}[theorem]{Lemma}
\newtheorem{corollary}[theorem]{Corollary}
\newtheorem{definition}[theorem]{Definition}
\newtheorem{example}[theorem]{Example}
\newtheorem{remark}[theorem]{Remark}
\newtheorem{convention}[theorem]{Convention}
\newenvironment{proof}{{\noindent \textbf{Proof}\,\,}}{\hspace*{\fill}$\Box$\medskip}
\title{On quadrilateral  orbits in complex algebraic planar billiards}
\author{Alexey Glutsyuk
\thanks{Permanent address: CNRS, Unit\'e de Math\'ematiques
Pures et Appliqu\'ees, M.R., \'Ecole Normale Sup\'erieure de Lyon,
46 all\'ee d'Italie, 69364 Lyon 07, France.  \newline Email:
aglutsyu@ens-lyon.fr}
\thanks{Laboratoire
J.-V.Poncelet (UMI 2615 du CNRS and the Independent University of Moscow)}
\thanks{National Research University Higher School of Economics, Russia}
 \thanks{Supported by part by RFBR grants 
10-01-00739-a, 13-01-00969-a
and NTsNIL\_a (RFBR-CNRS)  10-01-93115 and by ANR grant ANR-08-JCJC-0130-01.}}
\begin{document}
\maketitle
{\it To my dear teacher Yu.S.Ilyashenko on the occasion of his 70-th birthday}
\begin{abstract} The famous conjecture of V.Ya.Ivrii (1978) says that 
{\it in every billiard with infinitely-smooth boundary in a Euclidean space 
the set of periodic orbits has measure zero}. In the present paper we study the 
complex algebraic version of Ivrii's conjecture for quadrilateral  
orbits in two dimensions, with reflections from complex algebraic curves. 
We present the complete classification of 4-reflective algebraic 
counterexamples:  billiards formed by four complex algebraic curves in the 
projective plane that have open set of quadrilateral orbits. As a corollary, we provide classification of 
the so-called  real algebraic pseudo-billiards with open set of quadrilateral orbits: 
billiards formed by four real algebraic curves; the reflections allow 
to change the side with respect to  the reflecting tangent line. 
 \end{abstract}
\tableofcontents

\def\cc{\mathbb C}
\def\oc{\overline{\cc}}
\def\oci{\oc_{\infty}}
\def\cp{\mathbb{CP}}
\def\wt#1{\widetilde#1}
\def\rr{\mathbb R}
\def\var{\varepsilon}
\def\tt{\mathcal T}
\def\mcr{\mathcal R}
\def\a{\alpha}
\def\ha{\hat a}
\def\hb{\hat b}
\def\hc{\hat c}
\def\hd{\hat d}
 \def\mct{\mathcal T}
 \def\Int{\operatorname{Int}}
 
\section{Introduction}

\subsection{Main result: the classification of 4-reflective  complex planar algebraic billiards}

The famous V.Ya.Ivrii's conjecture \cite{Ivrii} 
 says that {\it in every billiard with infinitely-smooth 
boundary in a Euclidean space of any dimension the set of periodic orbits 
has measure zero.} As it was shown by V.Ya.Ivrii \cite{Ivrii}, his conjecture implies the famous 
H.Weyl's conjecture  on the two-term 
asymptotics of the spectrum of Laplacian \cite{HWeyl11}. A brief historical survey of 
both conjectures with references is presented in \cite{gk1,gk2}. 

For the proof of Ivrii's conjecture it suffices to show that for every $k\in\mathbb N$ 
the set ot $k$-periodic orbits has measure zero. For $k=3$ this 
was proved in  \cite{bzh, rychlik, stojanov, W}  for dimension two and in \cite{vorobets} for any dimension. 
For $k=4$ in dimension two this was proved in 
\cite{gk1, gk2}. 

\begin{remark}
Ivrii's conjecture is open 
already for piecewise-analytic billiards, and we believe that this is its principal case. 
In the latter case Ivrii's conjecture is equivalent to the statement saying that for every 
$k\in\mathbb N$ the set of $k$-periodic orbits has empty interior.
\end{remark}

 In the present paper we study a complexified version of Ivrii's  
conjecture in complex dimension two. More precisely, we consider the complex plane 
$\cc^2$ with the complexified Euclidean metric, which is the standard complex-bilinear 
quadratic form $dz_1^2+dz_2^2$. This defines notion of symmetry with respect to a complex line, 
reflections with respect to complex lines and more 
generally, reflections of complex lines with respect to complex analytic (algebraic) curves. 
The symmetry is defined by the same formula, as in the real case. More details concerning the complex reflection law 
are given  in  Subsection 2.1. One could have replaced the initial real Euclidean metric by
a pseudo-Euclidean one: the geometry of the latter is  
somewhat similar to that of our complex Euclidean metric. Billiards in pseudo-Euclidean spaces were studied, e.g.,  in \cite{drag, khes}. 
Proofs of the classical Poncelet theorem and its generalizations by using complex methods can be found in  \cite{grifhar, schwa}. 

To formulate the complexified Ivrii's conjecture, let us introduce the following definitions.

\begin{definition} A complex projective line $l\subset\cp^2\supset\cc^2$ is {\it isotropic}, 
if either it  coincides with the infinity line, or the complexified Euclidean quadratic 
form vanishes on $l$. Or equivalently, a  line is isotropic, if it passes through some 
of two points with homogeneous coordinates $(1:\pm i:0)$: the so-called {\it isotropic 
points at infinity} (also known as {\it cyclic} (or {\it circular}) points). 
\end{definition}

\begin{definition} \label{deforb} A {\it complex analytic (algebraic) planar billiard} is a finite collection 
of complex irreducible\footnote{By {\it irreducible} analytic curve we mean an analytic curve holomorphically 
parametrized by a connected Riemann surface.}  analytic (algebraic) 
curves $a_1,\dots,a_k$ that are not isotropic lines; we set $a_{k+1}=a_1$, $a_0=a_k$. 
A {\it $k$-periodic billiard orbit} is a collection of points $A_j\in a_j$,  $A_{k+1}=A_1$, $A_0=A_k$, such that  for every $j=1,\dots,k$ one has 
$A_{j+1}\neq A_j$, the tangent line $T_{A_j}a_j$ is not isotropic 
and the complex lines $A_{j-1}A_j$ and $A_jA_{j+1}$ are 
symmetric with respect to the line $T_{A_j}a_j$ and are distinct from it. (Properly saying, we have to take vertices $A_j$ together with 
prescribed branches of curves $a_j$ at $A_j$: this specifies the  line $T_{A_j}a_j$ in unique way, if $A_j$ is a self-intersection point 
of the curve $a_j$.) 
 \end{definition}

\begin{definition} \label{defref} A complex analytic (algebraic)  billiard $a_1,\dots,a_k$ is {\it $k$-reflective,} if 
it has an open set of $k$-periodic orbits. In more detail, this means that there exists 
an open set of pairs $(A_1,A_2)\in a_1\times a_2$ extendable to $k$-periodic 
orbits $A_1\dots A_k$. (Then the latter property automatically holds for every other 
pair of neighbor mirrors $a_j$, $a_{j+1}$.) 
\end{definition}

{\bf Problem (Complexified  version of Ivrii's conjecture).}  
{\it Classify all the  $k$-reflective complex analytic (algebraic)  billiards.}

\medskip

Contrarily to the real case, where there are no  piecewise $C^4$-smooth 4-reflective planar billiards \cite{gk1,gk2}, there exist 
4-reflective complex algebraic planar billiards.  
In the present paper we classify them\footnote{The 4-reflective complex {\it analytic} planar billiards
 will be classified in the next paper.}   (Theorem \ref{tclass} stated at the end of the subsection). 
Basic families of 4-reflective algebraic planar billiards are given below.  Theorem \ref{tclass}  shows that their    
  straightforward analytic extensions cover all the  4-reflective algebraic planar billiards. 

\begin{remark} An $l$-reflective analytic (algebraic) billiard generates $ml$-reflective analytic (algebraic) 
billiards for all $m\in\mathbb N$. Therefore, $k$-reflective billiards exist for all $k\equiv0(mod 4)$. 
\end{remark}

Now let us pass to the construction of 4-reflective complex billiards. The construction comes from the real domain, and 
we use the following relation between real and complex reflection laws in the real domain.

\begin{remark} \label{rskew} In a real billiard the reflection of a ray from the boundary 
is uniquely defined: the reflection is made at the first point where the ray meets the 
boundary. In the complex case,  the reflection of lines with respect to a 
complex analytic curve is a multivalued mapping 
{\it (correspondence)} of the space of lines in $\cp^2$: we do not have a canonical choice of intersection point of a line with the curve. Moreover, the notion of interior domain 
does not exist in the complex case, since the mirrors have real codimension two. Furthermore, the real reflection law also specifies the {\it side} of reflection. 
Namely,  a triple of points $A,B,C\in\rr^2$, $A\neq B$, $B\neq C$, and a line $L\subset\rr^2$ through $B$ 
satisfy the {\it real reflection law,} if the lines $AB$ and $BC$ are symmetric with respect to $L$, and also the points 
{\it $A$ and $C$ lie in the same half-plane} with respect to the line $L$. The  
{\it complex reflection law} says only that the {\it complex lines} $AB$ and $BC$ are symmetric with respect to $L$ and does not 
specify the positions of the points $A$ and $C$ on these lines: they may be arbitrary. 
A triple of real points $A,B,C\in\rr^2$, $A\neq B$, $B\neq C$ 
and a line $L\subset\rr^2$  through $B$ satisfy the complex 
reflection law, if and only if 

- either they satisfy the usual real reflection law (and then $A$ and $C$ lie on the 
same side from the line $L$),

- or the line $L$ is the bissectrix of the angle $ABC$ (and then $A$ and $C$ lie on different 
sides from the line $L$).

In the latter case we say that the triple $A$, $B$, $C$ and the line $L$ satisfy the {\it skew reflection law}. 
\end{remark}

%
%
%

\begin{example} \label{ex-lines} Consider the following complex billiard with four mirrors $a$, $b$, 
$c$, $d$: $a=c$ is a non-isotropic complex line; $b$ is an arbitrary analytic (algebraic)  curve distinct from $a$;  
$d$ is symmetric to $b$ with respect to the line $a$. This complex billiard 
obviously has an open set of 4-periodic orbits $ABCD$, these orbits are symmetric with 
respect to the line $a$, see Fig.1. 
\end{example}
\begin{figure}[ht]
  \begin{center}
   \epsfig{file=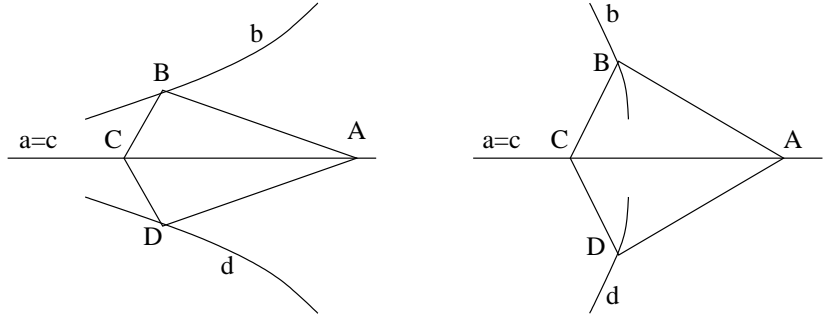}
    \caption{4-reflective billiards symmetric with respect to a line mirror: real pictures}
    \label{fig:1}
  \end{center}
\end{figure}

\begin{example} \label{ex-lines2} Consider a complex 
billiard formed by four distinct lines $a$, $b$, $c$, $d$ passing through the same point $O\in\cp^2$ 
such that {\it the line pairs} $(a,b)$ and $(d,c)$ (or equivalently, $(a,d)$ and $(b,c)$) are unimodularly isometric. That is,  
there exists a complex Euclidean isometry with unit Jacobian that transforms one pair 
into the other: a {\it complex rotation around the intersection point $O$,} see Fig.2. 
This billiard is 4-reflective, or equivalently, the composition of symmetries $\sigma_g$, $g=a,b,c,d$, 
with respect to the lines 
$a$, $b$, $c$ and $d$ that act  on the dual projective plane is identity: 
$\sigma_a\circ\sigma_b\circ\sigma_c\circ\sigma_d=Id$ on $\cp^{2*}$.  Indeed, the latter 
identity is equivalent to the  same identity on the projective plane, i.e., $\sigma_a\circ\sigma_b=\sigma_d\circ\sigma_c$ on $\cp^2$. 
The latter holds if there exists a complex rotation around $O$  sending the pair $(a,b)$ to $(d,c)$. This follows from the fact  that the 
composition of symmetries with respect to two lines through a point $O$ is a complex rotation around $O$ and the commutativity of 
the group of complex rotations around $O$. 
  Let us prove the converse: assuming the 
same identity we show that $(d,c)$ is obtained from $(a,b)$ by complex rotation around $O$. Let $c'$ denote 
the image of the line $b$ under the complex rotation sending $a$ to $d$. One has $\sigma_a\circ\sigma_b=\sigma_d\circ\sigma_{c'}=
\sigma_d\circ\sigma_c$, by the previous statement and assumption. Therefore, $\sigma_{c'}=\sigma_c$, hence $c'=c$. 
\end{example}
\begin{figure}[ht]
  \begin{center}
   \epsfig{file=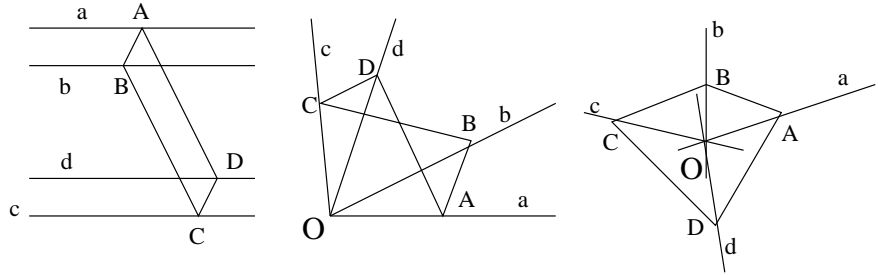}
    \caption{4-reflective billiards on two unimodularly isometric line pairs: real pictures}
    \label{fig:1.5}
  \end{center}
\end{figure}

\begin{example} \label{ex-circles} Consider the complex billiard with four mirrors 
$a$, $b$, $c$, $d$: $a=c$ and $b=d$ are 
complexifications of two distinct concentric circles on the real plane. Say, 
$a=c$ is the smaller circle. We say that we reflect from the bigger circle $b=d$ in the 
usual way, "from interior to interior", and we reflect from the smaller circle 
in the skew way: "from  interior to  exterior" and vice versa.  
Take arbitrary points $A\in a$ and $B\in b$ such that 
the segment $AB$ lies outside the disk bounded by $a$. Let $R$ denote the ray symmetric to the ray 
$BA$ with respect to the diameter through $B$. 
Let $C$ denote its second intersection point with the smaller circle mirror $a=c$. 
(We say that the  ray $R$ passes through the first intersection point "without noticing the  mirror $a$".) 
 Consider the symmetry 
with respect to  the diameter orthogonal to the line $AC$. Let $D\in b=d$ denote 
the symmetric image of the point $B$, see Fig.3. Thus, we have constructed a self-intersected quadrilateral $ABCD$ 
depending analytically on two parameters: $A\in a$ and $B\in b$. 

\begin{figure}[ht]
  \begin{center}
   \epsfig{file=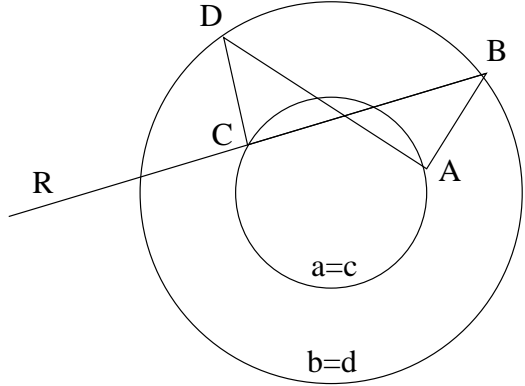}
    \caption{A 4-reflective billiard on concentric circles}
  \end{center}
\end{figure}
\end{example}

{\bf Claim.} {\it The above quadrilateral $ABCD$ is a 4-periodic orbit of the complex billiard 
$a$, $b$, $c$, $d$, and the billiard  is 4-reflective.}

\medskip
\begin{proof} The reflection law is obviously satisfied at the  vertices $B,D\in b=d$. 
It suffices to check the  skew reflection law at the  vertices $A,C\in a$. By symmetry, it suffices to show that 
the tangent line $T_Aa$ is the bissectrix of the angle $BAD$. The union of lines $AD$ and $BC$ intersects the circle $a$ at 
four points with equal intersection angles, since these lines are symmetric with respect to a diameter. 
Similarly, the lines $AB$ and $BC$ 
intersect the circle $a$ at equal angles: they are symmetric with respect to the diameter through $B$. 
The two latter statements together imply 
that the tangent line $T_Aa$ is the bissectrix of the angle $BAD$. Thus, we have a two-parametric quadrilateral orbit family  $ABCD$ that 
extends analytically to complex domain. Hence, the billiard is 4-reflective. This proves the claim.
\end{proof}

Consider a generalization of the above example: a complex billiard $a$, $b$, $c$, $d$ similar to the above one, but now 
$a=c$ and $b=d$ are complexifications of distinct {\it confocal ellipses}, say, $a=c$ is the smaller one. 

\begin{theorem} \label{tell} (M.Urquhart, see \cite[p.59, corollary 4.6]{tab}). 
The above two confocal ellipse  billiard $a$, $b$, $a$, $b$, see Fig.4, is 4-reflective.
\end{theorem}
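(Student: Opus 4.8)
The plan is to reduce the statement to a real-geometric assertion about confocal ellipses and then pass to the complex domain by analytic continuation, exactly as was done for the concentric-circle Claim following Example \ref{ex-circles}. First I would work with the real confocal ellipses $a$ (inner) and $b$ (outer) with common foci, and exploit the single classical fact that for a point $P$ lying on any conic confocal to a fixed conic $\gamma$, the two tangent lines from $P$ to $\gamma$ are symmetric with respect to the tangent line $T_P$ of the carrier conic at $P$ (the optical/integrability property of confocal conics, see \cite{tab}). Equivalently, reflection in either $a$ or $b$ sends a line tangent to a confocal conic $\gamma$ to another line tangent to the same $\gamma$.

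The core observation is that this optical property makes the reflection law automatic once all four sides share a caustic. Suppose $ABCD$ is a quadrilateral with $A,C\in a$, $B,D\in b$, all of whose sides $AB,BC,CD,DA$ are tangent to one common confocal conic $\gamma$, and such that at each vertex the two incident sides are the two distinct tangents to $\gamma$ from that vertex. Then at each of $A,B,C,D$ the two incident lines are symmetric with respect to the tangent of the carrier ellipse, i.e. the complex reflection law of Definition \ref{deforb} holds at every vertex. By Remark \ref{rskew} this line-symmetry is all that the complex law requires: the distinction between the ordinary reflections forced at $B,D\in b$ and the skew reflections forced at $A,C\in a$ concerns only on which side of the mirror the neighbouring vertices lie, and does not affect the complex reflection law. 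Thus it remains only to produce an open two-parameter family of such closed, common-caustic quadrilaterals.

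For the existence and closure I would dualise. Since the confocal conics form a pencil in the dual plane $\cp^{2*}$ (they are all tangent to the four fixed isotropic lines through $(1:\pm i:0)$), the sides tangent to $\gamma$ become points of the dual conic $\gamma^{*}\cong\cp^{1}$, and the conditions that $AB,CD$ be tangent to $a$ and $BC,DA$ be tangent to $b$ say that $ABCD$ dualises to a quadrilateral inscribed in $\gamma^{*}$ whose sides are alternately tangent to $a^{*}$ and $b^{*}$. Passing to the elliptic double cover $E\to\gamma^{*}$ branched over the four isotropic tangent lines, which is the \emph{same} branch locus for $a^{*}$ and for $b^{*}$ because it is common to the entire confocal family, the two reflection steps lift to translations by fixed vectors $\pm\alpha$ and $\pm\beta$ on the one curve $E$. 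The alternating pattern $a,b,a,b$ then lifts the four sides to $z,\ z+\alpha,\ z+\alpha+\beta,\ z+\beta$, the two reflections in $a$ contributing $+\alpha$ and $-\alpha$; hence the self-intersecting quadrilateral (the one realised by the skew reflections) closes identically, $\alpha+\beta-\alpha-\beta=0$, for \emph{every} caustic. Letting $\gamma$ vary (one parameter) and the starting side move along $E$ (one parameter) produces the required open two-parameter family of closed orbits, which extends analytically to $\cc^2$; whence the billiard is $4$-reflective.

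The step I expect to be the main obstacle is precisely this closure. Unlike the concentric-circle case, where the quadrilateral is forced to close by the elementary planar symmetry across the perpendicular bisector of $AC$ (a diameter), for confocal ellipses the perpendicular bisector of a chord is no longer a symmetry axis, and the naive simple (convex) alternating quadrilateral satisfies only a codimension-one Poncelet closing condition rather than an identity. The point to get right is therefore that the geometrically correct orbit is the self-intersecting one, for which the two reflections in $a$ at the \emph{distinct} points $A$ and $C$ use opposite translation branches on $E$, so that their contributions cancel and closure becomes automatic in the caustic. Verifying that this branch choice is consistently realisable, giving genuine quadrilaterals over an open set of $(A,B)\in a\times b$, is the heart of the argument.
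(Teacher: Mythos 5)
Your proposal is correct in substance, but it is a genuinely different argument from the one the paper relies on: the paper does not prove Theorem \ref{tell} at all --- it imports it as a classical result of Urquhart, citing the elementary real-geometric proof in \cite[p.59, corollary 4.6]{tab}, and the complex statement for arbitrary confocal conics is then obtained separately (Theorem \ref{tquad}) by a constructibility/Zariski-closure argument that uses the real ellipse case as its input. Your route is instead a Poncelet-style complex-algebraic one, in the spirit of \cite{grifhar}: quadrilaterals whose four sides touch a common confocal caustic $\gamma$ automatically satisfy the complex reflection law (the optical property of confocal conics, plus Remark \ref{rskew}), and on the elliptic double cover $E\to\gamma^*$ branched at the four isotropic common tangents --- the same branch locus for $a$ and $b$, which is exactly Corollary \ref{same-lines} --- the two reflection involutions $\sigma_a,\sigma_b$ and the deck involution $\tau$ of $E\to\gamma^*$ are all inversions $z\mapsto c-z$ in the group law, so that with the branch choices corresponding to the skew reflections the full monodromy of the orbit is $(\sigma_a\tau\sigma_b)^2=\mathrm{id}$: an identity in the caustic, exactly as you say, whereas the ``convex'' choice of branches yields a translation by a generically nonzero element (your codimension-one Poncelet condition). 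This buys more than the statement: your argument works over $\cc$ from the start and for every member of the confocal pencil at once, so it proves Theorem \ref{tquad} directly, bypassing the paper's limit argument, and it exhibits the same elliptic curve that resurfaces in the paper's Theorem \ref{irrdeg}.

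What you still owe, though all of it is routine, is: (a) the fixed-point check that makes the group computation legitimate --- $\sigma_a$, $\sigma_b$, $\tau$ have fixed points (over $a\cap\gamma$, $b\cap\gamma$ and the four branch points respectively), hence are inversions and not translations by $2$-torsion; (b) nondegeneracy (neighboring vertices distinct, tangent lines at vertices non-isotropic and distinct from the sides), which fails only on proper algebraic subsets of your two-parameter family; and (c) that the family covers an open set of pairs $(A,B)\in a\times b$, which follows because a generic line is tangent to exactly one conic of the confocal pencil (dually, a generic point lies on exactly one conic of a pencil), so $(\gamma,\ell)\mapsto(A,B)$ is generically invertible. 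Finally, a labeling slip: in your display $z,\ z+\alpha,\ z+\alpha+\beta,\ z+\beta$ the two reflections in $b$ (at $B$ and $D$) contribute $\pm\alpha$ and the two reflections in $a$ (at $C$ and $A$) contribute $\pm\beta$, opposite to your prose; the cancellation mechanism is of course unaffected.
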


\begin{figure}[ht]
  \begin{center}
   \epsfig{file=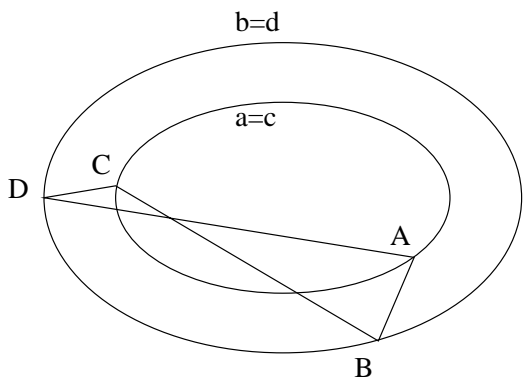}
    \caption{A 4-reflective billiard on confocal ellipses}
    \label{fig:4}
  \end{center}
\end{figure}

The main result of the paper is the following theorem.

%
%
%

\begin{theorem} \label{tclass} A complex planar algebraic billiard $a$, $b$, $c$, $d$ is 
4-reflective, if and only if it has one of the following types: 

Case 1): some mirror, say $a$ is a line, $a=c$, and the curves $b,d\neq a$ are symmetric with respect to 
the line $a$, cf. Example \ref{ex-lines}, Fig.1. 

Case 2):  $a$, $b$, $c$, $d$ are distinct lines through the same point $O\in\cp^2$; the line pair $(a,b)$ is sent to $(d,c)$  by complex rotation 
around $O$, cf. Example \ref{ex-lines2}, Fig.2. 

Case 3):  $a=c$,  $b=d$ and they are distinct confocal conics, 
 cf. Theorems \ref{tell} and \ref{tquad}, Fig.3, 4. 
\end{theorem}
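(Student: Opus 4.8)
The \emph{if} direction is already in hand: Cases 1, 2, 3 are realized respectively by Examples \ref{ex-lines}, \ref{ex-lines2} and by the confocal conics of Theorems \ref{tell} and \ref{tquad}, each shown there to be 4-reflective. So the content is the \emph{only if} direction: assuming $a,b,c,d$ carries an open (two-parameter) family of 4-periodic orbits $ABCD$, one must show it has one of the three normal forms. The plan is to encode the reflection law purely through the isotropic points $I_+=(1:i:0)$ and $I_-=(1:-i:0)$. At a vertex, say $B\in b$ with tangent $T_Bb$, the complex reflection law says the symmetry $\sigma_{T_Bb}$ carries line $BA$ to line $BC$. On the pencil of lines through $B$ (a copy of $\cp^1$) this symmetry is the involution with fixed points $T_Bb$ and its perpendicular $T_Bb^{\perp}$; being an isometry it swaps the isotropic directions $BI_+\leftrightarrow BI_-$, so $[T_Bb,T_Bb^{\perp};BI_+,BI_-]=-1$, while the orbit condition gives $[T_Bb,T_Bb^{\perp};BA,BC]=-1$. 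Thus $(BA,BC)$ and $(BI_+,BI_-)$ are two pairs harmonic to the same pair $(T_Bb,T_Bb^{\perp})$, i.e. interchanged by one involution of $\cp^1$. Imposing this at all four vertices converts 4-reflectivity into a system of cross-ratio identities tying the four dual curves (families of tangent lines) of $a,b,c,d$ to the fixed points $I_{\pm}$.

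I would first dispose of the case in which some mirror is a line. If $a$ is a line, then $\sigma_a$ is a global projective involution of $\cp^2$, and closing up the orbit family forces the composed correspondence to reduce to $\sigma_a\circ\sigma_b\circ\sigma_c\circ\sigma_d=\mathrm{Id}$ on the dual plane $\cp^{2*}$ along the family, exactly as in Example \ref{ex-lines2}. Rewriting this as $\sigma_a\circ\sigma_b=\sigma_d\circ\sigma_c$ and using that a composition of two line-symmetries is a rotation, together with the commutativity of rotations about a common center, I expect precisely two outcomes: either $b$ and $d$ are exchanged by $\sigma_a$ with $a=c$ (Case 1), or all four mirrors are lines concurrent at a point $O$ with $(a,b)$ sent to $(d,c)$ by a rotation about $O$ (Case 2). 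This is the computation of Example \ref{ex-lines2} run in reverse, upgraded from fixed lines to a moving one-parameter family.

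The substantial case is when every mirror has degree $\geq 2$; the goal is to force each curve to be a conic with $a=c$, $b=d$, and then confocal. The strategy is to exploit degenerate members of the orbit family — orbits in which a vertex tends to an isotropic point, or a tangent line becomes isotropic. Since the cross-ratio identities of the first paragraph involve $I_{\pm}$, such limits constrain the contact of $a,b,c,d$ with the two isotropic lines through $I_+$ and $I_-$. I would show that the dual curve of each mirror must be an algebraic curve compatible with these contact conditions for an open set of orbits; a degree and monodromy count on the reflection correspondence should then eliminate degree $>2$ and pin down $a=c$, $b=d$. Confocality is read off from the resulting requirement that $a$ and $b$ share their tangent lines drawn from $I_+$ and from $I_-$ — precisely the projective description of confocal conics with common foci — and the exact conic classification is then closed by invoking Theorem \ref{tquad}.

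The main obstacle will be this degree reduction: ruling out mixed and high-degree configurations. The cross-ratio relations are necessary but far from sufficient, so the heart of the matter is to extract enough rigidity from an \emph{open} family of closing quadrilaterals to bound the complexity of the curves. I anticipate this needs a careful analysis of the reflection correspondence near $I_{\pm}$ — tracking how branches of the orbit family behave as vertices approach the isotropic points — combined with a genus, monodromy, or explicit envelope argument showing that any 4-reflective configuration not of line type must have all its dual curves tangent to the same two isotropic lines, hence consist of confocal conics. Separating the genuinely distinct geometric subcases (for instance $a\neq c$ with both of high degree) from the listed normal forms, and handling the non-generic coincidences among the vertices, is where the bulk of the technical labor will lie.
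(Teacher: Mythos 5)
Your ``if'' direction and your broad intuition---that degenerations at isotropic points and common isotropic tangent lines are the key---do match the paper's philosophy, but the proposal has genuine gaps at every load-bearing step. First, in the straight-mirror case you write the closing condition as $\sigma_a\circ\sigma_b\circ\sigma_c\circ\sigma_d=\mathrm{Id}$ on $\cp^{2*}$ and propose to ``run Example \ref{ex-lines2} in reverse''; but $\sigma_b$, $\sigma_c$, $\sigma_d$ are well-defined projective involutions only when $b$, $c$, $d$ are lines. Reflection from a curve of higher degree is a multivalued correspondence (the reflection point is not determined by the incoming line), so for the mixed case---$a$ a line, $b$ not a line---your identity does not even parse, and this is exactly where the paper must work: it studies the one-parameter family of degenerate orbits with $AB=T_Bb$, applies Corollary \ref{connect} (which itself rests on Propositions \ref{tang} and \ref{neighbmir} about impossible tangency degenerations) to get $AD=T_Dd$ and $a=c$, and then a projective-duality argument to conclude that $b$ and $d$ are symmetric in $a$ (Proposition \ref{straight}). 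None of this machinery appears in your plan.

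Second, in the main case (no line mirrors) your cross-ratio identities are only a restatement of the reflection law; they carry no rigidity by themselves, and you defer the actual degree reduction to an unspecified ``degree and monodromy count.'' What makes the paper's proof work is a chain you never touch: (i) no two neighbor mirrors coincide (Corollary \ref{lnoncoinc}, deduced from the tangency-degeneration propositions); (ii) therefore the edge correspondence $(B,A)\mapsto(A,D)$ is birational (Lemma \ref{lbirat})---the contrary would manufacture a 4-reflective billiard $c$, $d$, $d$, $c$ with coinciding neighbors; (iii) birationality plus the contraction of the curves $\hb\times A$ at isotropic tangency points $A$ forces the mirrors to be rational (Corollary \ref{contrat}, via the Indeterminacy Resolution Theorem); (iv) the hard local Lemma \ref{lemgerm} (irreducibility of the reflection germ $\Pi_{ab}$) yields property (I), common isotropic tangent lines, and $a=c$, $b=d$ (Lemma \ref{commi}); (v) only then do cusp exclusion and quadraticity (Theorem \ref{caust-conic}) become accessible. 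Finally, your last step is also wrong as stated: sharing isotropic tangent lines is \emph{not} ``precisely'' confocality---for conics tangent to the infinity line at an isotropic point one needs in addition that the conics be translates of one another (Lemma \ref{propconf}, case 3), which the paper establishes by a separate asymptotic argument in Subsection 4.2; and Theorem \ref{tquad} cannot ``close'' the classification, since it is the converse (``if'') direction. In short, you identify the right objects but supply neither the structural mechanism (birationality from non-coincidence of neighbors) nor the local analysis that the theorem actually requires.
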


\begin{remark} The notion of confocality of complex conics is the immediate analytic extension to complex domain of 
confocality of real conics. See more precise Definition \ref{defconf} in Subsection 2.4.
\end{remark}


 
%
%
%


\begin{remark} There is an analogue of Ivrii's conjecture in the invisibility theory: Plakhov's invisibility conjecture 
\cite[conjecture 8.2, p.274]{pl}. Its 
complexification coincides with 
the above complexified Ivrii's conjecture \cite{odd}.  For more results on invisibility see \cite{ply, pl, pl2, pl3}. Another analogue of the 
4-reflective planar Ivrii's conjecture is Tabachnikov's commuting billiard problem \cite[p.58, the last paragraph]{tabcom}; 
their complexifications coincide. 
Thus, results about the complexified Ivrii's conjecture have applications not only 
to the original Ivrii's conjecture, but also to Plakhov's invisibility conjecture and Tabachnikov's commuting billiard problem. 
\end{remark}

\subsection{Plan of the proof of Theorem \ref{tclass} and structure of the paper}

Theorem \ref{tclass} is proved in Sections 2--4. In Section 6 we present its application  (Theorem \ref{class}), which gives 
the classification of so-called 4-reflective real algebraic pseudo-billiards: billiards that have open set of 4-periodic orbits with skew reflection 
law at some vertices and usual at the other ones. 

The definition of confocal complex conics and the proof of 4-reflectivity of a billiard $a$, $b$, $a$, $b$ on 
distinct confocal conics $a$ and $b$ (Theorem \ref{tquad}) are given in Subsection 2.4. 
  The 4-reflectivity of  billiards of types 1) and 2) in Theorem 
\ref{tclass} was already explained in Examples \ref{ex-lines} and \ref{ex-lines2}. 

The main part of 
Theorem \ref{tclass} saying that each 4-reflective planar algebraic billiard $a$, $b$, $c$, $d$ 
is of one of the types 1)--3) is proved in Subsection 2.3 and Sections 3, 4. 
The main idea of the proof  is similar to that from  \cite{gk1, gk2}:  to study the ``degenerate'' limits of open set of 
quadrilateral orbits, i.e.,  quadrilaterals having either an edge tangent to a mirror at an adjacent vertex, or 
a pair of coinciding neighbor vertices, or a vertex that is an isotropic tangency point of the corresponding mirror. We deal 
with the compact Riemann surfaces $\ha$, $\hb$, $\hc$, $\hd$, the so-called normalizations of  the curves $a$, $b$, $c$, $d$ respectively 
that parametrize them bijectively except for self-intersections. We study the closure in $\ha\times\hb\times\hc\times\hd$ 
of the open set of 4-periodic orbits in the usual topology. This is a purely two-dimensional algebraic set, which we will call the 
4-reflective set and denote $U$, that contains a Zariski open and dense 
subset $U_0\subset U$ of 4-periodic billiard orbits (Proposition \ref{comp-set}). 
The above-mentioned degenerate quadrilaterals form the complement 
$U\setminus U_0$. The proof of Theorem \ref{tclass} consists of the following steps.

\medskip
Step 1. {\it Description of  a large class of degenerate quadrilaterals \footnote{The complete description of the complement $U\setminus U_0$ 
in a billiard of type 3) will be given by Theorem \ref{irrdeg} in Section 5.} $ABCD\in U\setminus U_0$} (Subsections 2.1 and 2.2). 

1A) Case, when {\it some vertex, say $B$ is an isotropic tangency point} of the corresponding mirror $b$. 
We prove the {\it isotropic reflection law:} if $B\neq A,C$, then 
 at least one of the lines $AB$, $BC$ coincides with the isotropic tangent line $T_Bb$.  
(Propositions \ref{plimits} and \ref{comp-set} in Subsection 2.1) 

1B) Case, when {\it some edge is tangent to the mirror through an adjacent vertex,} say, $AB=T_Bb$. We show that {\it this cannot be the only 
degeneracy} (Proposition \ref{tang}). We show that the case, when $AB=T_Bb$, $b=c$ and $B=C$ is also 
impossible without other degeneracies 
(Proposition \ref{neighbmir}). 
We then deduce (Corollary \ref{lnoncoinc}) that {\it no 4-reflective algebraic planar billiard can have a pair of coinciding neighbor 
mirrors,} say $b=c$. 
This is done by contradiction: 
assuming the contrary, we deform a quadrilateral orbit $ABCD$ to a limit quadrilateral with $B=C$ forbidden by Proposition
\ref{neighbmir}. 

1C) Case, when $AB=T_Bb$ {\it and it is not an isotropic line, and there are no  degeneracies at the neighbor vertices $A$ and $C$.} 
We show that {\it either  $AD=T_Dd$, $A=C$ and $a=c$, or  $D$ is a cusp\footnote{Everywhere in the paper by {\it cusp} we mean 
the singularity of an arbitrary  irreducible singular germ of analytic curve, not necessarily the one given by equation $x^2=y^3+\dots$ 
in appropriate coordinates. The {\it degree} of a cusp is its intersection index with a generic line though the singularity; the 
degree of a regular germ is one.} with non-isotropic tangent line} 
(Corollary \ref{connect} in Subsection 2.2). This follows from  Proposition \ref{tang} and the fact (proved in the same subsection) 
that there exists no one-parameter 
family of quadrilaterals  $ABCD\in U$ with  $C,D\in AB=T_Bb=T_Dd$. Corollary \ref{connect} implies 
Corollary \ref{a=c} saying that {\it if the mirror $b$ is not a line and $d$ has no cusps 
with non-isotropic tangent lines, then $a=c$.} 

\medskip

Step 2. {\it Proof of Theorem \ref{tclass} in the case, when some of the mirrors, say $a$ is a  line (Proposition \ref{straight} 
in Subsection 2.3).} In the subcase, when 
some its neighbor mirror, say $b$ is not  a line, this is done by considering a one-parametric family $\mct$ of degenerate quadrilaterals 
$ABCD\in U$ with $AB=T_Bb$. The above-mentioned Corollary \ref{connect} together with the reflection law at $A$ imply that 
for every $ABCD\in\mct$ one has $AD=T_Dd$, the vertices 
$B$, $D$ are symmetric with respect to 
the line $a$ (an elementary projective duality argument) and $a=c$. Thus,  the billiard is of type 1) in Theorem \ref{tclass}. 
In the subcase, when $a$, $b$ and $d$  are lines, the above arguments applied to $b$ instead of $a$ 
imply that $c$ is also a line. The composition of symmetries with respect to the lines $a$, $b$, $c$ and $d$ acting 
on the dual projective plane is  
identity, by 4-reflectivity. Hence, the billiard is of type 2), if the lines are distinct, and of type 1) otherwise. 

\medskip

The rest of the proof concerns the case, when no mirror is a line.

\medskip

Step 3. {\it Birationality of neighbor edge correspondence and rationality of mirrors} (Subsection 3.1). 
The image of the projection $U\to\hb\times\ha\times\hd$ 
is a two-dimensional projective algebraic variety. It defines an algebraic correspondence $\psi_a:\hb\times\ha\to\ha\times\hd:$ 
$(B,A)\mapsto(A,D)$ for every $ABCD\in U$. First we prove (Lemma \ref{lbirat}) that {\it $\psi_a$ is birational,} by contradiction. 
The contrary assumption implies that some of the transformations $\psi_a^{\pm1}$, say $\psi_a$ has 
two holomorphic branches on some open subset $V\subset \hb\times\ha$:  every $(A,B)\in V$ completes to two distinct 4-periodic orbits 
$ABCD, ABC'D'\in U_0$.  It follows that the quadrilaterals $CDD'C'$ form a two-parameter family of 
 4-periodic orbits of the billiard $c$, $d$, $d$, $c$ with coinciding neighbor mirrors, and the latter billiard is 4-reflective, -- a contradiction 
 to Corollary \ref{lnoncoinc} (Step 1B)). Next we prove that {\it the mirrors are rational curves (Corollary \ref{contrat}).} The proof is based on 
 the observation that for every isotropic tangency point $A\in\ha$ the transformation $\psi_a$ contracts the curve $\hb\times A$ 
 to a point $(A,D)$ with $\pi_d(D)\in T_Aa\cap d$ (follows from the isotropic reflection law, see Proposition \ref{comp-set}, Step 1A)). Applying 
 the classical Indeterminacy Resolution Theorem (\cite{griff}, p.545 of the Russian edition) to the inverse $\psi_a^{-1}$ yields that  
 the curve $b$ is rational. At the end of Subsection 3.1 we deduce Corollary \ref{bircusp}, which deals with one-parametric family 
 $\Gamma$ of quadrilaterals $ABCD\in U\setminus U_0$ with $D$ being a {\it fixed cusp with isotropic tangent line}. It shows that 
 $B\equiv const $ on $\Gamma$ and $B$ is a {\it cusp of the same degree, as $D$.}
 
 \medskip
 
 Step 4. We show that {\it all the mirrors have common isotropic tangent lines and each mirror has the so-called} {\bf property (I):} {\it  
 all the isotropic tangencies are of maximal order, i.e., the intersection of each mirror, say $a$ with its isotropic tangent line corresponds to a single point of its normalization $\ha$} (Lemma \ref{commi} in Subsection 3.2). Namely, given an isotropic tangency point 
 $A\in\ha$ of the curve $a$, 
 we have to show that the curve $d$ intersects the line $L=T_Aa$ at a single point. This is equivalent to the statement saying 
 that for every $D\in\hd$ with $\pi_d(D)\in d\cap L$ the mapping $\psi_a$ contracts the curve $\hb\times A$ to the point $(A,D)$. 
 Or in other words,  the projection to $\hb\times\ha\times\hd$ of the graph of the birational correspondence $\psi_a$ 
 contains the curve $\hb\times A\times D$. The proof of this statement is split into the following substeps. 
 
 4A) Proof of  the {\it local version of the latter inclusion} (Lemma \ref{lemgerm} in Subsection 2.6), 
 which deals with two distinct irreducible 
 germs of analytic curves $(a,A), (b,B)\subset\cp^2$ such that the line $L=T_Aa$ is isotropic and $B\in L$. 
 Its statement concerns the germ of two-dimensional 
 analytic subset $\Pi_{ab}$ in $b\times a\times\cp^{2*}$ at the point $(B,A,L)$ defined as follows: 
 we say that $(B',A',L')\in \Pi_{ab}$, if $A'\in L'$ and either $(A',L')=(A,L)$, or $(B',A')=(B,A)$, or the lines $A'B'$ and $L'$ 
 are symmetric with respect to the tangent line 
 $T_{A'}a$. By definition, the germ $\Pi_{ab}$ contains the germ  of the curve $b\times A\times L$. 
  Lemma \ref{lemgerm} states that  under a mild additional condition {\it the germ $\Pi_{ab}$ is irreducible}. The most technical part of the 
  paper is the proof of Lemma \ref{lemgerm} in the case, when the germs $a$ and $b$ are tangent to each other.  The additional condition 
  from the lemma is imposed only in the case of germs tangent to each other at a finite point  and is formulated in terms of their {\it Puiseaux 
  exponents:} the germs are represented as graphs of multivalued functions in appropriate coordinates, and we take  the lower powers in their Puiseaux expansions. The above additional condition is satisfied automatically, if either the germ $a$ is smooth, or its Puiseaux exponent 
  is no less than that of the germ $b$. The  proof of Lemma \ref{lemgerm} for tangent germs is based on Proposition \ref{tang-asympt} from Subsection 2.6, which deals with the family of tangent lines to $b$.  It relates the asymptotics of the tangency point with that of the 
 intersection points of the tangent line with the curve $a$. 
  
  4B) We cyclically rename the mirrors so that the mirror $a$ has a germ tangent to $L$ that satisfies the condition of Lemma 
  \ref{lemgerm} for every branch of the curve $b$ at its intersection point with $L$. This will be true, e.g., if we name $a$ the curve having a 
  tangent germ 
  to $L$ that is either smooth, or at an infinite point, or with the maximal possible Puiseaux exponent. We show that   {\it each one of the curves  
  $d$ and $b$ intersects the line $L$ at a unique point.} This follows from the irreducibility of the germ $\Pi_{ab}$ (Lemma \ref{lemgerm}),  
  by elementary topological argument given by Proposition \ref{clemgerm}.  
  
  4C) We show that {\it each one of the curves $\ha$ and $\hc$ intersects the line $L$ at a unique point.} 
  In the case, when  the intersection point 
  $D=d\cap L$ is either infinite, or smooth, this follows by applying  Step 4B) to the curve $d$ instead of $a$. Otherwise, if $D$ is 
  a finite cusp, we show that the curves $a$ and $c$  are conics with focus $D$, by using Corollary \ref{bircusp} (Step 3) and 
  Proposition \ref{char-conic} (Subsection 2.4). Finally, we show that $a=c$, by using Corollary \ref{a=c} (Step 1C)) and proving that the 
  curve $d$ has no cusps with non-isotropic tangent lines. The latter statement is proved by applying Riemann-Hurwitz Formula to the 
  projection $d\to\cp^1$ from appropriate point: the intersection point of two isotropic tangent lines to $a$.      
 \medskip
 
 Step 5. We show that {\it the opposite mirrors coincide: $a=c$, $b=d$.} This follows immediately from Corollary \ref{a=c} and absence of 
 cusps with non-isotropic tangent lines in rational curves with property (I). The latter follows from the description of  
 rational curves with property (I) having cusps (Corollary \ref{cnform} in Subsection 2.5). 
 
 \medskip

 Step 6. We prove that {\it the mirrors are conics} (Theorem \ref{caust-conic} in Subsection 4.1). To do this, first we show that 
 they have no cusps (Lemma \ref{lem-cusp}). Assuming the contrary, 
 we show that a mirror with  cusps should have two distinct cusps of equal degrees (basically follows from Corollary \ref{bircusp}, Step 3). 
 This would contradict  the above-mentioned  Corollary \ref{cnform}, which implies that there are  
 at most two cusps and their degrees are distinct. The rest of the proof of Theorem  \ref{caust-conic} is based 
 on the fact that {\it for every $A\in a$ with non-isotropic tangent line $T_Aa$  the collection of the tangent lines to $b$ through $A$ 
 is symmetric with respect to $T_Aa$} (Corollary \ref{cortang}, which follows immediately from Corollary \ref{connect}, Step 2). We 
 apply Corollary \ref{cortang} as $A$ tends to an isotropic tangency point, and deduce from symmetry that the isotropic tangency should be quadratic. 
 In the case, when $a$ and $b$ are tangent to some isotropic line at distinct finite points, this is done by elementary local analysis. 
 In the case, when $a$ and $b$ are isotropically tangent to each other, the proof is slightly more technical and uses 
 Proposition \ref{tang-asympt}, see Step 4A). 
 \medskip
 
 Step 7. We prove that {\it the conics $a=c$ and $b=d$ are confocal} (Subsection 4.2), by using confocality criterion given by 
 Lemma \ref{propconf} in Subsection 2.4.  If one of them is 
 transverse to the infinity line, then their confocality immediately follows from the lemma and the coincidence of their isotropic tangent lines. 
 In the case, when both $a$ and $b$ are tangent to the infinity line, the proof is slightly more technical and is done by using the 
 above-mentioned Corollary \ref{cortang} and Proposition \ref{tang-asympt}.

\section{Preliminaries}

\def\mcl{\mathcal M}

\subsection{Complex reflection law and nearly isotropic reflections.}


We fix an Euclidean metric on $\rr^2$ and consider its complexification: the 
complex-bilinear quadratic form $dz_1^2+dz_2^2$ on the complex affine plane $\cc^2\subset\cp^2$. 
We denote the infinity line in $\cp^2$ by 
 $\oc_{\infty}=\cp^2\setminus\cc^2$.   

\begin{definition} The {\it symmetry} $\cc^2\to\cc^2$ with respect to a non-isotropic 
complex line $L\subset\cp^2$  is the unique non-trivial complex-isometric involution 
fixing the points of the line $L$. 
For every $x\in L$ it acts on the space $\mcl_x=\cp^1$ of lines through $x$, and this action is called {\it symmetry at $x$}. If $L$ is an isotropic line through a finite point $x$, then a pair of  lines through $x$ is called symmetric with respect to $L$, if 
it is a limit of pairs of lines $(l_1^n,l_2^n)$ through points $x_n\to x$ such that $l_1^n$ and $l_2^n$ 
are symmetric with respect to  non-isotropic lines $L_n$ through $x_n$ converging to $L$. 
\end{definition}

\begin{remark} If $L$ is a non-isotropic line, then its symmetry is a projective transformation. Its restriction to 
the infinity line is a conformal involution. The latter is conjugated to the above action on $\mcl_x$ 
via the projective isomorphism $\mcl_x\simeq\oc_{\infty}$ sending a line to its intersection point with $\oc_{\infty}$. 
\end{remark}

\begin{lemma} \label{lim-refl} Let $L$ be an isotropic line through a finite point $x$. Two lines through $x$ are  
symmetric with respect to $L$, if and only if some of them coincides with $L$.  
\end{lemma}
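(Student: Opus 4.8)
The plan is to transport the whole question to the infinity line $\oci$ by means of the projective isomorphism $\mcl_x\simeq\oci$ of the Remark above, under which a point of $\mcl_x$ (a line through $x$) becomes its direction, i.e. its intersection point with $\oci$, and under which the symmetry at $x$ with respect to a non-isotropic line becomes a conformal involution of $\oci\simeq\cp^1$. Recall that the two cyclic points $I_+=(1:i:0)$, $I_-=(1:-i:0)$ are exactly the isotropic directions and that each of them is self-orthogonal for the form $dz_1^2+dz_2^2$ (the orthogonal direction of an isotropic direction is that direction itself). Since $L$ is isotropic through $x$, it meets $\oci$ in one cyclic point; I would assume without loss of generality $L\cap\oci=I_+$, so that a line through $x$ equals $L$ if and only if its direction is $I_+$.

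First I would record the normal form of a single symmetry. Every complex-isometric reflection interchanges the two cyclic points (a reflection is a rotation composed with one fixed reflection; rotations fix each $I_\pm$, while the fixed reflection $(z_1,z_2)\mapsto(z_1,-z_2)$ interchanges them), so the involution $\sigma_n$ of $\oci$ induced by the symmetry with respect to a non-isotropic line $L_n\ni x_n$ swaps $I_+$ and $I_-$, and its two fixed points are the direction of $L_n$ and the direction orthogonal to $L_n$. Choosing the coordinate $w$ on $\oci\simeq\cp^1$ with $w(I_+)=0$ and $w(I_-)=\infty$, every involution swapping $0$ and $\infty$ has the shape $\sigma_n(w)=c_n/w$ for some $c_n\in\cc^*$, its fixed points being $w=\pm\sqrt{c_n}$. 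The key geometric input is then the collision of these fixed points: as $L_n\to L$ one has $\mathrm{direction}(L_n)\to I_+$, and because $I_+$ is self-orthogonal the orthogonal direction tends to $I_+$ as well; hence $\pm\sqrt{c_n}\to0$, i.e. $c_n\to0$.

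For the "only if" direction, writing $w_i^n=w(l_i^n)$ for a symmetric pair $l_2^n=\sigma_n(l_1^n)$, the defining relation is $w_1^n w_2^n=c_n\to0$. I would then argue, working throughout on the compact $\cp^1$ and treating the point $w=\infty$ (the direction $I_-$) explicitly, that any limiting pair $(w_1,w_2)$ satisfies $w_1=0$ or $w_2=0$: if both limits are finite the product is their product and must vanish; if one limit is $\infty$, then since $c_n$ is bounded (indeed $c_n\to0$) the other factor is forced to $0$. In every case one of the two limiting directions equals $I_+$, that is, one of the two lines coincides with $L$. For the converse I would exhibit an approximating family realizing the pair $(L,\ell)$ for an arbitrary line $\ell$ through $x$ with $w(\ell)=w_2$: choose non-isotropic $L_n\to L$ so that $c_n\to0$ and set $w_1^n=c_n/w_2$ (so $w_1^n\to0$, i.e. $l_1^n\to L$), which yields $w_2^n=\sigma_n(w_1^n)=w_2$ identically; the degenerate values $w_2\in\{0,\infty\}$ are handled by the obvious variants $w_1^n=\sqrt{c_n}$ and $w_1^n=c_n^2$. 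Thus every pair one of whose members is $L$ is symmetric with respect to $L$, and the equivalence follows.

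The only delicate point, and the step I expect to be the main obstacle, is this limit analysis on $\cp^1$: the naive assertion "$w_1^n w_2^n\to0$ forces $w_1=0$ or $w_2=0$" fails if one reasons inside $\cc$ alone, since a factor may escape to the point $I_-$ at infinity. The substantive work is to carry out the limit carefully in the compact coordinate, verifying that when one factor tends to $I_-$ the boundedness of $c_n$ pins the other factor to $I_+$; once this is established, together with the collision $c_n\to0$ coming from the self-orthogonality of $I_+$, both implications are immediate.
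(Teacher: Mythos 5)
Your proof is correct and takes essentially the same route as the paper's: the paper's Proposition \ref{tang-asympt1} is precisely your normal form $\sigma_n(w)=c_n/w$ (with $c_n=\var_n^2$ the squared direction of the reflecting line in the coordinate placing the cyclic points at $0$ and $\infty$), and the paper's limit argument is the same analysis of the relation $w_1^n w_2^n=c_n\to0$ on the infinity line. The only differences are cosmetic — you justify that the involution swaps the cyclic points by linear algebra rather than the paper's three-fixed-points contradiction, and you spell out the converse direction and the $w=\infty$ case more explicitly.
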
 

Let us  introduce an affine coordinate $z$ on $\oc_{\infty}$ in which the isotropic points 
 $I_1=(1:i:0)$, $I_2=(1:-i:0)$ at infinity be respectively $0$ and $\infty$.
As it is shown below, Lemma \ref{lim-refl} is implied by the following proposition

\begin{proposition} \label{tang-asympt1} The symmetry with respect to a finite non-isotropic line  through 
a point $\var\in\oc_{\infty}
\setminus\{0,\infty\}$ acts on $\oc_{\infty}$ by the formula $z\mapsto\frac{\var^2}z$.
 \end{proposition}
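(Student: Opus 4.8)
The plan is to identify the symmetry's action on $\oc_{\infty}$ from two qualitative facts — which pair of points it fixes and which pair it interchanges — rather than by computing it head-on. Write $\sigma_L$ for the symmetry with respect to a finite non-isotropic line $L$. By the Remark stating that the symmetry of a non-isotropic line is a projective transformation restricting to a conformal involution of $\oc_{\infty}$, the induced map on $\oc_{\infty}\simeq\cp^1$ is a Möbius involution in the coordinate $z$. Its linear part is a complex-orthogonal involution for the form $z_1^2+z_2^2$, and since $\sigma_L$ and the symmetry with respect to any parallel line differ by a translation (trivial at infinity), this action depends only on the direction of $L$, which is the prescribed point $\var\in\oc_{\infty}\setminus\{0,\infty\}$; so it is legitimate to speak of ``the'' symmetry through $\var$.

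First I would pin down the form of this involution. The linear part of $\sigma_L$ is an improper complex-orthogonal involution, hence diagonalizable with eigenvalues $+1$ and $-1$, whose eigendirections are the axis (the direction $\var$ of $L$) and the orthogonal direction. Because $\sigma_L$ preserves the quadratic form it permutes its two null directions, which are exactly the isotropic points $I_1=(1:i:0)$ and $I_2=(1:-i:0)$. Since $L$ is non-isotropic, its axis and orthogonal directions are non-isotropic, so neither $I_1$ nor $I_2$ is an eigendirection and hence neither is fixed; therefore $\sigma_L$ interchanges $I_1$ and $I_2$. In the coordinate $z$, for which $I_1\mapsto 0$ and $I_2\mapsto\infty$, an interchange of $\{0,\infty\}$ by a Möbius map forces the form $z\mapsto c/z$ for some $c\neq0$ (automatically involutive).

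It then remains to compute $c$. The line $L$ is sent to itself, so its direction $\var$ is a fixed point of the involution; substituting into $z\mapsto c/z$ gives $\var=c/\var$, i.e. $c=\var^2$, so $\sigma_L$ acts by $z\mapsto\var^2/z$, as claimed. As a consistency check, the other fixed point $-\var$ should be the direction orthogonal to $L$: writing $z=\frac{\mu-i}{\mu+i}$ for the slope $\mu$, orthogonality $\mu_1\mu_2=-1$ corresponds to $z\leftrightarrow -z$ by a short Möbius computation, so the orthogonal direction is indeed $-\var$.

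The one step deserving genuine care — the main obstacle — is establishing that $\sigma_L$ \emph{swaps} the isotropic points rather than fixing each, since fixing each would instead yield the involution $z\mapsto -z$. This is exactly what the eigenvalue/orthogonality argument of the second paragraph rules out: the only $\sigma_L$-fixed directions are the two non-isotropic eigendirections, so the two null directions must be exchanged. Once this is secured, the form $z\mapsto c/z$ together with the single fixed point $\var$ determines the map completely.
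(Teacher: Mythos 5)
Your proof is correct, and it reaches the formula by the same overall skeleton as the paper (Möbius map on $\oc_{\infty}$, swap of $\{0,\infty\}$ forces $z\mapsto c/z$, the fixed point $\var$ gives $c=\var^2$), but the one nontrivial step --- ruling out that the symmetry fixes each isotropic point --- is handled by a genuinely different argument. You work with the linear part: it is a complex-orthogonal involution, hence diagonalizable with eigenvalues $\pm1$ and eigendirections the axis of $L$ and its orthogonal, both non-isotropic; since the fixed points of the projectivized map are exactly the eigendirections, neither null direction is fixed, so the preserved pair $\{I_1,I_2\}$ must be swapped. The paper instead argues by contradiction with projective rigidity: if the symmetry fixed $0$ and $\infty$, it would fix three distinct points of $\oc_{\infty}$ (namely $0$, $\infty$, $\var$), hence be the identity on $\oc_{\infty}$; combined with fixing the finite line $L$ pointwise, it would then be the identity on all of $\cp^2$, contradicting that it is a nontrivial involution. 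The paper's route needs nothing beyond the defining property of the symmetry (a nontrivial involution fixing $L$ pointwise) and the rigidity of projective maps, whereas your eigenvalue analysis is more local and yields slightly more as a by-product: it exhibits both fixed points on the infinity line ($\var$ and the orthogonal direction $-\var$), which is exactly your consistency check and is consistent with the fixed-point set of $z\mapsto\var^2/z$. Both arguments are sound and of comparable length.
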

 
 \begin{proof} Let $L_\var$ and $\tau_\var$ denote respectively the above line and symmetry. 
 Then $\tau_\var$ acts on $\oc_{\infty}$ by fixing $\var$ and preserving the isotropic point set $\{0,\infty\}$, by definition. 
 It cannot fix $0$ and $\infty$, since otherwise, it would fix three distinct points in $\oc_{\infty}$ and 
hence, would be identity there. Therefore, it would be identity on the whole projective plane, since it also fixes the points of a finite 
line $L_\var$, while it should be a nontrivial involution, -- a contradiction. Thus, $\tau_\var:\oc_{\infty}\to\oc_{\infty}$ 
is a conformal transformation fixing $\var$ and permuting $0$ and $\infty$. Hence, it sends $z$ to $\frac{\var^2}z$. 
This proves the proposition.
\end{proof}
 
\begin{proof} {\bf of Lemma \ref{lim-refl}.} Without loss of generality we consider that $0=I_1\in L$. Let $L^n$ be an arbitrary sequence 
of non-isotropic 
lines converging to $L$, set $\var_n=L^n\cap\oc_{\infty}$: $\var_n\to0$. Let $L_1\neq L$ be another line through $x$, and let 
$L_1^n$ be a sequence of lines converging to $L_1$. Set 
$q_n=L_1^n\cap\oc_{\infty}$, $x_n=L_1^n\cap L^n$: $q_n\to q=L_1\cap\oc_{\infty}\neq0$, $x_n\to x$. 
Then the image $L_2^n$ of the line $L_1^n$ under the symmetry with respect to $L^n$ is the line 
through the points $x_n$ and $p_n=\frac{\var_n^2}{q_n}\in\oc_{\infty}$ (Proposition \ref{tang-asympt1}). 
One has $p_n\to0$. Hence,  $L_2^n\to L$, 
as $n\to\infty$. This proves the lemma. 
\end{proof}

\begin{remark} The statement of Lemma \ref{lim-refl} is wrong in the case, when $L=\oc_{\infty}$. 
Indeed, fix a non-isotropic line $l$ and consider a 
sequence of lines $L^n\to L$ orthogonal to $l$. Then for every $n$ the line symmetric to $l$ with respect to $L^n$ is the line $l$ itself, and 
it does not tend to $L$. On the other hand, the next proposition provides an analogue of Lemma \ref{lim-refl} in the case, 
when the lines $L=\oci$ and $L^n\to L$ are tangent  to one and the same irreducible 
germ of analytic curve. 
\end{remark}

In what follows we deal with irreducible germs of analytic curves that are not lines; we will call them {\it non-linear irreducible germs.} For 
every curve $\Gamma\subset\cp^2$ and $t\in\Gamma$ we identify the tangent line $T_t\Gamma$ with the {\it projective tangent line} to 
$\Gamma$ at $t$ in $\cp^2$.

\begin{definition} An irreducible germ $(\Gamma,O)\subset\cp^2$  of analytic curve has {\it isotropic tangency} 
at $O$, if the projective line tangent to $\Gamma$ at $O$ is  isotropic.  
\end{definition}

\def\mck{\mathcal K}
 \def\az{\operatorname{az}}

\begin{proposition} \label{plimits} Let $(\Gamma,O)$ be a non-linear irreducible germ of analytic curve in $\cp^2$ at its isotropic tangency point 
$O$, set $L=T_O\Gamma\subset\cp^2$. 
Let $l_t$ be an arbitrary family of projective lines through $t\in\Gamma$ that do not accumulate to $L$, as $t\to O$. Let 
$l_t^*$ denote their symmetric images with respect to the tangent lines $T_t\Gamma$. Then $l_t^*\to L$, as $t\to O$. 
%
\end{proposition}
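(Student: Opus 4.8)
The plan is to realize the reflected line as the image of $l_t$ under the symmetry involution on the pencil of lines through $t$, and to show that this involution degenerates to the constant ``map'' $L$ as $t\to O$. Parametrize $\Gamma$ near $O$ by $t=\gamma(s)$ with $\gamma(0)=O$. Since $\Gamma$ is non-linear, its tangent direction is a non-constant holomorphic function of $s$; hence for small $s\neq0$ the line $T_t\Gamma$ is finite and non-isotropic, so the symmetry $\sigma_t$ with respect to $T_t\Gamma$ is defined. As $t\in T_t\Gamma$, the symmetry fixes $t$ and restricts to a projective involution $\rho_t$ of the pencil $\mcl_t\cong\cp^1$ of lines through $t$, with $l_t^*=\rho_t(l_t)$. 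The two fixed lines of $\rho_t$ are the mirror $T_t\Gamma$ itself and the line $T_t\Gamma^{\perp}$ through $t$ perpendicular to it.

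The key point is that both fixed lines tend to $L$. Indeed $T_t\Gamma\to T_O\Gamma=L$ by continuity of the tangent line. For the perpendicular one uses that an isotropic line is self-perpendicular: the perpendicularity map on $\oci$ is a conformal involution whose fixed points are exactly the two isotropic directions, so $L^{\perp}=L$, and therefore $T_t\Gamma^{\perp}\to L$ as well. Thus the unordered pair of fixed points of $\rho_t$ collapses to the single point $[L]\in\cp^{2*}$. A projective involution of $\cp^1$ whose two fixed points merge at a point $c$ converges, locally uniformly away from $c$, to the constant map $c$; equivalently, via the direction identification $\mcl_t\cong\oci$ valid for finite $t$, Proposition \ref{tang-asympt1} writes $\rho_t$ as $z\mapsto\var_t^2/z$ with fixed points $\pm\var_t$, both tending to the isotropic direction of $L$.

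To conclude I argue by contradiction, using compactness and the involutivity of $\rho_t$. If $l_t^*$ did not tend to $L$, then by compactness of $\cp^{2*}$ there would be a sequence $t_n\to O$ with $l_{t_n}^*\to\ell_{\infty}\neq L$. Applying $\rho_{t_n}$ and using that its fixed points tend to $[L]$ while $[\ell_\infty]\neq[L]$, the degeneration above gives $\rho_{t_n}(l_{t_n}^*)\to L$; but $\rho_{t_n}(l_{t_n}^*)=l_{t_n}$ since $\rho_{t_n}$ is an involution, so $l_{t_n}\to L$, contradicting the hypothesis that the lines $l_t$ do not accumulate to $L$. Working throughout in $\cp^{2*}$ makes the convergence of the involutions $\rho_t$ on the varying pencils $\mcl_t$ precise and uniform across all positions of $O$.

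For $L\neq\oci$ this is immediate and essentially repeats the proof of Lemma \ref{lim-refl}: normalizing the isotropic direction of $L$ to $0$ one has $\var_t\to0$, while $l_t\not\to L$ forces the direction $q_t$ of $l_t$ to stay away from $0$, so by Proposition \ref{tang-asympt1} the direction $\var_t^2/q_t$ of $l_t^*$ tends to $0$ and, since $l_t^*$ passes through $t\to O$, the line $l_t^*$ tends to $L$. The main obstacle is the case $L=\oci$, precisely the situation in which Lemma \ref{lim-refl} fails (see the preceding remark): here $L$ has no direction, the identification $\mcl_t\cong\oci$ no longer detects convergence to $\oci$, and the delicate step is to verify directly that $T_t\Gamma^{\perp}\to\oci$ and that the collision of fixed points forces $\rho_t$ to degenerate in $\cp^{2*}$. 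This is exactly where the tangency hypothesis enters: it guarantees $t\to O\in L$, so that together with $T_t\Gamma\to L$ the perpendicular fixed line is dragged onto $L$ as well; I would check this by a short computation in a chart centred at $O\in\oci$, modelled on a parabola tangent to the infinity line.
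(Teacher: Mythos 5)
Your strategy is genuinely different from the paper's and, in outline, sound: realize the reflection as the M\"obius involution $\rho_t$ of the pencil of lines through $t$, note that its two fixed points are $T_t\Gamma$ and the perpendicular to $T_t\Gamma$ at $t$, show that both fixed lines collapse onto $L$, and invoke the (correct) fact that a family of M\"obius involutions whose fixed points merge at a point degenerates, locally uniformly off that point, to the constant map; the passage to varying pencils via ambient projective maps close to the identity is also fine. For a finite tangency point $O$ the argument is complete and is essentially the proof of Lemma \ref{lim-refl}. The paper instead computes azimuth asymptotics case by case, which is heavier but simultaneously proves the quantitative Addendum (formulas (\ref{az-fin}), (\ref{az-inf})) that is used later in the paper; your soft argument could at best replace the qualitative statement.

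The genuine gap is in the cases $O\in\{I_1,I_2\}$, and it is larger than the one you flag. You assert that every case with $L\neq\oci$ is ``immediate'', but when $O$ is an isotropic point at infinity and $L$ is finite (the paper's Subcase 2b) the point $O$ and the isotropic direction of $L$ are the \emph{same} point of $\oci$, and two of your inferences fail. In the paper's chart for that subcase ($O$ the origin, $\oci$ the $y$-axis, $L$ the $x$-axis, $\Gamma=\{y=\sigma x^r(1+o(1))\}$), the line through $t$ meeting $\oci$ at the point with coordinate $q_t=\sqrt{x(t)}$ satisfies $q_t\to0$ and yet converges to $\oci\neq L$; convergence to $L$ of a line through $t$ is equivalent to the rate condition ``direction $=o(x(t))$'', not to ``direction $\to0$''. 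Hence ``$l_t\not\to L$ forces $q_t$ to stay away from $0$'' is false, and ``the direction of $l_t^*$ tends to $0$ and $l_t^*$ passes through $t\to O$, hence $l_t^*\to L$'' is not a valid inference: both conflate convergence of directions with convergence of lines when the base point and the limiting direction merge on $\oci$. The same objection applies to your step ``$L^\perp=L$, therefore $T_t\Gamma^\perp\to L$''. In Subcase 2b this is repairable semi-softly (the perpendicular's direction is minus the mirror's, and the mirror's direction is $o(x(t))$ because $T_t\Gamma\to L$), but one must first write down the rate criterion, which you do not. For $L=\oci$ (Subcases 2a and 2c) you defer the key verification entirely; there the limit of the perpendicular fixed line is governed by where $T_t\Gamma$ meets $\oci$, i.e.\ by $x(P_t)=\frac{r-1}{r}x(t)(1+o(1))$ (Proposition \ref{pxt}), needed for an arbitrary rational Puiseaux exponent $r>1$ -- a computation ``modelled on a parabola'' covers only $r=2$. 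So, as written, your proposal proves the proposition only for finite $O$; the infinite cases, which are precisely the substance of the paper's Case 2, are missing, and closing them requires the same asymptotic input the paper uses.
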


Proposition \ref{plimits} together with its next more precise addendum will be proved below. To state the addendum 
and in what follows, we will use the next 
 convention and definition.

 \begin{convention} \label{convaz} Fix an affine chart in $\cp^2$ (not necessarily the finite plane)  with coordinates $(x,y)$ 
 and the projective coordinate $z=\frac xy$ on its complementary ``new infinity''  line. The {\it complex azimuth} $\az(L)$ of a line 
 $L$ in the affine chart is 
 the $z$-coordinate of its intersection point with the new infinity line. We define the {\it ordered complex angle} $\angle(L_1,L_2)$ between 
 two complex lines as  the difference $\az(L_2)-\az(L_1)$ of their azimuths. We identify the projective  lines 
 $\mathcal M_w\simeq\mathbb{CP}^1$ 
 by translations for all $w$ in the affine chart under consideration. We fix a round sphere metric on $\mathbb{CP}^1\simeq\mathcal M_w$. 
\end{convention}

\begin{definition} \label{defpuis} Consider a non-linear irreducible germ $(\Gamma,O)$ of analytic curve at a point $O\in\cp^2$ and a local chart 
$(x,y)$ centered at $O$ with the tangent projective line $T_O\Gamma\subset\cp^2$ 
being the $x$-axis. Then the curve $\Gamma$ is the graph of a 
(multivalued) analytic function with Puiseaux asymptotics 
\begin{equation} y=\sigma x^r(1+o(1)), \text{ as } x\to0; \  r\in\mathbb Q, \ r>1, \ \sigma\neq0.\label{graph}\end{equation} 
The exponent $r$, which is independent on the choice of coordinates, 
 will be called the {\it Puiseaux exponent} of the germ $(\Gamma,O)$. 
\end{definition}

\medskip

{\bf Addendum to Proposition \ref{plimits}.} {\it In Proposition \ref{plimits} let us measure the angles between lines with respect to an affine 
chart $(x,y)$ centered at $O$ with $L$ being the $x$-axis. Let $r$ be the Puiseaux exponent of the germ $(\Gamma,O)$. Then the azimuth $\az(l_t^*)$ has one of the following asymptotics:

Case 1): the affine chart is the finite plane, thus, $O$ is finite. Then 
\begin{equation}\az(l_t^*)=O(|\frac{y(t)}{x(t)}|^2)=O(|x(t)|^{2(r-1)}), \text{ as }t\to O.\label{az-fin}\end{equation}

Case 2):  the point $O$ is infinite.  Then
\begin{equation} \az(l_t^*)=p\frac{y(t)}{x(t)}(1+o(1))=p\sigma (x(t))^{r-1}(1+o(1)), \text{ as } t\to O; \ p,\sigma\neq0,\label{az-inf}\end{equation}
$$p=\frac r2 \text{ if  } O \neq I_1, I_2; \ p=1 \text{ if } O\in\{ I_1,I_2\} \text{  and } L \text{ is finite;}$$  
$$ p=\frac{r^2}{2r-1} \text{ if } O\in\{ I_1,I_2\} \text{  and } L=\oc_{\infty}.$$}

\medskip

%

%
%
%

\medskip

In the proof of Proposition \ref{plimits} and its addendum and in what follows we will use the following elementary fact.

 \begin{proposition} \label{pxt} Let 
  $(\Gamma,O)$ be a non-linear irreducible germ at the origin of analytic curve in $\cc^2$ that is tangent to the $x$-axis, and let 
  $r$ be its  Puiseaux exponent. For every $t\in \Gamma$ let $P_t$ denote the intersection point 
 of the tangent line $T_t\Gamma$ with the $x$-axis. Then 
 \begin{equation} x(P_t)=\frac{r-1}rx(t)(1+o(1)), \text{ as } t\to O.\label{r-1r}\end{equation}
 \end{proposition}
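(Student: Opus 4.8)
The plan is to reduce the statement to a direct computation of the $x$-intercept of the tangent line, carried out in a local uniformizing parameter that resolves the multivaluedness of the Puiseaux graph. Write $r=m/n$ in lowest terms with $m>n\geq1$, and introduce the parameter $s$ by setting $x=s^n$. Since $(\Gamma,O)$ is an irreducible germ tangent to the $x$-axis with Puiseaux exponent $r$, it admits a genuine (single-valued) analytic parametrization
\begin{equation}
x(s)=s^n,\qquad y(s)=\sigma s^m\bigl(1+o(1)\bigr),\quad\text{as } s\to0,\ \sigma\neq0,\label{param}
\end{equation}
where the $o(1)$ term is an honest convergent power series in $s$ with no constant term. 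This is just the Puiseaux expansion \eqref{graph} rewritten in $s$, and it turns fractional powers of $x$ into integral powers of $s$.

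The next step is to compute the slope of the tangent line $T_t\Gamma$. Differentiating the convergent power series \eqref{param} term by term (which is legitimate for convergent series) gives $y'(s)=\sigma m s^{m-1}(1+o(1))$ and $x'(s)=ns^{n-1}$, so that
\begin{equation}
\frac{dy}{dx}=\frac{y'(s)}{x'(s)}=\frac{\sigma m}{n}\,s^{m-n}\bigl(1+o(1)\bigr)=\sigma r\,s^{m-n}\bigl(1+o(1)\bigr),\quad\text{as } s\to0.\label{slope}
\end{equation}
Here the point to check is that differentiating the higher-order part of \eqref{param} produces only terms of order strictly higher than the leading one, so it remains absorbed into the factor $1+o(1)$; this is where the argument needs the small amount of care, and it follows at once from the fact that the correction is a convergent power series in $s$ vanishing at $s=0$.

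With the slope in hand, the intercept $P_t$ is found by setting $Y=0$ in the equation $Y-y(t)=\frac{dy}{dx}\,(X-x(t))$ of the tangent line, which yields $x(P_t)=x(t)-y(t)\bigl/\tfrac{dy}{dx}$. Substituting \eqref{param} and \eqref{slope},
\begin{equation}
x(P_t)=s^n-\frac{\sigma s^m(1+o(1))}{\sigma r\,s^{m-n}(1+o(1))}=s^n-\frac{s^n}{r}\bigl(1+o(1)\bigr)=\frac{r-1}{r}\,s^n\bigl(1+o(1)\bigr),\label{final}
\end{equation}
and since $s^n=x(t)$ this is exactly \eqref{r-1r}. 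I expect no serious obstacle: the only nontrivial ingredient is the passage to the uniformizing parameter $s$ together with the justification of term-by-term differentiation in \eqref{slope}, and both are standard facts about convergent Puiseaux series. The hypothesis $r>1$ guarantees $m>n$, so the intercept genuinely sits between $O$ and $t$ to leading order, and the formula is non-degenerate.
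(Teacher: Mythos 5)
Your computation is correct in substance, and it is essentially the argument the paper intends: the paper states Proposition \ref{pxt} as an ``elementary fact'' with no written proof, and the same differentiation-of-Puiseaux-asymptotics is used silently elsewhere in the text (for instance in the proof of Proposition \ref{tang-asympt}, where the tangent line to $\{y=x^{r_b}(1+o(1))\}$ at the point with abscissa $u$ is taken to have slope $r_bu^{r_b-1}(1+o(1))$, which is exactly your formula \eqref{slope}). The passage to a single-valued parameter, the term-by-term differentiation, and the intercept computation $x(P_t)=x(t)-y(t)\bigl/\tfrac{dy}{dx}$ are all sound, and the hypothesis $r>1$ is used correctly to see that the leading coefficient $\tfrac{r-1}{r}$ is nonzero.

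One detail of your setup is, however, false as stated, although it is harmless and easily repaired. An irreducible germ with Puiseaux exponent $r=m/n$ in lowest terms need \emph{not} admit a parametrization with $x(s)=s^{n}$: the multiplicity of the germ can be a proper multiple of $n$. For example, $x=s^{4}$, $y=s^{6}+s^{7}$ is a primitive (hence irreducible) parametrization with Puiseaux exponent $r=3/2$, yet no single-valued analytic parametrization with $x=s^{2}$ exists, because the full Puiseaux series $y=x^{3/2}+x^{7/4}$ involves the denominator $4$. The correct statement is that there is a parametrization $x(s)=s^{p}$, $y(s)=\sigma s^{q}(1+o(1))$, where $p$ is the multiplicity of the germ and $q=rp\in\mathbb{N}$, so that $(p,q)$ is a positive integer multiple of $(n,m)$. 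Since your argument never uses coprimality of $m$ and $n$ --- only that $q/p=r$ and $q>p$ --- the proof goes through verbatim with $(p,q)$ in place of $(n,m)$: the slope is $\sigma r\,s^{q-p}(1+o(1))$ and the intercept is $s^{p}-\tfrac1r s^{p}(1+o(1))=\tfrac{r-1}{r}\,x(t)(1+o(1))$, as required.
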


\def\mcct{\mathcal C_t}

\begin{proof} {\bf of Proposition \ref{plimits} and its addendum.} The statement of  Proposition \ref{plimits} obviously follows from its addendum, 
since $y(t)=o(x(t))$ by assumption. Thus, it suffices to prove the addendum. 


Case 1): the affine chart is finite. Without loss of generality we consider that the isotropic line $L$ passes 
through the point $I_1$. Then formula (\ref{az-fin})  
 follows from Proposition \ref{tang-asympt1} with $\var=\az(T_t\Gamma)\simeq r\frac{y(t)}{x(t)}$.
 
 Case 2): the point $O$ is infinite. Let $P_t$, $Q_{t}^*$, $Q_{t}$ 
denote respectively the points of intersection of the (true) infinity line $\oc_{\infty}$  with the lines 
$T_t\Gamma$, $l_t^*$,  $l_t$. 
 
 Subcase 2a): $O\neq I_{1,2}$. Then $L=\oc_{\infty}$. Recall that $L$ is the $x$-axis, set $u_t=x(t)$. One has 
\begin{equation}x(Q_t)-u_t=o(u_t), \  x(P_t)=qu_t(1+o(1)), \text{ as } t\to O; \ q=\frac{r-1}r\neq1.\label{xptqu}\end{equation}
The former equality follows from the condition of Proposition \ref{plimits}: the line $l_t$ through $t$ and $Q_t$ 
has azimuth bounded away from below, since it does not accumulate to $L$. The latter equality follows from (\ref{r-1r}). 
The $x$-coordinates of the points $Q_t$, $P_t$, $Q_t^*$ form asymptotically an 
arithmetic progression: $x(P_t)-x(Q_t)=(x(Q_t^*)-x(P_t))(1+o(1))$. Indeed, the  points $P_t$ and $Q_t^*$ collide to 
$O\neq I_{1,2}$, as $t\to O$, and the symmetry with respect to $T_t\Gamma$ acts by a conformal involution $S_t:L\to L$ fixing $P_t$,  
permuting $Q_t$ and $Q_t^*$ and permuting distant isotropic points $I_{1,2}$  (reflection law).  The involutions $S_t$ obviously 
converge to the limit conformal involution $S_O$ fixing $O$ and permuting $I_{1,2}$. This implies the above statement on asymptotic 
arithmetic progression. The latter in its turn together 
with (\ref{xptqu}) implies that    $x(Q_t^*)=u_t(s+o(1))$,  $s=2q-1=\frac{r-2}r\neq1$, as $t\to O$. Finally, the line $l_t^*$ passes through 
the points $t=(x(t),y(t))$ and $Q_t^*=(x(t)(s+o(1)),0)$, hence $\az(l_t^*)=p\frac{y(t)}{x(t)}(1+o(1))$, $p=\frac1{1-s}=\frac r2$. 
This proves (\ref{az-inf}).

Subcase 2b): $O$ is an isotropic point at infinity, say $O=I_1$, and the line $L=T_O\Gamma$ is finite. We  work in 
the above coordinate $z$ 
on the infinity line $\oc_{\infty}$, in which $O$ is the origin. We choose coordinates $(x,y)$ so that $\oc_{\infty}$ is the $y$-axis and 
$y=z$ there. Here and in the next paragraph we identify the points $P_t$, $Q_t^*$, $Q_t$ with their 
$z$-cordinates. One has 
\begin{equation}Q_t^*=\frac{P_t^2}{Q_t},\label{pqt}\end{equation}
by Proposition \ref{tang-asympt1}. One has $P_t=O(|x(t)|^r)$, by definition and transversality of the lines $L$ and $\oc_{\infty}$;  
$x(t)=O(Q_t)$, since the line $l_t$ does not accumulate to 
$L=T_O\Gamma$. Hence, $Q_t^*=O(|x(t)|^{2r-1})=o(y(t))$, by (\ref{pqt}). This implies that the line $l_t^*$ through the points 
$t=(x(t),y(t))$ and $Q_t^*=(0,o(y(t)))$ has azimuth of order $\frac{y(t)}{x(t)}(1+o(1))$ and proves (\ref{az-inf}). 

Subcase 2c): $O=I_1$ and $L=\oc_{\infty}$. We choose the local coordinates $(x,y)$ centered at $O$ so that $L$ is the $x$-axis and 
$x=z$ there. One has $Q_t=x(t)(1+o(1))$, $P_t=\frac{r-1}r x(t)(1+o(1))$ as in Subcase 2a). Hence, $Q_t^*=qx(t)(1+o(1))$, 
$q=(\frac{r-1}r)^2$, 
by (\ref{pqt}). Therefore, the line $l_t^*$ through the points $t=(x(t),y(t))$ and $Q_t^*=(qx(t)(1+o(1)),0)$ has azimuth with asymptotics 
$p\frac{y(t)}{x(t)}(1+o(1))$, $p=\frac1{1-q}=\frac{r^2}{2r-1}$. This proves (\ref{az-inf}) and finishes the proof of Proposition 
\ref{plimits} and its addendum.
 \end{proof}

 We will apply Proposition \ref{plimits} to study limits of periodic orbits in complex billiards. To do this, we will use the following convention. 
  
 \begin{convention} \label{conv2} An irreducible analytic (algebraic) curve $a\subset\cp^2$ may have singularities: self-intersections or cusps. 
 We will denote by $\pi_a:\hat a\to a$ its analytic parametrization by an abstract connected Riemann surface  $\hat a$ that is 
 bijective except for self-intersections. It is usually called 
 {\it normalization}. In the case, when $a$ is algebraic, the Riemann surface $\ha$ is compact.  Sometimes 
we idendify a point (subset) in $a$ with its preimage in the normalization $\hat a$ and denote 
both subsets by the same symbol. 
In particular, given a subset in $\cp^2$, 
say a line $l$, we set $\hat a\cap l=\pi_a^{-1}(a\cap l)\subset\hat a$. 
 If $a,b\subset\cp^2$ are two  curves, and $A\in\hat a$, $B\in\hat b$, $\pi_a(A)\neq\pi_b(B)$, then for simplicity we write $A\neq B$ and 
 the line $\pi_a(A)\pi_b(B)$ will be referred to, as $AB$. 
\end{convention}

\begin{definition} \label{defbranch} Let $a$ be an irreducible analytic curve in $\cp^2$, and let $\ha$ be its normalization, 
see the above convention. For every $A\in\ha$ the {\it  local branch $a_A$ of the curve $a$ at 
$A$ } is the irreducible germ of analytic curve given by the germ of 
normalization projection $\pi_a:(\ha,A)\to (a,\pi_a(A))$. 
For every line $l\subset\cp^2$ through $\pi_a(A)$ 
the {\it intersection index at $A\in\ha$} of the curve $\ha$ and the line $l$ is  the intersection index 
of the line $l$ with the local branch $a_A$. The tangent line $T_{\pi_a(A)}a_A$ will be referred to, as $T_Aa$. 
\end{definition}

\begin{definition}
 Let $a_1,\dots,a_k\subset\cp^2$ be an analytic (algebraic) billiard, $P_k\subset\hat a_1\times\dots\times\hat a_k$ be the set of 
 $k$-gons corresponding to its periodic orbits. Consider the closure $\overline{P_k}$ in the usual topology. 
We set 
$$U_0=\Int(P_k), \ U=\overline{U_0}\subset\overline{P_k}.$$
The set $U$  will be called the {\bf $k$-reflective set}.
 \end{definition}
 
 
 \begin{proposition} \label{comp-set}
 The   sets $\overline{P_k}$ and $U$ are analytic (algebraic), and $U$  is the union of the two-dimensional irreducible components of the 
 set $\overline{P_k}$.  The billiard is $k$-reflective, if and only  $U$ is non-empty.   In this case 
for every $j$ the projection $U\to\ha_j\times\ha_{j+1}$ is a submersion on an open dense subset in $U$. 
In the $k$--reflective algebraic case the latter 
projection  is epimorphic and the subset $U_0\subset U$ is Zariski open and dense.  For every  
 $A_1\dots A_k\in\overline{P_k}$ and  every $j$ such that $A_{j\pm1}\neq A_j$ the {\bf complex reflection law} holds: 
 
 - if the tangent line $l_j=T_{A_j}a_j$ is not isotropic, then the lines $A_{j-1}A_j$ and $A_jA_{j+1}$ are symmetric with respect to 
 $l_j$;
 
 - if $l_j$ is isotropic, then either $A_{j-1}A_j$, or $A_jA_{j+1}$ coincides with $l_j$.
\end{proposition}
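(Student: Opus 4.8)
The plan is to reduce the whole statement to a single explicit algebraic equation per vertex encoding the reflection law, and then to read off every assertion from elementary dimension theory of the resulting algebraic set, using Subsection 2.1 to control the isotropic limits.

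First I would record the reflection law in coordinates. Fix the affine chart on $\oci$ from Convention \ref{convaz} in which the isotropic points are $0$ and $\infty$, and identify all pencils $\mathcal M_w$ with $\oci$ via the azimuth $\az$. By Proposition \ref{tang-asympt1} the symmetry with respect to a non-isotropic line of azimuth $\var$ acts on directions by $z\mapsto\var^2/z$; hence a triple $A_{j-1},A_j,A_{j+1}$ with $A_{j\pm1}\neq A_j$ and non-isotropic $l_j=T_{A_j}a_j$ obeys the reflection law at $A_j$ exactly when
\[
\az(A_{j-1}A_j)\cdot\az(A_jA_{j+1})=\az(l_j)^2 .
\]
Every azimuth here is a rational function of the coordinates of the points, and $A_j\mapsto\az(T_{A_j}a_j)$ is holomorphic (algebraic) on the normalization $\hat a_j$ through the Gauss map; so for each $j$ this is one algebraic equation on $\hat a_1\times\dots\times\hat a_k$. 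Intersecting the $k$ equations over the locus where the vertices are pairwise distinct and the tangent lines non-isotropic yields a locally closed algebraic set whose Euclidean closure equals its Zariski closure; that closure is $\overline{P_k}$, hence algebraic. The analytic case is identical with analytic sets in place of algebraic ones.

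For the complex reflection law at an arbitrary point of $\overline{P_k}$ (the last clause) I would pass to the limit in the displayed identity, which persists on the whole closure by continuity. If $l_j$ is non-isotropic the identity is literally the symmetry condition; if $l_j$ is isotropic then $\az(l_j)\in\{0,\infty\}$, so the product $\az(A_{j-1}A_j)\,\az(A_jA_{j+1})$ is $0$ or $\infty$, forcing one of the two edges to have the azimuth of $l_j$, i.e. to coincide with $l_j$. This is precisely Lemma \ref{lim-refl}, whose geometric justification is Proposition \ref{plimits}, which I invoke. This is the delicate point of the proof and the reason Subsection 2.1 is needed: the symmetry has no naive limit at an isotropic tangency, and one must use the precise asymptotics to decide which edge collapses onto $l_j$; everything else is bookkeeping.

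Next I control dimensions through the projection $p_{12}\colon\overline{P_k}\to\hat a_1\times\hat a_2$, $(A_1,\dots,A_k)\mapsto(A_1,A_2)$. On the dense non-degenerate locus the orbit is produced from $(A_1,A_2)$ by successive reflections, and at each step $A_{j+1}$ lies among the finitely many intersection points of the reflected line with $a_{j+1}$; hence $p_{12}$ has finite fibers there. Consequently every irreducible component of $\overline{P_k}$ carrying an open family of genuine orbits is $2$-dimensional: it cannot exceed dimension $2$ by finiteness of fibers over the surface $\hat a_1\times\hat a_2$, and it is at least $2$-dimensional because it contains an open $2$-parameter family. These are exactly the components meeting $U_0=\Int(P_k)$, for a smooth such point at which all non-degeneracy conditions hold (an open condition, while the reflection identity holds automatically on $\overline{P_k}$) has a whole neighbourhood in $\overline{P_k}$ consisting of genuine orbits, hence lies in $U_0$; lower-dimensional components and the degenerate loci contribute no such points. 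Thus $U=\overline{U_0}$ is the union of the $2$-dimensional irreducible components of $\overline{P_k}$, and $U\neq\emptyset$ precisely when such a component exists, i.e. precisely when an open family of $k$-periodic orbits exists --- which is the definition of $k$-reflectivity. Finally, for each $j$ the same finite-fiber argument applied to $(A_j,A_{j+1})$ shows that $U\to\hat a_j\times\hat a_{j+1}$ is a dominant morphism of surfaces (dominance uses the symmetric role of neighbouring mirrors noted after Definition \ref{defref}); in characteristic zero a dominant morphism of smooth surfaces is \'etale, hence a submersion, on a Zariski-open dense subset, giving the submersion claim. In the algebraic case $\hat a_j\times\hat a_{j+1}$ is projective and irreducible, so the closed image of the projective variety $U$ fills it: the projection is epimorphic; and $U_0$, being the complement in $U$ of the proper algebraic subset cut out by the degeneracy equations (coinciding vertices, isotropic tangencies, edges tangent to mirrors), is Zariski open and dense in $U$.
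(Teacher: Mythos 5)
Your overall architecture coincides with the paper's proof: the paper's set $\Pi_k$ is cut out locally by exactly your azimuth-product identity (written there in cross-multiplied polynomial form, in its displayed equation), $\overline{P_k}$ is identified with the union of the irreducible components of $\Pi_k$ meeting $P_k$, $U$ with the union of the two-dimensional components, and epimorphicity in the algebraic case is likewise deduced from compactness, with the same finite-fiber bookkeeping over $\hat a_1\times\hat a_2$. The genuine problem lies in your derivation of the isotropic reflection law, which is the only non-trivial clause of the proposition.

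Your argument is: pass to the limit in $\operatorname{az}(A_{j-1}A_j)\cdot\operatorname{az}(A_jA_{j+1})=(\operatorname{az}(l_j))^2$; since $\operatorname{az}(l_j)\in\{0,\infty\}$ for isotropic $l_j$, one edge must have the azimuth of $l_j$, hence coincide with it. This works only when the vertex $A_j$ is finite. When the tangency point lies on the infinity line --- and such configurations occur and are used repeatedly later in the paper (mirrors tangent to $\overline{\mathbb C}_{\infty}$, mirrors through the isotropic points $I_1,I_2$) --- the argument is vacuous. First, if $l_j=\overline{\mathbb C}_{\infty}$ then $\operatorname{az}(l_j)$ is not $0$ or $\infty$; it is not even defined, and for the approximating orbits $\operatorname{az}(T_{A_j^n}a_j)$ converges to $z(A_j)\notin\{0,\infty\}$ when $A_j$ is a non-isotropic infinite point. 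Second, every line through a fixed infinite point $A_j$ meets $\overline{\mathbb C}_{\infty}$ at $A_j$ itself, so in the limit both edge azimuths automatically equal $z(A_j)$ (equal to $0$ or $\infty$, but trivially so, when $A_j\in\{I_1,I_2\}$); the identity then holds identically and forces nothing, and the step ``same azimuth plus common point $A_j$ implies coincidence of lines'' collapses, because the common point is the azimuth point itself. Moreover, the statement you reduce everything to, Lemma \ref{lim-refl}, is explicitly false for $L=\overline{\mathbb C}_{\infty}$: the remark following that lemma gives the counterexample of a line reflecting to itself in mirrors orthogonal to it and converging to $\overline{\mathbb C}_{\infty}$. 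This is exactly why the paper does not argue by continuity of the identity, but instead applies Proposition \ref{plimits} to the family of reflecting lines $T_{A_j^n}a_j$: because these are tangent lines of a single branch of $a_j$, the reflected images of lines not accumulating to $l_j$ do converge to $l_j$ for every position of the tangency point, finite or infinite --- this is what the Addendum's cases 2a)--2c) and the Puiseaux asymptotics behind them are for. So your citation of Proposition \ref{plimits} is the right instinct, but it must replace the azimuth computation rather than ``justify'' it; as written, your proof of the isotropic clause covers only finite tangency points, and the infinite ones are where all the work lies.
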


\begin{proof}
The set $P_k$ is the difference  $\Pi_k\setminus D_k$ of the two following analytic sets.  The set $\Pi_k$ is  locally defined by a system of $k$ 
analytic equations on vertices $A_j$ and $A_{j\pm1}$, $j=1,\dots,k$, saying that either the lines $A_jA_{j-1}$ and $A_jA_{j+1}$ are symmetric 
with respect to the tangent line $T_{A_j}a_j$, or the line $T_{A_j}a_j$ is isotropic, or some of the vertices $A_{j\pm1}$ coincides with $A_j$. 
For example, if a $k$-gon $A_1'\dots A_k'\in\Pi_k$ has finite vertices $A_j'$, $A_{j\pm1}'$ for some $j$, then the corresponding $j$-th equation 
defining $\Pi_k$ in its neighborhood can be written as 
$$(y(A_{j+1})-y(A_j))(y(A_{j-1})-y(A_j))=(\operatorname{az}(T_{A_j}a_j))^2(x(A_{j+1})-x(A_j))(x(A_{j-1})-x(A_j)).$$
The set $D_k$ consists of the $k$-gons having either some of the latter degeneracies (isotropic tangency or neighbor vertex collision), or 
 a vertex $A_j$ such that  $A_jA_{j+1}=T_{A_j}a_j$. 
The set $\overline{P_k}$ is the union of those irreducible components of 
the set $\Pi_k$ that intersect $P_k$. Hence, it is analytic (algebraic). Similarly, the set $U$ is analytic (algebraic), and it is the union of 
two-dimensional irreducible component of the set $\overline{P_k}$. The $k$-reflectivity criterion  
and submersivity  follow from definition; the epimorphicity  in the algebraic case follows from compactness. The (Zariski) openness and 
density of the subset $U_0\subset U$ is obvious. 
The reflection law follows from definition and 
Proposition \ref{plimits}. In more detail, let $A_1^n\dots A_k^n\to A_1\dots A_k$ be a sequence of $k$-periodic orbits converging in 
$\overline{P_k}$. Let for a certain $j$ one have $A_{j\pm1}\neq A_j$ and the tangent line $l_j=T_{A_j}a_j$ be isotropic. 
If $A_{j-1}A_j=l_j$, then we are done. Otherwise $A_jA_{j+1}=l_j$, since the image  $A_j^nA_{j+1}^n$ 
of the line $A_{j-1}^nA_j^n$ under the symmetry with respect to $T_{A_j^n}a_j$ tends to $l_j$, by Proposition \ref{plimits}. 
This proves Proposition \ref{comp-set}.
\end{proof}

\subsection{Tangencies in $k$-reflective billiards}

  Here we deal with (germs of) analytic $k$-reflective planar billiards $a_1,\dots,a_k$ in $\cp^2$: the 
mirrors are (germs of) analytic curves with normalizations $\pi_{a_j}:\hat a_j\to a_j$, see Convention \ref{conv2};  
$\hat a_j$ are neighborhoods of zero in $\cc$, $j=1,\dots,k$. Let $U\subset\hat a_1\times\dots\times\hat a_k$ be the 
$k$-reflective set, see Proposition \ref{comp-set}, $U_0=\Int(P_k)\subset U$. 
The main results of the subsection concern degenerate $k$-gons 
$A_1\dots A_k\subset U\setminus U_0$ such that for a certain $j$  the mirror $a_j$ is not a line, $A_{j\pm1}A_j=T_{A_j}a_j$ and the 
latter line is not isotropic. Propositions \ref{tang} and \ref{neighbmir} show that they cannot have  types as at Fig. 5 and 6a). 
We deduce the following corollaries for $k=4$: Corollary \ref{lnoncoinc} saying that every 4-reflective algebraic billiard has no pair of coinciding neighbor mirrors; 
Corollary  \ref{connect} describing the degeneracy at the vertex opposite to tangency; Corollary \ref{a=c} giving a mild sufficient 
condition for the coincidence of opposite mirrors.
%

%
\begin{definition} A point of  a planar analytic curve is {\it marked}, if it is either a cusp, or an isotropic tangency point.  
Given a parametrized curve $\pi_a:\hat a\to a$ as above, a point $A\in\hat a$ is marked, if it corresponds to a marked point 
of the local branch $a_{A}$, see Definition \ref{defbranch}. 
\end{definition}
\begin{proposition} \label{tang} Let $a_1,\dots,a_k$ and $U$ be as above. 
Then $U$ contains no $k$-gon $A_1\dots A_k$ with the following properties:

- each pair of neighbor vertices correspond to distinct points, and no vertex is a marked point;

- there exists a unique $s\in\{1,\dots,k\}$ such that the line $A_sA_{s+1}$ is tangent to the curve $a_s$ at $A_s$, 
and the latter curve is not a line, see Fig.5. 
\end{proposition}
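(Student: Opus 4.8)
The plan is to argue by contradiction, assuming that $U$ contains a $k$-gon $A_1\dots A_k$ of the stated kind. The first step is to force collinearity at the tangent vertex. Since $A_s$ is not a marked point, the tangent line $L:=T_{A_s}a_s$ is non-isotropic, so by the complex reflection law (Proposition \ref{comp-set}) the lines $A_{s-1}A_s$ and $A_sA_{s+1}$ are symmetric with respect to $L$. By hypothesis $A_sA_{s+1}=L$, and the symmetry with respect to $L$ fixes $L$; being a nontrivial involution, it then forces $A_{s-1}A_s=L$ as well. Hence both edges at $A_s$ lie on $L$, and the (pairwise distinct) vertices $A_{s-1},A_s,A_{s+1}$ are collinear on $L$, with $A_s$ a tangency point of $a_s$ with $L$ of some integer order $r\ge 2$ (integer and $\ge 2$ because $a_s$ is smooth at $A_s$ and is not a line).

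The second step is to pass to a deforming family and record its asymptotics. Because $P:=A_1\dots A_k\in U=\overline{U_0}$, there is a sequence of genuine periodic orbits $A_1^n\dots A_k^n\in U_0$ converging to $P$; for these all reflection laws are strict and no edge is tangent to its mirror. In the algebraic case I would moreover exploit that the projection $U\to\hat a_s\times\hat a_{s+1}$ is epimorphic (Proposition \ref{comp-set}): the tangency condition $A_{s+1}\in T_{A_s}a_s$ cuts out a curve in $\hat a_s\times\hat a_{s+1}$ whose lift is a one-parameter family $\mct\subset U$ of orbits all sharing the tangency at $s$. Since possessing a second degeneracy is a closed analytic condition, either it holds identically on $\mct$ (then it holds at $P$, contradicting the hypotheses) or a generic member of $\mct$ is itself a forbidden configuration; so it suffices to contradict such a family. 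Working in an affine chart with $L=\{y=0\}$ and $A_s$ at the origin, $a_s$ is the graph $y=\sigma x^r(1+o(1))$ with $\sigma\ne 0$; writing $u=x(A_s)$, Proposition \ref{pxt} gives that $T_{A_s}a_s$ meets $L$ at $\tfrac{r-1}{r}u(1+o(1))$, i.e. has slope $\theta=r\sigma u^{r-1}(1+o(1))$. Linearizing the symmetry with respect to $T_{A_s}a_s$ on directions near that of $L$ (Proposition \ref{tang-asympt1}, in the azimuth coordinate of Convention \ref{convaz}, the relevant direction being non-isotropic) yields the leading reflection relation $\alpha+\beta=2\theta(1+o(1))$ between the slopes $\alpha,\beta\to 0$ of the incoming and outgoing edges; as $r\ge 2$ this is a relation of exact order $u^{r-1}$, pinning the rate at which $A_{s\pm1}$ approach $L$. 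When $A_s$ or a neighbor is infinite, or a neighboring tangent line is isotropic, the corresponding asymptotics are supplied instead by the Addendum to Proposition \ref{plimits}.

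The third step — and the main obstacle — is to close the argument by propagating the reflection law through the remaining vertices. By the uniqueness of $s$ and the absence of marked vertices, at every other vertex the tangent line is non-isotropic, neighbors are distinct, and no edge is tangent to its mirror; hence the orbit leaves the anomalous arc $A_{s-1}A_sA_{s+1}$ and returns to it through a cycle of transverse reflections whose limiting directions stay bounded away from that of $L$. The plan is to track the azimuth data all the way around this cycle and to combine it with the collinearity of the first step and the order-$u^{r-1}$ constraint of the second: the reflection at $A_s$ degenerates to the identity on $L$ in the limit, which I expect makes a two-parameter family of genuine orbits unable to accumulate at $P$ (equivalently, makes the family $\mct$ unsustainable) without forcing a second independent degeneration — a neighbor-vertex collision, a marked vertex, or a further edge-tangency — contradicting the hypotheses. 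The hardest part is converting this structural expectation into a clean contradiction: it requires carrying the local Puiseux asymptotics of Propositions \ref{pxt} and \ref{plimits} (and its Addendum) through the whole $k$-cycle and verifying the incompatibility case by case according to the position of each relevant vertex (finite or infinite) and the isotropy type of each neighboring tangent line.
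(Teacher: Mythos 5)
Your first two steps are sound: the collinearity $A_{s-1}A_s=A_sA_{s+1}=T_{A_s}a_s$ follows from Proposition \ref{comp-set} exactly as you argue, and your reduction to a one-parameter family $\mathcal T\subset U$ along which the tangency may be assumed quadratic and all other degeneracies absent is the same normalization the paper makes at the start of its proof. The gap is your Step 3, which is where the entire content of the proposition lies and which you leave as a declared expectation rather than an argument. Worse, the route you propose --- carrying leading-order azimuth/Puiseux asymptotics around the $k$-cycle --- would not close even if executed in full. At a quadratic tangency, a line $l$ close to $L=T_{A_s}a_s$ meets $a_s$ near $A_s$ at \emph{two} points, so the tangential reflection sends $l$ to two lines; both images tend to $L$ and both satisfy your leading relation $\alpha+\beta=2\theta(1+o(1))$, differing only by a term of square-root order in the deviation of $l$ from the pencil of lines tangent to $a_s$. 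Leading-term bookkeeping therefore cannot distinguish the two branches, and no contradiction can come out of it: the obstruction is a branching (monodromy) phenomenon, not an asymptotic one. Note also that no case-by-case analysis at the other vertices (finite versus infinite, isotropy types) is needed at all: by hypothesis they are unmarked, collision-free and non-tangential, which is exactly what makes the reflections there harmless.

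The missing idea is the paper's duality argument. Rename cyclically so that $s=k$ and pass to the space $\cp^{2*}$ of projective lines. For $j\neq k$ the reflection at $A_j$ induces a germ of \emph{biholomorphism} $\psi_j:(\cp^{2*},A_jA_{j-1})\to(\cp^{2*},A_{j+1}A_j)$, precisely because $A_jA_{j-1}$ is transverse to $T_{A_j}a_j$ and $A_j$ is unmarked. At $j=k$ the induced germ $\psi_k$ is genuinely \emph{two-valued}, branched along the family of lines tangent to $a_k$: the image of a line near $L$ depends on the choice of its intersection point with $a_k$ near $A_k$, and that choice is a two-valued function of the line (here one uses that $a_k$ is not a line, so the two choices really give different images). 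Since $A_1\dots A_k\in U=\overline{U_0}$, a branch of the composition $\psi_k\circ\dots\circ\psi_1$ following the open set of genuine $k$-periodic orbits must be the identity on an open set of lines accumulating to $A_kA_1$; but the composition of the biholomorphic germ $\psi_{k-1}\circ\dots\circ\psi_1$ with the two-valued germ $\psi_k$ is itself genuinely two-valued and so cannot be the identity. This single monodromy observation replaces your entire Step 3, and it is the complex counterpart of the fold-singularity argument behind the real version in \cite{gk2}.
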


\begin{remark} A  real version of Proposition \ref{tang} is contained in \cite{gk2} 
(lemma 56, p.315 for $k=4$, and its  generalization (lemma 67, p.322) for higher $k$). 
\end{remark}

\begin{figure}[ht]
  \begin{center}
   \epsfig{file=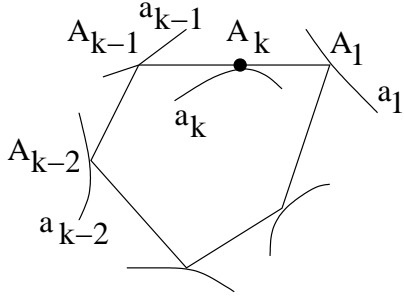}
    \caption{Impossible degeneracy of simple tangency: $s=k$.}
  \end{center}
\end{figure}

\begin{proof} 
Suppose the contrary: there exists a $k$-gon 
$A_1\dots A_k\in U$ as above. Without loss of generality we consider that $s=k$. Moreover, without loss of generality we 
can and will assume that the above tangency is quadratic: the quadrilaterals with a tangency  vertex 
$A_s\in\ha_s$ form a holomorphic curve in $U$ with variable $A_s$. For every $j=1,\dots,k$ the reflection 
with respect to the local branch 
of the curve $a_j$ at $A_j$ induces a  mapping  in the space $\cp^{2*}$ of projective lines. 
More precisely, for every $j\neq k$ it induces a germ of biholomorphic mapping $\psi_j:(\cp^{2*},A_jA_{j-1})\to (\cp^{2*},A_{j+1}A_{j})$, 
since the line $A_jA_{j-1}$ is transverse to $T_{A_j}a_j$ for these $j$. On the other hand, the germ $\psi_k$ is double-valued, with branching locus being the family of lines tangent to $a_k$. Indeed, the image of a line 
close to $A_kA_{k-1}=T_{A_k}a_k$ under the reflection from the curve $a_k$ at their intersection point depends on choice 
of the intersection point. The latter intersection point is a double-valued function with the above branching locus. 
The product  $\psi_k\circ\dots\circ\psi_1$ 
should be identity on an open set accumulating to the line $A_kA_1$, since $A_1\dots A_k$ is a limit of an open set 
of $k$-periodic orbits. But this is impossible, since the product of a biholomorphic germ $\psi_{k-1}\circ\dots\circ\psi_1$ and a 
double-valued germ $\psi_k$ cannot be identity. The proposition is proved.
\end{proof}

 \begin{proposition} \label{neighbmir} Let $a_1,\dots,a_k$ and $U$ be as at the beginning of the subsection. Then 
 $U$ contains no $k$-gon $A_1\dots A_k$ with the following properties:
 
1) each its vertex is not a marked point of the corresponding mirror;

2) there exist $s,r\in\{1,\dots,k\}$, $s<r$ such that 
$a=a_s=a_{s+1}=\dots=a_r$, $A_s=A_{s+1}=\dots=A_r$, and $a$ is not a line;

3)   For every $j\notin \mathcal R=\{ s,\dots,r\}$ one has 
$A_j\neq A_{j\pm1}$ and the line $A_{j-1}A_j$ is not tangent to $a_j$ at $A_j$, see Fig.6a.
\end{proposition}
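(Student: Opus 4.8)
The plan is to argue by contradiction, following the strategy of Proposition \ref{tang}: to write the total ``go-around'' reflection correspondence on the dual plane $\cp^{2*}$ as a composition of biholomorphic germs with one genuinely multivalued germ coming from the block $\mcr=\{s,\dots,r\}$, and to observe that such a composition cannot be the identity, whereas it must be the identity because $A_1\dots A_k$ is a limit of an open set of honest periodic orbits.

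First I would pin down the geometry at the collision. By hypothesis (1) the common point $A_*:=A_s=\dots=A_r$ is not marked, so the branch of $a=a_s=\dots=a_r$ at $A_*$ is smooth with non-isotropic tangent line $T:=T_{A_*}a$, and $a$ is not a line. Take honest orbits $A_1^n\dots A_k^n\in U_0$ converging to $A_1\dots A_k$. For each $j\in\{s,\dots,r-1\}$ the interior block edge $A_j^nA_{j+1}^n$ is a secant of the smooth curve $a$ joining two points tending to $A_*$, hence it tends to $T$. Applying the reflection law (Proposition \ref{comp-set}) at $A_s^n$ and passing to the limit — the tangent lines $T_{A_s^n}a$ tend to $T$, the outgoing lines $A_s^nA_{s+1}^n$ tend to $T$, and the associated symmetries tend to $\sigma_T$ — I obtain $\sigma_T(A_{s-1}A_s)=T$, whence $A_{s-1}A_s=T$; symmetrically $A_rA_{r+1}=T$. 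Thus every edge issuing from the block, interior or exterior, equals $T$, so the line entering each block vertex is the non-isotropic tangent $T$, which lies on the branch locus of the reflection correspondence off $a$.

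Next I would set up the correspondences. For $j\notin\mcr$ hypotheses (1) and (3) give that the base line $A_{j-1}A_j$ is transverse to the non-isotropic tangent $T_{A_j}a_j$, so the reflection induces a biholomorphic germ $\psi_j:(\cp^{2*},A_{j-1}A_j)\to(\cp^{2*},A_jA_{j+1})$. Writing the whole go-around as $B_2\circ\Psi\circ B_1$, where $B_1,B_2$ are biholomorphic (the composites of the $\psi_j$, $j\notin\mcr$) and $\Psi:=\psi_r\circ\dots\circ\psi_s$ is the block correspondence ``reflect $m=r-s+1\ge2$ times off $a$ near $A_*$,'' periodicity of the $A^n$ forces $B_2\circ\Psi\circ B_1=\mathrm{Id}$ on an open set of lines accumulating to $T$, and hence $\Psi=B_2^{-1}\circ B_1^{-1}$ must be single-valued. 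The contradiction will come from showing $\Psi$ is genuinely multivalued: its base line $T$ is tangent to $a$ at $A_*$, so reflecting a nearby line off $a$ requires a square-root choice of reflection point among the intersection points of that line with $a$ that merge at $A_*$ as the line tends to $T$; this ramification over $T$ makes $\Psi$ a branched (at least $2$-valued) correspondence, which cannot equal a biholomorphic germ.

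The main obstacle is exactly this genuine multivaluedness of $\Psi$, and here the collision is subtler than the single tangency of Proposition \ref{tang}. Indeed, if one naively lets all block reflection points coincide at $A_*$, each reflection degenerates to $\sigma_T$ and the block degenerates to the \emph{single-valued} map $\sigma_T^{\,m}$; the ramification is a finite-distance effect that survives only because the block vertices, though colliding, stay distinct along the approximating orbits. To make this rigorous I would work in a chart centered at $A_*$ with $T$ the $x$-axis and use the tangent-line and azimuth asymptotics of Propositions \ref{pxt} and \ref{tang-asympt1} (as in Proposition \ref{plimits}) to follow the two branches of the first reflection point as the incoming line tends to $T$, and to check that the square-root branching over $T$ is not annihilated by the remaining block reflections, i.e. that the two branches of $\Psi$ yield distinct outgoing lines. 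Granting this, $\Psi$ is ramified over its base line, the go-around is a nontrivial branched correspondence rather than the identity, and the assumed degenerate $k$-gon cannot exist.
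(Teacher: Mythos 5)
Your overall strategy coincides with the paper's: reduce to the identity of a composition of biholomorphic germs with the block correspondence $\Psi$ on the dual plane, establish $A_{s-1}A_s=T_{A_s}a$ as the limit of the reflection law (the paper uses this tangency implicitly), and derive a contradiction from multivaluedness of $\Psi$. But there is a genuine gap exactly at the heart of the matter: you never prove that the two branches of $\Psi$ produce \emph{distinct} output lines; you announce that you would ``check that the square-root branching over $T$ is not annihilated by the remaining block reflections'' and then proceed ``granting this.'' As you yourself observe, to leading order each block reflection degenerates to the single symmetry $\sigma_T$, so the branching is invisible at the level of the asymptotics of Propositions \ref{pxt} and \ref{tang-asympt1}; ruling out cancellation of the two branches through all $m=r-s+1$ reflections is precisely the difficulty, and it is not clear an azimuth-asymptotics computation ``as in Proposition \ref{plimits}'' settles it. The paper resolves this point by a different, essentially dynamical device: choosing $B_s$ to be one or the other intersection point of $l$ with the local branch amounts to running the billiard on that single branch forwards or backwards; extending the block orbit $B_{s-1}\dots B_{k+1}$ backwards to $B_{2s-k-2}\dots B_{k+1}$, the output of the second branch is identified as the edge $B_{2s-k-1}B_{2s-k-2}$, so double-valuedness reduces to the claim that the $m$-th forward edge and the $m$-th backward edge of an orbit on one curve differ for an open set of initial lines, which is then proved ``analogously to arguments from'' \cite{vas}. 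Without this identification (or some substitute for it), your contradiction does not materialize.

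A second, smaller omission: your setup presupposes that $\{1,\dots,k\}\setminus\mathcal{R}$ is non-empty, since you need an entering edge to conclude $A_{s-1}A_s=T$ and to form the outer biholomorphic factors $B_1,B_2$. The proposition also covers $s=1$, $r=k$, i.e.\ the whole $k$-gon collapsing to a single non-marked point of a single mirror; this case is genuinely used later (Case 3 in the proof of Corollary \ref{lnoncoinc}) and the paper treats it separately, as a straightforward complexification of the arguments of \cite{vas}. Your proof as written does not apply to it.
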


\begin{figure}[ht]
  \begin{center}
   \epsfig{file=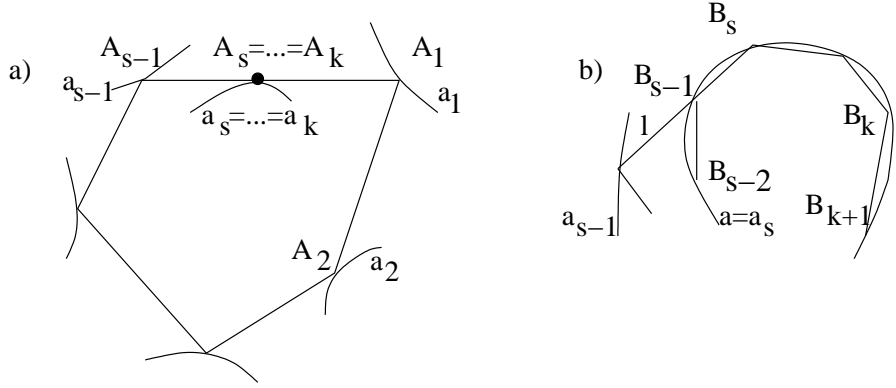}
    \caption{Coincidence of subsequent  vertices and mirrors: $r=k$.}
  \end{center}
\end{figure}

\begin{proof} The proof of Proposition \ref{neighbmir} repeats the above proof with some modifications. 
For simplicity we give the proof  only in the case, when the complement $\{1,\dots,k\}\setminus\mathcal R$ is non-empty. 
In the opposite case the proof is analogous  and is a straightforward complexification of the arguments from \cite{vas}. 
Without loss of generality we consider that $r=k$, then $s\geq2$, and we will assume that the mirror $a=a_s$ has quadratic tangency 
with $T_{A_s}a_s$, as in the above proof.   Consider the germs $\psi_j$ from the above proof. Set 
$$\phi=\psi_{s-1}\dots\psi_1, \ \psi=\psi_k\dots\psi_s.$$
A holomorphic branch of the product $\psi\circ\phi$ following an open set of  $k$-periodic orbits accumulating 
to $\pi_{a_1}(A_1)\dots \pi_{a_k}(A_k)$ should be identity,  as in the above proof. The germ $\phi$ is biholomorphic. 
We claim that the  germ $\psi$ is double-valued on a small neighborhood of the line $A_{s-1}A_s$. 
Indeed, a line $l$ close to $A_{s-1}A_s=T_{A_s}a_s$ and distinct from the latter intersects the local branch $a_{A_s}$ 
at two distinct  points  close to 
 $\pi_{a}(A_s)$. Let us choose one of them and denote it $B_s\in l\cap a_{A_s}$, and denote $B_{s-1}$ the other intersection point. 
 Then the ordered pair $(B_{s-1},B_s)$ extends to a unique orbit $B_{s-1}\dots B_{k+1}$ of length $k+3-s$ 
 of the billiard on the  local branch $a_{A_s}$, see Fig.6b. The line $l^*=B_kB_{k+1}$ is the image of the line $l$ under 
 a branch of the mapping $\psi$. 
 For different choices of the intersection point $B_s$ we get different  lines $l^*$.  Indeed let us fix the above $B_s$ and complete the 
 above orbit to the orbit $B_{2s-k-2}\dots B_{k+1}$ on $a_{A_s}$ of length $2k+4-2s$. Then the output line $l^*$ corresponding to 
 the other intersection point $B_{s-1}$ is the line $B_{2s-k-1}B_{2s-k-2}$, by definition. It is distinct from the line $B_kB_{k+1}$ for an 
 open set of lines $l$, analogously to arguments from  \cite{vas}. This together with the double-valuedness of the intersection point 
 $l\cap a_{A_s}$  proves the double-valuedness of the above germ $\psi$. Each $k$-periodic 
 orbit $B_1\dots B_k$ corresponding to a point in $U_0$ close to $A_1\dots A_k$ 
 contains a sub-orbit $B_s\dots B_k$ on $a_{A_s}$ as above. This implies that the product of the above biholomorphic 
 germ $\phi$ and  double-valued germ $\psi$ should be identity, -- a contradiction. This proves the proposition.
 \end{proof}

\begin{corollary} \label{lnoncoinc} 
 There are no 4-reflective planar algebraic billiards with  a pair of coinciding  
neighbor mirrors.
\end{corollary}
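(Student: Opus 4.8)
The plan is to argue by contradiction, forcing a configuration of the type excluded by Proposition \ref{neighbmir}. Suppose some 4-reflective algebraic billiard has a pair of coinciding neighbor mirrors; after a cyclic relabeling assume $b=c$, and let $U\subset\ha\times\hb\times\hc\times\hd$ be its 4-reflective set, which is two-dimensional by Proposition \ref{comp-set}. Since $c=b$ we identify $\hc=\hb$ and write orbits as $ABCD$ with $B,C\in\hb$. First I would dispose of the case when $b=c$ is a (non-isotropic) line $L$. Every genuine orbit in $U_0$ has $B\neq C$ by Definition \ref{deforb}; as $\pi_b(B),\pi_b(C)\in L$ are then distinct, the edge $BC$ equals $L$, and the reflection law at $B$ gives $AB=\sigma_L(BC)=\sigma_L(L)=L=T_Bb$, contradicting the requirement of Definition \ref{deforb} that the edges at $B$ be distinct from $T_Bb$. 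Hence a line cannot be a doubled mirror, and from now on $b=c$ is not a line.

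The heart of the matter is to produce a configuration in $U$ with $B=C$, which I would do by a fiber argument. Fix a generic $B_0\in\hb$ and let $F\subset U$ be the fiber over $B_0$ of the projection $U\to\hb$, $ABCD\mapsto B$; by Proposition \ref{comp-set} this projection is dominant with one-dimensional fibers, so $F$ is a compact curve. On $F$ the vertex $A$ is nonconstant, since a fixed pair $(A,B_0)$ extends to only finitely many orbits. Consider the holomorphic map $F\to\hb$, $ABCD\mapsto C$. It is either surjective or constant. If surjective, it attains the value $B_0$. If constant, say $C\equiv C_0$ with $C_0\neq B_0$, then the fixed line $\pi_b(B_0)\pi_b(C_0)=\sigma_{T_{B_0}b}(AB_0)$ forces the line $AB_0$ to be fixed, whence $A$ ranges over the finite set $a\cap(AB_0)$, contradicting the nonconstancy of $A$; thus $C_0=B_0$. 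In either case $F$, and hence $U$, contains an orbit with $B=C=B_0$.

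It remains to check that a generic point of the nonempty, hence at least one-dimensional, analytic set $\{B=C\}\cap U$ is a quadrilateral of the type excluded by Proposition \ref{neighbmir} with $s=2$, $r=3$. Any orbit $ABCD\in U=\overline{U_0}$ with $B=C$ and non-isotropic $L:=T_Bb$ collapses onto $L$: as the two neighbor vertices on $b$ coincide the secant $BC$ tends to $L$, so the reflection laws at $B$ and at $C=B$ force $AB=CD=L$, and therefore $A,D\in L$ and $DA=L$. The reflection laws at $A$ and $D$ then read $\sigma_{T_Aa}(L)=L$ and $\sigma_{T_Dd}(L)=L$, i.e. $L$ is either equal or complex-orthogonal to $T_Aa$ (resp. to $T_Dd$). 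Along a one-dimensional component of $\{B=C\}\cap U$ the point $B$ must vary: if $B\equiv B_0$ then $L$ is fixed and $A\in a\cap L$, $D\in d\cap L$ are confined to finite sets, so the component would be zero-dimensional. As $B$ varies, $L=T_Bb$ sweeps infinitely many lines, whereas the common-tangent conditions $L=T_Aa$ and $L=T_Dd$ hold for at most finitely many $L$ (distinct irreducible mirrors have finitely many common tangents; the case $a=b$ gives a threefold coincidence handled the same way). Hence at a generic member the outer edges $DA$ and $CD$ are not tangent to $a$ at $A$ and to $d$ at $D$; generically the vertices are also non-marked and $A,B(=C),D$ are pairwise distinct. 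Thus all hypotheses of Proposition \ref{neighbmir} hold, giving the desired contradiction.

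The main obstacle, and the step deserving the most care, is this last one: reaching a \emph{clean} collision. The coincidence $B=C$ automatically drives all four vertices onto the single line $L=T_Bb$, so the danger is that the forced reflection identities at $A$ and $D$ degenerate into the common-tangent alternatives $L=T_Aa$ or $L=T_Dd$, introducing an extra tangency that would block Proposition \ref{neighbmir}. Excluding these rests on the observation that $B$, and hence $L$, genuinely varies along the collision family while common tangents of the distinct mirrors are finite in number; one must also confirm that the relevant branch lies in $U=\overline{U_0}$ rather than in a spurious component of $\overline{P_4}$, so that the reflection law of Proposition \ref{comp-set} is available throughout.
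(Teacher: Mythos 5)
You follow the paper's strategy: manufacture a one-parameter family of collision quadrilaterals with $B=C$ inside $U$, then exhibit a generic member as a configuration forbidden by Proposition \ref{neighbmir}. Your construction of that family (the fiber argument over a generic $B_0$, plus surjectivity of nonconstant maps of compact curves) is a sound, slightly longer substitute for the paper's one-line appeal to the epimorphicity of the projection $U\to\hb\times\hc$ in Proposition \ref{comp-set}; your elimination of the straight-mirror case and the collapse of a collision orbit onto $L=T_Bb$ are both correct.

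The gap is the sentence ``generically the vertices are also non-marked and $A,B(=C),D$ are pairwise distinct,'' on which your single application of Proposition \ref{neighbmir} with $s=2$, $r=3$ rests. Pairwise distinctness is not a genericity matter: it can fail \emph{identically} on every collision component, and it does so precisely when further mirrors coincide --- configurations the corollary must also exclude. If $a=b$, the collision family can consist of quadrilaterals with $A\equiv B\equiv C$; then hypothesis 3) of Proposition \ref{neighbmir} fails at $j=1$ for $s=2$, $r=3$ (it demands $A_1\neq A_2$), and one must instead invoke the proposition with $s=1$, $r=3$, and with $s=1$, $r=4$ if moreover $D\equiv B$ (all four mirrors equal). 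If $a=d$, the family can consist of quadrilaterals $ABBA$ with $A\equiv D$, where hypothesis 3) fails at both $j=1$ and $j=4$ (it demands $A_1\neq A_0=A_4$); here an extra argument is required: since $A'$ and $D'$ of approximating true orbits then collide along one local branch of $a=d$, the secant $D'A'$ tends to $T_Aa$, so the limit reflection law at $A$ gives $AB=T_Aa$, contradicting $AB=T_Bb\not\equiv T_Aa$ --- this is how one shows $A\not\equiv D$, and it is not automatic. Note also that your finiteness-of-common-tangents argument itself presupposes the distinctness in question (tangency ``at distinct points''), so it cannot be used to dispose of these cases. Your parenthetical ``the case $a=b$ gives a threefold coincidence handled the same way'' is exactly where the paper's proof branches into its Cases 1)--3), applying Proposition \ref{neighbmir} with $(s,r)=(2,3)$, $(1,3)$, $(1,4)$ respectively; your proof needs the same case analysis.
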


\begin{proof} Suppose the contrary: there exists a 4-reflective planar algebraic 
billiard $a$, $b$, $b$, $d$. Then $b$ cannot be a line, since otherwise, there would exist no 4-periodic orbit $ABCD$:  by 
Definition \ref{deforb},  the lines $AB\neq b$ and $BC=b$ should be symmetric with respect to the non-isotropic line $b$, which is impossible. 
Let $U\subset\hat a\times\hat b\times\hat b\times\hat d$ denote the 4-reflective set. It is two-dimensional and contains at least one 
irreducible algebraic curve $\Gamma$ consisting of quadrilaterals $ABBD$ with coinciding variable vertices  $B=C$, by  epimorphicity of 
the projection $U\to\hb\times\hb$ (Proposition \ref{comp-set}). Let us fix the above $\Gamma$. There are three possible cases:

Case 1): $A\not\equiv B$, $D\not\equiv B$ on $\Gamma$.  Then $AB\equiv BD\equiv T_Bb\not\equiv T_Aa, T_Dd$, since the set of lines 
tangent to two  algebraic curves at distinct points is finite. This implies that $A\not\equiv D$ and a generic quadrilateral $ABBD\in\Gamma$ 
represents a counterexample to Proposition \ref{neighbmir} with $s=2$, $r=3$. 
  
Case 2): $A\equiv B$, $D\not\equiv B$ on $\Gamma$. Then  we get a  contradiction to Proposition \ref{neighbmir} 
with $s=1$, $r=3$. The case $A\not\equiv B$, $D\equiv B$ is symmetric. 

Case 3): $A\equiv B\equiv C\equiv D$ on $\Gamma$. Then we get a contradiction to Proposition \ref{neighbmir} with $s=1$, $r=4$. 
Corollary \ref{lnoncoinc} is proved.
 \end{proof}

 \begin{corollary} \label{connect} 
 Let $a$, $b$, $c$, $d$ be a 4-reflective analytic planar 
 billiard, and let $b$ be not a line. 
 Let $U\subset\hat a\times\hat b\times\hat c\times\hat d$ be the 4-reflective set. Let $ABCD\in U$ be such that 
 $A\neq B$, $B\neq C$, the line $AB=BC$ is tangent to the curve $b$ at $B$ and is not isotropic. Then 
 
 - either $AD=DC$ is tangent to the curve $d$ at $D$,  $\pi_a(A)=\pi_c(C)$, $a=c$ and the  corresponding local branches  
coincide,  i.e., $a_A=c_C$ (see Convention \ref{conv2}): ``opposite tangency connection'', see Fig.7a; 

 - or $\pi_d(D)$ is a cusp of the branch $d_D$ and the tangent line $T_Dd$ is not isotropic: ``tangency--cusp connection'', see Fig.7b. 
  \end{corollary}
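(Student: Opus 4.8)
The plan is to locate, opposite to the tangency at $B$, a second degeneracy of the quadrilateral and to read off the two alternatives from its nature. Write $L:=AB=BC=T_Bb$; by hypothesis $L$ is non-isotropic and $b$ is not a line. Since $ABCD\in U=\overline{U_0}$, this degenerate quadrilateral is a limit of the open set of genuine $4$-periodic orbits, and the tangency condition ``$A'B'=T_{B'}b$'' cuts out an (at least) one-dimensional analytic subset of the two-dimensional set $U$; I would fix an irreducible curve $\mct\subset U$ through $ABCD$ consisting of such tangent quadrilaterals. Along $\mct$ the marked points of the mirrors (cusps and isotropic tangencies) and the coincidences of neighbour vertices occur only at isolated parameter values, so a generic member of $\mct$ has no marked vertex, has pairwise distinct neighbour vertices, and in particular smooth $A$, $C$ with non-isotropic $T_Aa$, $T_Cc$. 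For such a member Proposition \ref{tang} applies and shows that the non-isotropic tangency $AB=BC=T_Bb$ cannot be the \emph{only} degeneracy; hence a second edge-tangency must occur at one of the remaining vertices $A$, $C$, $D$.

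Next I would show this second tangency sits at the opposite vertex $D$. The reflection law at $A$ forces $AD$ to be the reflection of $L$ in $T_Aa$, and the reflection law at $C$ forces $CD$ to be the reflection of $L$ in $T_Cc$; these determine $D=AD\cap CD\in d$, at which the reflection law must again hold. If the second tangency were at $C$, then by the reflection law at $C$ (with $T_Cc$ equal to one of the coinciding lines $BC=L$ or $CD$) the line $CD$ would again equal $L$, so $D\in L$ and all four vertices would lie on $L$; the symmetric argument (reading $ABCD$ backwards, which interchanges the roles of $A$ and $C$) excludes a second tangency at $A$. Carrying such an all-collinear configuration along $\mct$ would yield a one-parameter family of quadrilaterals with $C,D\in AB=T_Bb=T_Dd$, which is excluded by the no-one-parameter-family statement of this subsection (itself a two-branch-point monodromy argument in the spirit of Proposition \ref{tang}); a neighbour collision $C=D$ or $D=A$ is excluded on generic members and, if forced, contradicts Proposition \ref{neighbmir}. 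Therefore the second tangency is $CD=T_Dd$ or $DA=T_Dd$; since the reflection law at $D$ makes $CD$ and $DA$ symmetric in $T_Dd$ with one of them already equal to $T_Dd$, both hold, giving $AD=DC=T_Dd$.

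It remains to identify $AD=DC=T_Dd$ with the opposite-tangency case. If $A\neq C$, then $A$ and $C$ lie both on $L$ and on $T_Dd=AD=DC$, forcing these two lines to coincide, whence once more $C,D\in AB=T_Bb=T_Dd$ along $\mct$ -- the forbidden one-parameter family. Hence $\pi_a(A)=\pi_c(C)$ on the generic member, and therefore, by analyticity, everywhere on $\mct$. As $B$ varies the tangent line $L=T_Bb$ varies (here $b$ is not a line), so the common point $\pi_a(A)=\pi_c(C)$ is non-constant along the one-dimensional family $\mct$ and traces a curve contained in $a\cap c$; since $a$ and $c$ are irreducible and share infinitely many points, $a=c$, and the coincidence of the tangent data along $\mct$ forces $a_A=c_C$. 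This is the opposite-tangency connection, with $AD=DC=T_Dd$.

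Finally, the tangency--cusp alternative is exactly the case excluded from the generic analysis: if $\pi_d(D)$ is a cusp of the branch $d_D$, then the cusp already supplies the compensating multivaluedness of the reflection correspondence at $D$, so Proposition \ref{tang} does not apply (its vertex is marked) and none of the above forces $A=C$. Here I would only check that $T_Dd$ is not isotropic: were it isotropic, $D$ would be an isotropic tangency point, and the isotropic reflection law of Proposition \ref{comp-set} would force one of $CD$, $DA$ to coincide with the isotropic line $T_Dd$; propagating this through the reflection law at the non-marked neighbour $C$ (respectively $A$), whose symmetry permutes the two isotropic points at infinity, would make $L$ isotropic, contradicting the hypothesis. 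The main obstacle, as in Proposition \ref{tang}, is the bookkeeping of the branching of the reflection correspondence around the quadrilateral: proving that exactly one further branch point is available and that it can only be placed at $D$, never at $A$ or $C$. This is precisely where the no-one-parameter-family exclusion carries the weight, since degeneracies at $A$ or $C$ collapse all four vertices onto the single line $L$.
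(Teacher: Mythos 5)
Your overall strategy does track the paper's: deform along the curve $\mct$ of tangency quadrilaterals, invoke Proposition \ref{tang} on a generic member to force a second degeneracy, and read off the dichotomy between an edge tangency at $D$ and a constant marked point $D$. Your exclusion of an isotropic $T_Dd$ in the cusp case (symmetry with respect to the non-isotropic line $T_Cc$ sends an isotropic line to an isotropic line, making $L$ isotropic) is a clean alternative to the paper's argument, which instead notes that $A'$ or $C'$ would have to be constant on $\mct$. But there is a genuine gap at the step that carries essentially all the weight. Every hard point of your proof --- ruling out a second tangency at $A$ or $C$, and above all forcing $\pi_a(A)=\pi_c(C)$ and $a=c$ in the opposite-tangency case --- is discharged onto ``the no-one-parameter-family statement of this subsection,'' i.e.\ the claim that $U$ contains no one-parameter family of quadrilaterals with $C,D\in AB=T_Bb=T_Dd$. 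No such prior statement exists for you to cite: in the paper this fact is not a lemma proved before Corollary \ref{connect}, it \emph{is} the technical core of the proof of Corollary \ref{connect} itself (Case 2). There it is established by a quantitative argument: such a family would force $B'\equiv D'$ and $b=d$, with the common edge orthogonal to both $T_{A'}a$ and $T_{C'}c$ along $\mct$; one then compares a nearby genuine orbit $A_1B_1C_1D_1\in U_0$ with the tangency configuration and shows the angle between $T_{B_1}b$ and $T_{D_1}b$ is simultaneously of exact order $\var$ and of order $O(\var^2)$, a contradiction. Without reproducing some argument of this kind, your proof is circular.

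Moreover, your parenthetical route to that missing fact --- ``itself a two-branch-point monodromy argument in the spirit of Proposition \ref{tang}'' --- would fail. In the configuration to be excluded, the line is tangent to the mirrors at \emph{two} vertices ($B$ and $D$), so the reflection correspondences at both vertices are double-valued germs, and a composition of two double-valued germs interleaved with biholomorphic ones can perfectly well be single-valued and equal to the identity; the proof of Proposition \ref{tang} works precisely because the tangency vertex is assumed unique. That no monodromy-type argument can exclude two-tangency families is confirmed by the fact that they genuinely occur in $4$-reflective billiards: the curves $\mct_a$, $\mct_b$ of Theorem \ref{irrdeg} consist of degenerate quadrilaterals with tangencies at two opposite vertices. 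A secondary flaw: your reduction of ``second tangency at $C$'' to the forbidden family is incorrect, since a tangency at $C$ yields $L=T_Bb=T_Cc$ with all vertices on $L$, but at $D$ the reflection law then only forces $L$ to be tangent \emph{or orthogonal} to $d$, so you do not land in the family with $T_Dd=L$. The paper avoids this entirely by building transversality of $A'B'$ to $a$ and $c$ into the genericity conditions on $\mct$ (a one-parameter family of common tangents to two curves at distinct points is impossible), so that Proposition \ref{tang} places the second degeneracy directly at $D$.
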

 
\begin{figure}[ht]
  \begin{center}
   \epsfig{file=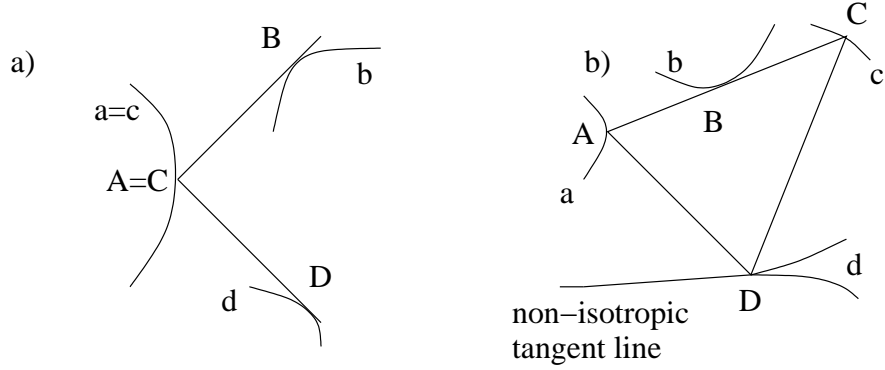}
    \caption{Opposite degeneracy to tangency vertex: tangency or cusp.}
  \end{center}
\end{figure}

%
%
%

 \begin{proof}  Let $\mct\subset U$ denote an irreducible germ at $ABCD$ of analytic curve consisting of quadrilaterals $A'B'C'D'$ such that 
 $A'B'=T_{B'}b$. Then $A'B'\equiv B'C'\equiv T_{B'}b$ on $\mct$, hence 
 $A'$, $B'$, $C'$ vary along the curve $\mct$. Without loss of generality we consider that 
 
 (i)  the line $AB=BC$ has quadratic tangency with the curve 
 $b$ at $B$ and is transverse to $a$ and $c$ at $A$ and $C$ respectively; 
 
 (ii)  the points $A$, $B$, $C$ are finite and not marked;
 
 (iii) $A\neq D$ and $D\neq C$;
 
 (iv) the germ at $ABCD$ of the projection $\pi_{a,b}:U\to\ha\times\hb$ is open. 
 
 One can achieve this by deforming the quadrilateral 
 $ABCD$ along the curve $\mct$. Indeed, it is easy to achieve conditions (i) and (ii). Condition (iv) can be achieved, since 
 the projection $\pi_{a,b}$ is open  at a generic point of the curve $\mct$. Indeed, it is a submersion on an open and dense subset in $U$ 
 (Proposition \ref{comp-set}). Hence, it is open outside at most countable union of  curves contracted to points by $\pi_{a,b}$ (if any), while 
 $\Gamma$ is not contracted.  Condition (iii) can be achieved, since 
$A'\not\equiv D'$ and $D'\not\equiv C'$ on $\mct$. Indeed, if, e.g., $A'\equiv D'$, then the one-parameter family of  lines $A'B'$ would be   
tangent to both curves $a$ and $b$ at distinct points $\pi_a(A')$, $\pi_b(B')$, which is impossible. 
Then either $D'=D$ is a marked point 
 (cusp or isotropic tangency) that is constant on the curve $\mct$, or the line $A'D'=D'C'$  is tangent to $d$ at $D'$ for every 
 $A'B'C'D'\in\mct$. This follows from conditions (i)--(iv), Proposition \ref{tang} and the discreteness of the set of marked points in $\hat d$.  
 
 Case 1): $D'=D$ is a marked point that is constant on $\mct$. Let $D$ be an isotropic tangency point. 
 Then one of the lines $A'D$ or $DC'$ identically coincides with the 
 isotropic tangent line $T_Dd$ (Proposition \ref{comp-set}). 
 Hence, either $A'$, or $C'$ is constant on the curve $\mct$, which is  impossible, see the beginning of the proof. 
 Thus, $\pi_d(D)$ is a cusp and the tangent line $T_Dd$ is not isotropic. 
 
 Case 2): $D$ is not a marked point, the line $AD=DC$ is tangent to $d$ at $D$, and this holds in a neighborhood of the 
 quadrilateral $ABCD$ in the curve $\mct$.  In the case, when $\pi_a(A')\equiv\pi_c(C')$ on $\mct$, one has 
$a_{A}=c_C$, and we are done. Let us show that the opposite case is impossible. 
 Suppose the contrary. Then deforming the quadrilateral $ABCD$ along the curve $\mct$, one can achieve 
 that in addition to  conditions (i)--(iv), one has $A\neq C$. Thus, the line $A'C'$ is identically tangent to both curves $b$ and $d$ 
 at $B'$ and $D'$ respectively along the curve $\mct$. Therefore, $B'\equiv D'$, $b=d$ and the tangent line $A'B'$ to $b$ is orthogonal 
 to both $T_{A'}a$ and $T_{C'}c$ identically on $\mct$ (reflection law). Let us fix a local parameter $t$ on the curve $b$ in a neighborhood of the point $B$. Consider a quadrilateral $A_1B_1C_1D_1\in U_0\subset  U\setminus\mct$ 
 close to $ABCD$. Then $A_1B_1\neq T_{B_1}b$ and $A_1B_1\neq A_1D_1$. Let $L_1$  ($R_1$) denote the line through $A_1$ 
 ($C_1)$ orthogonal 
 to $T_{A_1}a$ (respectively, $T_{C_1}c$). These lines are tangent to $b$ at some points $B_{L,1}$ and $B_{R,1}$ respectively. 
 Set  $\var=t(B_1)-t(B_{L,1})$. 
 Without loss of generality we consider that the point $\pi_d(D)$ is finite, since $D'\equiv B'$ varies along the curve $\mct$. 
 We measure angles between lines in the finite affine chart $\cc^2$.  
 We show that the angle between the tangent lines $T_{B_1}b$ and $T_{D_1}b$ is of order $\var$ on one hand, and of order $O(\var^2)$ 
 on the other hand, as $\var\to0$. The contradiction thus obtained will prove the corollary. We identify a point of the curve $b$ 
 (or its normalization $\hb$) with its $t$-coordinate. 
 The angle between the lines $L_1$ and $A_1B_1$ is of order $\var^2$ (quadraticity of tangency). This and analogous statement for $C_1$ 
together with the reflection law at $A_1$, $C_1$ and $D_1$ imply  the following asymptotics, as $\var\to0$: 
 
 a) $\angle(A_1B_1,A_1D_1)=O(\var^2)$, $\angle(C_1B_1,C_1D_1)=O(\var^2)$;
 
 b) $B_1-D_1=c\var(1+o(1))$, $c\neq0$.
 
 Let us prove statement b) in more detail. The lines $A_1B_1$ and $A_1D_1$ are symmetric with respect to the line $L_1$, hence 
 $\angle(L_1,A_1B_1)\simeq-\angle(L_1,A_1D_1)$. They intersect the local 
 branch $b_{B_{L,1}}$ at the points $B_1$ and $D_1$ respectively. This together with the quadraticity implies that  
 $B_1-B_{L,1}\simeq \pm i(D_1-B_{L,1})$. The latter implies statement b) with appropriate  constant $c\neq0$. 
The angle $\angle(T_{B_1}b,T_{D_1}b)$ is an order $\var$ quantity, by statement b) and quadraticity. 
On the other hand, it is the angle between the symmetry lines for the pairs of lines 
$(A_1B_1, B_1C_1)$ and $(A_1D_1,D_1C_1)$ respectively.  This together with  statement a), 
 which says that the latter pairs are ``$O(\var^2)$-close'', 
 implies that the above angle  should be also of order $O(\var^2)$. The contradiction thus obtained 
 proves the corollary.
 \end{proof}
 
 \begin{corollary} \label{a=c} Let in a 4-reflective complex algebraic planar billiard $a$, $b$, $c$, $d$ the mirror $b$ be not a line and 
 $d$ have no cusps with non-isotropic tangent lines. Then $a=c$.
 \end{corollary}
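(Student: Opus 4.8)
The plan is to reduce the statement to a single application of Corollary \ref{connect}, whose second (``tangency--cusp'') alternative is exactly what the hypothesis on $d$ forbids. Concretely, I would first produce one degenerate quadrilateral $ABCD\in U$ with $A\neq B$, $B\neq C$ and $AB=BC=T_Bb$ a non-isotropic tangent line of $b$ at $B$, and then feed it into Corollary \ref{connect}: the opposite--tangency alternative gives $\pi_a(A)=\pi_c(C)$ and $a=c$ directly, while the tangency--cusp alternative would produce a cusp of $d$ with non-isotropic tangent line, contradicting the hypothesis. So the whole content lies in exhibiting such a tangency configuration inside $U$.

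For the existence, I would work with the projection $\pi_{a,b}\colon U\to\ha\times\hb$. By Proposition \ref{comp-set} it is epimorphic, and since $U$ is a projective algebraic set its image is closed and dense, hence equal to $\ha\times\hb$; thus $\pi_{a,b}$ is onto and, being a submersion on a dense open subset of $U$, is generically finite. Next I would introduce the tangency locus
\[
Z=\{(A,B)\in\ha\times\hb:\ \pi_a(A)\in T_Bb\}.
\]
Because $b$ is not a line, its projective tangent lines form a genuine one-parameter family, so $Z$ is the zero set of one nontrivial algebraic condition: for each $B$ the line $T_Bb$ always meets $a$ in $\cp^2$, while for fixed $B$ not every $A$ lies on $T_Bb$. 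Hence $Z$ is a non-empty curve. Since on $U_0$ the edges are transverse to the mirrors by Definition \ref{deforb} (the lines $A_{j-1}A_j$ and $A_jA_{j+1}$ are required distinct from $T_{A_j}a_j$), the preimage $\pi_{a,b}^{-1}(Z)$ misses the dense open set $U_0$ and is therefore a proper subvariety; combined with surjectivity and generic finiteness of $\pi_{a,b}$ (which force its image to be all of $Z$), this preimage is a non-empty algebraic curve $\mct\subset U$. Every $A'B'C'D'\in\mct$ has $\pi_a(A')\in T_{B'}b$, so whenever $A'\neq B'$ the edge $A'B'$ coincides with $T_{B'}b$, and the reflection law at $B'$ (Proposition \ref{comp-set}) gives $A'B'=B'C'=T_{B'}b$ as soon as this line is not isotropic.

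Finally I would choose the point of $\mct$ generically. Along $\mct$ the vertex $B'$ cannot be constant, for otherwise $T_{B'}b$ would be fixed and, by the reflection law, $A'$, $C'$, $D'$ would each be confined to finite sets, contradicting $\dim\mct=1$; hence $B'$ varies. Consequently each of the following occurs only at finitely many points of $\mct$: the isotropic tangencies of $b$; the points with $\pi_b(B')\in a$ or $\pi_b(B')\in c$ (which are the only ways to get $A'=B'$ or $B'=C'$); and the marked points of the mirrors. Discarding these finitely many bad points yields $ABCD\in\mct$ satisfying the hypotheses of Corollary \ref{connect}. Applying it, the cusp alternative is excluded by the assumption that $d$ has no cusps with non-isotropic tangent lines, so $\pi_a(A)=\pi_c(C)$ and $a=c$, as required. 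I expect the only genuinely delicate step to be the existence and one-dimensionality of the family $\mct$ (i.e.\ that the tangency degeneracy is actually realized inside the algebraic set $U$ and can be taken generic); the concluding dichotomy is a direct citation of Corollary \ref{connect}.
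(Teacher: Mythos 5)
Your overall strategy is exactly the paper's: produce an algebraic curve $\mct\subset U$ of degenerate quadrilaterals with $AB=BC=T_Bb$ non-isotropic, choose a generic point of it, and feed it to Corollary \ref{connect}, whose tangency--cusp alternative is excluded by the hypothesis on $d$. The paper builds $\mct$ more directly -- for each fixed $B$ it deforms a $4$-periodic orbit through $B$, within the fiber of $U\to\hb$, until the edge $AB$ lands on $T_Bb$ -- so that ``variable $B$'' holds by construction; your $\mct$, a one-dimensional component of $\pi_{a,b}^{-1}(Z)$ for the tangency locus $Z$, is the same object, and your use of epimorphicity, of Zariski density of $U_0$ (to get $\dim\le 1$), and of the reflection law at $B$ is all correct.

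The one step whose justification is incomplete is your claim that $B'$ cannot be constant along $\mct$. Your contradiction argument (``otherwise $A'$, $C'$, $D'$ would be confined to finite sets'') tacitly uses (i) that $a\not\subset T_{B_0}b$, which fails if $a$ is a line tangent to $b$ (the corollary does not assume $a$ is not a line), and (ii) that the reflection law of Proposition \ref{comp-set} pins down the remaining vertices, which it does not in two situations: when neighbor vertices coincide (the law is only asserted for $A_{j\pm1}\neq A_j$, so a component of $\pi_{a,b}^{-1}(Z)$ lying over a single point $(A_0,B_0)$ with $\pi_a(A_0)=\pi_b(B_0)\in a\cap b$ is not excluded -- and fibers of $\pi_{a,b}$ are not yet known to be finite, since birationality is only proved later in Lemma \ref{lbirat}), and when $T_{B_0}b$ is isotropic, where the isotropic reflection law constrains only one of the two edges at $B_0$, so that $C'$ may still vary. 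The repair is cheap and amounts to what the paper's formulation builds in: the projection $Z\to\hb$ is surjective (every tangent line of $b$ meets $a$ in $\cp^2$), so pick an irreducible component $Z_1$ of $Z$ dominating $\hb$, and then a component $\mct$ of $\pi_{a,b}^{-1}(Z_1)$ whose (closed) image is all of $Z_1$; such a component exists because $Z_1$ is irreducible and is covered by the finitely many images of components. With $B'$ non-constant by choice rather than by contradiction, your genericity step and the concluding citation of Corollary \ref{connect} go through verbatim.
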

 
 \begin{proof} There exists  an irreducible algebraic curve $\mct\subset U$  consisting of those quadrilaterals $ABCD$ with variable $B$ for which $AB=T_Bb$: every 4-periodic billiard orbit can be deformed  without changing the vertex $B$ to such a quadrilateral. Let us fix 
 an $ABCD\in\mct$ with $B\neq A,C$ and non-isotropic tangent line $T_Bb$. Its vertex $D$ is not a cusp with non-isotropic tangent line, by assumption. Therefore, 
 $a=c$, by Corollary \ref{connect}.
 \end{proof}

 \subsection{Case of a straight mirror}
 
 Here we apply the results of the previous subsection to 
 classify the 4-reflective algebraic billiards with at least one mirror being a line.
 
 \begin{proposition} \label{straight} 
  Let $a$, $b$, $c$, $d$ be a 4-reflective algebraic 
 billiard in $\cp^2$  such that $a$ is a line. If some of the mirrors $b$ or $d$ is not a line, then $a=c$ and the curves $b$, $d$ are 
 symmetric with respect to the line $a$, see Fig.1. 
 If $a$, $b$, $d$ are lines, then $c$ is also a line, the  lines $a$, $b$, $c$, $d$ pass through the 
 same point,  and the line pairs $(a,b)$, $(d,c)$ are sent one into  the other by complex isometry with unit Jacobian, see Fig.2. 
 \end{proposition}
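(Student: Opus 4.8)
The plan is to split into the two subcases of the statement and to feed the degeneracy analysis of Subsection 2.2 with the extra rigidity coming from the fact that reflection in a straight mirror is a global projective symmetry. Throughout I write $\sigma_g$ for the symmetry with respect to a line $g$, and I use that for $A\in a$ with $a$ a line one has $T_Aa=a$, so the reflection law at $A$ reads $\sigma_a(AD)=AB$.

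First I would assume that a neighbor of $a$, say $b$, is not a line (the case of $d$ not a line is symmetric after passing to the reversed billiard $a,d,c,b$). As in the proof of Corollary \ref{a=c}, I fix an irreducible algebraic curve $\mct\subset U$ of degenerate quadrilaterals $ABCD$ with variable $B$ and $AB\equiv BC\equiv T_Bb$; along $\mct$ the vertices $A,B,C$ all vary. Choosing a generic $ABCD\in\mct$ I may assume $B\neq A,C$ and $T_Bb$ non-isotropic, so Corollary \ref{connect} applies: either we are in the opposite tangency connection ($AD=DC=T_Dd$, $\pi_a(A)=\pi_c(C)$, $a=c$), or $\pi_d(D)$ is a cusp of $d_D$ with non-isotropic tangent line. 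The key step is to exclude the cusp alternative as the generic behaviour. Since $d$ has only finitely many cusps, if the cusp case held on a dense subset of the irreducible curve $\mct$ then $D$ would be constant on $\mct$, a fixed cusp; but then all lines $AD$ pass through the fixed point $\pi_d(D)$, and the reflection law at $A$ forces every tangent line $AB=\sigma_a(AD)=T_Bb$ to pass through the fixed point $\sigma_a(\pi_d(D))$. Dually this means the irreducible dual curve $b^*$ meets a line in infinitely many points, hence equals it, forcing $b$ to be a point — impossible. Hence the generic $ABCD\in\mct$ is in the opposite tangency case and $a=c$.

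It then remains to read off the symmetry of $b$ and $d$. For generic $ABCD\in\mct$ we have $AB=T_Bb$, $AD=T_Dd$, and the reflection law at $A$ gives $\sigma_a(T_Bb)=T_Dd$; since $\sigma_a$ is a projective isometry, $\sigma_a(T_Bb)=T_{\sigma_a(B)}(\sigma_a b)$, so $T_{\sigma_a(B)}(\sigma_a b)=T_Dd$ along a one-parameter family. Thus the irreducible dual curves of $\sigma_a(b)$ and $d$ meet in infinitely many points and coincide, whence $\sigma_a(b)=d$, i.e. $b$ and $d$ are symmetric with respect to the line $a$. This is Case 1) of Theorem \ref{tclass}.

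Finally, suppose $a$, $b$ and $d$ are all lines. Applying the already-proved first part to the line mirror $b$, whose neighbors are $a$ and $c$: were $c$ not a line we would obtain $c=\sigma_b(a)$, itself a line since $a$ is — a contradiction; hence $c$ is a line. Now all four reflections are honest projective involutions on $\cp^{2*}$, and writing a $4$-periodic orbit as the chain $AB=\sigma_a(DA)$, $BC=\sigma_b(AB)$, $CD=\sigma_c(BC)$, $DA=\sigma_d(CD)$, $4$-reflectivity forces $\sigma_a\circ\sigma_b\circ\sigma_c\circ\sigma_d=Id$ on $\cp^{2*}$, since this composition fixes an open set of lines. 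Equivalently $\sigma_a\sigma_b=\sigma_d\sigma_c$ and $\sigma_b\sigma_c=\sigma_a\sigma_d$; comparing the fixed points of these equal complex rotations yields $a\cap b=c\cap d$ and $b\cap c=a\cap d$. If these two points were distinct, every one of the four lines would pass through both, forcing $a=b$, a pair of coinciding neighbor mirrors excluded by Corollary \ref{lnoncoinc}; hence all four lines pass through one common point $O$. Then $\sigma_a\circ\sigma_b\circ\sigma_c\circ\sigma_d=Id$ is precisely the hypothesis of Example \ref{ex-lines2}, so $(a,b)$ is sent to $(d,c)$ by a complex rotation about $O$, i.e. a unit-Jacobian isometry (Case 2), reducing to Case 1) when opposite lines coincide). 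The main obstacle I expect is the first subcase: extracting from the single degeneracy curve $\mct$ that the generic connection is the opposite one, and upgrading the pointwise tangency coincidence to the global identity $\sigma_a(b)=d$. The projective-duality argument forbidding a one-parameter family of concurrent tangent lines is exactly what closes this gap.
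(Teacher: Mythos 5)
Your proof is correct and follows essentially the same route as the paper: the same degeneration curve $\mct$ of quadrilaterals with $AB=T_Bb$, the same dichotomy from Corollary \ref{connect} with the constant-cusp alternative killed by projective duality, the identification $\sigma_a(b)=d$ from the symmetric families of tangent lines, and the same reduction of the all-lines case to Example \ref{ex-lines2} after applying the first part to the mirror $b$. The only deviations are local refinements rather than a different approach: you implement the duality step via coincidence of dual curves where the paper takes limits of intersection points $T_Bb\cap T_{B'}b\to\pi_b(B)$, and you spell out (via fixed points of $\sigma_a\sigma_b=\sigma_d\sigma_c$ and Corollary \ref{lnoncoinc}) the concurrency of the four lines, a step the paper leaves implicit when it invokes the last statement of Example \ref{ex-lines2}.
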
 
 
  \begin{proof} We already know that $b\neq a$, $a\neq d$, by Corollary \ref{lnoncoinc}.  Let us first consider the case, when one of the mirrors 
  $b$, $d$, say $b$ is not a line. Let $U$ be the 4-reflective set, 
 and let $\mct\subset U$ be an irreducible algebraic curve as in the proof of Corollary \ref{connect}: it consists 
 of those quadrilaterals $ABCD$ with variable $B$, for which 
 the line $AB$ is tangent to $b$ at $B$. 
  For every $ABCD\in \mct$ such that $B$ is not a marked point and $A,C\neq B,D$ 
  either the point $\pi_d(D)$ is a cusp of the branch $d_D$ (the same for all $ABCD\in \mct$), or 
  the line $AD$ is tangent to $d$ at $D$. This follows from Corollary \ref{connect}. The first, 
  cusp case is impossible, by a projective duality argument. 
  Indeed, if $D$ were constant along the curve $\mct$, then the lines $AB$ with variable $B$ would intersect at one and the 
   same point $B^*$ symmetric to $\pi_d(D)$ with respect to the line $a$ (the reflection law). 
   On the other hand, for every $ABCD\in \mct$ and any other $A'B'C'D'\in \mct$ close to it the 
   intersection point $AB\cap A'B'=T_Bb\cap T_{B'}b$ tends to $\pi_b(B)$, as $A'B'C'D'\to ABCD$. 
Therefore, $\pi_b(B)\equiv B^*$, hence $B\equiv const$, as $ABCD$ 
   ranges in $\mct$, -- a contradiction. Thus, for every $ABCD\in\mct$ the line $AD$ is tangent to $d$ 
   at the point $D$. Finally, the family of tangent lines $AB$ to $b$ is symmetric to the family of 
   tangent lines $AD$ to $d$ with respect to the line $a$. This implies that the curves $b$ and $d$ are also symmetric: 
   the above argument shows that the intersection points $AB\cap A'B'$ and $AD\cap A'D'$ should 
   be symmetric and tend to $\pi_b(B)$ and $\pi_d(D)$ respectively.   One has $\pi_a(A)\equiv \pi_c(C)$ on $\mct$, hence $a=c$ 
   (Corollary \ref{connect}).   The first statement of Proposition    \ref{straight} is proved. Now let us consider the case, when $a$, $b$ and $d$ 
   are lines. Let us prove the second statement of the proposition. If $c$ were not a line, then $a$ would also haven't been a  line, being symmetric  to $c$ with respect to the line $b$ (the first statement of the proposition), -- a contradiction to our assumption. 
   Therefore, $c$ is a line. The composition of reflections from the lines $a$, $b$, $c$, $d$ is identity as a transformation of the space 
   $\cp^{2*}$ of projective lines (4-reflectivity). This together with the last statement of Example \ref{ex-lines2} 
   proves the second statement of the proposition. 
   \end{proof}
%
%

Let us prove that every 4-reflective billiard with at least one straight mirror is of one of the types 1) or 2). If 
each mirror is a line and some of them coincide, then the billiard is of type 1). Indeed, in this case 
the coinciding mirrors are opposite (Corollary \ref{lnoncoinc}), say $a=c$, and $b$, $d$ are symmetric with respect to the line $a$, by 
the isometry of the pairs $(a,b)$ and $(d,c)$. Otherwise, a billiard with a straight mirror is of type either 1), or 2), by 
Proposition \ref{straight}. This proves Theorem \ref{tclass} in the case of straight mirror.

\subsection{Complex confocal conics}
\def\diag{\operatorname{diag}}

Here we  recall the classical notions of confocality and foci for complex conics.  We  extend  Urquhart's Theorem \ref{tell} and the 
characterization of ellipse as a curve with two given foci to complex conics  
 (Theorem \ref{tquad} and Proposition \ref{char-conic} respectively). Afterwards we state and prove Lemma \ref{propconf} characterizing 
 pairs of confocal complex conics in terms of their isotropic tangent lines. Lemma \ref{propconf} and its proof 
 are based on the classical relations between foci and isotropic tangent lines  
(Propositions \ref{prklein} and Corollary \ref{same-lines}). 

\def\k{\mathcal K}
Let $\k_\rr$ ($\k$) denote the space of all the conics in $\mathbb{RP}^2$ ($\cp^2$) 
including degenerate ones: couples of lines. This is a 5-dimensional real (complex) projective space. 
One has complexification inclusion $\k_{\rr}\subset\k$. 
Let $\k'\subset\k$ denote the set of smooth (non-degenerate) complex conics. 
Consider the subset $\Lambda_\rr\subset\k_\rr\times\k_\rr$ of pairs of  confocal ellipses.  
Let 
$$\Lambda\subset \k\times\k$$ denote the minimal  complex 
projective algebraic set containing $\Lambda_\rr$.

\begin{remark} 
The projective algebraic set $\Lambda$ is irreducible and has complex dimension 6.  It is 
 symmetric, as is $\Lambda_\rr$. The subset 
$$\Lambda'=(\Lambda\setminus\diag)\cap\k'\times\k'\subset\Lambda$$
is Zariski open and dense. These statements follow from definition. 
\end{remark} 

\begin{definition} \label{defconf} Two smooth planar complex conics are {\it confocal,} if  their pair is 
contained in $\Lambda$. 
\end{definition} 
 
\begin{theorem} \label{tquad} For every pair of distinct complex confocal 
conics $a$ and $b$  the complex billiard $a$, $b$, $a$, $b$ is 4-reflective.  
\end{theorem}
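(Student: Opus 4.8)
The plan is to prove Theorem~\ref{tquad} by \emph{analytic continuation} from Urquhart's real Theorem~\ref{tell}, exploiting the irreducibility of the parameter space $\Lambda$ of confocal pairs. Concretely, I would assemble the whole incidence family over $\Lambda'$. Let $\mathcal V\subset\Lambda'\times(\cp^2)^4$ be the closed algebraic set of tuples $((a,b),A_1,A_2,A_3,A_4)$ with $A_1,A_3\in a$, $A_2,A_4\in b$ satisfying the complex reflection law at all four vertices. The reflection law is algebraic (the explicit equation from Proposition~\ref{comp-set}), so $\mathcal V$ is projective over $\Lambda'$; write $f\colon\mathcal V\to\Lambda'$ for the projection and $\mathcal V_{(a,b)}=f^{-1}(a,b)$ for the fibre, which is exactly the configuration space of (possibly degenerate) closed reflection quadrilaterals of the billiard $a,b,a,b$. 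Since smooth conics are their own normalizations, $\hat a=a$ and $\hat b=b$, and proving 4-reflectivity amounts to exhibiting, for \emph{every} distinct confocal pair $(a,b)\in\Lambda'$, a two-dimensional component of $\mathcal V_{(a,b)}$ projecting dominantly to $a\times b$.

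The argument rests on two dimension bounds. First, $\dim_\cc\mathcal V_{(a,b)}\le 2$ for all pairs: given $(A_1,A_2)$ with non-isotropic tangent lines, reflecting the line $A_1A_2$ in $T_{A_2}b$ determines the line $A_2A_3$ uniquely, whence $A_3\in a$ takes finitely many values, and likewise $A_4$; thus the completion map $\mathcal V_{(a,b)}\to a\times b$, $(A_1,\dots,A_4)\mapsto(A_1,A_2)$, has finite generic fibres (off the lower-dimensional isotropic locus), bounding the dimension by $\dim(a\times b)=2$. Second, $\dim_\cc\mathcal V_{(a,b)}\ge2$ whenever $(a,b)$ is a pair of distinct confocal \emph{ellipses} in $\Lambda_\rr\cap\Lambda'$: by Theorem~\ref{tell} such a billiard carries a two-real-parameter family of genuine orbits, i.e.\ a real-analytic set $M$ of real dimension $2$ inside the real points $\mathcal V_{(a,b)}(\rr)$; since $\mathcal V_{(a,b)}$ is defined over $\rr$, its real points have real dimension at most $\dim_\cc\mathcal V_{(a,b)}$, forcing $\dim_\cc\mathcal V_{(a,b)}\ge2$.

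To propagate the lower bound to all of $\Lambda'$, I would invoke upper semicontinuity of fibre dimension. As $f$ is proper (projective), the locus $Z=\{(a,b)\in\Lambda':\dim_\cc\mathcal V_{(a,b)}\ge2\}$ is Zariski closed. By the second bound $Z$ contains $\Lambda_\rr\cap\Lambda'$, which is Zariski dense in the irreducible variety $\Lambda'$ (recall $\Lambda$ is the minimal algebraic set containing $\Lambda_\rr$, and $\Lambda'$ is Zariski open and dense). Hence $Z=\Lambda'$, so $\dim_\cc\mathcal V_{(a,b)}=2$ for every confocal pair, by the first bound. Combined with the finiteness of the completion map, a two-dimensional component of $\mathcal V_{(a,b)}$ must project dominantly onto the irreducible surface $a\times b$; since the non-degeneracy conditions of Definition~\ref{deforb} ($A_{j+1}\ne A_j$, non-isotropic tangent lines, distinct symmetric edges) are Zariski-open, a generic point of this component is a \emph{genuine} 4-periodic orbit. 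Thus the orbits fill an open subset of $a\times b$ and the billiard is 4-reflective.

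The main obstacle I anticipate is the passage from the \emph{complex reflection law} back to genuine orbits: the fibre $\mathcal V_{(a,b)}$ a priori also contains degenerate quadrilaterals (neighbour collisions, isotropic tangencies, edges tangent to a mirror at an adjacent vertex), and one must guarantee that the two-dimensional component supplied by continuation is not absorbed into this degenerate locus. This is exactly what the dominance onto $a\times b$ plus the openness of the non-degeneracy conditions resolve, but it forces two points to be checked carefully: that the finite-fibre (hence dimension-bounding) computation is genuinely valid away from the isotropic locus, so that no component of $\mathcal V_{(a,b)}$ can escape to dimension $>2$ while staying degenerate; and that the real Urquhart family $M$ indeed certifies a full complex-two-dimensional fibre rather than sitting on a spurious lower-dimensional real slice, which is handled by the standard bound $\dim_\rr\mathcal V_{(a,b)}(\rr)\le\dim_\cc\mathcal V_{(a,b)}$ for a real algebraic variety.
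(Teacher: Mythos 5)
Your proposal is correct in substance and shares the paper's skeleton: both arguments propagate Urquhart's real Theorem \ref{tell} to all of $\Lambda'$ through the Zariski density of the real confocal pairs in the irreducible variety $\Lambda'$. But the propagation mechanism is genuinely different. The paper works directly with the locus $\Lambda''\subset\Lambda'$ of 4-reflective pairs: it is constructible by Remmert's Proper Mapping Theorem, contains the Zariski dense set $\Lambda_\rr\setminus\operatorname{diag}$, and is then shown to be closed in the \emph{usual} topology by an analytic limit argument---for a sequence of 4-reflective pairs converging to $(a,b)$ one fixes a pair $(A,B)\in a\times b$ satisfying explicit genericity conditions (the two reflected lines meet $a\cup b$ in eight distinct points, none an isotropic tangency) precisely so that a subsequential limit of orbits cannot degenerate. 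You instead work with the total incidence family $f:\mathcal V\to\Lambda'$ and replace ``constructible and closed in the usual topology'' by the purely algebraic statement that $Z=\{(a,b):\dim_\cc\mathcal V_{(a,b)}\ge2\}$ is Zariski closed (semicontinuity of fibre dimension plus properness of $f$), feeding in Urquhart's theorem through the bound $\dim_\rr\mathcal V_{(a,b)}(\rr)\le\dim_\cc\mathcal V_{(a,b)}$. Your route avoids all limits and subsequence extraction; its cost is that the degeneration analysis the paper performs once, at the limiting pair, you must perform for an arbitrary confocal pair, in the guise of your dominance claim.

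That claim---every 2-dimensional component of $\mathcal V_{(a,b)}$ projects dominantly to $a\times b$---is the one step you assert rather than prove, and your justification (``combined with the finiteness of the completion map'') is, as written, a non sequitur: the completion map is finite only off the degenerate locus, so a priori a 2-dimensional component could sit entirely over that locus, with 1-dimensional image and 1-dimensional fibres. The claim is nevertheless true, and proving it is the real remaining work. The key observation: the displayed reflection equation of Proposition \ref{comp-set} at the vertex $A_2$, viewed as an equation on $A_3$ alone, is affine-linear in the coordinates of $A_3$, hence vanishes identically on the conic $a$ only when both of its coefficients vanish, which happens only when $A_1=A_2\in a\cap b$ or when $A_2$ is an isotropic tangency point of $b$ and $A_1$ lies on the line $T_{A_2}b$; symmetrically for the equation at $A_1$, which constrains $A_4$. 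Consequently the fibres of $\mathcal V_{(a,b)}\to a\times b$ are finite outside a \emph{finite} set of base points, and over those finitely many points the two equations coupling $A_3$ and $A_4$ cut the fibre down to dimension at most one, by the same linearity argument. This forces any 2-dimensional component to dominate, and the same coefficient-vanishing analysis shows that no degeneracy (a neighbour collision, an isotropic tangency at $A_3$ or $A_4$, an edge tangent to a mirror at a vertex) can hold identically along a dominant component, so your final Zariski-openness step is legitimate. With this fibre analysis written out, your argument is a complete and valid alternative to the paper's proof.
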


\begin{proof} 
Consider  
the  fibration $\pi:F\to \k^2$, $F\subset(\cp^2)^4\times\k^2$: the $F$-fiber over a pair $(a,b)\in\k^2$ is the product $a\times b\times a\times b\subset(\cp^2)^4$. Let $\Sigma\subset F$ denote 
the  set of pairs $(ABCD,(a,b))\in F$ such that  $ABCD$ is an interior point of the  4-periodic orbit set of the billiard $a$, $b$, $a$, $b$. Set 
$$\Lambda''=\pi(\Sigma)\cap\Lambda'\subset\Lambda'.$$
For the proof of Theorem \ref{tquad} it suffices to show 
that $\Lambda''=\Lambda'$. The set $\Sigma$ is a difference of two analytic subsets  in $F$, as in the proof of Proposition \ref{comp-set}. The set $\Lambda''$ is constructible, by the latter statement and Remmert's Proper Mapping Theorem (see, \cite[p.46 of the Russian edition]{griff}). 
The set $\Lambda''$ contains a Zariski dense subset $\Lambda_{\rr}\setminus\diag\subset\Lambda'$, 
by Urquhart's Theorem \ref{tell}.  Hence, $\Lambda''$ contains a Zariski open and dense subset in $\Lambda'$. 
Now it suffices to show that the subset $\Lambda''\subset\Lambda'$ 
is closed in the usual topology. That is, fix an arbitrary sequence $(a_n,b_n)\in\Lambda''$ 
converging to a pair of smooth distinct confocal conics $(a,b)$, and let  us show that the 
billiard $a$, $b$, $a$, $b$  is 4-reflective. To do this, fix an arbitrary pair $(A,B)\in a\times b$ that satisfies the following 
genericity conditions: $A\neq B$; 
$A$ and $B$ are finite and not isotropic tangency points of the corresponding conics; 
the pair of lines symmetric to $AB$ with respect to the lines $T_Aa$ and $T_Bb$ intersect the union $a\cup b$ at eight distinct points that are 
not isotropic tangency points. Pairs $(A,B)$ satisfying the 
latter conditions exist and form a Zariski open subset in $a\times b$, since $a\neq b$. 
The pair $(A,B)$ is a limit of pairs 
$(A^n,B^n)\in a^n\times b^n$ extendable to periodic orbits $A^nB^nC^nD^n$ of the billiard $a^n$, $b^n$, $a^n$, $b^n$, since the 
latter pairs form a Zariski open dense subset in $a^n\times b^n$, by Proposition \ref{comp-set}.
After passing to a subsequence, the 
above orbits converge to a 4-periodic orbit $ABCD$ of the billiard $a$, $b$, $a$, $b$, by construction and genericity assumption. 
Thus, each pair $(A,B)$ from a Zariski open subset in $a\times b$ extends to a quadrilateral orbit, and hence, the 
billiard is 4-reflective. Theorem \ref{tquad} is proved.
\end{proof}

\begin{remark} In the above argument the assumption that  $a\neq b$   is 
important. Otherwise, a priori it may happen that while passing to the limit,  
some neighbor vertices $A^n$ and $B^n$ of a  4-periodic 
orbit collide,  and in the limit we get a degenerate 4-periodic orbit 
with coinciding neighbor vertices. 
\end{remark}
%
 
\begin{proposition} \label{prklein} (\cite[p.179]{klein}, \cite[subsection 17.4.3.1, p.334]{berger}, goes back to Laguerre) For every smooth real conic (ellipse, hyperbola, parabola) each 
its focus lies in two transverse isotropic tangent lines to the complexified conic.
\end{proposition}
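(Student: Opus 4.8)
The plan is to reduce the statement to a single clean tangency fact and then read off tangency from the focus--directrix form of the conic. First I would observe that the word ``transverse'' carries almost no content here: the only isotropic lines through a finite point $F$ are the two lines $FI_1$ and $FI_2$ joining it to the circular points $I_1=(1:i:0)$, $I_2=(1:-i:0)$, and these are automatically distinct because the real foci of an ellipse, hyperbola or parabola are finite and hence do not lie on the infinity line $\oci=I_1I_2$. Two distinct lines in $\cp^2$ meet transversally, so the whole content of the proposition is that both $FI_1$ and $FI_2$ are tangent to the complexified conic.

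To prove this tangency I would use the focus--directrix description, which is available for exactly the three listed types (the circle, having no directrix, is correctly excluded). Fix a focus $F$, its directrix $\{L=0\}$, which is a real line, and the eccentricity $e\neq0$; then the conic $C$ is cut out by the quadratic $Q(P)=\|P-F\|^2-e^2L(P)^2$, where $\|\cdot\|^2$ is the complexified Euclidean form. The key observation is that the focus--distance vanishes identically along an isotropic line through $F$: a point of $\ell=FI_1$ has the form $P=F+t(1,i)$, and since the direction $(1,i)$ is null one gets $\|P-F\|^2=t^2(1+i^2)=0$ for every $t$. Thus on $\ell$ the first term of $Q$ disappears and only $-e^2L^2|_\ell$ survives.

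Now $L$ is affine-linear, and its restriction to $\ell$ is non-constant: the directrix is a real line, so it has non-isotropic direction, whence $L(F+t(1,i))=\alpha+\beta t$ with $\beta\neq0$. Therefore $Q|_\ell=-e^2(\alpha+\beta t)^2$ is a nonzero perfect square in $t$, so $\ell$ meets $C$ in a single double point rather than in two distinct points or along the whole line; this is exactly the statement that $\ell$ is tangent to $C$. The same computation applies verbatim to $FI_2$ with direction $(1,-i)$; alternatively, complex conjugation, which fixes the real conic and the real focus while interchanging $I_1$ and $I_2$, carries the tangency of $FI_1$ to that of $FI_2$. Combined with the transversality remark above, this gives the proposition.

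I expect the only genuine subtleties to be bookkeeping rather than real obstacles: verifying that the focus--directrix equation defines precisely the given irreducible conic for each of the three types, confirming that the directrix direction is never isotropic so that the restricted linear form stays non-constant, and keeping in mind that the parabola contributes only a single finite focus. The heart of the argument---that an isotropic line through the focus is a null line on which $\|P-F\|^2\equiv0$, collapsing the defining quadratic to a perfect square---is uniform across all cases and needs no coordinate normalization beyond the standard focus--directrix form.
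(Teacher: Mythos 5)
Your proof is correct, but there is nothing in the paper to compare it against: Proposition \ref{prklein} is not proved there at all — it is quoted as a classical fact going back to Laguerre, with references to Klein and Berger, and is then used as a black box (in Corollary \ref{same-lines} and, through it, in Lemma \ref{propconf}). So what you have written is a self-contained replacement for a citation, and it is a sound one. Your reduction is right: a focus $F$ is a finite point, so the only isotropic lines through it are $FI_1$ and $FI_2$, which are distinct (the line $I_1I_2$ is $\oci$ and misses $F$) and hence transverse, leaving only the tangency to prove; and the tangency computation works, since on the null line $P=F+t(1,\pm i)$ the complexified Euclidean form gives $\|P-F\|^2\equiv 0$, the directrix is a real line so $L(F+t(1,\pm i))=\alpha+\beta t$ with $\beta\neq 0$, and therefore the focus--directrix quadratic restricts to $-e^2(\alpha+\beta t)^2$, a degree-two polynomial with a double root; by Bezout the line then meets the smooth conic only at that finite point, with multiplicity two, which for a smooth conic means it is the tangent line there. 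One caveat is worth recording: your method genuinely excludes circles ($e=0$, no directrix). The parenthetical in the statement can be read as excluding them, but the paper's subsequent Remark and Corollary \ref{same-lines} do apply the same picture to circles (concentric circles count as confocal, with \emph{double} isotropic tangent lines); for that case the statement needs the separate one-line check that the complexified circle passes through $I_1$ and $I_2$ and that its tangent lines at those points meet at the center. Adding that sentence would make your argument cover every case the paper actually uses.
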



\begin{corollary} \label{same-lines} \cite[p.179]{klein} Every two complexified confocal real planar conics have the same 
isotropic tangent lines: a pair of transverse isotropic lines through each focus (with multiplicities, see the next remark). 
\end{corollary}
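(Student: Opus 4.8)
The plan is to show that the isotropic tangent lines of a smooth conic are completely determined by its foci, so that two confocal conics — sharing their foci — are forced to share their isotropic tangent lines. The starting observation is purely projective: through any finite point $F$ there are exactly two isotropic lines, namely the lines $FI_1$ and $FI_2$ joining $F$ to the circular points $I_1=(1:i:0)$ and $I_2=(1:-i:0)$, and these are distinct, hence transverse, since $I_1\neq I_2$.

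First I would invoke Proposition \ref{prklein}: each focus $F$ of the conic lies on two transverse isotropic tangent lines. By the observation above, the only isotropic lines through $F$ are $FI_1$ and $FI_2$, so these two lines are precisely the tangent lines in question. Thus for a conic with real foci $F_1,F_2$ one obtains four isotropic tangent lines $F_1I_1$, $F_1I_2$, $F_2I_1$, $F_2I_2$, each depending only on a focus together with the fixed circular points, and \emph{not} on the conic itself. Next I would argue that these four lines exhaust all isotropic tangent lines: a smooth conic admits exactly two tangent lines through any point not lying on it, so through $I_1$ there are two isotropic tangent lines and through $I_2$ two more, giving four in total; the four lines $F_iI_j$ already realize this maximum, two through $I_1$ (namely $F_1I_1,F_2I_1$) and two through $I_2$, so they account for every isotropic tangent line. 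Consequently the set of isotropic tangent lines equals $\{F_iI_j\}_{i,j\in\{1,2\}}$, a configuration read off from $F_1,F_2$ alone. Since confocal real conics have, by definition, the same foci, they produce the same lines $F_iI_j$, and therefore share their isotropic tangent lines, which is the assertion of the corollary.

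The main obstacle is the bookkeeping of multiplicities in the degenerate cases, where the clean count of four distinct lines breaks down. For a circle the two real foci coalesce at the center $O$, the conic itself passes through $I_1$ and $I_2$, and the two lines $OI_1,OI_2$ (tangent to the circle at the circular points) must be counted with multiplicity two; for a parabola one focus escapes to infinity and the infinity line becomes itself an isotropic tangent line. In such cases the correspondence sending a focus to the pair of isotropic lines through it degenerates, and the identification of the tangent lines has to be read with the multiplicities recorded in the following remark. I would accordingly establish the statement in full for a generic ellipse or hyperbola by the argument above, and then dispose of the degenerate configurations either by repeating the same count with multiplicities or by a continuity argument inside the confocal family $\Lambda$.
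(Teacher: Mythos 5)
Your proof is correct and matches the route the paper intends: the paper states this corollary as a classical fact (citing Klein) that follows immediately from Proposition \ref{prklein}, and your argument---identifying the two isotropic tangent lines through each focus as the lines joining it to the circular points $I_1$, $I_2$, then counting tangent lines through each circular point to see that the four lines $F_iI_j$ exhaust all isotropic tangents---is precisely that implicit derivation made explicit. Your deferral of the degenerate cases (circle, parabola) to a count with multiplicities or a limiting argument inside $\Lambda$ is also sound, and is consistent with how the paper itself treats the analogous statement for complex confocal conics in the proof of Lemma \ref{propconf}.
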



\begin{remark} For every conic $a\subset\mathbb{CP}^2$ and  $C\in
\mathbb{CP}^2\setminus a$ 
there are two distinct tangent lines to $a$ through $C$. But if $C\in a$, then $T_Ca$ is 
the unique tangent line through $C$. We count it twice, since it is the limit of two 
confluenting tangent lines through $C'\notin a$, as $C'\to C$. If $C\in a$ is an 
isotropic point at infinity, then  $T_Ca$ is a {\it double isotropic 
tangent line to $a$.}
\end{remark}

\begin{corollary} Every smooth complex conic has four isotropic tangent lines with 
multiplicities. 
\end{corollary}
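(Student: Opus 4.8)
The plan is to deduce this immediately from the preceding remark by specializing it to the two isotropic points at infinity. First I would record what ``isotropic tangent line'' means here: by the definition of isotropic line, a tangent line to the smooth conic $a$ is isotropic exactly when it passes through one of the two isotropic points $I_1=(1:i:0)$ or $I_2=(1:-i:0)$ (the infinity line $\oc_{\infty}$ being the unique line passing through both). Hence the number of isotropic tangent lines to $a$, counted with multiplicities, is the number of tangent lines to $a$ through $I_1$ plus the number through $I_2$, each counted with multiplicity.

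Second, I would invoke the preceding remark, which asserts that through an arbitrary point $C\in\cp^2$ there pass exactly two tangent lines to a smooth conic, counted with multiplicity: two distinct tangent lines when $C\notin a$, and the single line $T_Ca$ counted twice when $C\in a$. Applying this with $C=I_1$ and then with $C=I_2$ and adding gives $2+2=4$ isotropic tangent lines with multiplicities, which is exactly the assertion of the corollary. The conceptual reason behind the ``two tangents'' count is projective duality: the dual of a smooth conic is again a smooth conic $a^{*}\subset\cp^{2*}$ of degree $2$, the tangent lines to $a$ through $C$ correspond to the points of $a^{*}$ on the line $C^{*}\subset\cp^{2*}$ (the pencil of lines through $C$), and by B\'ezout this intersection consists of two points with multiplicities, the multiplicity-two case occurring precisely when $C\in a$.

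Since the statement is essentially a direct specialization of the remark, there is no real difficulty; the only point I would make explicit is the bookkeeping of multiplicities. When $I_j\in a$ the two tangents through $I_j$ coalesce into the double isotropic tangent line $T_{I_j}a$, which is absorbed by the remark's convention. The one genuine overlap to flag is when $\oc_{\infty}$ itself is tangent to $a$: then $\oc_{\infty}$ is a tangent line through both $I_1$ and $I_2$, so it is counted once in each pair, which remains consistent with the total of four with multiplicities and matches the transverse-pair description through each focus in Corollary \ref{same-lines}. Thus the main (and only) step requiring care is verifying that this counting convention is coherent, after which the corollary follows at once.
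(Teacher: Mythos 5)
Your proposal is correct and is essentially the paper's own (implicit) argument: the paper states this corollary without proof precisely because it follows at once from the preceding remark, by counting the two tangent lines with multiplicity through each of the two isotropic points $I_1$, $I_2$. Your extra care about the case where $\oc_{\infty}$ is itself tangent to the conic (so that it is counted once in each pair) matches the convention the paper uses later, e.g.\ in the dual-plane counting of the points $A_{ij}\in a^*\cap I_i^*$ in the proof of Lemma \ref{propconf}.
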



\begin{definition} The {\it complex focus} of a smooth complex conic is an  intersection point of some 
its two distinct isotropic tangent lines. 
\end{definition}

\begin{proposition} \label{char-conic}  Let $P,Q\in\cp^2$ be an unordered pair of points that does not coincide with the 
pair of isotropic points at infinity. Let 
$a\subset\cp^2$ be a parametrized analytic curve distinct from an isotropic line such that for  every $A\in a$ the lines 
$AP$ and $AQ$ are symmetric with respect to the line $T_Aa$. Then the curve $a$ is either a conic with foci $P$ and $Q$, 
or a line with $P$ and $Q$ being symmetric with respect to $a$.
\end{proposition}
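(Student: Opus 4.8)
The hypothesis is an optical (reflection) property: at every point $A\in a$ the tangent line $T_Aa$ bisects the angle $\angle PAQ$, i.e.\ the lines $AP$ and $AQ$ are symmetric with respect to $T_Aa$. My plan is to turn this differential-geometric condition into an integrated condition on a suitable ``distance'' function and then recognize the resulting curve. The classical fact to exploit is that the bisector property is exactly the first-order condition for a confocal reflection, which in the complexified Euclidean setting is governed by the complex-bilinear form $dz_1^2+dz_2^2$. Concretely, I would introduce the complex-analytic square-distance functions $f_P(A)=\langle A-P,A-P\rangle$ and $f_Q(A)=\langle A-Q,A-Q\rangle$ (with $\langle\cdot,\cdot\rangle$ the complexified Euclidean form), pick local holomorphic square roots $\rho_P=\sqrt{f_P}$, $\rho_Q=\sqrt{f_Q}$ away from the isotropic lines through $P,Q$, and compute their differentials along the parametrized curve $a$.

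\emph{Key computation.} For a parametrized point $A(t)$, the differential $d\rho_P$ measures the cosine of the complex angle between $AP$ and the velocity $A'(t)$, and similarly for $d\rho_Q$. The symmetry of $AP$ and $AQ$ with respect to $T_Aa$ (which contains $A'(t)$) translates into the statement that the tangential components of the two unit directions from $A$ toward $P$ and toward $Q$ are equal in absolute value and opposite (or equal) in sign. This gives, along $a$, the relation
\begin{equation}
d\rho_P \pm d\rho_Q = 0,\label{eq:diffrel}
\end{equation}
with a consistent choice of sign and branch. Integrating \eqref{eq:diffrel} yields either $\rho_P+\rho_Q\equiv \text{const}$ or $\rho_P-\rho_Q\equiv \text{const}$ on the (connected) curve. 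The first case is precisely the defining equation of a conic with foci $P$ and $Q$ (the complexified ``sum/difference of distances is constant'' characterization of an ellipse/hyperbola), and I would then check that when the constant forces the quadratic relation to degenerate, the locus is a line with $P,Q$ symmetric about it; this is the exceptional alternative in the statement. That the constant cannot vanish identically in a way producing an isotropic line is guaranteed by the hypothesis that $a$ is not an isotropic line and that $\{P,Q\}$ is not the pair of isotropic points at infinity.

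\emph{Finishing and identifying foci.} Once the curve is shown to satisfy $\rho_P\pm\rho_Q=\mathrm{const}$, it is an algebraic conic (or line); to conclude that $P$ and $Q$ are genuinely its foci in the sense of the preceding subsection, I would invoke Proposition~\ref{prklein} and Corollary~\ref{same-lines} (via the complexified notion of focus as the intersection of two isotropic tangent lines) to match the analytic foci $P,Q$ with the conic's isotropic tangent structure. The degenerate alternative—$a$ a line with $P,Q$ reflections of each other—arises exactly when the conic collapses, and there the bisector condition at every point is equivalent to $P$ and $Q$ being symmetric in $a$, which is immediate.

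\textbf{Main obstacle.} The delicate point is the branch-and-sign bookkeeping in \eqref{eq:diffrel}: the square roots $\rho_P,\rho_Q$ are multivalued and the ``symmetry'' condition must be promoted from a pointwise statement about complex azimuths (as in Convention~\ref{convaz} and Proposition~\ref{tang-asympt1}) to a global integrated identity, while carefully handling the loci where $A$ lies on an isotropic line through $P$ or $Q$ (where $\rho_P$ or $\rho_Q$ vanishes or branches). I expect the real work to be verifying that the reflection law forces a \emph{single} globally consistent sign on the connected parametrized curve $a$, so that integration gives one of the two clean conic/line normal forms rather than a piecewise-glued monster; the genericity assumption that $\{P,Q\}$ is not the isotropic pair is what rules out the pathological branchings.
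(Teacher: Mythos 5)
Your route is genuinely different from the paper's, and for \emph{finite} $P,Q$ it is essentially sound. The paper makes no computation at all: it first rules out $P$ or $Q$ being an isotropic point at infinity (if $P=I_1$, the symmetry in a non-isotropic tangent line swaps the two isotropic directions, so every reflected line $AQ$ would pass through $I_2$, forcing $Q=I_2$, which is excluded), and then views $a$ as a phase curve of the double-valued singular algebraic line field $\lambda_{P,Q}$ on $\cp^2$ that assigns to $A$ the bisectors of the line pair $(AP,AQ)$; since for real $(P,Q)$ the phase curves of this field are classically exactly the conics with foci $P,Q$ together with the symmetry line of $(P,Q)$, analyticity of the family $\lambda_{P,Q}$ in the parameters $(P,Q)$ transports the statement to all admissible complex pairs. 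Your complexified string-property computation replaces that continuation argument by a direct one: writing $v_P=A-P$, $v_Q=A-Q$ and $\sigma$ for the symmetry in $T_Aa$, one has $\sigma(v_P)=\lambda v_Q$ with $\lambda^2=f_P/f_Q$, which gives exactly your relation $d\rho_P=\pm\,d\rho_Q$. Moreover, the sign/branch ``obstacle'' you single out as the main difficulty is not really one: both local alternatives $\rho_P\pm\rho_Q=c$ lie on the \emph{single} algebraic curve $(f_P+f_Q-c^2)^2=4f_Pf_Q$, i.e. $(f_P-f_Q)^2-2c^2(f_P+f_Q)+c^4=0$, which is a genuine degree-two curve because $f_P-f_Q$ is affine-linear; hence monodromy of the square roots cannot move $a$ to a different member of the family, connectedness of $a$ pins down one value of $c^2$, and $c=0$ yields precisely the symmetry-line alternative. (You also get the foci for free, without Proposition~\ref{prklein}: restricted to an isotropic line through $P$, where $f_P\equiv 0$, the equation becomes $(f_Q-c^2)^2=0$, a perfect square, so all four isotropic lines through $P$ and $Q$ are tangent to the conic, which is the definition of focus used in the paper.)

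The genuine gap is at infinity. The hypothesis excludes only $\{P,Q\}=\{I_1,I_2\}$; it allows $P$ or $Q$ to be a non-isotropic point of $\oc_{\infty}$, and the proposition is stated (and applied) at that level of generality. This case is not vacuous: it is exactly the parabola, whose mirror property reflects lines through the focus into lines through the infinite point of the axis. For such a $Q$ your functions $f_Q$ and $\rho_Q$ are undefined, so the differential identity $d\rho_P\pm d\rho_Q=0$ has no meaning and your argument simply does not cover this case; you would need a separate device (a focus--directrix affine function in place of $\rho_Q$, or a degeneration $Q\to\oc_{\infty}$ with a limiting argument), whereas the paper's phase-curve formulation works verbatim for infinite non-isotropic $P,Q$ --- this uniformity is exactly what its parameter-analyticity trick buys. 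Relatedly, the preliminary reduction should be explicit rather than attributed to ``pathological branchings'': the case $P\in\{I_1,I_2\}$, $Q$ arbitrary, must be excluded by the swap-of-isotropic-directions argument above, not by branch bookkeeping, since there too $f_P$ is undefined.
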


\begin{proof} None of the points $P$ and $Q$ is an isotropic point at infinity. Indeed, if $P=I_1$, then $Q=I_2$, by symmetry, -- a 
contradiction to the assumption that $\{ P,Q\}\neq\{ I_1,I_2\}$. 
The curve $a$ is a phase curve of the following double-valued singular algebraic line field $\lambda_{P,Q}$ on $\cp^2$: for every $A\in\cp^2$ the lines $AP$ and $AQ$ are symmetric with respect to the line $\lambda_{P,Q}(A)$.  
The singular set of the latter field is the union of the isotropic lines through $P$ and those through $Q$. 
 Each its phase curve is either a conic with foci $P$ and $Q$, or the symmetry line of the pair $(P,Q)$. 
This follows from the same  statement for real $P$ and $Q$ in the real plane, 
which is classical,  and by analyticity in $(P,Q)$ of the line field family $\lambda_{P,Q}$. The proposition is proved.
\end{proof}


\begin{definition} A {\it transverse hyperbola} is a smooth complex conic in $\cp^2$ transverse to the infinity line. 
A {\it generic hyperbola}  is a smooth 
complex conic  that has four distinct isotropic tangent lines. \end{definition} 

\begin{remark} The complexification of a real conic $a$ is a generic hyperbola, if 
and only if $a$ is either an ellipse with distinct foci, or a hyperbola. The complexification of 
a circle is a non-generic transverse hyperbola through both isotropic points $I_1$, $I_2$, 
with two double isotropic tangent lines  at them intersecting  at its center. 
Each generic hyperbola is a transverse one. 
Conversely, a transverse hyperbola  is a generic one, 
if and only if it contains no isotropic points at infinity. 
A conic confocal to a transverse (generic) hyperbola is also a transverse (generic) hyperbola. 
{\it A generic  hyperbola  (a complexified ellipse or 
hyperbola with distinct foci)} has  {\it four distinct finite complex foci} (including the two real ones). 
\end{remark}

\def\mcc{\mathcal C}

\begin{lemma} \label{propconf} Two smooth conics $a$ and $b$ are  confocal, if and only if 
one of the following cases takes place:

1) $a$ and $b$ are transverse hyperbolas with common isotropic 
tangent lines; 

2) ``non-isotropic parabolas'': $a$ and $b$ are tangent to the infinity line at a common non-isotropic point, and 
their finite isotropic tangent lines coincide;

3) ``isotropic parabolas'': 
 the conics $a$ and $b$ are tangent to the infinity line at a common isotropic point, 
have a common finite isotropic tangent line and are obtained one from the other by translation by a vector parallel to the latter 
finite isotropic tangent line.  
\end{lemma}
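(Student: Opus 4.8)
The plan is to characterize confocality through the four isotropic tangent lines together with the way each conic meets $\oci$, and then to read off the three cases. The forward implication is the easy half: if $(a,b)\in\Lambda$, then $a$ and $b$ have the same isotropic tangent lines, counted with multiplicities. Indeed, this holds for complexified confocal real conics by Corollary \ref{same-lines}; the condition ``$a$ and $b$ have the same isotropic tangent lines'' is Zariski closed in $\k\times\k$ (it is cut out by the coincidence of the two finite subschemes $a^*\cap(I_1^*\cup I_2^*)$ and $b^*\cap(I_1^*\cup I_2^*)$ of the dual plane, where $I_j^*\subset\cp^{2*}$ is the line dual to $I_j$); and $\Lambda_\rr\setminus\diag$ is Zariski dense in $\Lambda$ by the Remark. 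Hence this closed condition propagates from $\Lambda_\rr$ to all of $\Lambda$.

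Next I would split into cases according to how $a$ meets $\oci$. Since $a$ is tangent to $\oci$ if and only if the isotropic line $\oci$ is one of its isotropic tangent lines, the forward implication forces $a$ and $b$ to be simultaneously transverse to $\oci$ or simultaneously tangent to it. If both are transverse, then $\oci$ is not among the (finite) isotropic tangent lines and we are exactly in Case 1. The two conics then share their four isotropic tangent lines, hence their finite foci (the pairwise intersections of the common isotropic tangent lines through $I_1$ with those through $I_2$), and the converse in this case follows because the conics tangent to four fixed lines form a one-parameter tangential pencil which, for real foci, is the classical confocal family of ellipses and hyperbolas (or of concentric circles, when the conic passes through $I_1,I_2$); by Zariski density this pencil lies in $\Lambda$, so Case 1 is an equivalence.

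If both $a$ and $b$ are tangent to $\oci$, the tangency point is either non-isotropic (Case 2) or an isotropic point $I_1$ or $I_2$ (Case 3). Here the finite isotropic tangent lines only record the finite focus $F=\ell_1\cap\ell_2$, since $\ell_1=FI_1$ and $\ell_2=FI_2$ are determined by $F$, so they do not by themselves pin down the conic inside its confocal family. The missing datum is precisely the tangency point on $\oci$, and the content of Cases 2 and 3 is that confocal parabolas share it. I would recover this from the structure of $\Lambda$ as a degeneration of confocal ellipse pairs: writing a confocal pair of parabolas as a limit of confocal pairs of ellipses and tracking the second focus as it escapes to infinity, the common axis direction, hence the common tangency point on $\oci$, is preserved in the limit. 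In the isotropic subcase (Case 3) one moreover checks that the residual one-parameter confocal family consists of translates along the finite isotropic tangent line $\ell$: translation along the isotropic direction of $\ell$ fixes $\oci$, keeps $\oci$ tangent at $I_1$ and keeps $\ell$ tangent, so it preserves the whole isotropic tangent configuration and the tangency point, and a dimension count shows these translates exhaust the family. Conversely, for both parabolic cases membership in $\Lambda$ is obtained by realizing the configuration as a limit of complexified real confocal conics, using that $\Lambda$ is the Zariski closure of $\Lambda_\rr$.

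The main obstacle is exactly the parabolic analysis of Cases 2 and 3: there ``same isotropic tangent lines'' is strictly weaker than confocality, so one must extract the extra invariant (the common tangency point on $\oci$, and in Case 3 the translation relation) from the degeneration of the real confocal families, and must carefully control the multiplicities of $\oci$ as an isotropic tangent line when the conic touches $\oci$ or passes through $I_1,I_2$. The transverse Case 1 and the forward implication ``confocal $\Rightarrow$ same isotropic tangent lines'' are by comparison routine.
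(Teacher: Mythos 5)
Your overall strategy is the same as the paper's: reduce confocality to the configuration of isotropic tangent lines, using that confocal pairs are limits (Zariski-dense degenerations) of real/generic confocal pairs; your Case 1 is in substance the paper's Proposition \ref{isofoci}, and your forward analysis of Case 2 (tracking the escaping focus) is the primal version of the paper's dual argument with colliding intersection points $A_{ij}^n\in a_n^*\cap b_n^*$. However, there is a genuine gap in your Case 3. The tangential configuration there does \emph{not} cut out a one-parameter family, so your ``dimension count'' cannot close the argument. If $a$ is tangent to $\oc_{\infty}$ at $I_1$ and has finite isotropic tangent line $\ell$ (which passes through $I_2$), then the smooth conics sharing this configuration are, dually, the conics in $\cp^{5}$ passing through the point $\ell^*$ and tangent to the line $I_1^*$ at the point $p$ dual to $\oc_{\infty}$ --- three linear conditions, hence a \emph{two}-dimensional family. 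The translates of $a$ along $\ell$ form a one-dimensional subfamily, and the confocal family $C_a$ is another one-dimensional subfamily; two curves inside a surface need not coincide, so ``translations preserve the configuration, plus dimensions'' proves nothing. The translation relation is genuinely extra information beyond the tangential data (this is exactly why Case 3 of the statement carries it as an additional hypothesis, unlike Case 2, where the configuration already imposes four dual conditions and yields a one-dimensional pencil). The paper extracts it by showing that in this degeneration \emph{three} of the four intersection points of $a_n^*\cap b_n^*$ collide at $p$, so that $a$ and $b$ have triple contact at their common tangency point with $\oc_{\infty}$, and then Proposition \ref{ptransl} converts triple contact into the translation relation. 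Your proposal never derives this contact order: tracking the escaping focus gives only the common tangency point, which is the Case 2 statement, not the Case 3 one.

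A secondary weakness is your one-line converse for the parabolic cases: ``membership in $\Lambda$ is obtained by realizing the configuration as a limit of complexified real confocal conics.'' Taken literally (limits in the usual topology) this fails: complexifications of real pairs and their limits are invariant under complex conjugation of coefficients, whereas a generic complex pair satisfying Case 2 or Case 3 is not, so it is not such a limit. To repair this one needs exactly what the paper proves: for $a$ not a generic hyperbola, the set $CT_a$ of conics tangentially confocal to $a$ is an irreducible one-dimensional variety (a punctured Riemann sphere --- translates in Case 3, a pencil of dual conics through three points with a fixed tangent line in Case 2), while $C_a\subset CT_a$ is closed and of positive dimension, whence $C_a=CT_a$. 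That irreducibility-plus-closedness argument, not a limit construction, is what forces every tangentially confocal pair into $\Lambda$.
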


The first step in the proof of Lemma \ref{propconf}  is the following proposition.

\begin{proposition} \label{isofoci} Let $a$ be a generic hyperbola. A smooth conic $b$ is confocal to $a$, 
 if and only if  $a$ and  $b$ have common isotropic tangent lines. 
\end{proposition}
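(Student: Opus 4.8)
The plan is to prove the two implications separately, exploiting that $\Lambda$ is the minimal algebraic set containing the real confocal locus $\Lambda_\rr$.

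\textbf{Confocal $\Rightarrow$ common isotropic tangent lines.} For a smooth conic the two tangent lines through each isotropic point $I_1,I_2$ depend rationally on the conic, so the locus $W\subset\k'\times\k'$ of pairs sharing all four isotropic tangent lines is Zariski closed in $\k'\times\k'$. By Corollary \ref{same-lines} every pair of distinct complexified confocal real conics lies in $W$, that is, $\Lambda_\rr\setminus\diag\subseteq W$. Now $\Lambda'$ is irreducible, being Zariski open and dense in the irreducible set $\Lambda$, and $\Lambda_\rr\setminus\diag$ is Zariski dense in $\Lambda'$; since $W\cap\Lambda'$ is Zariski closed in $\Lambda'$ and contains this dense set, we get $\Lambda'\subseteq W$. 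Hence any two distinct smooth confocal conics — in particular a generic hyperbola $a$ and any smooth conic $b$ confocal to it — have the same four isotropic tangent lines.

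\textbf{Common isotropic tangent lines $\Rightarrow$ confocal.} Fix a generic hyperbola $a$, so it has four distinct isotropic tangent lines $\ell_1,\dots,\ell_4$, two through each $I_j$. By projective duality the conics tangent to all four of these lines correspond to the dual conics through the four fixed dual points, which form a pencil; thus they form a line $P_a\subset\k$, an irreducible curve. Writing $\Lambda_a=\{\,b:(a,b)\in\Lambda\,\}$, the forward implication gives $\Lambda_a\cap\k'\subseteq P_a$, and it remains to prove the reverse inclusion on the smooth locus. Consider the first projection $p:\Lambda\to\k$, $(a,b)\mapsto a$. Its image is closed since $\Lambda$ is projective, and it contains the Zariski closure of all real ellipses, which is all of $\k$; so $p$ is surjective, and as $\dim\Lambda=6$, $\dim\k=5$, the generic fibre has dimension $1$. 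By upper semicontinuity of fibre dimension the set $\{a:\dim\Lambda_a\geq1\}$ is Zariski closed and dense, hence all of $\k$, so $\dim\Lambda_a\geq1$ for our $a$. Since $\Lambda_a$ is closed with $\Lambda_a\cap\k'\subseteq P_a$ and $P_a$ is an irreducible curve, once the one-dimensional part of $\Lambda_a$ is known to meet $\k'$ we get $\overline{\Lambda_a\cap\k'}=P_a$, whence $P_a\cap\k'\subseteq\Lambda_a$ and every smooth conic with the same isotropic tangent lines as $a$ is confocal to it.

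\textbf{Expected main obstacle.} The delicate point is exactly the last one: ruling out that the positive-dimensional fibre $\Lambda_a$ is entirely absorbed into the discriminant of degenerate conics, leaving only finitely many smooth confocal conics. I would settle this by seeding smooth members from the real case, where for a real generic conic $a_0$ the classical one-parameter real confocal family is Zariski dense in the line $P_{a_0}$ and lies in $\Lambda$, giving $P_{a_0}\subseteq\Lambda_{a_0}$ with smooth members; propagating smoothness of the generic fibre member across the dominant projection $p$ then guarantees that the $1$-dimensional fibre over a generic hyperbola genuinely meets $\k'$, completing the identification $\Lambda_a\cap\k'=P_a\cap\k'$.
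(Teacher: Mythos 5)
Your forward implication is correct and is essentially the paper's own argument (Corollary \ref{same-lines} together with minimality of $\Lambda$ and Zariski density of $\Lambda_{\mathbb R}\setminus\operatorname{diag}$ in $\Lambda'$). The genuine gap is in the backward implication, exactly at the point you flag, and your proposed repair does not close it. What semicontinuity and Chevalley's theorem actually give you is: (a) $\dim\Lambda_a\geq 1$ for \emph{every} $a$ (this part of your argument is fine), and (b) the set of $a$ for which $\Lambda_a$ meets $\mathcal K'$ is constructible and contains a dense Zariski open subset of $\mathcal K$. But (b) is strictly weaker than what the proposition demands: it quantifies over \emph{every} generic hyperbola $a$, and the complement of a dense constructible set is only contained in a proper Zariski closed subset of $\mathcal K$, which may perfectly well still contain generic hyperbolas (the generic hyperbolas form a dense open set, so a $4$-dimensional bad locus could meet it). ``Propagating smoothness of the generic fibre member across the dominant projection'' is precisely the step with no content: a property known on generic fibres never propagates by itself to all fibres, and nothing you wrote excludes that for some particular generic hyperbola $a$ the whole positive-dimensional fibre $\Lambda_a$ sits inside the discriminant hypersurface of degenerate conics, leaving $\Lambda_a\cap\mathcal K'$ finite and your closure argument inside $P_a$ with nothing to close.

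The hole can be closed inside your fibre-wise framework by one observation you did not make: the diagonal of $\mathcal K$ lies in $\Lambda$. Indeed, $(e,e)$, for $e$ a real ellipse, is a usual-topology limit of pairs $(e,e_n)\in\Lambda_{\mathbb R}$ of distinct confocal ellipses, hence lies in the closed set $\Lambda$, and the Zariski closure of the real-ellipse diagonal is the full diagonal. Consequently $(a,a)$ is a point of the fibre over \emph{every} generic hyperbola $a$ whose second component is smooth; applying the theorem on fibre dimension to the irreducible $6$-dimensional variety $\Lambda\cap(\mathcal K\times\mathcal K')$ (a non-empty Zariski open subset of $\Lambda$, mapping dominantly to $\mathcal K$) shows that every component of its non-empty fibre over $a$ has dimension $\geq 1$, so $\Lambda_a\cap\mathcal K'$ is infinite and your argument then concludes. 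The paper avoids fibre-wise reasoning altogether: it shows that the total space $\mathcal C$ of pairs (generic hyperbola, smooth conic) with common isotropic tangent lines is irreducible of dimension $6$ — it fibres over the $4$-dimensional space of quadruples $Q$ of isotropic lines with fibres $C_Q\times C_Q$, each $C_Q$ a punctured projective line — while $\widetilde\Lambda=\Lambda\cap(\mathcal K''\times\mathcal K')$ is a $6$-dimensional subvariety that is closed in $\mathcal C$, whence $\mathcal C=\widetilde\Lambda$ by irreducibility, with no need to control any individual fibre $\Lambda_a$.
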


\begin{proof} Let $\mck''\subset\mck'$ denote the subset of generic hyperbolas. Set
 $$ \wt\Lambda=\Lambda\cap(\mck''\times\mck').$$
Let $\mathcal C\subset\mck''\times\mck'$ denote the subset of pairs of conics having common isotropic tangent lines. These are 
quasiprojective algebraic varieties. We have to show that $\mathcal C=\wt\Lambda$. 
 Indeed, one has $\mathcal C\supset\wt\Lambda$, by Corollary \ref{same-lines} and minimality of the set $\Lambda$. 
For every quadruple $Q$ of distinct isotropic lines, two through each isotropic point 
at infinity, let $C_Q\subset\k'$ denote the space of smooth conics tangent to the collection $Q$. In other terms, 
the conics of the space $C_Q$ are dual to the conics passing through the given four points dual to the lines in $Q$ in the dual projective 
plane. No triple of 
the latter four points lies in the same line. This implies that  the space $C_Q$ is 
conformally-equivalent to punctured projective line. 
 The space $\mathcal C$ is holomorphically fibered over the four-dimensional  space of the above 
quadruples $Q$ with fibers $C_Q\times C_Q$. This implies that 
$\mathcal C$ is a 6-dimensional irreducible quasiprojective variety containing another 6-dimensional irreducible quasiprojective variety 
$\wt\Lambda$. The latter is  a closed subset in the usual topology of the ambient set $\mathcal C$, by definition. Hence, 
both varieties coincide. The proposition is proved.
\end{proof}


\begin{proof} {\bf of Lemma \ref{propconf}.} 
We will call a pair of conics {\it tangentially confocal,} if they satisfy one of the above statements 1)--3). 
First we show that every pair  $(a,b)$ of confocal conics is  tangentially confocal. Then we prove the converse. 
 In the proof of the lemma we use the fact that {\it every pair of confocal conics is a limit of pairs of confocal generic 
hyperbolas}, since the latter pairs form a Zariski open and dense subset in  $\Lambda$. 
 This together with Proposition \ref{isofoci} implies that {\it every two confocal conics have common isotropic tangent lines 
(with multiplicities)}. 

For the proof of Lemma \ref{propconf} we translate the tangential confocality into the dual language. Let $I_1^*,I_2^*\subset\cp^{2*}$ be 
the dual lines to the isotropic points at infinity, set $p=\oc_{\infty}^*=I_1^*\cap I_2^*$. A pair of smooth conics $a$ and $b$ 
are confocal generic hyperbolas, 
if and only if the dual curves $a^*$ and $b^*$ pass through 
the same four distinct points $A_{11}, A_{12}\in I_1^*$, $A_{21},A_{22}\in I_2^*$ (Proposition \ref{isofoci}). 
Now let $(a,b)$ be a limit of pairs $(a_n,b_n)$ 
of confocal generic hyperbolas, let $A_{ij}^n\in I_i^*$ denote the above points of intersection $a_n^*\cap b_n^*$, and let $a$ be not a 
generic hyperbola.  Let us show that one has some of  cases 1)--3). Passing to a subsequence, 
 we consider that the points $A_{ij}^n$ converge to some limits $A_{ij}\in a^*\cap I_i^*$. Then one of the following holds: 

(i) One has $A_{ij}\neq p$ for all $(i,j)$. Then $p\notin a^*,b^*$, hence 
$a$ and $b$ are transverse hyperbolas, and we have case 1).

(ii) One has  $A_{11}=A_{21}=p$, $A_{12}, A_{22}\neq p$, and the conics $a^*$ and $b^*$ are tangent 
to each other at the limit $p$ of colliding intersection points $A_{11}^n,A_{21}^n\in a_n^*\cap b_n^*$. 
This implies that the curves $a$ and $b$ are as in case 2). 

(iii) One has $p=A_{11}=A_{21}=A_{22}\neq A_{12}$ (up to permuting $I_1$ and $I_2$), and the conics $a^*$ and $b^*$ are tangent to each other with triple contact at the limit $p$ of three colliding intersection points $A_{ij}^n\in a_n^*\cap b_n^*$. The corresponding tangent line 
coincides with $I_2^*$, since the points $A_{21}^n,A_{22}^n\in a_n^*\cap I_2^*$ collide. 
Therefore, the conics $a$ and $b$ are tangent to the infinity line at $I_2$ and have triple contact there between them, and 
they have a common finite isotropic tangent line $A_{12}^*$ through $I_1$. Below we show that then we have case 3). 

(iv) All the points $A_{ij}$ coincide with $p$. Then a smooth conic $a^*$ should be tangent at $p$ to both transverse lines $I_1^*$ and 
$I_2^*$, analogously to the above discussion, -- a contradiction. Hence, this case is impossible.

 \begin{proposition}  \label{ptransl} Let two distinct smooth conics $a,b\subset\cp^2$ be tangent to the infinity line $\oc_{\infty}$ at a common 
 point.  Then they  
 have at least  triple contact there, if and only if they are obtained from each other by translation.
    \end{proposition}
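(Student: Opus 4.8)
The plan is to reduce the statement to an explicit normal form in which both conics become affine parabolas with a common vertical axis direction, and then to isolate the leading coefficient as the single invariant that simultaneously governs the contact order and translation-equivalence.

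First I would normalize the common tangency point. Since $a$ and $b$ are tangent to $\oc_{\infty}$ at a common point $P$, we have $P\in\oc_{\infty}$. The group of complex affine transformations acts on the directions $\oc_{\infty}\cong\cp^1$ through its linear parts, and this action is transitive, so some affine map sends $P$ to $(0:1:0)$. Three remarks make the reduction legitimate for every notion in the proposition: intersection multiplicity at a point is an affine invariant, so ``at least triple contact at $P$'' is preserved; tangency to $\oc_{\infty}$ is preserved since affine maps fix $\oc_{\infty}$; and the translations form a \emph{normal} subgroup of the affine group (with $\Phi^{-1}\tau_t\Phi=\tau_{M^{-1}t}$ for $\Phi\colon x\mapsto Mx+s$), so ``$b$ is a translate of $a$'' is itself affine-invariant. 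This last point is the subtlety I expect to be the main obstacle: one must normalize by an \emph{affine} map rather than a general projective one, precisely so that the translation structure survives the reduction, and I would flag this explicitly.

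Next I would describe the conics in the normal form. Writing a conic as $Ax^2+Bxy+Cy^2+Dxz+Eyz+Fz^2=0$ with $\oc_{\infty}=\{z=0\}$, passage through $(0:1:0)$ forces $C=0$, the tangent line there being $\oc_{\infty}$ forces $B=0$ and $E\neq0$, and smoothness forces $A\neq0$ (otherwise the conic splits off the line $\oc_{\infty}$). Restricting to the finite chart $z=1$ and rescaling, each such conic becomes an affine parabola $y=\alpha x^2+\beta x+\gamma$ with $\alpha\neq0$, and conversely every such parabola arises this way. Thus $a$ and $b$ correspond to triples $(\alpha,\beta,\gamma)$ and $(\alpha',\beta',\gamma')$.

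Finally I would run the two parallel computations and match them. For the contact order I work in the chart $y=1$ with coordinates $(x,z)$ centered at $P$; solving the implicit equation of $a$ gives the branch $z=\alpha x^2+\alpha\beta x^3+O(x^4)$, and substituting into the equation of $b$ yields
\[
(\alpha-\alpha')x^2+\alpha(\beta-\beta')x^3+O(x^4),
\]
so the intersection multiplicity at $P$ equals $2$ when $\alpha\neq\alpha'$ and is $\geq3$ exactly when $\alpha=\alpha'$; hence ``at least triple contact'' is equivalent to $\alpha=\alpha'$. For the translation side, applying $(x,y)\mapsto(x+u,y+v)$ to $y=\alpha x^2+\beta x+\gamma$ gives $y=\alpha x^2+(\beta-2\alpha u)x+(\cdots)$, so translations preserve $\alpha$ and, since $\alpha\neq0$, realize any prescribed change of $(\beta,\gamma)$. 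Therefore $b$ is a translate of $a$ if and only if $\alpha=\alpha'$. Combining the two equivalences, and recalling that $a\neq b$ only excludes the trivial case $(\alpha,\beta,\gamma)=(\alpha',\beta',\gamma')$, yields the proposition.
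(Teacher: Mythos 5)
Your proof is correct, but it takes a genuinely different route from the paper's. The paper argues synthetically via B\'ezout: for the forward direction it notes that two distinct translates satisfy a linear equation at their finite intersections, hence meet in at most one finite point, forcing intersection multiplicity $\geq 3$ at infinity; for the converse it picks finite points $A\in a$, $B\in b$ with parallel tangent lines, translates $a$ by the vector $AB$ to a conic $a_1$ tangent to $b$ at $B$ and still having triple contact with $b$ at infinity, and concludes $a_1=b$ because otherwise the intersection index would be at least $2+3=5>4$. You instead reduce everything to the normal form $y=\alpha x^2+\beta x+\gamma$ and exhibit the leading coefficient $\alpha$ as a complete invariant simultaneously for contact order $\geq 3$ at $(0:1:0)$ and for translation-equivalence, so both implications drop out of one computation. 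Your attention to normalizing by an \emph{affine} rather than projective map, and to translations being normal in the affine group, is exactly the point that makes this reduction legitimate, and it is handled correctly. What the paper's argument buys is brevity and coordinate-freeness, reusing the intersection-index technique that appears throughout the paper; what yours buys is transparency and slightly more information (the contact is exactly quadratic when the conics are not translates, and the equivalence is seen to be governed by a single explicit invariant), while avoiding the need to produce points with parallel tangents and the implicit step that triple contact at infinity is itself translation-invariant.
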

  
  \begin{proof} One direction is obvious: if conics $a$ and $b$ tangent to $\oc_{\infty}$ 
  are  translation images of each other, then they have common tangency point with $\oc_{\infty}$ and  at least triple contact there 
  between them. The latter follows from the elementary fact that in this case they may have at most one finite intersection point, which 
  is a solution of a linear equation. 
   Let us prove the converse. Suppose they are tangent to the infinity line at a common point and have at least triple contact there.  
  Let $A\in a$ and $B\in b$ be arbitrary two finite points with parallel tangent lines. The translation by the vector $AB$ sends the 
  curve $a$ to a curve $a_1$ tangent to $b$ at $B$ that has at least triple contact with $b$ at infinity. If  $a_1\neq b$, then their 
  intersection index is at least 5,  -- a contradiction.  Hence, $a_1=b$. This proves the proposition.
  \end{proof}

 Thus, in case (iii) $a$ and $b$ are translation images of each other, by  
 Proposition \ref{ptransl}, and have  a common 
 finite isotropic tangent line (hence, parallel to the translation vector). Therefore, we have case 3).

For every smooth conic $a$ let 
$C_a$ ($CT_a$)  denote respectively the space of smooth conics confocal (tangentially confocal) to $a$. These are quasiprojective varieties. 
We have  shown above that 
$C_a\subset CT_a$, and for the proof of the lemma it suffices to show that $CT_a=C_a$. In the case, when $a$ is a generic hyperbola, this 
follows from Proposition \ref{isofoci}. 
Note that $dim C_a>0$, since this is true for a Zariski open dense subset in $\mathcal K$ of generic hyperbolas $a$   
(see Proposition \ref{isofoci} and its proof) and remains valid while passing to limits. 
Moreover, the subset $C_a\subset \mathcal K'$ is closed by definition. 
Fix a smooth conic $a$ that is not a generic hyperbola.  
Let us show that  $CT_a$ is  a punctured Riemann sphere. This together with the inclusion $C_a\subset CT_a$ and 
closeness  will imply that $CT_a=C_a$ and prove the lemma. 
We will treat  separately each one of cases 1)--3) (or an equivalent dual case (i)--(iii)). 

Case 3) is obvious: the space $CT_a$ of images of the conic $a$ by translations parallel to a given line 
is obviously conformally equivalent to $\cc$. Let us treat case 2)=(ii). In this case the dual curve $a^*$ intersects the union 
$I_1^*\cup I_2^*$ at exactly three distinct points:  $A_{12}\in I_1^*$, $A_{22}\in I_2^*$ and $p=I_1^*\cap I_2^*=\oc_{\infty}^*$. 
The tangent line $l_p=T_pa^*$ is transverse to the lines $I_j^*=pA_{j2}$, $j=1,2$, since $a^*$ is a conic intersecting each line 
$I_j^*$ at two distinct points. 
The conics  tangentially confocal to $a$ are dual to exactly those conics $b^*$  that pass through the  points $A_{12}$, $A_{22}$, $p$ 
 and are tangent to the line $l_p$ at $p$. The latter three points and line being in generic position,  
the space of  conics $b^*$ respecting them as above is a punctured projective line. In case (i) the proof is analogous 
and is omitted to save the space. Lemma \ref{propconf} is proved.
\end{proof}

\begin{corollary} \label{tan-con} Let two confocal conics $a$ and $b$ be tangent to each other. Then each their 
tangency point lies on the infinite line, 
the corresponding tangent line  is isotropic, and one of the following cases holds:

(i) single tangency point of quadratic contact; either the tangency point is isotropic and the tangent line is finite; or 
it is non-isotropic, and the tangent line is infinite;

(ii) two tangency points, which are the two isotropic points at infinity; the tangent lines are finite;

(iii) single tangency point of triple contact: an isotropic point at infinity, the tangent line is infinite. 
\end{corollary}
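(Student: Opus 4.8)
The plan is to transport everything into the dual picture already set up in the proof of Lemma \ref{propconf}, where a pair of confocal conics $(a,b)$ corresponds to a pair of smooth dual conics $a^*,b^*\subset\cp^{2*}$ meeting exactly at the four points $A_{11},A_{12}\in I_1^*$ and $A_{21},A_{22}\in I_2^*$ that are dual to the common isotropic tangent lines, where $I_1^*,I_2^*$ are dual to the isotropic points $I_1,I_2$ and $p=I_1^*\cap I_2^*=\oci^*$. The starting observation is that a tangency of $a$ and $b$ at a flag $(P,L)$ with $P\in a\cap b$ and $L=T_Pa=T_Pb$ is equivalent, under projective duality, to a tangency of $a^*$ and $b^*$ at the point $L^*$ with common tangent line $P^*$; since $a^*\cap b^*=\{A_{11},A_{12},A_{21},A_{22}\}$ by B\'ezout (Corollary \ref{same-lines}, Proposition \ref{isofoci}), such a tangency occurs if and only if at least two of these four points collide. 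In particular every common tangent line $L$ of $a$ and $b$ is dual to a point of $I_1^*\cup I_2^*$, hence passes through $I_1$ or $I_2$ and is therefore isotropic; this already proves the assertion on tangent lines.

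Next I would locate the tangency point $P$ and show $P\in\oci$, i.e. $P^*\ni p$. If the colliding points lie on a single line, say $A_{i1}=A_{i2}=Q\in I_i^*$, then $I_i^*$ meets each of $a^*,b^*$ with multiplicity two at $Q$, hence is tangent to both there, so the common tangent line is $I_i^*$ and $P=(I_i^*)^*=I_i\in\oci$; moreover $L=Q^*$ is finite exactly when $Q\neq p$. If instead the collision occurs at $L^*=p$, then $P^*=T_pa^*\ni p$, so again $P\in\oci$, while $L=p^*=\oci$. Thus in every case the tangency point lies at infinity and the tangent line is isotropic, the finite tangent lines being precisely those dual to collision points on some $I_i^*$ away from $p$.

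Finally I would read off the trichotomy from the admissible collision patterns, using that, since conics have no inflections, duality preserves intersection multiplicity, so the contact order of $a,b$ at $P$ equals the number of $A_{ij}$ collided at $L^*$. A single double collision on one line $I_i^*$ away from $p$ gives quadratic contact at the isotropic point $I_i$ with finite tangent line; a single double collision at $p$ (necessarily $A_{11}=A_{21}=p$, one point from each line, with $a^*$ transverse to both $I_1^*,I_2^*$ there) gives quadratic contact at a \emph{non-isotropic} point of $\oci$ with tangent line $\oci$; two separate double collisions, one on each line, give two quadratic contacts at $I_1$ and $I_2$ with finite tangent lines; and a triple collision at $p$ (forced, since $p\in I_i^*$ makes both intersections of $a^*$ with one line accumulate at $p$, so that $I_j^*$ becomes tangent to $a^*$ at $p$) gives triple contact at the isotropic point $I_j$ with tangent line $\oci$. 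The quadruple collision is impossible, since a smooth conic cannot be tangent to the two distinct lines $I_1^*,I_2^*$ at their common point $p$. Matching these patterns with cases 1)--3) of Lemma \ref{propconf} (non-generic transverse hyperbolas through one or both isotropic points; non-isotropic parabolas; and isotropic parabolas, the latter being translates by Proposition \ref{ptransl}) yields exactly cases (i), (ii), (iii).

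The step requiring the most care, and the main obstacle, is the contact-order bookkeeping at the distinguished point $p=\oci^*$, where the two lines $I_1^*$ and $I_2^*$ cross: one must track whether $a^*$ is tangent or transverse to each $I_i^*$ at a collision to decide whether the common tangent line is some $I_i^*$ (so $P=I_i$) or $\oci$ (so $P$ non-isotropic), and to separate the genuinely quadratic contact $A_{11}=A_{21}=p$ from the triple contact that is automatically forced whenever both intersections on one line collide at $p$. Everything else is a direct dictionary between the primal configurations of Lemma \ref{propconf} and the dual collision combinatorics.
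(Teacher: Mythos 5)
Your proposal is correct and takes essentially the same route as the paper's own proof: the paper likewise passes to the dual conics, invokes the collision patterns of the four points $A_{ij}$ worked out in the proof of Lemma \ref{propconf}, and translates back via the correspondence of tangencies (and contact orders) under projective duality. Your write-up simply makes explicit the collision bookkeeping at $p$ and on the lines $I_j^*$ that the paper compresses into a reference to the lemma's proof.
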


\begin{proof}
Let $a$ and $b$ be tangent confocal conics. All their common 
tangent lines are isotropic, since this is true for generic hyperbolas and remains valid after passing to limit. 
Case 1) of the lemma corresponds to Cases (i) (first subcase) or (ii) of the corollary. Case 2) of the lemma corresponds to Case (i), second 
subcase. 
Case 3) of the lemma corresponds to Case (iii) of the corollary. These statements follow from the proof of the lemma (the arguments on 
the points $A_{ij}$ of intersection of the dual conics) and the fact that a tangency of two curves corresponds to a tangency of the dual curves. 
For example, in Case 1) (or equivalently, case (i) from the proof of the lemma) a tangency point $O$ of the conics $a$ and $b$ corresponds to 
a common tangency point of the dual conics $a^*$ and $b^*$ with a line $I_j^*$, $j=1,2$. This implies that $O=I_j$. The other cases are 
treated analogously.   
\end{proof}

\subsection{Curves with property (I) of maximal isotropic tangency}

In this subsection we describe the class of special rational curves having property (I) introduced below
(Proposition \ref{pisotr} and Corollary \ref{cnform}). We show in Subsection 3.2 that mirrors of every 4-reflective algebraic 
billiard without lines belong to this class.    These results will be used  in Section 4.

\begin{definition} \label{defi} We say that a  planar projective algebraic curve $a$ that is not a line {\it has property (I)}, 
if every its isotropic tangent line intersects its 
normalization $\ha$ at a single point $A$, see Convention \ref{conv2}; the  intersection 
index of the curve  $\ha$ with $T_Aa$ at $A$ (see Definition \ref{defbranch}) then equals the degree of the curve $a$. 
\end{definition}

\begin{remark} Every conic has property (I). Corollary \ref{cnform} below shows that the 
converse is not true. A curve $a$ that is not a line has property (I), if and only if its dual $a^*$ satisfies the following statement:

($I^*$) For every $j=1,2$ and $t\in a^*\cap I_j^*$ the germ $(a^*,t)$ is irreducible, the line $T_ta^*$ is the only line through $t$ 
tangent to $a^*$, and $t$ is their unique tangency point.  
\end{remark}

\begin{corollary} Each planar projective curve with property (I) has at least two distinct  isotropic tangent lines.
\end{corollary}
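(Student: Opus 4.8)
The plan is to pass to the dual projective plane and exploit the characterization $(I^*)$ of property (I) recorded in the Remark, reducing everything to Bézout's theorem and the multiplicity bound for irreducible curves. Recall that the isotropic tangent lines of $a$ correspond bijectively to the points of $a^*\cap(I_1^*\cup I_2^*)$, where $a^*\subset\cp^{2*}$ is the dual curve and $I_j^*$ is the line dual to the isotropic point $I_j$; distinct isotropic tangent lines correspond to distinct such points. Since $a$ is an irreducible curve that is not a line, its dual $a^*$ is irreducible and is not a line (otherwise biduality $a=a^{**}$ would make $a$ a point), so $m:=\deg a^*\geq2$. It therefore suffices to prove that $a^*\cap(I_1^*\cup I_2^*)$ contains at least two points.

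First I would argue by contradiction, assuming this set consists of a single point $t$. Each line $I_j^*$ meets the positive-degree curve $a^*$, so both $a^*\cap I_1^*$ and $a^*\cap I_2^*$ are nonempty; being contained in $\{t\}$, they force $t\in I_1^*\cap I_2^*$. As $I_1^*$ and $I_2^*$ are distinct lines, their intersection is the single point $p=\oc_\infty^*$, so $t=p$: the only isotropic tangent line would be the infinity line, and set-theoretically $a^*\cap I_1^*=a^*\cap I_2^*=\{p\}$.

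The numerical contradiction then comes from combining Bézout with the multiplicity bound. Because all of $a^*\cap I_j^*$ is concentrated at $p$, Bézout gives intersection multiplicity $(a^*\cdot I_j^*)_p=m$ for $j=1,2$. By property (I), equivalently $(I^*)$, the germ $(a^*,p)$ is irreducible, hence its tangent cone is a single line $T$ taken with multiplicity $\mu=\operatorname{mult}_p(a^*)$. Since $I_1^*\neq I_2^*$, at least one of them, say $I_j^*$, differs from $T$ and so is not contained in the tangent cone; for such a line $(a^*\cdot I_j^*)_p=\mu$. Comparing with the Bézout value yields $\mu=m$. But a point on an irreducible projective plane curve of degree $m\geq2$ has multiplicity at most $m-1$ (multiplicity $m$ would force $a^*$ to split into $m$ lines through $p$), a contradiction, completing the proof.

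The genuinely essential use of the hypothesis, and the only real subtlety, is the step deducing from $(I^*)$ that the germ $(a^*,p)$ has a \emph{single} tangent-cone line: without irreducibility of this germ its tangent cone could contain both $I_1^*$ and $I_2^*$, letting both local intersection multiplicities exceed $\mu$ and dissolving the contradiction. This is precisely why the statement can fail for general curves yet holds under property (I); I would accordingly isolate the ``single tangent line of the germ'' consequence of $(I^*)$ as the pivotal observation, while treating nonemptiness of the intersections, biduality, and the multiplicity bound as standard facts.
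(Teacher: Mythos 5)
Your proof is correct and is essentially the paper's own argument: dualize, observe that the contrary assumption forces $a^*\cap(I_1^*\cup I_2^*)$ to reduce to the single point $p=I_1^*\cap I_2^*$, and use the irreducibility of the germ $(a^*,p)$ guaranteed by $(I^*)$ to conclude that its unique tangent line differs from at least one of $I_1^*$, $I_2^*$. The only difference is one of detail: you make explicit, via B\'ezout and the bound that a point of an irreducible plane curve of degree $m\ge 2$ has multiplicity at most $m-1$, the step the paper asserts without elaboration, namely that non-tangency to $I_1^*$ forces $a^*$ to meet $I_1^*$ at a second point.
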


\begin{proof} Let $a$ be a property (I) curve. Its isotropic tangent lines are dual to the points of non-empty intersection 
$a^*\cap(I_1^*\cup I_2^*)$. Hence, the contrary to the corollary would imply that this intersection reduces to 
$s=I_1^*\cap I_2^*$. The germ $(a^*,s)$ is irreducible (the above statement ($I^*$)), and hence is not tangent at $s$, say, to 
the line $I_1^*$. This  implies that $a^*$ should intersect $I_1^*$ at some other point. The contradiction thus obtained proves the corollary.
\end{proof}
 
\begin{proposition} \label{pisotr} Let a rational  curve $a$ have  property (I). Then

(i) either $a$ has at least three distinct isotropic tangency points: then  
it  has no cusps, and at least one  its isotropic tangency point is finite;

(ii) or it has exactly two distinct isotropic tangency points; then   at least one of them is an isotropic point at infinity, and $a$ 
has no cusps  except maybe for some of the two latter points.
\end{proposition}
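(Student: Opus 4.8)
The plan is to analyze the two projections from the isotropic points $I_1=(1:i:0)$ and $I_2=(1:-i:0)$ and to feed the maximal-contact information contained in property (I) into the Riemann--Hurwitz formula. Write $n=\deg a$ and let $\pi:\hat a=\cp^1\to a$ be the normalization. For $j=1,2$ let $\rho_j$ be the projection of $\cp^2$ from $I_j$ (the pencil of isotropic lines through $I_j$) and set $f_j=\rho_j\circ\pi:\cp^1\to\cp^1$. First I would treat the generic case $I_1,I_2\notin a$, so that $\deg f_j=n$; the cases $I_j\in a$ are handled at the end by replacing $n$ with $n-\operatorname{mult}_{I_j}a$ and recording the extra contact of $f_j$ over $I_j$. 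The elementary but crucial local fact is that the ramification index of $f_j$ at a point $A\in\hat a$ equals the intersection index of the line $I_jA$ with the local branch $a_A$; hence $f_j$ ramifies exactly at the cusps of $a$ and at the points $A$ with $I_jA=T_Aa$, i.e.\ the isotropic tangencies whose tangent line passes through $I_j$. At such a tangency property (I) forces the contact to be maximal, equal to $n$, so the ramification index there is exactly $n$.

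Next I would apply Riemann--Hurwitz to each $f_j$. Writing $\tau_j$ for the number of distinct isotropic tangent lines of $a$ through $I_j$ and $\delta_i\ge 2$ for the degree of the $i$-th cusp, the preceding paragraph turns Riemann--Hurwitz into the balance
\[
\tau_j\,(n-1)+\sum_{\text{cusps with cuspidal tangent}\,\not\ni\, I_j}(\delta_i-1)=2(n-1),\qquad j=1,2.
\]
Since every term is non-negative this already gives $\tau_j\le 2$, and it shows that $\tau_j=2$ forces every cusp of $a$ to have its cuspidal tangent through $I_j$. I would also record the a priori bound $\delta_i\le n-1$: an irreducible branch of a degree-$n$ curve cannot have multiplicity $n$, since multiplicity $n$ at a point would make every line through that point meet $a$ only there and force $a$ to be a cone, contradicting irreducibility for $n\ge 2$.

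Then I would combine the two balances. If $a$ has at least three distinct isotropic tangency points, then, because each tangency point contributes its tangent line to $\tau_1$ or to $\tau_2$ (a tangent line equal to $\oc_\infty$ contributing to both), the inequalities $\tau_j\le 2$ together with the cusp consumption in the displayed balance force $\tau_1=\tau_2=2$ and leave no room for any cusp whose cuspidal tangent misses $I_1$ or misses $I_2$; hence a cusp could only have cuspidal tangent equal to $\oc_\infty$. Such a cusp would be the unique point of $a\cap\oc_\infty$ (property (I) applied to $\oc_\infty$), and I would exclude it to obtain the conclusion ``no cusps'' of part (i). For ``at least one finite tangency point'' I would use that any isotropic tangency point lying on $\oc_\infty$ with tangent line different from $\oc_\infty$ must be $I_1$ or $I_2$ itself, because a line joining two distinct points of $\oc_\infty$ is $\oc_\infty$; thus at most two of the $\ge 3$ tangency points lie at infinity and at least one is finite. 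Finally, for part (ii) ($p=2$) the same balances restrict the configuration to $\tau_1=\tau_2=1$ or $\{\tau_1,\tau_2\}=\{2,0\}$, from which I would read off that at least one tangency point is $I_1$ or $I_2$ and that any cusp must sit at one of the two tangency points with isotropic cuspidal tangent.

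The main obstacle is precisely the exclusion of a cusp whose cuspidal tangent is $\oc_\infty$ in part (i): the Riemann--Hurwitz budget of the two projections is necessary but not sufficient to rule it out --- it is satisfied by configurations that do not correspond to any genuine plane curve (for instance the numerics of a ``cubic with two cusps,'' impossible only because its geometric genus would be negative). To close this gap I expect to invoke the genus-zero constraint for the plane curve $a$ (the arithmetic-genus/Pl\"ucker relation bounding the total $\delta$-invariant by $\binom{n-1}{2}$), or equivalently a direct degeneration argument showing that such an isotropic cusp at infinity cannot coexist with two further maximal-contact isotropic tangencies. The remaining labour is the bookkeeping for the degenerate placements of tangency points at infinity and the modifications needed when $I_1$ or $I_2$ lies on $a$.
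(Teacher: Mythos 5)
Your two-pencil setup (projections from $I_1$ and $I_2$, with property (I) feeding maximal ramification into Riemann--Hurwitz) is correct as far as it goes and is genuinely different from the paper's argument, but the deductions you draw from the two balances fail, and in more places than the one you flag. First, three tangency points do \emph{not} force $\tau_1=\tau_2=2$: take three finite isotropic tangent lines, two through $I_1$ and one through $I_2$, so $\tau_1=2$, $\tau_2=1$. The $j=1$ balance is then already exhausted, and the $j=2$ balance can be satisfied by cusps of multiplicities $m_1,m_2$ with $(m_1-1)+(m_2-1)=n-1$ placed \emph{at} the two tangency points whose tangents pass through $I_1$: such a cusp is invisible in the $j=1$ balance (its ramification index there is the contact $n$, already counted in the term $\tau_1(n-1)$), while it contributes only its multiplicity to the $j=2$ balance; since $m_i\le n-1$, this is numerically consistent for $n\ge 3$. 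So the balances alone prove neither $\tau_1=\tau_2=2$ nor ``no cusps'' in case (i). Second, in case (ii) the statement that some tangency point equals $I_1$ or $I_2$ cannot be ``read off'' either: two \emph{finite} tangency points $A$, $B$ with tangents through $I_1$, $I_2$ respectively, $a$ smooth there, plus cusps with non-isotropic cuspidal tangents contributing $n-1$ to each balance, satisfies both balances (for $n=4$: three simple cusps, the numerics of a tricuspidal quartic); the same configuration shows that ``cusps only at the two tangency points'' is not established by your argument. Third, deferring the case $I_j\in a$ is not a harmless bookkeeping modification: $I_j\in a$ is exactly the conclusion of part (ii), so the case you must actually refute is $I_1,I_2\notin a$, and that is precisely where the balances are too weak. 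Together with the gap you yourself acknowledge (a cusp with cuspidal tangent $\oc_{\infty}$ in case (i)), the proposal establishes neither (i) nor (ii).

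The paper closes these holes with two ideas absent from your proposal, and you would need them or genuine substitutes (e.g.\ the Pl\"ucker/genus computation you only gesture at). For cusps it projects not from $I_1,I_2$ but from $O=l_A\cap l_B$, the intersection point of the isotropic tangent lines at two tangency points $A\ne B$: property (I) gives $O\notin a$, and then \emph{both} $A$ and $B$ are critical points of index $n=\deg a$ of a \emph{single} degree-$n$ map $\ha\to\cp^1$, so the Riemann--Hurwitz budget $2(n-1)$ is exhausted at once and the map has no further critical points whatsoever -- in particular no cusps outside $\{A,B\}$; running over all pairs among three tangency points kills every cusp and yields (i), and applied to the unique pair it localizes all cusps in (ii). For the location statement in (ii) the paper dualizes: $a^*$ meets $I_1^*\cup I_2^*$ in exactly two points, hence some line $I_j^*$ contains a single point $t$ of $a^*$; the germ $(a^*,t)$ being irreducible (statement $(I^*)$) and carrying the whole intersection number, $a^*$ is tangent to $I_j^*$ at $t$, which dualizes exactly to $I_j\in a$. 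The structural reason your route cannot succeed as stated is that the pencils through $I_1$ and through $I_2$ can never see the maximal contact of two \emph{different} isotropic tangent lines inside one application of Riemann--Hurwitz, whereas the projection from $O$ does exactly that.
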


\begin{proof} The curve $a$ has at least two distinct isotropic tangent lines, by the above corollary. 
The  tangency points should be distinct, since the contrary would obviously contradict property (I). 

Case 1): $a$ has at least three distinct isotropic tangency points 
$A$, $B$, $C$. At least one of them is finite. Indeed, otherwise $A,B,C\in\oc_{\infty}$, and some of them, say $A$ is not 
isotropic. Hence, the curve $a$ is tangent to the infinity line at $A$ and intersects it at $B\neq A$, -- a contradiction to property (I). 
Fix arbitrary two isotropic tangency points, say $A$ and $B$. We show  that the curve $a$ has no cusps distinct from them. Applying this 
to the other pairs $(A,C)$ and $(B,C)$ will imply that $a$ has no cusps at all and will prove (i). Let 
 $l_A$ and $l_B$ denote the  projective lines tangent to $a$ at $A$ and $B$ respectively. 
They intersect $a$ only at $A$ (respectively, $B$), 
by property (I). This implies that $l_A\neq l_B$ and $O=l_A\cap l_B\notin a$. Consider the projection $\pi:\ha\to\cp^1$: the composition of the 
parametrization $\pi_a:\ha\to a$ and the projection  from the point $O$. 
Its global degree  and its  local degrees at its critical points corresponding to $A$ and $B$ are 
equal to the degree of the curve $a$ (property (I)). These are the only critical points, since    they have maximal order and 
$a$ is rational. Hence, $a$ has no cusps distinct from $A$ and 
$B$. This together with the above discussion proves (i). 
%

Case 2): $a$ has exactly two isotropic tangency points  $A$ and $B$. Let us prove (ii). 
As is shown above, $a$ has no cusps distinct from 
$A$ and $B$. The dual curve $a^*$ intersects the union $I_1^*\cup I_2^*$ exactly at two distinct points $l_A^*$ and $l_B^*$. 
%
Thus, $a^*$ intersects one of the lines $I_j^*$, say $I_1^*$ at a unique point $t$. Then $a^*$ is tangent to $I_1^*$ at $t$ (irreducibility of the germ $(a^*,t)$, by ($I^*$)). This implies that $I_1\in a$ and proves (ii) and the proposition. 
\end{proof}

\begin{definition} A system of {\it isotropic coordinates} on $\cc^2$ is 
a system of affine coordinates with isotropic axes.
\end{definition}
\begin{corollary} \label{cnform} 
In case (ii) of Proposition \ref{pisotr} one of the following holds (here $d$ is the 
degree of the curve $a$):

- either the curve $a$ is tangent to $\oc_{\infty}$ at an isotropic point at infinity and has another finite isotropic tangency
point; then in appropriate isotropic coordinates the curve $a$ 
is given by the following parametrization: 
\begin{equation}x=t^p, \ y=t^d, \  t\in\oc, \ 0<p<d;\label{form-pd}
\end{equation}

- or the curve $a$ passes through the two isotropic points at infinity and 
 in appropriate isotropic coordinates it is given by the following parametrization: 
\begin{equation}x=t^{-p}, \ y=t^q, \ t\in\oc, \  p,q\geq1, \ p+q=d.\label{form-pq}
\end{equation}
In both formulas $p$ and $d$ are relatively prime. 
In particular, the  curve $a$ is without cusps, if and only if it is a conic and in the 
above formulas $p=1$, $d=2$ and $p=q=1$ respectively. 
\end{corollary}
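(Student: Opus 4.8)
The plan is to work in isotropic coordinates adapted to the two isotropic tangency points, to parametrize the normalization $\ha\cong\cp^1$ so that these two points sit at $t=0$ and $t=\infty$, and then to force each affine coordinate function $x(t),y(t)$ to be a monomial in $t$ by combining property (I) with Riemann--Hurwitz. The geometric fact I will use repeatedly is that \emph{every line through an isotropic point at infinity $I_1=(1:i:0)$ or $I_2=(1:-i:0)$ is isotropic}; consequently the projection of $a$ from $I_1$ (resp.\ $I_2$), which in isotropic coordinates is simply the coordinate function $y$ (resp.\ $x$), has the feature that each of its critical values is realized by an isotropic tangent line of $a$. Thus the hypotheses of Proposition \ref{pisotr}(ii) translate directly into strong control of the ramification of $x$ and $y$ as maps $\cp^1\to\cp^1$.

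First I distinguish the two scenarios of Proposition \ref{pisotr}(ii). Scenario A: $a$ is tangent to $\oc_\infty$ at an isotropic point, say $I_2$, with a second, finite isotropic tangency point $B$; I normalize so that $I_2$ lies at $t=\infty$, $B$ at $t=0$, and $T_Ba=\{y=0\}$. Since $\oc_\infty$ is itself an isotropic tangent line (tangent at $I_2$), property (I) forces $a$ to meet $\oc_\infty$ only at $I_2$, so both $x$ and $y$ are polynomials in $t$; property (I) at $B$ then makes $y$ have a single zero, of order equal to the degree $d$, whence $y=\beta t^d$. Scenario B: $a$ passes through both $I_1$ and $I_2$; I place $I_1$ at $t=0$, $I_2$ at $t=\infty$, and take the finite isotropic tangent lines $T_{I_1}a=\{y=0\}$ and $T_{I_2}a=\{x=0\}$ as the coordinate axes, so that property (I) says $y$ vanishes only at $t=0$ and $x$ only at $t=\infty$.

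Next I pin the coordinate functions down as monomials. A finite critical point of $x$ is either a genuine vertical tangency of $a$ or a cusp of the parametrization. A vertical tangent at a finite point $t_0$ is a finite isotropic line through $I_2$; since $a$ already meets $I_2$, such a line would meet $a$ at two distinct points, which property (I) forbids except over $t\in\{0,\infty\}$. By Proposition \ref{pisotr}(ii) the only admissible cusps sit at the tangency points $t=0,\infty$. Hence all finite critical points of $x$ lie over $\{0,\infty\}$, and with the zero/pole data this forces $x$ to be a monomial; the symmetric argument handles $y$. To make the bookkeeping rigorous I apply Riemann--Hurwitz to the maps $x,y:\cp^1\to\cp^1$: the total ramification $2(\deg)-2$ is already exhausted by the total ramifications at $t=0$ and $t=\infty$ dictated by property (I), leaving no room for intermediate critical points or for stray points of $a$ on $\oc_\infty$. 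Reading off exponents yields (\ref{form-pd}) with $0<p<d$ in Scenario A and (\ref{form-pq}) with $p,q\ge1$, $p+q=d$, in Scenario B.

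Finally, coprimality of $p$ and $d$ follows since $\pi_a$ is birational: a common factor $g>1$ would make the monomial parametrization $g$-to-one. For the cusp assertion I read the two local branches off the normal forms: their lower Puiseux exponents are $p$ at the finite tangency and $d-p$ at $I_2$ in case (\ref{form-pd}), and $p$ at $I_1$, $q$ at $I_2$ in case (\ref{form-pq}); a branch is smooth exactly when its lower exponent equals $1$, so $a$ has a cusp unless $p=1,d=2$ (resp.\ $p=q=1$), i.e.\ unless $a$ is a conic. I expect the monomialization step to be the main obstacle: the delicate point is to verify that property (I) together with ``exactly two isotropic tangencies and no stray cusps'' really excludes every intermediate critical point and every extra intersection of $a$ with $\oc_\infty$, and it is precisely here that the Riemann--Hurwitz count must be organized with care and that the isotropy of all lines through $I_1,I_2$ is indispensable.
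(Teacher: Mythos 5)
Your monomialization machinery is essentially sound and is a genuinely different route from the paper's: you work directly with the coordinate functions $x(t),y(t)$ on the normalization $\ha\cong\cp^1$, reading their zeros and poles off property (I), excluding intermediate critical points (vertical tangencies would give an isotropic tangent line meeting $\ha$ twice; cusps away from the two tangency points are barred by Proposition \ref{pisotr}(ii)), and closing the count with Riemann--Hurwitz; the paper instead argues on the dual curve $a^*$ via property $(I^*)$, Bezout and the projection from an isotropic point. Your coprimality argument (a common factor would make the normalization non-injective) and your cusp criterion (smoothness of the two branches iff $p=1,d=2$, resp.\ $p=q=1$) are correct, and your Riemann--Hurwitz bookkeeping in Scenario B does, when written out, exclude stray points of $a$ on $\oc_{\infty}$.

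However, there is a genuine gap at the very first step. You assert that Proposition \ref{pisotr}(ii) ``distinguishes the two scenarios'' A and B, but that proposition only says that at least one of the two isotropic tangency points is an isotropic point at infinity; it does \emph{not} say that either $a$ is tangent to $\oc_{\infty}$ (your Scenario A) or that $a$ passes through both isotropic points at infinity (your Scenario B). The configuration left uncovered is: $a$ passes through exactly one isotropic point at infinity, say $I_1$, with a \emph{finite} isotropic tangent line there, while the second isotropic tangency point is a finite point $B$. Ruling this out is precisely the nontrivial content of the dichotomy stated in the corollary, and it is what the paper's dual-curve argument delivers: $a^*$ meets $I_2^*$ in a single point, so by Bezout that intersection index equals $\deg a^*$, which exceeds the multiplicity of the (irreducible) germ, forcing $a^*$ to be tangent to $I_2^*$ there --- dually, $I_2\in a$. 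The gap is repairable with exactly your own tools, so it is a missing step rather than a wrong approach: first, some isotropic tangent line must pass through $I_2$ (dual Bezout), and since the tangent line at $I_1$ is finite and passes through $I_1$, it must be $T_Ba$ that passes through $I_2$; then, since $I_2\notin a$, the projection of $\ha$ from $I_2$ has degree $d$, and its only possible critical points are $B$ (full ramification $d-1$, by property (I)) and the point over $I_1$ (ramification equal to the multiplicity of the germ minus one, hence at most $d-2$, because the tangent there is not $\oc_{\infty}$ and an irreducible curve of degree $d$ other than a line has multiplicity less than $d$ at every point); the total ramification is then at most $2d-3<2d-2$, contradicting Riemann--Hurwitz. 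Two smaller omissions in Scenario A are one-liners via property (I) applied to $\oc_{\infty}$: that the second tangency point is finite, and that its tangent line passes through $I_1$ rather than $I_2$. With these insertions your proof is complete.
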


\begin{proof} Recall that the curve $a^*$ intersects the union $I_1^*\cup I_2^*$ exactly at two distinct points, and one of the 
intersections $a^*\cap I_j^*$ is a single point, see the end of the above 
proof. At each point of intersection $a^*\cap I_j^*$ the germ of the curve $a^*$ is irreducible, see $(I^*)$. 
  Therefore, we have the following possibilities (up to permuting $I_1^*$ and $I_2^*$):

Case 1): $a^*$ passes through $s=I_1^*\cap I_2^*=\oc_{\infty}^*$, $s=a^*\cap I_1^*$, then $a^*$ is tangent to  $I_1^*$ at $s$ (statement 
($I^*$)), and 
$a^*$ intersects $I_2^*$ at a unique point $t$ different from $s$.  Then $a$ is tangent to the infinity line at $I_1$ and has a finite isotropic 
tangent line $t^*$ through $I_2$. The line $T_ta^*$ does not contain $s$, since otherwise, $T_ta^*=ts=I_2^*$ and $I_1^*$ would be 
two distinct tangent lines to $a^*$ through $s$, -- a contradiction to property $(I^*)$. Hence, the dual $O=(T_ta^*)^*$ to the 
line $T_ta^*$ is a finite point, and it lies in $t^*\cap a$ by duality.  The composition of the 
parametrization $\pi_a:\ha\to a$ with the projection $a\to\cp^1$ from the point $I_1$ is a branched covering $\pi:\ha\to\cp^1$. Either it is  
bijective, or it has exactly two critical points $O$ and $I_1$, since  $a$ has neither cusps distinct from them, nor finite tangent lines through $I_1$. Therefore, taking $t^*$ as the $x$-axis, $O$ as the origin and $OI_1$ as the $y$-axis, 
we get (\ref{form-pd}) after appropriate coordinate rescalings. 

Case 2): $a^*$ intersects each line $I_j^*$ at a unique point $t_j$ and is tangent to $I_j^*$ there (by statement ($I^*$)), and $t_j\neq s$. Hence,  
 $a$ passes through both isotropic points  transversely to the infinity line. Taking isotropic coordinates centered at  
the intersection $t_1^*\cap t_2^*$, we get (\ref{form-pq}) after appropriate rescalings. 

The parametrizations in (\ref{form-pd}) and (\ref{form-pq}) can be chosen bijective; then $p$ and $d$ are relatively prime. 
Hence,  if $d\neq2$, the curve $a$ has at least one cusp: either at an isotropic point at infinity, or 
at the origin (the latter may take place only in  case (\ref{form-pd})). The corollary is proved.
\end{proof}

 \subsection{Reflection correspondences: irreducibility and contraction}
 
 
 In what follows, for every irreducible non-linear germ $(a,A)$ of analytic curve in $\mathbb{CP}^2$ (or briefly, {\it irreducible non-linear germ}) 
 its Puiseaux exponent (see Definition \ref{defpuis}) will be denoted by $r_a=r_a(A)$. 
 The main results of this subsection are the following lemma, proposition and corollary. They will be used in Subsection 3.2 
 in the proof of property (I) 
 of  mirrors of a 4-reflective billiard and coincidence of their isotropic tangent lines. Proposition \ref{tang-asympt} stated below will be 
 used in their proofs and also in Section 4, where we show that the mirrors are confocal conics.
 
 \begin{lemma} \label{lemgerm}  Consider a pair of distinct non-linear irreducible germs $(a,A)$ and $(b,B)$, set $L=T_Aa$. Let $L$ 
 be isotropic and $B\in L$. 
 Let $\Pi_{ab}\subset b\times a\times\cp^{2*}$ denote the 
germ at $(B,A,L)$ of two-dimensional analytic subset defined as follows:
 $(\tau,t,l)\in\Pi_{ab}$,  if and only if $t\in l$ and either $(\tau,t)=(B,A)$, or $t=A$ and $l=L$, or the lines $t\tau$, $l$ are 
  symmetric with respect to $T_{t}a$. (Thus, $\Pi_{ab}$ contains the curve $b\times A\times L$.) Let one of the following conditions hold:
 
 (i) $A\neq B$;
 
 (ii) $A=B$, but $L=T_Aa\neq T_Bb$;
 
 (iii) $A=B$, $L=T_Aa=T_Bb$ and $A$ is an infinite point;
 
 (iv) $A=B$ is a finite point, $L=T_Aa=T_Bb$ and 
 \begin{equation}r_b(2-r_a)<r_a.\label{puiseq}\end{equation}
 Then the germ $\Pi_{ab}$ is irreducible. 
  \end{lemma}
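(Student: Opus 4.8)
The starting point is to recognize $\Pi_{ab}$ as the closure of the graph of a single-valued map. Since for $t\neq A$ the tangent line $T_ta$ is non-isotropic, the reflection $l=\Phi(\tau,t):=\sigma_{T_ta}(t\tau)$ is a well-defined holomorphic function of $(\tau,t)$ on $(b\times a)\setminus\{t=A\}$ (working on normalizations, Convention \ref{conv2}). Thus $\Pi_{ab}=\overline{\operatorname{graph}\Phi}$ is the closure of the graph of this single-valued holomorphic map over a connected dense open set, hence is irreducible as a global analytic set; moreover the two distinguished curves it must contain — the fibre $\{B\}\times\{A\}\times\mathcal M_A$ over the indeterminacy point $(B,A)$ and the curve $b\times A\times L$ — are genuinely limits of the graph (the latter by Proposition \ref{plimits}, since for $\tau\neq B$ the incoming line $t\tau$ does not accumulate to $L$ as $t\to A$). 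The real content of the lemma is therefore the irreducibility of the \emph{germ} of $\overline{\operatorname{graph}\Phi}$ at the single point $(B,A,L)$, which I would restate as: the normalization of $\Pi_{ab}$ has exactly one point over $(B,A,L)$, or equivalently the link of $(B,A,L)$ in $\Pi_{ab}$ is connected.

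To make this concrete I would resolve the indeterminacy of $\Phi$, whose only indeterminacy point is $(B,A)$ (where the incoming line $AB=L$ coincides with the isotropic tangent). Choosing the affine chart with $L$ the $x$-axis through $I_1$, Proposition \ref{tang-asympt1} gives $\az(l)=\az(T_ta)^2/\az(t\tau)$, while Proposition \ref{pxt} controls $\az(T_ta)$ and the geometry of the tangent line. The germ at $(B,A,L)$ is irreducible precisely when the region of the base on which $\az(\Phi)$ is small accumulates to $(B,A)$ along a connected set; the two ways of making $\az(l)$ small — driving $t\to A$, which forces $\az(T_ta)\to0$ and produces the curve $b\times A\times L$, and driving the incoming line $t\tau$ toward the ``vertical'' isotropic direction (azimuth $\to\infty$) — must belong to one and the same local branch. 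In the easy cases (i)–(iii) I expect a single ordinary blow-up of $(B,A)$ to suffice: when $A\neq B$, or $A=B$ with $T_Aa\neq T_Bb$, or $A=B$ infinite, the relevant lines meet the tangent and the infinity line transversally, $\az(\Phi)$ has a single simple zero along the exceptional $\mathbb P^1$, and a direct computation with Propositions \ref{tang-asympt1}, \ref{pxt} and \ref{plimits} shows that the exceptional fibre meets the level $\{l=L\}$ at one point into which both distinguished curves run; hence one branch.

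The main obstacle, and the reason for hypothesis (iv), is the case $A=B$ finite with $T_Aa=T_Bb=L$, i.e.\ germs tangent to one another along an isotropic line. Writing $a:\,y=\sigma_a x^{r_a}(1+o(1))$, $b:\,y=\sigma_b x^{r_b}(1+o(1))$ and putting $\xi=x(t)$, $\eta=x(\tau)$, the symmetry formula yields the leading asymptotics
$$\az(l)=\frac{r_a^2\sigma_a^2\,\xi^{2r_a-2}(\xi-\eta)}{\sigma_a\xi^{r_a}-\sigma_b\eta^{r_b}}\,(1+o(1)),$$
whose zero locus is $\{\xi=0\}\cup\{\xi=\eta\}$ and whose polar locus is the ``resonance'' curve $D=\{\sigma_a\xi^{r_a}=\sigma_b\eta^{r_b}\}$, along which the incoming line becomes tangent to $a$. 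Counting the branches of $\overline{\operatorname{graph}\Phi}$ at $(B,A,L)$ amounts to a Newton-polygon / weighted-blow-up analysis of this expression: one must decide whether the components of $\{|\az(l)|\text{ small}\}$ around the two zero-curves are joined, or separated by the high-azimuth region surrounding $D$. Balancing the exponents along the weighted directions $\eta\sim\xi^{\gamma}$ shows that the two regions merge into a single branch exactly when $r_a-2+r_a/r_b>0$, which is precisely the inequality $r_b(2-r_a)<r_a$ of (iv).

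I expect the honest way to run the last step is not a single toric chart but a Puiseux parametrization of the branch adjacent to $D$, obtained from the asymptotics of the intersection points of the tangent lines to $b$ with the curve $a$ — the content I would isolate as a separate proposition on the family of tangent lines to $b$ (cf.\ the forthcoming Proposition \ref{tang-asympt}). Condition (iv) then guarantees that this parametrization sweeps out one branch through $(B,A,L)$, absorbing both distinguished curves, so that $\Pi_{ab}$ is irreducible. The technical heart is the bookkeeping of fractional Puiseux exponents when $a$ or $b$ is singular, where the naive blow-up must be replaced by the weighted one dictated by $r_a,r_b$, and where the threshold $r_b(2-r_a)=r_a$ appears as the exact boundary between one and two local branches.
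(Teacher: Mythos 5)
Your framing is sound and, in substance, close to the paper's: $\Pi_{ab}$ is the closure of the graph of the single-valued reflection map, and irreducibility of its germ at $(B,A,L)$ amounts to connectivity, near $(B,A)$, of the locus where the reflected line stays close to $L$. The paper runs exactly this criterion, but fibered over $t\in a$: it shows that each component of the intersection of $b$ with the reflected cone $K_t^*$ (the slice $x(t)=\mathrm{const}$ of your high-azimuth region) is simply connected, hence each fiber of the localized germ is connected, and these connected fibers accumulate to $b\times A\times L$. Your leading-order formula for $\operatorname{az}(l)$ in case (iv) is correct, and your exponent balancing does recover the true threshold: the paper obtains $r_b(2-r_a)<r_a$ as precisely the condition that the tangent lines to $b$ through $t$, whose azimuths have order $(x(t))^{r_a(r_b-1)/r_b}$ when $r_a<r_b$ (Proposition \ref{tang-asympt}), stay outside the reflected cone, whose azimuths have order $(x(t))^{2(r_a-1)}$.

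The genuine gap is case (iii). It is not an "easy" case: there $A=B$ is an \emph{infinite} point and the germs $a$, $b$ are tangent to each other and to the isotropic line $L$, so your transversality justification is false, and a single ordinary blow-up of $(B,A)$ cannot detect the branch structure, which depends on $r_a$, $r_b$. The reason the lemma holds there with \emph{no} Puiseaux inequality --- and the reason your case-(iv) computation cannot be transplanted to it --- is that reflection near an infinite tangency point obeys the first-order law $\operatorname{az}(l_t^*)=p\,\frac{y(t)}{x(t)}(1+o(1))$ (Addendum to Proposition \ref{plimits}, Case 2), not the quadratic law $O(|y(t)/x(t)|^2)$ valid at finite points; your proposal never distinguishes the two regimes. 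Worse, in the subcase $r_a\ge r_b$ of (iii), tangent lines to $b$ through $t$ have azimuths of the same order $(x(t))^{r_a-1}$ as the lines of the bad cone, so the "no tangency in the high-azimuth region" mechanism fails outright; the paper needs a genuinely different argument there (its Claim 1, localizing the $x$-coordinates of $K_t^*\cap b$ near a finite set $S\cdot x(t)$, followed by two applications of the Maximum Principle). Secondarily, in case (iv) your formula for $\operatorname{az}(l)$ has non-uniform error exactly along the polar curve $D$, where the two leading terms of the denominator cancel --- i.e., exactly where the branch count is decided; you flag this and point to the right tool (the Puiseux asymptotics of tangent lines, Proposition \ref{tang-asympt}), but do not carry it out, so as written the case-(iv) argument is a heuristic that predicts the correct threshold rather than a proof of connectivity. (A small slip besides: along $D$ the incoming line $t\tau$ becomes parallel to $L$, not tangent to $a$.)
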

 
 \begin{definition} \label{refbir} We say that three irreducible algebraic curves 
 $b,a,d\subset\cp^2$ form a {\it reflection-birational triple,} if they are not lines, 
  $b\neq a$, $a\neq d$ and 
 there exists a birational isomorphism $\psi_a:\hb\times\ha\to\ha\times\hd$ such that for a non-empty 
 Zariski open set of pairs $(B,A)\in\hb\times\ha$ 
  one has $\psi_a(B,A)=(A,D)$ and the lines $AB$ and $AD$ are symmetric with respect to the tangent line $T_Aa$.
  \end{definition}

 \begin{proposition} \label{clemgerm}  Let $b$, $a$, $d$ be a reflection-birational triple, $L$ be an isotropic tangent line to $a$. For every their 
tangency point $A\in\ha$ and every 
$B\in\hb\cap L$ consider the  germ $\Pi_{ab}$  constructed above for the local branches $a_A$ and $b_B$. 
Let there exist a tangency point $A\in\ha$, $T_Aa=L$, such that for every $B\in\hb\cap L$ the corresponding germ $\Pi_{ab}$ is irreducible. 
Then  the line $L$ intersects the curve $\hd$ at a unique point $D$, and the transformation $\psi_a$ contracts the curve $\hb\times A$ 
to the point $(A,D)$. 
  \end{proposition}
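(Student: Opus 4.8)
The plan is to realize $\psi_a$ through the closure $W\subset\hb\times\ha\times\hd$ of the graph of the reflection correspondence $(B,A')\mapsto(A',D)$, an irreducible two-dimensional algebraic surface carrying two birational projections $p_{ba}\colon W\to\hb\times\ha$ and $p_{ad}\colon W\to\ha\times\hd$, so that $\psi_a=p_{ad}\circ p_{ba}^{-1}$. I would split the statement into two parts: first the contraction of $\hb\times A$ to a \emph{single} point, then the uniqueness of the intersection $\hd\cap L$.

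For the contraction I fix a generic $\tau\in\hb$ with $\pi_b(\tau)\notin L$, so that the line joining $\tau$ and $A$ does not accumulate to $L$. Letting $A'\to A$ along $\ha$, the tangent lines $T_{A'}a$ tend to the isotropic line $L$, and Proposition \ref{plimits} forces the reflected line to tend to $L$; hence its intersection with $d$ tends to a point of $\hd\cap L$, i.e. $\psi_a(\tau,A)\in\{A\}\times(\hd\cap L)$. Since $\hd\cap L$ is finite and $\hb$ is connected, the holomorphic assignment $\tau\mapsto\psi_a(\tau,A)$ is constant, say equal to $(A,D)$ with $D\in\hd\cap L$. Thus $\psi_a$ contracts $\hb\times A$ to $(A,D)$ and $\hb\times A\times D\subset W$. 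Applying the same argument to the symmetric correspondence $\psi_a^{-1}$ (the reflection relation being symmetric in $B$ and $D$) produces a point $B_0\in\hb\cap L$ to which $\{A\}\times\hd$ is contracted; equivalently $B_0\times A\times\hd\subset W$, so $(B_0,A,D'')\in W$ for every $D''\in\hd$.

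It remains to show $\hd\cap L=\{D\}$, and here I would argue by contradiction using the irreducibility hypothesis at $B=B_0$. Suppose $D'\in\hd\cap L$ with $D'\neq D$. Then $(B_0,A,D')\in W$ by the previous paragraph, and since $W$ is pure two-dimensional its germ $G$ there is pure two-dimensional. As $p_{ba}$ is birational it can collapse only curves, so it cannot collapse the two-dimensional germ $G$; hence $p_{ba}(G)$ fills a neighborhood of $(B_0,A)$, and over a sector $S'$ of that neighborhood the points of $G\subset W$ are of the form $(\tau,t,\psi_a(\tau,t))$ with third coordinate near $D'$. In the local model $b\times a\times\cp^{2*}$, the assignment $(\tau,t,l)\mapsto(\tau,t,\,l\cap d\text{ near }D')$ identifies the irreducible germ $\Pi_{ab}$ (built from the branches $a_A$, $b_{B_0}$) with its lift $\Pi^{(D')}\subset\hb\times\ha\times\hd$ near $(B_0,A,D')$, since $t\in l$ lets one recover $l$ as the join of $t$ and $l\cap d$; thus $\Pi^{(D')}$ is irreducible and contains the tangency curve $b_{B_0}\times A\times D'$. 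The part of the generic reflection graph lying over $S'$ is a non-empty open subset of $\Pi^{(D')}$ contained in $W$, so taking closures gives $\Pi^{(D')}\subset W$, whence $b_{B_0}\times A\times D'\subset W$.

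The main obstacle is precisely this last inference, and it is where irreducibility is indispensable. Because $(B_0,A)$ is an indeterminacy point of $\psi_a$, blown up to the whole curve $\{A\}\times\hd$, only a \emph{sector} $S'$ of its neighborhood maps near $D'$, so a priori $W$ contains merely an open piece of the reflection graph near $D'$; irreducibility of $\Pi^{(D')}$ upgrades the closure of this open piece to the entire germ, dragging the tangency curve $b_{B_0}\times A\times D'$ into $W$. Once this is known, the irreducible algebraic curve $\hb\times A\times D'$ meets $W$ in a positive-dimensional analytic set and therefore lies entirely in $W$. Then for generic $\tau$ both $(\tau,A,D)$ and $(\tau,A,D')$ lie in $W$ with $D\neq D'$, so $p_{ba}^{-1}(\tau,A)$ has at least two points over a generic point of $\hb\times A$, contradicting the generic injectivity of the birational map $p_{ba}$. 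Hence $\hd\cap L=\{D\}$, which together with the contraction established in the second paragraph completes the proof.
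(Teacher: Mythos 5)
Your architecture is genuinely different from the paper's. The paper fixes an \emph{arbitrary} $D\in\hd\cap L$ and proves directly that $\psi_a$ contracts $\hb\times A$ to $(A,D)$ (via the auxiliary family $l_t=Qt$, the reflected lines $l_t^*$, points $D_t\in l_t^*\cap\hd$ tending to $D$ and $B_t$ with $\psi_a(B_t,t)=(t,D_t)$; irreducibility of $\Pi_{ab}$ enters only in the limit case $\lim B_t=A$, through connectedness of punctured neighborhoods); uniqueness of $D$ then follows at once because $\psi_a$ is single-valued off its finitely many indeterminacy points. You instead establish one contraction cheaply, produce $B_0\in\hb\cap L$ by the transposed contraction, and use irreducibility at $(B_0,A,L)$ to force a second curve $\hb\times A\times D'$ into $W$. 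This plan is viable, but two of your steps have genuine gaps as written. The first is the ``identification'' of $\Pi_{ab}$ with its lift $\Pi^{(D')}$: the assignment $(\tau,t,l)\mapsto(\tau,t,\,l\cap d$ near $D')$ is single-valued only when $L$ meets the branch $d_{D'}$ transversally at a smooth point. If $L$ is tangent to $d$ at $D'$, or $\pi_d(D')$ is a cusp --- and nothing in your contradiction hypothesis excludes this; tangency is in fact the \emph{expected} behaviour, since at the true point $D$ the line $L$ is tangent to $d$ --- then a line $l$ near $L$ meets $d$ near $D'$ at $m\geq2$ points, the join map $(\tau,t,D'')\mapsto(\tau,t,tD'')$ from $\Pi^{(D')}$ to $\Pi_{ab}$ is generically $m$-to-one, and irreducibility of $\Pi^{(D')}$ does not follow from that of $\Pi_{ab}$; your ``taking closures gives $\Pi^{(D')}\subset W$'' can then fail, the closure of the graph piece being possibly a single component that misses the tangency curve. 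The gap is repairable without any identification: the germ $G$ of $W$ at $(B_0,A,D')$ satisfies $G\subseteq\Pi^{(D')}$ (graph points are dense in $G$, satisfy the reflection law, and $\Pi^{(D')}$ is closed); the join map is finite, so its image of $G$ is a two-dimensional analytic subgerm of the irreducible two-dimensional germ $\Pi_{ab}$ and hence equals $\Pi_{ab}$; finally the only ambient preimage near $(B_0,A,D')$ of a point $(\tau,A,L)$ is $(\tau,A,D')$, because $\hd\cap L$ is locally just $D'$, so $b_{B_0}\times A\times D'\subset G\subset W$.

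The second gap is your closing contradiction. ``Generic injectivity of $p_{ba}$'' only says that $p_{ba}$ is injective over a Zariski-open dense subset of the surface $\hb\times\ha$; the set $\hb\times A$ is a \emph{curve}, and under your hypothesis it would lie entirely inside the complementary exceptional curve, so two-point fibers over all of its points contradict nothing about generic injectivity. To finish you need either (a) the fact that the rational map $\psi_a$ has only isolated indeterminacy points, together with the remark that at a point of holomorphy the fiber of the graph closure $W$ is the single point given by the holomorphic value --- this is exactly the paper's ``holomorphicity of $\psi_a$ on the finitely punctured curve $\hb\times A$'' --- or (b) Zariski's connectedness theorem for the proper birational morphism $p_{ba}$ onto the smooth surface $\hb\times\ha$: a fiber with two points must be positive-dimensional, and positive-dimensional fibers occur over only finitely many points, not over the whole curve $\hb\times A$. (A minor further remark: the claim that $p_{ba}(G)$ fills a neighborhood of $(B_0,A)$ is false in general --- think of a small neighborhood of a point on the exceptional divisor of a blow-up, whose image is a horn, not a neighborhood --- but you never actually need it, only the density of graph points in $G$, which holds because $W$ is the graph closure.)
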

  
 \begin{corollary} \label{plemgerm}  Let $b$, $a$, $d$ be a reflection-birational triple, and let $L$ be an isotropic tangent line to $a$. Let 
 there exist a tangency point $A\in\ha$, $T_Aa=L$, such that for every $B\in\hb\cap L$ the local branches $a_A$ and $b_B$ satisfy one of 
 the conditions (i)--(iv) from Lemma \ref{lemgerm}. Then  the line $L$ intersects the curve $\hd$ at a unique point $D$. 
 \end{corollary}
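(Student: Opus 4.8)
The plan is to obtain the corollary as a direct combination of Lemma \ref{lemgerm} and Proposition \ref{clemgerm}: the former converts the geometric/Puiseaux conditions (i)--(iv) into irreducibility of the germs $\Pi_{ab}$, and the latter converts that irreducibility into the desired statement about $\hd\cap L$. So the whole task reduces to checking that the quantifiers and standing hypotheses line up.

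First I would fix the tangency point $A\in\ha$ with $T_Aa=L$ supplied by the hypothesis, and verify that for each $B\in\hb\cap L$ the pair of local branches $(a_A,b_B)$ genuinely meets the standing assumptions of Lemma \ref{lemgerm}. These are: both germs are irreducible and non-linear; the line $L=T_Aa$ is isotropic; and $B\in L$. Irreducibility and non-linearity are built into the notion of a reflection-birational triple (Definition \ref{refbir}), since $a$ and $b$ are irreducible algebraic curves that are not lines and the normalization presents each local branch as an irreducible non-linear germ. The line $L$ is isotropic and tangent to $a$ at $A$ by the choice of $A$, and $B\in\hb\cap L$ means $\pi_b(B)\in L$, i.e. $B\in L$. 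The one point worth an explicit remark is distinctness of the germs $a_A\neq b_B$: this holds because two irreducible algebraic curves sharing a local analytic branch would coincide (each being the analytic, hence Zariski, closure of that branch), whereas $a\neq b$ by the definition of a reflection-birational triple.

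With these verifications in place, the hypothesis that $(a_A,b_B)$ satisfies one of conditions (i)--(iv) lets me invoke Lemma \ref{lemgerm} for each $B\in\hb\cap L$, concluding that every germ $\Pi_{ab}$ attached to the chosen point $A$ is irreducible. This is exactly the hypothesis of Proposition \ref{clemgerm} for the same tangency point $A$. Applying that proposition then yields the conclusion: the line $L$ meets $\hd$ at a unique point $D$ (and, as a byproduct, $\psi_a$ contracts the curve $\hb\times A$ to the point $(A,D)$).

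I do not expect a substantial obstacle here; the corollary is essentially a bookkeeping step whose entire analytic content was already discharged in Lemma \ref{lemgerm} (irreducibility of $\Pi_{ab}$, including the hard mutually tangent case) and in Proposition \ref{clemgerm} (the topological passage from irreducibility of all the $\Pi_{ab}$ to uniqueness of $D$). The only mild care needed is to confirm that the quantifier structure matches: the corollary's hypothesis provides a single tangency point $A$ for which conditions (i)--(iv) hold simultaneously for all $B\in\hb\cap L$, and this is precisely the ``for every $B\in\hb\cap L$'' input demanded by Proposition \ref{clemgerm}.
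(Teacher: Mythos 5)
Your proposal is correct and coincides with the paper's own proof: the paper simply states that the corollary ``follows immediately'' from Lemma \ref{lemgerm} and Proposition \ref{clemgerm}, which is exactly the combination you carry out. Your extra verifications (non-linearity and irreducibility of the local branches, $B\in L$, and the distinctness $a_A\neq b_B$ via the fact that distinct irreducible algebraic curves cannot share an analytic branch) are precisely the bookkeeping the paper leaves implicit.
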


The lemma and the proposition are proved below. The corollary follows immediately from them. 

  \begin{remark} \label{remiv} If in the above condition (iv) inequality (\ref{puiseq}) does not hold, then the germ $\Pi_{ab}$ is not irreducible, and
   some its irreducible component does not contain the curve $b\times A\times L$. This statement will not be used in the paper. Its proof 
   omitted to save the space follows arguments similar to the proof of Lemma \ref{lemgerm} given below. The author does not 
   know whether the statement of Proposition \ref{clemgerm} holds in full generality, 
   without requiring the irreducibility of all the germs $\Pi_{ab}$ corresponding to some $A$.
    \end{remark}
 
 For the proof of Lemma \ref{lemgerm} we introduce affine coordinates $(x,y)$ centered at $A$ so that $L$ is the $x$-axis.  We fix an arbitrarily small  $c>0$, and for every $t\in a$ we consider the cone $K_t=K_{c,t}\subset\cp^2$ saturated by the lines 
 through $t$ with moduli of azimuths 
 greater than $c$. We denote 
$$K_t^*=K_{c,t}^*=\text{ the image of the cone } K_t=K_{c,t}$$
  \begin{equation} \text{ under the symmetry with respect to the line } T_ta.\label{kt}\end{equation}
 We already know that the cone $K_t^*$ shrinks to $L$, as $t\to A$ (Proposition \ref{plimits}), thus each connected 
 component of the intersection $K_t^*\cap b$ shrinks to $B$. 
 We show (case by case) that for every $t$ close enough to $A$ each one of the latter components is simply connected. 
 Thus, for those $t$ the complement $b_t=b_{c,t}=b\setminus K_t^*$ is connected,  and it is  the whole curve $b$ with 
 small holes deleted; the latter holes shrink to $B$, as $t\to A$. Note that for every $Q\in b_t$ the line $Qt$ reflects from $T_ta$ to a line $l_t$ through $t$ with modulus of azimuth no greater than $c$. Moreover, each line $l_t$ through $t$ with azimuth less than $c$ corresponds to some $Q\in b_t$. Let us localize the analytic set $\Pi_{ab}$ by 
 the inequality $|\az l|<c$ with small  $c>0$. Then for every $t\in a$ close enough to $A$ the preimage of the point $t$ under the projection 
 $\Pi_{ab}\to a$ is a connected holomorphic curve conformally projected onto $b_t\subset b$ that accumulates to the curve $b\times A\times L$, 
 as $t\to A$. This implies the irreducibility of the germ $\Pi_{ab}$. 

 
 The most technical cases of Lemma \ref{lemgerm} are cases (iii) and (iv), when the germs $(a,A)$ and $(b,B)$ are tangent to each other. 
 In the proof of the lemma in those cases we use Proposition \ref{tang-asympt} stated and proved below that concerns 
 the family of tangent lines to the curve $b$. It describes the  
 asymptotic relation between the tangency point and the intersection points of the tangent line with the curve $a$. It will imply that in case 
(iii) with $r_a<r_b$ and in case (iv)  the cone $K_t^*$ contains no tangent line to $b$. This in its turn implies the simple connectivity of the components of the intersection $b\cap K_t^*$. In case (iii) with $r_a\geq r_b$ we study the projections of the components of the
 intersection $K_t^*\cap b$ to the $x$-axis. We show that the projection of each component lies in a disk disjoint from $x(t)$. 
 This together with the Maximum Principle implies that the minimal topological disk $U_t$ containing the component is disjoint from 
 the vertical line $\{ x=x(t)\}$. This together with the Maximum principle, now applied to the 
 projection $U_t\to\cp^1$ from the point $t$ implies that the intersection component under question is simply connected. 
 
 
Now let us pass to the proofs. 

 We consider parametrized curves (germs) and identify them with their parameter spaces (disks in $\cc$). Let $(a,A)$ and $(b,B)$ be 
 distinct tangent irreducible non-linear germs: $A=B=O$, $T_Aa=T_Bb=L$.  Let $(x,y)$ be affine coordinates centered at $O$ such that $L$
  is the $x$-axis. Set 
 $$T_{ab}=\{ (t,\tau)\in a\times b \ | \  t\in T_{\tau}b\}, \ 
 v=x(t), \ u=x(\tau).$$
 The subset $T_{ab}\subset a\times b$ represents a germ of  one-dimensional analytic set at $(O,O)$. 
 We consider its irreducible components  and their  projections to the product 
 $L\times L$: both $t$ and $\tau$ are projected to $L$ along the $y$-axis. Each irreducible component defines two implicit 
 multivalued functions: 
 
 - the function $u=u(v)$, whose graph is the image of the component under the above projection;
 
  - the function $\alpha=\alpha(v)$: the azimuth of the tangent line $T_{\tau}b$. 
  
We normalize the coordinates  so that the curves $a$ and $b$ are graphs of functions 
\begin{equation} b=\{ y=x^{r_b}(1+o(1))\}; \ a=\{ y=\sigma x^{r_a}(1+o(1))\}; \ \sigma\neq0.\label{sigm}\end{equation}
  
 \begin{proposition} \label{tang-asympt} Let $a$, $b$ be two tangent irreducible non-linear 
 germs of analytic curves at a point $O\in\cp^2$. Let the coordinates 
 $(x,y)$, the number $\sigma$, the germ $T_{ab}$  and the functions $v$ and $u$ be as above.  
 Then for every irreducible component of the germ $T_{ab}$  the corresponding 
 implicit  functions $u(v)$ and $\a(v)$ have asymptotic Puiseaux expansions at $0$ of the following possible types;  
 for every given pair $(a,b)$ all the corresponding  asymptotics are realized by appropriate irreducible components: 
 
 Case 1): $r_a>r_b$. Two possible asymptotics for every $(a,b)$: 
 \begin{equation}u=sv(1+o(1)), \ s=\frac{r_b}{r_b-1}, \ \a=r_bs^{r_b-1}v^{r_b-1}(1+o(1)), \label{case31}
 \end{equation}
 \begin{equation} u=sv^{\frac{r_a-1}{r_b-1}}(1+o(1)), \ s=(\frac{\sigma}{r_b})^{\frac1{r_b-1}}, \ 
 \a=\sigma v^{r_a-1}(1+o(1)).\label{case32}\end{equation}

 Case 2): $r_a=r_b=r=\frac pq$, $p,q\in\mathbb Z$ are relatively prime. Then 
 \begin{equation}u=s^qv(1+o(1)), \ \a=rs^{p-q}v^{r-1}(1+o(1)); \ s^{p}(r-1)-r s^{p-q}+\sigma=0.\label{case2}
 \end{equation}
 
 Case 3): $r_a<r_b$. Set $r_g=\frac{p_g}{q_g}$, as above, $g=a,b$. Then 
 \begin{equation}
 u=s^{q_b}v^{\frac{r_a}{r_b}}(1+o(1)), \ s^{p_b}=\frac{\sigma}{1-r_b};  \ 
 \a=r_bs^{p_b-q_b}v^{\frac{r_a(r_b-1)}{r_b}}(1+o(1)).\label{case1}
 \end{equation}
  \end{proposition}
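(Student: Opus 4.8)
The plan is to reduce the whole statement to a single analytic relation expressing the tangency condition, and then to read off its branches by a Newton--polygon (dominant--balance) analysis. First I would parametrize the two germs in the normalized coordinates (\ref{sigm}): a point $t\in a$ is $(v,f(v))$ with $f(v)=\sigma v^{r_a}(1+o(1))$, and a point $\tau\in b$ is $(u,g(u))$ with $g(u)=u^{r_b}(1+o(1))$, whence $g'(u)=r_bu^{r_b-1}(1+o(1))$. The condition $(t,\tau)\in T_{ab}$ means $t\in T_\tau b$, i.e. $f(v)=g(u)+g'(u)(v-u)$. Substituting and using $g(u)-ug'(u)=(1-r_b)u^{r_b}(1+o(1))$ gives the master relation
\begin{equation}
\sigma v^{r_a}(1+o(1))=(1-r_b)u^{r_b}(1+o(1))+r_bu^{r_b-1}v(1+o(1)).\label{master}
\end{equation}
For the azimuth I would recall from the proof of Proposition \ref{plimits} (via Convention \ref{convaz}) that the azimuth of a line close to the $x$-axis $L$ agrees to leading order with its Euclidean slope; hence $\a=\az(T_\tau b)=g'(u)=r_bu^{r_b-1}(1+o(1))$, so once the leading term of $u=u(v)$ is known the asymptotics of $\a$ follows by substitution.

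Next I would use that $T_{ab}$ is a germ of analytic curve, as already recorded above: each of its irreducible components projects to a convergent Puiseaux graph $u=u(v)=\rho v^{\beta}(1+o(1))$ with $\rho\neq0$ and $\beta\in\mathbb Q_{>0}$. The admissible pairs $(\beta,\rho)$ are precisely those realizing a dominant balance in (\ref{master}), i.e. for which at least two of the three monomials $\sigma v^{r_a}$, $(1-r_b)u^{r_b}$, $r_bu^{r_b-1}v$ attain the same minimal order with cancelling leading coefficients. This is the Newton--polygon principle, and since the branch count is accounted for, every such balance is realized by an appropriate component. The core of the argument is then the elementary enumeration of these balances in the three regimes.

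I would then run the enumeration. If $r_a>r_b$, the term $\sigma v^{r_a}$ is subdominant: either the two $u$-monomials balance each other, forcing $u\sim sv$ and, after the cancellation $(1-r_b)s+r_b=0$, yielding $s=\frac{r_b}{r_b-1}$ and (\ref{case31}); or $\sigma v^{r_a}$ balances $r_bu^{r_b-1}v$ (the larger $u$-monomial once $u\ll v$), giving $u=sv^{\frac{r_a-1}{r_b-1}}(1+o(1))$ with $s^{r_b-1}=\frac{\sigma}{r_b}$ and (\ref{case32}). If $r_a=r_b=r=\frac pq$, then for $u\sim\rho v$ all three monomials have order $v^{r}$, and equating coefficients gives $(r-1)\rho^{r}-r\rho^{r-1}+\sigma=0$; the substitution $\rho=s^{q}$ (so $\rho^{r}=s^{p}$, $\rho^{r-1}=s^{p-q}$) turns this into the polynomial equation of (\ref{case2}). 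Finally, if $r_a<r_b$, then $r_bu^{r_b-1}v$ is subdominant and the balance $\sigma v^{r_a}\sim(1-r_b)u^{r_b}$ forces $u=s^{q_b}v^{r_a/r_b}(1+o(1))$ with $s^{p_b}=\frac{\sigma}{1-r_b}$, i.e. (\ref{case1}). In each regime the formula for $\a$ follows by inserting the leading term of $u$ into $\a=r_bu^{r_b-1}(1+o(1))$, and a short check rules out every other balance (e.g. when $r_a>r_b$ the putative balance $\sigma v^{r_a}\sim u^{r_b}$ is impossible since then $u^{r_b-1}v$ would dominate).

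The hard part will be the regime $r_a>r_b$ with the two $u$-monomials cancelling: there the two leading terms annihilate, so one must verify that the omitted higher-order parts of the Puiseaux expansions of $a$ and $b$ cannot alter the claimed leading coefficient $s=\frac{r_b}{r_b-1}$. The saving observation is that the conclusion asserts only $u=sv(1+o(1))$ and $\a=r_bs^{r_b-1}v^{r_b-1}(1+o(1))$, that is, only the \emph{leading} coefficient of $u$, which is already pinned by the cancellation $(1-r_b)s+r_b=0$ alone; the residual relation left after this cancellation still has strictly positive order in $v$, so the correction to $u$ is genuinely $o(v)$ and the stated asymptotics survive. The remaining routine points—that no balance beyond those listed is consistent, and that each Newton--polygon edge actually carries a branch—are handled by the dominant-balance bookkeeping together with the analyticity of $T_{ab}$ recorded above.
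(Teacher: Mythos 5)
Your proposal is correct and follows essentially the same route as the paper: you derive the identical master tangency relation (the paper's equation (\ref{tang-form})) and enumerate its branches by the Newton--polygon/dominant-balance principle, exactly as the paper does with its Newton diagram of the three monomials $(1-r_b)u^{r_b}$, $r_bu^{r_b-1}v$, $-\sigma v^{r_a}$. Your added remarks (the explicit azimuth formula $\a=r_bu^{r_b-1}(1+o(1))$ and the check that the cancellation $(1-r_b)s+r_b=0$ in Case 1 still leaves a genuinely $o(v)$ correction) only make explicit what the paper leaves implicit.
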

 
 \begin{proof} 
The germ $T_{ab}$  is given by zero set of an analytic function germ on $a\times b$  at 
$(O,O)$ that has the type
 $u^{r_b}(1+o(1))+r_bu^{r_b-1}(v-u)(1+o(1))-\sigma v^{r_a}(1+o(1))$ in the variables $u$ and $v$.
Or equivalently, by an equation 
 \begin{equation} (1-r_b)u^{r_b}(1+o(1))+r_bu^{r_b-1}v(1+o(1))-\sigma v^{r_a}(1+o(1))=0  
 \label{tang-form}\end{equation}
 with the left-hand side being an analytic function of the parameters of the curves $a$ and $b$.
An implicit function $u(v)$ corresponding to an irreducible component of the germ $T_{ab}$ is a solution to (\ref{tang-form}) 
that has Puiseaux expansion without free term. Hence, the restrictions to its graph of the 
  three monomials $(1-r_b)u^{r_b}$, $r_bu^{r_b-1}v$, $-\sigma v^{r_a}$  should satisfy 
  the  following statements as multivalued functions in $v$ after substitution $u=u(v)$:
  
  - at least two of the above monomials  have  lower Puiseaux terms in $v$ with equal powers; 
  we call them {\it principal monomials};  their sum is of smaller order, i.e., it starts with higher terms; 
  
  - the remaining  monomial (if any) should be of smaller order than the principal ones.
  
  In more detail, consider the Newton diagram in $\rr^2$ of the above triple of monomials. That is, take the union of 
  the translation images of the positive quadrant by the vectors $(r_b,0)$, 
  $(r_b-1,1)$, $(0,r_a)$. The Newton diagram is its convex hull. Its {\it edges} are segments in  its boundary that are not contained in 
  the coordinate axes. For every irreducible component of the germ $T_{ab}$ the corresponding 
  principal monomials should lie in the same  edge of the Newton diagram. Vice versa, each edge is realized by an irreducible component. 
  This is a version of a classical observation due to Newton. 
  
 Case 1): $r_a>r_b$. Then the  Newton diagram has  two 
 edges: the segments $[(r_b,0), \ (r_b-1,1)]$ and 
 $[(r_b-1,1), \ (0,r_a)]$. These edges correspond to 
 asymptotics (\ref{case31}) and (\ref{case32}) respectively.
 
   Case 2): $r_a=r_b$. Then there is a unique edge $[(r_b,0), \ (0,r_a)]$,  the three above monomials 
   lie there and are principal.  This implies (\ref{case2}). 
 
 Case 3): $r_a<r_b$. We have one edge $[(r_b,0), \ (0,r_a)]$, the point $(r_b-1,1)$ is in the interior of the Newton diagram,  
 the principal monomials are $(1-r_b)u^{r_b}$ and $-\sigma v^{r_a}$. This implies (\ref{case1}).
Proposition \ref{tang-asympt} is proved.
\end{proof}

   \begin{proof} {\bf of Lemma \ref{lemgerm}.} As it was shown above, for  the proof of the lemma 
    it suffices to prove that for every $t\in a$ close to $A$ 
   each connected component of the intersection $K_t^*\cap b=K_{c,t}^*\cap b$ 
   is simply connected. Let us prove this case by case. To do this, we 
   consider the projection $\nu_t:b\to\oc_t=\mathbb P(T_t\cp^2)$ of the curve $b$ from the point $t$. Note that the intersection $K_t^*\cap b$ is 
   the preimage of a disk  $D(t)=\nu_t(K_t^*)\subset\oc_t$; the symmetry with respect to  the line $T_ta$ sends the disk 
   $D(t)$ to another disk that 
  correspond exactly to the lines through $t$  with moduli of azimuths greater than $c$. Let $(x,y)$ be affine coordinates centered at $A$ with 
  $L$ being the $x$-axis. We identify all the projective lines $\oc_t=\oc$ 
  by translations and introduce the coordinate $z=\frac yx$ on $\oc_A$: $z(L)=0$. This induces coordinate $z$ on each $\oc_t$. 
     
   Case (i): $A\neq B$. Then there exist neighborhoods $U=U(B)\subset b$ and $V=V(A)\subset a$ such that for every $t\in V$ the image 
   $\nu_t(U)\subset\oc$ lies in the unit disk $D_1$: if $\tau\in b$ is close to $B$ and $t\in a$ is close $A$, then the line $t\tau$ is close to $L=AB$. 
   This together with the Maximum Principle applied to the projection $\nu_t:U\to D_1$ 
   implies the simple connectivity of components of the intersection 
   $K_t^*\cap b$. 
   
   Case (ii): $A=B$ but $T_Bb\neq L$. Then for every $t\in a$ close enough to $A$ the cone $K_t^*$  contains no tangent lines to $b$ 
   through $t$, since $K_t^*$  shrinks to the line $L$ transverse to $T_Bb$, as $t\to A$. Therefore, the projection $\nu_t$ 
   of each component of the intersection $K_t^*\cap b$ to the disk $D(t)$ is a branched covering either without critical points, or with 
exactly one critical point of maximal multiplicity. The latter happens exactly when the intersection component under question 
contains $B$ and the latter is a cusp of the curve $b$: this is the critical point. In both cases the component is obviously simply 
connected. 

 Case (iv): $A=B=O$ is a finite point, $T_Aa=T_Bb=L$, and $r_b(2-r_a)<r_a$. We choose $(x,y)$ to be a finite affine chart. 
 Let us show that for every $t\in a$ close to $O$ 
 the cone $K_t^*$ contains no tangent line to the curve $b$, as in Case (ii). Indeed, 
 the azimuths of all the lines  forming the cone $K_t^*$ have uniform asymptotics $O((x(t))^{2(r_a-1)})$, by (\ref{az-fin}). On the other 
 hand, the azimuths of the tangent lines to $b$ through $t$ are of order $(x(t))^{\mu}$, $\mu<2(r_a-1)$. Indeed, 
in the case, when 
 $r_a\geq r_b$, one has $\mu\in\{ r_b-1, \ r_a-1\}$, see (\ref{case31})--(\ref{case2}), hence $\mu\leq r_a-1<2(r_a-1)$. 
 In the case, when $r_a<r_b$, one has $\mu=\frac{r_a(r_b-1)}{r_b}$, by (\ref{case1}), and hence $\mu<2(r_a-1)$: 
 this is equivalent to the 
 inequality $r_b(2-r_a)<r_a$ from the assumption.  Thus, the azimuths of the tangent lines uniformly asymptotically dominate the 
 azimuths of the lines forming the cone $K_t^*$. Hence, the cone $K_t^*$ contains no tangent lines, whenever $t$ is close enough to $O$, 
 and all the components of the intersection $K_t^*\cap b$ are simply connected, as in Case (ii).
 
 Case (iii): $A=B=O$ is an infinite point and $T_Aa=T_Bb=L$. Then the azimuths of the lines forming the cone $K_t^*$ have uniform 
asymptotics $p\frac{y(t)}{x(t)}(1+o(1))=p\sigma(x(t))^{r_a-1}$, where $p\in\{\frac r2, \ 1, \ \frac{r^2}{2r-1}\}$, $r=r_a$, by (\ref{az-inf}); 
here $\sigma$ is the same, as in (\ref{sigm}). 

Subcase (iii a): $r_a<r_b$. Then the azimuths of the tangent lines to $b$ through $t$ are of order $(x(t))^{\mu}$, $\mu=\frac{r_a(r_b-1)}{r_b}
>r_a-1$, see (\ref{case1}). Therefore, the cone $K_t^*$ contains no tangent lines to $b$, whenever $t$ is close enough to $O$, and we are done, as in Case (ii). 

Subcase (iii b): $r_a\geq r_b$.  The intersection points $\tau\in b\cap K_t^*$ satisfy the asymptotic equation
\begin{equation} y(\tau)+p\frac{y(t)}{x(t)}(1+o(1))(x(t)-x(\tau))=y(t).\label{ytt}\end{equation}
Substituting the expressions $y(\tau)=(x(\tau))^{r_b}(1+o(1))$ and $y(t)=\sigma(x(t))^{r_a}(1+o(1))$, $u=x(\tau)$, $v=x(t)$, we get 
\begin{equation} u^{r_b}(1+o(1))-p\sigma v^{r_a-1}u(1+o(1))+\sigma v^{r_a}(p-1+o(1))=0.\label{yttt}\end{equation}

{\bf Claim 1.} {\it Let $r_a\geq r_b$. Then there exists a finite subset $S\subset\cc$ depending only on $r_a$, $r_b$ and $\sigma$ 
such that the projection to the $x$-axis  of the intersection $K_t^*\cap b$ lies in $o(v)=o(x(t))$- neighborhood of the subset 
$Sv\subset\cc$, as $t\to O$.  If there exists a  family $\tau_t$ of intersection points such that 
$x(\tau_t)=v(1+o(1))$ (along a sequence $t_k\to O$), then  $r_a=r_b$ and $y(\tau_t))=y(t)(1+o(1))$.}
%

\begin{proof} In the case, when $r_a>r_b$, all the solutions  $u$ to (\ref{yttt}) are $o(v)$. Indeed, otherwise,  
if there existed a family of solutions 
$u$ to (\ref{yttt}) that is not $o(v)$ along a sequence of points $t\to O$, then the term $u^{r_b}$ in (\ref{yttt}) would have dominated 
the rest of (\ref{yttt}), which is obviously impossible. Let us now treat the case, when $r_a=r_b=r$. 
Consider the three monomials $u^{r}$, $-p\sigma v^{r-1}u$ and $(p-1)\sigma v^{r}$ in the left-hand side of (\ref{yttt}). 
In the case, when $p\neq1$, their Newton diagram consists of one edge $[(r,0),(0,r)]$: the point $(1,r-1)$ lies on this edge. 
 These are the principal monomials in a similar sense, as in the proof of Proposition 
\ref{tang-asympt}. Therefore, if $p\neq 1$, then each solution to (\ref{yttt}) has asymptotics $u=sv(1+o(1))$, with $s$ being a root of polynomial 
depending on $r$, $p$ and $\sigma$, as in the same proposition. The number  $p$ being a three-valued function in 
$r$, we get a triple of  polynomials depending on $r$ and $\sigma$. This proves the claim for the union $S$ of their roots. 
Let now $p=1$: then the third monomial vanishes.  For every family of solutions $u=u(v)$ to (\ref{yttt}) 
one has $u=O(v)$: otherwise, the monomial $u^r$ would have dominated the rest of the left-hand side in (\ref{yttt}) along a subsequence 
$t_k\to O$, which is impossible, as above. Substituting $p=1$ and $u=O(v)$ to (\ref{yttt}) yields that the 
expression  $u^r-\sigma v^{r-1}u$ should be of order $o(v^r)$. This implies that each family of solutions $u=u(v)$ to (\ref{yttt}) 
has asymptotics either $u=o(v)$, or $u=v(s+o(1))$ with $s=\sigma^{\frac1{r-1}}$. This implies the first statement of the claim with 
$S$ consisting of zero and all the numbers $\sigma^{\frac1{r-1}}$. 
Let now there exist a family of solutions $u=v(1+o(1))$ to (\ref{yttt}), i.e., there exist a family of solutions $\tau$ to (\ref{ytt}) with 
$x(\tau)=x(t)(1+o(1))$. Then $r_a=r_b$, by the above discussion, and $y(\tau)=y(t)(1+o(1))$, by (\ref{ytt}). This proves the claim.
\end{proof}  
   
Given a family $U_t$ of connected components of the intersection $K_t^*\cap b$, let us prove their simple connectivity, whenever $t$ 
is close enough to $O$. Let $V_t\subset b$ denote the minimal topological disk containing $U_t$. Without loss of generality we consider 
that there exists an $s\in S$ such that for every $t$ close enough to $O$ 
the projection of the domain $U_t$, and hence $V_t$  to the $x$-axis lies in $o(v)$-neighborhood of the point $sv$, passing to subsequence 
$t_k\to O$ (Claim 1).  Let $s\neq1$. Then the domain $V_t$ does not contain $v=x(t)$, whenever  $t$ is close enough to $O$. 
  Therefore, the image of 
 its projection $\nu_t:V_t\to\oc_t=\mathbb P(T_t\cp^2)$ from the point $t$ is contained in the affine chart $\cc=\oc_t\setminus\{x=x(t)\}$. 
 Hence, $U_t=V_t$ is simply connected, by the Maximum Principle, as in Case (i). Let now $s=1$. We claim that then 
 $U_t$ cannot be tangent to a line through $t$. Indeed,  the $x$- and $y$- coordinates of all the 
 points $\tau\in U_t$ are asymptotically equivalent to those of the point $t$, as $t\to O$, by the claim. 
 Therefore, if there existed  a $\tau\in U_t$ such that 
 the line $t\tau$ is tangent to $b$ at $\tau$, then the azimuth of the tangent line would have been asymptotically equivalent to 
 that of the line $T_ta$: both of them should be equivalent to $r\frac{y(t)}{x(t)}$. 
 Thus, the cone $K_t^*$ would have contained a line with azimuth $r\frac{y(t)}{x(t)}(1+o(1))$, while we already know that 
 the azimuths of all its lines should have  uniform asymptotics $p\frac{y(t)}{x(t)}(1+o(1))$ with $p\in\{\frac r2, \ 1, \ \frac{r^2}{2r-1}\}$, see 
 (\ref{az-inf}).  The latter three possible values of the number $p$ are distinct from $r$, since $r>1$. The contradiction thus obtained 
 proves that in the case under consideration the component $U_t$ is not tangent to a line through $t$, and hence, is simply connected, as in 
 Case (ii). This finishes the proof of simple connectivity of the components of the intersection $K_t^*\cap b$. Together with the 
 discussion following Remark \ref{remiv}, this proves Lemma \ref{lemgerm}. 
 \end{proof} 
 
  \def\mcv{\mathcal V}
  \def\mcc{\mathcal Q}
  
%
 \begin{proof} {\bf of Proposition \ref{clemgerm}.}  The mapping $\psi_a$ contracts the curve $\hb\times A$ to a point $(A,D)$ with 
 some $D\in\hd\cap L$, by Proposition \ref{plimits} and birationality. 
 Fix an arbitrary $D\in\hd\cap L$, and let us show that $\psi_a$ contracts 
 the curve $\hb\times A$ to $(A,D)$. This together with holomorphicity of the mapping $\psi_a$  on finitely punctured curve $\hb\times A$ 
 (isolatedness of indeterminacies) implies uniqueness of the intersection point $D$. Fix 
 a line $l\neq L$  through $A$, a point $Q\in l\setminus A$ and  consider the family of lines $l_t=Qt$ with $t\in \ha$:  $l_t\to l$, 
 as $t\to A$. For every $t\in\ha$ that is not an isotropic tangency point let 
 $l_t^*$ denote the line symmetric to $l_t$ with respect to the tangent line $T_ta$. One has $l_t^*\to L$, by Proposition \ref{plimits}. 
 Fix a family of  intersection points $D_t\in l_t^*\cap\hd$ that tend to $D$, as $t\to A$. Let $B_t\in\hb\cap l_t$ denote the family of points 
 such that $\psi_a(B_t,t)=(t,D_t)$: it exists and is unique by birationality. The family of pairs $(B_t,t)\subset\hb\times\ha$ is obviously an 
 algebraic curve, hence, there exists a limit $B=\lim_{t\to A}B_t\in\hb\cap l$. If $ B\neq A$, then $AB=l\neq L$, $\psi_a$ is holomorphic 
 at $(B,A)$, $\psi_a|_{\hb\times A}\equiv D$ in a neighborhood of the pair $(B,A)$ (Proposition \ref{plimits}), and we are done. 
 Let now $B=A$. The irreducible surface germ $\Pi_{ab}$ contains the germs at $(B,A,L)$
  of analytic  curves  $\gamma=\{(B_t,t,l_t^*) \ | \ t\in a_A\}$  and 
  $\Gamma=B\times A\times\cp^1$, where $\cp^1$ is the space of projective lines through the point $A$. The  germ 
   $\Pi_{ab}$ is smooth in the complement to the curve $\Gamma$. 
 Consider the mapping $\mcc: \Pi_{ab}\to \ha\times\hd$: 
 $\mcc(B',A',L')=\psi_a(B',A')$. This is a well-defined holomorphic mapping on the complement  $\Pi_{ab}\setminus\Gamma$,  
  by birationality and isolatedness of indeterminacies of the mapping $\psi_a$. One has $\mcc(x)\to(A,D)$, as $x\to(B,A,L)$. 
 Indeed, as $x=(B',A',L')\to(B,A,L)$,  set $(A',D')=\mcc(x)$, the points $D'$ tend to the intersection 
 $\hd\cap L$. The germ $\Pi_{ab}$ being irreducible, there are arbitrarily small connected neighborhoods of the point 
 $(B,A,L)$ in $\Pi_{ab}$, and their complements to $\Gamma$ are also connected. 
 For every latter  neighborhood $V$ and each $x=(B',A',L')\in V\setminus\Gamma$ the line $L'$ is close to $L$. 
 Therefore, the corresponding point $D'$ should stay in one and the same local branch intersecting $L$ of the curve $\hd$, as $x$ ranges in 
 $V\setminus\Gamma$. This implies that the limit of the point $D'$ exists and lies in $L$, as   $x\to(B,A,L)$. 
  If $x\to(B,A,L)$ along the curve $\gamma$, then $D'\to D$, by construction. Hence, $\mcc(x)\to(A,D)$, as 
  $x\to(B,A,L)$ in $\Pi_{ab}\setminus\Gamma$. 
  The germ at $(B,A,L)$ of the curve $\hb\times A\times L$ is contained in $\Pi_{ab}$, and $\mcc$ contracts it to the 
  point $(A,D)$, by Proposition \ref{plimits} and the previous statement. 
Or  equivalently,   $\psi_a$ contracts $\hb\times A$ to  $(A,D)$. This proves the proposition.
  \end{proof}

\section{Rationality, property (I)  and coincidence of isotropic tangent lines and opposite mirrors}

In this section and in what follows we consider that  $a$, $b$, $c$, $d$ is a 4-reflective billiard, and no its mirror  is a line. 
 In Subsection 3.1 we prove birationality of neighbor edge correspondence (Lemma \ref{lbirat}) and deduce 
 rationality of mirrors (Corollary \ref{contrat}) and description of the  degenerate quadrilateral orbits with one of the 
 vertices being a cusp with isotropic tangency (Corollary \ref{bircusp}).  In Subsection 3.2 we show that the mirrors have property (I), 
 common isotropic tangent lines, and opposite mirrors coincide: $a=c$, $b=d$.  

\subsection{Birationality of billiard  and rationality of mirrors.}\label{subbirat}
\begin{lemma} \label{lbirat} Let $U\subset\ha\times\hb\times\hc\times\hd$ be the 4-reflective set. 
There exists a unique birational isomorphism $\psi_a:\hat b\times \hat a\mapsto \hat a\times \hat d$ 
such that $\psi_a(B,A)=(A,D)$ for every $ABCD\in U$.  In particular, the algebraic set $U$ is  irreducible, 
the projection $U\to\ha\times\hb$ is birational and  the curves $b$ and $d$ have equal degrees.  
 \end{lemma}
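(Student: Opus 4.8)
The plan is to realize $\psi_a$ as the neighbor-edge correspondence attached to the two-dimensional $4$-reflective set $U$, and to establish birationality by ruling out multivaluedness via Corollary \ref{lnoncoinc}. First I would consider the projection $p\colon U\to\hb\times\ha$ sending $ABCD\mapsto(B,A)$ and the projection $q\colon U\to\ha\times\hd$ sending $ABCD\mapsto(A,D)$. By Proposition \ref{comp-set} the set $U$ is a purely two-dimensional algebraic set on which, for each pair of neighboring mirrors, the relevant projection is a submersion on a dense open subset and is epimorphic; in particular $p$ is generically finite and dominant. The composition $q\circ p^{-1}$ then defines an algebraic correspondence $\psi_a\colon\hb\times\ha\dashrightarrow\ha\times\hd$, and for $ABCD\in U_0$ the reflection law at $A$ (Proposition \ref{comp-set}) forces the lines $AB$ and $AD$ to be symmetric with respect to $T_Aa$, so that $D$ is determined by $(B,A)$ as the second intersection of the reflected line with $\hd$. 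Thus $\psi_a(B,A)=(A,D)$ holds on a Zariski dense subset, which is the required defining property.

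The heart of the argument is to show $\psi_a$ is \emph{birational}, i.e.\ that $p$ has degree one, and this is where I expect the main obstacle to lie. I would argue by contradiction exactly as sketched in Step 3 of the introduction: suppose $p$ (or its inverse correspondence) has degree at least two, so that on some open $V\subset\hb\times\ha$ a generic pair $(A,B)$ extends to two \emph{distinct} $4$-periodic orbits $ABCD$ and $ABC'D'$ in $U_0$. The key observation is then that the quadrilaterals $CDD'C'$ sweep out a two-parameter family, and each is a $4$-periodic orbit of the auxiliary billiard $c,d,d,c$. Checking this requires verifying the reflection law at all four vertices of $CDD'C'$: at $C$ and $C'$ it follows because $ABCD$ and $ABC'D'$ are genuine orbits with a common incoming edge from $A$, and at $D$, $D'$ (the repeated mirror $d$) it follows from the symmetry of the reflected lines about $T_Da$ and $T_{D'}a$ respectively. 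This would make $c,d,d,c$ a $4$-reflective billiard with the coinciding neighbor mirrors $d=d$, contradicting Corollary \ref{lnoncoinc}. I expect the delicate point to be confirming that the two orbits really are distinct as $(A,B)$ varies (so that $C\neq C'$ or $D\neq D'$ generically, giving a genuinely two-dimensional family) and that the resulting family lands in the $4$-reflective set rather than in a lower-dimensional degenerate locus; this is precisely the bookkeeping hidden behind the phrase ``two holomorphic branches.''

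Once birationality is established, the remaining assertions follow quickly. Uniqueness of $\psi_a$ is immediate, since any two birational maps agreeing on the Zariski dense locus $U_0$ coincide. Irreducibility of $U$ follows because $U$ is the graph closure of the birational map $\psi_a$ over the irreducible surface $\hb\times\ha$, hence itself irreducible of dimension two; equivalently, $p\colon U\to\hb\times\ha$ being generically one-to-one and $\hb\times\ha$ irreducible forces $U$ irreducible. The projection $U\to\ha\times\hb$ being birational is then just the statement that $p$ has degree one, which we have proved. Finally, for the equality of degrees of $b$ and $d$: since $\psi_a$ is a birational isomorphism of surfaces, restricting to a generic fiber $\hb\times\{A\}$ (a copy of $\hb$) it maps birationally onto $\{A\}\times\hd$ (a copy of $\hd$), so $\hb$ and $\hd$ are birationally equivalent; as both are rational curves (Corollary \ref{contrat}) this alone is weak, so instead I would compare degrees directly. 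The cleanest route is to count, for generic $A$, the number of tangent-line/intersection incidences: the reflected pencil of lines through $A$ meets $b$ in $\deg b$ points and $d$ in $\deg d$ points, and the correspondence $B\mapsto D$ induced by symmetry about $T_Aa$ is a bijection between $\hb\cap(\text{pencil})$ and $\hd\cap(\text{pencil})$, giving $\deg b=\deg d$.
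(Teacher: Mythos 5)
Your strategy coincides with the paper's: define $\psi_a$ via the projection of $U$ to $\hb\times\ha\times\hd$, and kill multivaluedness by turning two distinct orbits $ABCD$, $ABC'D'$ over a common pair $(A,B)$ into a two-parameter family of quadrilateral orbits $CDD'C'$ of the auxiliary billiard $c$, $d$, $d$, $c$, contradicting Corollary \ref{lnoncoinc}. Your derivations of the consequences are also essentially the intended ones: the degree count via the pencil of lines through a generic $A$ (bijection, induced by the birational $\psi_a$, between $\ell\cap\hb$ and $\ell^*\cap\hd$) is correct, and the irreducibility argument works once one adds the standard remark that every two-dimensional component of $U=\overline{U_0}$ contains genuine orbits and hence dominates $\ha\times\hb$ by Proposition \ref{comp-set} (otherwise ``generically one-to-one'' alone would not exclude a contracted component).

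The genuine gap is exactly at the point you flag as delicate, and you have mis-stated what needs to be proved there. For $CDD'C'$ to be a $4$-periodic orbit in the sense of Definition \ref{deforb}, every pair of \emph{neighbor} vertices must be distinct; since the neighbor pairs of this quadrilateral are $(C,D)$, $(D,D')$, $(D',C')$ and $(C',C)$, you need \emph{both} $C\neq C'$ \emph{and} $D\neq D'$ generically. Your parenthetical ``$C\neq C'$ or $D\neq D'$'' is precisely what is automatic from the distinctness of the two orbits, and it is not enough: if, say, $C\equiv C'$ on $V$ while $D\not\equiv D'$, the quadrilaterals $CDD'C'$ have a coinciding pair of neighbor vertices, are not orbits of $c$, $d$, $d$, $c$ at all, and no contradiction with Corollary \ref{lnoncoinc} is obtained. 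The paper closes exactly this case by a collinearity argument: if $C_1\equiv C_2\equiv C$ but $D_1\neq D_2$, then $CD_1=CD_2$ (both are the image of $BC$ under the symmetry in $T_Cc$) and $AD_1=AD_2$ (both are the image of $AB$ under the symmetry in $T_Aa$), so all these lines equal $D_1D_2$ and the vertices $A$, $C$, $D$ are collinear on an open set of genuine orbits; this is impossible (collinearity forces $AD$ to be the fixed line through $D$ orthogonal to $T_Dd$, which for fixed $D$ pins $A$, then $C$, then $B$ down to finite sets, incompatible with a two-parameter family). The symmetric case $D_1\equiv D_2$, $C_1\neq C_2$ is excluded analogously. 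Your proposal contains no substitute for this step. Relatedly, your verification of the reflection law at the four vertices of $CDD'C'$ should be made explicit through the collinearity facts that $B$, $C$, $C'$ are collinear and $A$, $D$, $D'$ are collinear (uniqueness of the reflected line at $B$ and at $A$); as written it is too vague to check, and the tangent lines at $D$, $D'$ must be $T_Dd$, $T_{D'}d$, not tangent lines to $a$.
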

 
 \begin{proof} The algebraic set $U$ is epimorphically projected onto $\ha\times\hb$ (Proposition \ref{comp-set}),  and its projection to 
 $\hb\times\ha\times\hd$ defines 
 an algebraic correspondence $\psi_a:\hb\times\ha\to\ha\times\hd$: $\psi_a(B,A)=(A,D)$ for every $ABCD\in U$. 
 Suppose the contrary to birationality: one of the mappings $\psi_a^{\pm 1}$, say, 
 $\psi_a$ is multivalued on an open set. Then there exists an open subset $V\subset \hb\times\ha$ such that 
for every   $(B,A)\in V$ there exist at least  two distinct quadrilateral orbits $ABC_1D_1$ and 
$ABC_2D_2$ of the billiard $a$, $b$, $c$, $d$, 
where $C_j$ and $D_j$ depend analytically on $(B,A)$; $(C_1,D_1)\neq(C_2,D_2)$ on  $V$. The latter immediately implies that 
$C_1\neq C_2$ and $D_1\neq D_2$ on $V$ (after shrinking $V$). 
Indeed, if $C_1\equiv C_2\equiv C$ but $D_1\neq D_2$ on $V$, then 
$CD_1\equiv CD_2\equiv D_1D_2\equiv AD_1\equiv AD_2$, since the lines $CD_j$ and $AD_j$ are the images of the lines 
$BC$ and $AB$ under 
the symmetries with respect to the lines $T_Cc$ and $T_Aa$ respectively. Thus, $A$, $C$ and $D$ are on the same line for an 
open set of quadrilateral orbits $ABCD$, which is  impossible.  The 
quadrilaterals $C_1D_1D_2C_2$ form an open set of 4-periodic orbits of the billiard 
$c$, $d$, $d$, $c$ with two pairs of coinciding neighbor mirrors, - a contradiction to 
Corollary \ref{lnoncoinc}. This  proves Lemma \ref{lbirat}.
\end{proof}
%
 
 \begin{corollary} \label{contrat} Let $a$, $b$, $c$, $d$ be a 4-reflective algebraic billiard, and the  curve $a$ be not a line. 
 Then $b$ and $d$ are rational  curves. Thus, the mirrors of a 4-reflective algebraic 
 billiard are rational, if none of them is a line. 
 \end{corollary}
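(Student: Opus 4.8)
The plan is to derive rationality from the birational isomorphism $\psi_a\colon\hb\times\ha\to\ha\times\hd$ of Lemma \ref{lbirat} by showing that it contracts a whole fibre to a point, and then to invoke the classical fact that curves contracted by a birational map of surfaces are rational. First I would record that, since $a$ is not a line, it carries an isotropic tangent line: the dual curve $a^*$ has positive degree, so by B\'ezout it meets the line $I_1^*\subset\cp^{2*}$ consisting of the lines through the isotropic point $I_1$, and the dual of any such intersection point is a tangent line to $a$ through $I_1$, hence isotropic (recall that the infinity line $\oci$ is isotropic, so an infinite tangent line is allowed). Fix such an isotropic tangency point $A\in\ha$ and put $L=T_Aa$. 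If $b$ or $d$ is a line it is automatically rational, so I may assume $b$ and $d$ are non-linear, in which case $\psi_a$ is birational by Lemma \ref{lbirat}.

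The key step is that $\psi_a$ contracts the fibre $\hb\times A$ to a single point. For generic $B\in\hb$ the quadrilateral $ABCD\in U$ obeys the isotropic reflection law at $A$ (Proposition \ref{comp-set}): since $L=T_Aa$ is isotropic, either $AB=L$ or $AD=L$. As $L\cap b$ is finite, the first alternative fails for generic $B$, so $AD=L$ and $\pi_d(D)\in L\cap d$; the latter is finite, whence $D$ is constant along the connected curve $\hb\times A$, i.e. $\psi_a(\hb\times A)=(A,D_0)$. The same reasoning applied to $\psi_a^{-1}$, now fixing $A$ and varying $D$, forces $AB=L$ for generic $D$, so $B\in L\cap b$ is constant and $\psi_a^{-1}$ contracts $A\times\hd$ to a point. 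I would then appeal to the Indeterminacy Resolution Theorem (\cite{griff}): a birational map of smooth projective surfaces factors through blow-ups and blow-downs, so any irreducible curve it contracts to a point is rational. The fibre $\hb\times A\cong\hb$ is irreducible and is contracted by $\psi_a$, hence $\hb$, and therefore $b$, is rational; likewise $A\times\hd\cong\hd$ is contracted by $\psi_a^{-1}$, so $d$ is rational. For the final assertion, when no mirror is a line I apply this twice: with $a$ non-linear it yields the neighbours $b$ and $d$, and with $b$ non-linear (via $\psi_b$) it yields $a$ and $c$, so all four mirrors are rational.

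The hard part will be the rigorous justification of the contraction on the whole fibre rather than merely at generic points. Because $\psi_a$ is birational between smooth surfaces, its indeterminacy locus is finite, so $\psi_a$ is a genuine morphism along $\hb\times A$ away from finitely many points; there its value is determined by continuity from nearby true orbits in $U_0$, and the reflection law---valid on all of $\overline{P_4}$ by Proposition \ref{comp-set}, not just on $U_0$---pins the image down to $(A,D_0)$. Everything else is a routine application of the birational geometry of surfaces, so this transfer of the reflection law to the limiting fibre, together with the finiteness of $L\cap d$ and $L\cap b$, is the only point requiring genuine care.
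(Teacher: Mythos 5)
Your proposal is correct and takes essentially the same route as the paper: fix an isotropic tangency point $A\in\ha$, show that the birational map $\psi_a$ of Lemma \ref{lbirat} contracts the fibre $\hb\times A$ to a point with $D$-component in the finite set $\hd\cap T_Aa$ (the paper derives this from Proposition \ref{plimits} as in Proposition \ref{clemgerm}, you from the isotropic reflection law of Proposition \ref{comp-set}, which is the same fact), and then conclude rationality of the contracted curve from the Indeterminacy Resolution Theorem \ref{trat}. The paper treats $d$ by symmetry rather than via $\psi_a^{-1}$, which is the same argument in different clothing.
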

 
 As it is shown below, Corollary \ref{contrat} is immediately implied by Lemma \ref{lbirat} and 
 the following well-known theorem from algebraic geometry. It is a part of 
 the Indeterminacy Resolution Theorem  for birational mappings.
 
 \begin{theorem} \label{trat} (implicitly contained in \cite{griff},  p.546 
 of  Russian edition). 
 Let $\psi:S_1\to S_2$ be a birational isomorphism of smooth complex 
 projective surfaces. Then each curve in $S_1$ contracted by $\psi$ to a point in $S_2$ 
 is rational.
 \end{theorem}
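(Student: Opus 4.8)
The plan is to reduce the statement to the standard structure theory of birational maps between smooth projective surfaces, whose two pillars are the \emph{elimination of indeterminacy} and the \emph{factorization of a birational morphism into blow-ups at points}. First I would reduce to the case of an irreducible curve $C\subset S_1$, since a curve is rational exactly when each of its irreducible components is, and the hypothesis that $\psi$ contracts $C$ passes to components. Then I would eliminate the indeterminacy of $\psi$: there exists a smooth projective surface $\tilde S$ together with birational morphisms $p\colon\tilde S\to S_1$ and $q\colon\tilde S\to S_2$, each a finite composition of blow-ups at points, such that $q=\psi\circ p$ as rational maps. The point of passing to $\tilde S$ is that $q$ is a genuine morphism, whereas $\psi\circ p$ is a priori defined only on a dense open set.

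Next I would transport the contracted curve to $\tilde S$. Let $\tilde C\subset\tilde S$ be the strict transform of $C$ under $p$, that is, the closure of $p^{-1}(C\setminus Z)$, where $Z\subset S_1$ is the finite set of points blown up by $p$. Since $p$ is an isomorphism over $S_1\setminus Z$, the restriction $p|_{\tilde C}\colon\tilde C\to C$ is birational, so $\tilde C$ and $C$ have the same geometric genus; hence it suffices to prove that $\tilde C$ is rational. On the other hand, for a generic point $x\in\tilde C$ one has $p(x)\in C$ and $q(x)=\psi(p(x))=\psi(C)$, which is a single point by hypothesis. As $q$ is a morphism, the image of the irreducible curve $\tilde C$ is irreducible and contains a dense subset reduced to one point, so $q(\tilde C)$ is that single point; thus $\tilde C$ lies in a fiber of $q$, necessarily in a positive-dimensional one.

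It therefore remains to show that every irreducible component of a positive-dimensional fiber of the birational morphism $q$ is rational. Here I would use the factorization $q=q'\circ q_1$, where $q_1$ is the blow-down of a single $(-1)$-curve—the exceptional divisor of a blow-up at a point, isomorphic to $\cp^1$—and $q'$ is a composition of fewer blow-downs. I would then argue by induction on the number of blow-downs. In the base case the only positive-dimensional fiber is the exceptional $\cp^1$. For the inductive step, an irreducible curve $F$ contracted by $q$ is either the exceptional curve of $q_1$, hence $\cp^1$, or it maps birationally under $q_1$ onto a curve contracted by $q'$, which is rational by the inductive hypothesis; in the latter case $F$ is the strict transform of that rational curve under a blow-up at a point, and since blowing up a point leaves the normalization, and hence the geometric genus, of a curve unchanged, $F$ is again rational. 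Applying this to $F=\tilde C$ shows $\tilde C$ is rational, and therefore so is $C$.

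The main obstacle is the structural input rather than any computation: establishing the elimination of indeterminacy of $\psi$ and the factorization of a birational morphism of smooth surfaces into blow-ups at points. Both rest on Castelnuovo's contractibility criterion together with a descent on a numerical invariant (for instance the difference of Picard numbers, or the number of exceptional curves), and this is precisely the content that is implicit in the cited resolution theorem. Granting it, the rationality of the contracted curve follows formally from the two elementary facts used above: a birational morphism preserves geometric genus away from its exceptional locus, and every curve it contracts is assembled from rational exceptional $\cp^1$'s.
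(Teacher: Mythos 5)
Your proof is correct and follows essentially the same route as the source the paper relies on: the paper gives no independent argument for Theorem \ref{trat}, citing it as implicit in the Indeterminacy Resolution Theorem of Griffiths--Harris, whose content (elimination of indeterminacy via a smooth $\tilde S$ with morphisms to $S_1$ and $S_2$, plus factorization of a birational morphism of smooth surfaces into point blow-ups) is exactly the machinery you invoke. Your reduction via the strict transform, the induction on the number of blow-downs, and the use of birational invariance of geometric genus are all sound, so the proposal is a complete and standard proof of the cited statement.
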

%
%
%

 \begin{proof} {\bf of Corollary \ref{contrat}.} It suffices to show that $b$ is rational, 
 by symmetry. There exists an isotropic tangency point $A\in\ha$, since $a$ is not a line. 
 The birational isomorphism $\psi_a:\hb\times\ha\to\ha\times\hd$ from Lemma \ref{lbirat} 
contracts the fiber $\hb\times A$ to a pair $(A,D)\in\ha\times\hd$ with 
$D\in T_Aa$, by Proposition \ref{plimits}, as in the proof of Proposition \ref{clemgerm}. 
Therefore, the  fiber 
$\hb\times A\simeq\hb$ is a rational curve (Theorem \ref{trat}), and thus,  so is $b$. Corollary \ref{contrat} is proved.
 \end{proof}
%

\begin{corollary} \label{bircusp} Let $a$, $b$, $c$, $d$ be a 4-reflective algebraic 
billiard, and $U$ be its 4-reflective set. Let $D\in\hd$ be a cusp with 
isotropic tangent line. Let $\Gamma\subset U$ be a parametrized analytic curve consisting of  quadrilaterals $ABCD\in U$ with fixed $D$ 
and variable $A$ or $C$. Then $B\equiv const$ along $\Gamma$, and $B$ is a cusp of the same degree (see Footnote 4 in Subsection 1.2), 
as $D$.
\end{corollary}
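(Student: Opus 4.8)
The plan is to first pin down every vertex except $B$ by applying the isotropic reflection law at the cusp $D$, then to show $B\equiv const$ by a reflection/contraction argument at the neighbour of $D$ that gets frozen in the process, and finally to read off the equality of cusp degrees from the birationality of $\psi_a$ through a local ramification count. First I would set $L=T_Dd$, which is isotropic by hypothesis, and assume without loss of generality that $A$ varies along $\Gamma$; the case of variable $C$ is symmetric under the relabelling $a\leftrightarrow c$, $A\leftrightarrow C$ (which fixes $b$, $d$, $B$, $D$). For generic quadrilaterals on $\Gamma$ one has $A\neq D\neq C$, so the isotropic reflection law of Proposition \ref{comp-set}, applied at the vertex $D$ with isotropic tangent line $L$, forces either $CD\equiv L$ or $DA\equiv L$ on $\Gamma$. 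The second alternative would confine $A$ to the finite set $\ha\cap L$, contradicting the variability of $A$; hence $CD\equiv L$. Then $\pi_c(C)\in c\cap L$, again a finite set, so $C\equiv C_0$ is constant along $\Gamma$, and with it the edge $C_0D=L$ is frozen.

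Next I would prove $B\equiv const$ using the reflection at $C_0$, encoded by the birational map $\psi_c\colon\hd\times\hc\to\hc\times\hb$, $(D,C)\mapsto(C,B)$ (the cyclic analogue of Lemma \ref{lbirat}). If $T_{C_0}c$ is not the isotropic line $L$, then reflection at $C_0$ sends the fixed line $C_0D=L$ to a single fixed line $\ell$, whence $B\in\hb\cap\ell$ is finite and therefore constant. The only remaining possibility is that $C_0$ is an isotropic tangency point of $c$ with $T_{C_0}c=L$; here reflection at $C_0$ no longer determines $B$, and instead I would invoke the contraction mechanism from the proof of Corollary \ref{contrat} (an application of Proposition \ref{plimits}): $\psi_c$ contracts the entire fibre $\hd\times C_0$ to one point $(C_0,B^*)$. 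Since a holomorphic map that is constant on a dense subset of an irreducible curve is constant on all of it, its value at $(D,C_0)$ is again $(C_0,B^*)$. In either case $B\equiv B_0$ is constant on $\Gamma$.

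With $B\equiv B_0$ the curve $\Gamma$ corresponds under $\psi_a\colon\hb\times\ha\to\ha\times\hd$ to the horizontal curve $\{B_0\}\times\ha$, which $\psi_a$ maps onto the fibre $\ha\times\{D\}$ by $(B_0,A)\mapsto(A,D)$. At a generic point $(B_0,A_*)$ of it $\psi_a$ is a local biholomorphism, since a birational map of smooth surfaces is a local isomorphism away from its finitely many indeterminacy points and contracted curves, and $\{B_0\}\times\ha$ is not contracted. Freezing such a generic $A=A_*$ and letting $B$ move, the induced transverse map $B\mapsto D$ factors as the projection of $b$ from $A_*$, followed by the reflection in $T_{A_*}a$, followed by the inverse of the projection of $d$ from $A_*$; the middle factor is a local biholomorphism, so transverse local degree one forces the ramification index $e_b$ of the first projection at $B_0$ to equal the ramification index $e_d$ of the last at $D$. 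For generic $A_*$ the lines $A_*B_0$ and $A_*D$ are distinct from the isotropic tangents $T_{B_0}b$ and $L=T_Dd$, hence transverse to the branches $b_{B_0}$ and $d_D$, so $e_b$ and $e_d$ equal the intersection indices of these lines with the respective branches, i.e. the branch multiplicities (degrees). Thus $B_0$ is a cusp of $b$ of the same degree as the cusp $D$ of $d$. The hard part will be the two technical points flagged above: the degenerate subcase $T_{C_0}c=L$ in the second step, where one must replace the direct reflection argument by the birational contraction of $\psi_c$, and the ramification bookkeeping in the last step, where the cusp hypothesis on $D$ enters precisely through the computation $e_d=\deg d_D$ and the matching $e_b=e_d$ coming from birationality.
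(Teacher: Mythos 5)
Your step 3 (freezing a generic $A_*$ and matching local degrees of the projections of $b$ and $d$ from $A_*$ through the reflection in $T_{A_*}a$) is sound, and it is essentially the paper's own central computation. The real problem is in steps 1--2, which are supposed to deliver $B\equiv const$: they use only that $T_Dd$ is isotropic and never that $D$ is a cusp, and under that weaker hypothesis the conclusion is false. Indeed, in the billiard on two distinct confocal conics the 4-reflective set contains the curve $\Gamma_{cd}(L)$ of quadrilaterals $ABA_0B_0$ with $A_0\in\ha\cap L$, $B_0\in\hb\cap L$ the (smooth) isotropic tangency points and with \emph{both} $A$ and $B$ variable (Theorem \ref{irrdeg}, item (iv)): the fourth vertex is fixed with isotropic tangent line, the first vertex varies, yet the opposite vertex $B$ varies too. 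So any correct derivation of $B\equiv const$ must use the cusp hypothesis, and yours does not.

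Concretely, the failure sits in the degenerate subcase of your step 2, where $T_{C_0}c=L$. The contraction of the fibre $\hd\times\{C_0\}$ by $\psi_c$ to a point $(C_0,B^*)$ constrains only the generic point of that fibre; it says nothing about the fibre of the projection of $U$ over the single point $(D,C_0)$, which may be an indeterminacy point of $\psi_c$ blown up to a whole curve of positions of $B$ (contracted curves of birational surface maps routinely carry indeterminacy points of the same map, as in an elementary transformation). This loophole is realized in the confocal-conics example above: there the pair (fixed fourth vertex, fixed third vertex) $=(B_0,A_0)$ is exactly such an indeterminacy point, $U$ has a one-dimensional fibre over it, and $B$ does vary. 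Your appeal to ``a holomorphic map constant on a dense subset of an irreducible curve is constant'' evaluates a holomorphic extension \emph{along} the contracted curve, which is not what constrains $\Gamma$. (A smaller gap of the same nature: in step 1 you assert $C\neq D$ generically on $\Gamma$; the case $C\equiv D$ is not excluded by Corollary \ref{lnoncoinc}, which concerns coinciding mirrors, and since $D$ is fixed one cannot argue that $c=d$.) The paper's proof avoids all of this by running your step-3 degree count \emph{first}: birationality of $\psi_a$ and the reflection law at a generic $A$ give that the intersection index of the line $AB$ with the branch $b_B$ equals the degree of the cusp $D$, hence exceeds one; so either $B\equiv const$ and is a cusp of that degree, or $AB\equiv T_Bb$ with $B$ variable, and this tangency alternative is then killed by Corollary \ref{connect}, precisely because $T_Dd$ is isotropic. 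You should restructure your proof along these lines rather than routing through the vertex $C$.
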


\begin{proof} Let, e.g., $A$ vary along the curve $\Gamma$. 
Fix a quadrilateral $ABCD\in\Gamma$ with $A\neq B,D$ and $A$ being not a marked point ($A\not\equiv B$, by Corollary \ref{lnoncoinc}). 
Then the birational 
transformations $\psi_a^{\pm 1}$ are biholomorphic at $(B,A)$ and $(A,D)$ respectively. The biholomorphicity together with the 
reflection law imply that the intersection index of the variable line $AB$ with the local branch $b_B$ is equal to that of the variable 
line $AD$ with $d_D$, i.e., the degree of the cusp $D$. Thus, it is greater than one, and we have two possibilities: 
either $AB=T_Bb$ and $B$ varies along the 
curve $\Gamma$; or $B\equiv const$ along $\Gamma$ and it is a cusp of the same degree, as $D$, and we are done. 
Suppose the contrary: the former, 
tangency case takes place. Then $B\not\equiv C$ along $\Gamma$ (Corollary \ref{lnoncoinc}), and we can and will consider that $B\neq C$ 
 and the line $T_Bb$ is not isotropic in 
our quadrilateral $ABCD$. Hence,  $ABCD\notin U$, by Corollary \ref{connect} and 
since the tangent line $T_Dd$ is isotropic.  The contradiction thus obtained proves Corollary \ref{bircusp}. 
\end{proof}

\subsection{Property (I) and coincidence of isotropic tangent lines and opposite mirrors}

The main result of the present subsection is the following lemma. 

\begin{lemma} \label{commi} The curves $a$, $b$, $c$, $d$ have property (I)  
and common isotropic tangent lines, and $a=c$, $b=d$. 
\end{lemma}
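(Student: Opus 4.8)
The statement of Lemma \ref{commi} amounts to showing that every isotropic tangent line $L$ of any one of the mirrors is in fact an isotropic tangent line of all four mirrors, and that each mirror meets $L$ at a single point of its normalization; by B\'ezout that point then carries the full intersection index equal to the degree, which is exactly property (I), and the coincidence of opposite mirrors will be extracted at the end. The plan is to convert the global statement ``$L$ meets $\hd$ in one point'' into the purely local irreducibility of the reflection germ $\Pi_{ab}$, using that all mirrors are rational (Corollary \ref{contrat}), the birational neighbor--edge correspondences of Lemma \ref{lbirat}, and Proposition \ref{clemgerm}.

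First I would fix $L$ and cyclically rename the mirrors so that $a$ carries a branch tangent to $L$, at a point $A\in\ha$, whose Puiseaux exponent $r_a$ is maximal among those of all branches of all mirrors tangent to $L$ (preferring a branch that is smooth or at an infinite point). For this $A$ and every $B\in\hb\cap L$ the pair $(a_A,b_B)$ then satisfies one of the conditions (i)--(iv) of Lemma \ref{lemgerm}: case (i) if $A\neq B$; case (ii) if $A=B$ with distinct tangents; case (iii) if $A=B$, $T_Aa=T_Bb=L$ and $A$ is infinite; and in the finite tangent case the inequality $r_b(2-r_a)<r_a$ of (iv) holds because $r_a\ge r_b$ (indeed $r_a\ge 2$ forces $r_b(2-r_a)\le 0<r_a$, and $1<r_a\le 2$ with $r_b\le r_a$ gives $r_b(2-r_a)\le r_a(2-r_a)<r_a$). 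Hence every $\Pi_{ab}$ is irreducible, and Proposition \ref{clemgerm} yields that $L$ meets $\hd$ in a single point $D$ and that $\psi_a$ contracts $\hb\times A$ to $(A,D)$. Reading the billiard backwards gives the reflection--birational triple $(d,a,b)$ with the same tangency point $A$; since $\hd\cap L=\{D\}$ is now known to be a single point, the pair $(a_A,d_D)$ again satisfies one of (i)--(iv) (case (i) if $A\neq D$, otherwise (ii)--(iv) exactly as above, using $r_a\ge r_d$), so $\Pi_{ad}$ is irreducible and Proposition \ref{clemgerm} applied to $(d,a,b)$ shows $\hb\cap L=\{B\}$ is a single point. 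Thus $b$ and $d$ each meet $L$ at one point of their normalizations, so they have property (I) along $L$ and $L$ is an isotropic tangent line of both.

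It then remains to treat $a$ and $c$. Because $\hd\cap L=\{D\}$ with full multiplicity, $L=T_Dd$ is an isotropic tangent line of $d$, whose cyclic neighbors are $c$ and $a$. If $D$ is a smooth or an infinite point of $d$, the branch $d_D$ has $r_d\ge 2$ (smooth) or sits at infinity, so conditions (i)--(iv) hold automatically for the pairs $(d_D,\,\cdot\,)$ and the argument of the previous paragraph applies verbatim with $d$ in the role of $a$ and the triple $(c,d,a)$, forcing $\ha\cap L$ and $\hc\cap L$ to be single points. If instead $D$ is a finite cusp with isotropic tangent line, I would invoke Corollary \ref{bircusp}: along the family of degenerate quadrilaterals with the cusp $D$ fixed, the opposite vertex is a cusp of equal degree, and the resulting focal symmetry fed into Proposition \ref{char-conic} forces $a$ and $c$ to be conics with focus $\pi_d(D)$; conics have property (I), so once more $a$ and $c$ meet $L$ in a single point. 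In every case all four mirrors meet $L$ at one point of full multiplicity, establishing property (I) and the coincidence of isotropic tangent lines.

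Finally, to obtain $a=c$ and $b=d$ I would apply Corollary \ref{a=c}. Each mirror is rational with property (I), so by Proposition \ref{pisotr} and Corollary \ref{cnform} it either has no cusps at all (the case of at least three isotropic tangency points) or its only cusps occur at its isotropic tangency points (the normal forms (\ref{form-pd}), (\ref{form-pq})). In either case every cusp has an isotropic tangent line, so $d$ has no cusp with non-isotropic tangent line; since $b$ is not a line, Corollary \ref{a=c} gives $a=c$, and the same argument applied to the cyclically shifted billiard $b,c,d,a$ gives $b=d$. The hard part of this scheme is the local irreducibility of $\Pi_{ab}$ when $a_A$ and $b_B$ are tangent to one another: this is precisely Lemma \ref{lemgerm} (already proved via the Newton--diagram asymptotics of Proposition \ref{tang-asympt}), and it is what makes the maximal--exponent choice in the renaming indispensable, together with the separate conic analysis needed in the finite--cusp case.
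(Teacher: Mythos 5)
Your proof is correct and is essentially the paper's own argument: the same cyclic renaming by maximal Puiseaux exponent (preferring a smooth or infinite tangency branch), the same verification that condition (iv) of Lemma \ref{lemgerm} reduces to $r_a\ge r_b$, the same double application of Proposition \ref{clemgerm} to the triples $(b,a,d)$ and $(d,a,b)$ to make $\hd\cap L$ and $\hb\cap L$ single points, the same dichotomy on the point $D$, and the same closing step (property (I) plus Proposition \ref{pisotr} and Corollary \ref{cnform} exclude cusps with non-isotropic tangents, so Corollary \ref{a=c} gives $a=c$ and, after a cyclic shift, $b=d$). The one place you genuinely diverge is the finite-cusp case. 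The paper applies Corollary \ref{bircusp} and Proposition \ref{char-conic} only to conclude that $a$ is a conic, and then treats $c$ indirectly: it proves $a=c$ already inside this case, by projecting $d$ from the intersection point of two isotropic tangent lines of the conic $a$ and using a Riemann--Hurwitz count (two critical points of maximal multiplicity) to show $d$ has no cusps with non-isotropic tangent lines, so that Corollary \ref{a=c} applies. You instead run Corollary \ref{bircusp} a second time with $C'$ as the variable vertex, so the reflection law at $C'$ feeds Proposition \ref{char-conic} and makes $c$ itself a conic with foci $B'$ and $\pi_d(D)$. This shortcut is legitimate: the corollary is stated for variable $A$ or $C$, the required family exists because the projection $U\to\hc\times\hd$ is epimorphic (Proposition \ref{comp-set}), and since $\pi_d(D)$ is then a focus of $c$, both isotropic lines through it --- in particular $L$ --- are tangent to $c$, so $\hc\cap L$ is indeed a single point. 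Your route is arguably cleaner through that case; what the paper's Riemann--Hurwitz detour buys is the intermediate fact $a=c$ within the case, which both you and the paper re-derive at the end from property (I) anyway.
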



\begin{proof} Let us first prove property (I) and coincidence of isotropic tangent lines. 
To do this, it suffices to show that for every isotropic tangent line $L$ to any mirror each one of the curves $\ha$, $\hb$, $\hc$, $\hd$ intersects $L$ at a single point. 

Step 1. Fix an isotropic tangent line $L$ to some mirror. Among the tangent branches to $L$ of the mirrors let us fix 
the one that either has infinite tangency point, or  is smooth, or has the maximal possible Puiseaux exponent. 
Let this be, say, the local branch of the curve $a$ at a point $A\in\ha\cap L$. 
The curves $b$, $a$ and $d$ taken in this or inverse order form a reflection-birational triple (Definition \ref{refbir} and Lemma \ref{lbirat}). 
For every $B\in\hb\cap L$ the pair of germs $a_A$ and $b_B$ satisfy some of the conditions (i)--(iv) of Lemma \ref{lemgerm}: 
if $A=B$ is a finite point and $T_Bb=L$, then  inequality  (\ref{puiseq})   
follows from the assumption that either the germ $a_A$ is smooth (hence $r_a\geq2$), or $r_a\geq r_b$. Therefore, the curve $\hd$ intersects the line $L$ at a single point $D$ (Corollary \ref{plemgerm}), 
hence $T_Dd=L$. Analogously, the curve $\hb$ intersects the line $L$ at a unique point, by the above arguments with $b$ and $d$ interchanged. 

Step 2. Let us prove that each one of the curves $\ha$ and  $\hc$ intersects the line $L$ at a unique point. To do this, note that 
the  curves $a$, $d$, $c$ taken in this or inverse order also form a reflection-birational triple, by Lemma \ref{lbirat}. 

Case 1): $D$ is either an infinite point, or a finite point that is not a cusp. Then  
some of the conditions (i)--(iv) of Lemma \ref{lemgerm} holds for $(a,A)$, $(b,B)$ replaced by the local branches $d_D$ and  $a_A$ 
respectively, as in Step 1. Therefore,  the 
curve $\hc$ intersects the line $L$ at a unique point, as in Step 1, and analogously so does $\ha$, by symmetry.  
\def\mck{\mathcal K}

Case 2): $D$ is a finite cusp. Consider a one-parametric 
family $\Gamma$ of quadrilaterals $A'B'C'D\in U$ with the above fixed $D$ and variable $A'\in\ha$. Then $B'\equiv const$ on $\Gamma$, 
and it is a cusp, by Corollary \ref{bircusp}. Thus, for every $A'\in a$ 
   the lines $A'B'$ and $A'D$ are symmetric with respect to the line $T_{A'}a$ (reflection law). Therefore, $a$ is 
   a conic with foci $B'$ and $D$, by Proposition \ref{char-conic}. Hence, it intersects the line $L$ at their unique tangency point $A$. 
   Let us show that $a=c$. The conic $a$ has at least two distinct isotropic tangent 
   lines $l_1$ and $l_2$, set $P=l_1\cap l_2$. Each of them intersects the curve $\hd$ at a unique point, by Step 1. These intersection points 
   are distinct, as are their tangent lines $l_1$, $l_2$, and $P\notin d$, by uniqueness.    Consider the  
   composition of the projection $\pi_d:\hd=\oc\to d$ with the projection $d\to\oc=\cp^1$ from the point $P$. This is a rational 
   mapping $\hd=\oc\to\oc$ that  has two distinct critical points with the maximal multiplicity:  the intersection points 
   $\hd\cap l_j$, $j=1,2$. Therefore, it has no other critical points, and in particular, the curve $d$ has no cusps with non-isotropic tangent 
   lines. Hence, $a=c$, by Corollary \ref{a=c}, thus $c$ is a conic tangent to $L$. 
   
   We have proved that the mirrors have property (I) and common isotropic tangent lines. Recall that a curve with property (I) has no 
   cusps with non-isotropic tangent lines (Propositon   \ref{pisotr} and Corollary \ref{cnform}).  
 This together with Corollary \ref{a=c} implies that $a=c$ and $b=d$. Lemma \ref{commi} is proved.
   \end{proof}

\begin{corollary}\label{cortang} Let $a$, $b$, $a$, $b$ be a 4-reflective complex planar algebraic billiard, and no mirror be a line. Then for every $A\in\ha\setminus b$ 
that is not an isotropic tangency point the collection of 
 lines through $A$ tangent to $b$ (with multiplicities) is symmetric  with respect  to the tangent line $T_Aa$. 
\end{corollary}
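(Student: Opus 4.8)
The plan is to read the symmetry off the degenerate tangency quadrilaterals supplied by Corollary \ref{connect}, and then to propagate it to every tangent line through a fixed $A$ by a dominance argument based on the birationality of the billiard (Lemma \ref{lbirat}). First I would record what the standing hypotheses give: since $a$, $b$, $a$, $b$ is 4-reflective with no straight mirror, Lemma \ref{commi} shows that $b$ has property (I), whence $b$ has no cusp with non-isotropic tangent line (Proposition \ref{pisotr} and Corollary \ref{cnform}). Let $T_U\subset U$ be the tangency locus, i.e.\ the set of $ABCD\in U$ with $AB=T_Bb$; at such a quadrilateral the reflection law at $B$ forces $BC=AB=T_Bb$ as well. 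For a generic point of $T_U$ --- $B$ not marked, all neighbouring vertices distinct, $A\notin b$, and $T_Bb$ non-isotropic --- Corollary \ref{connect} applies, and its cusp alternative is excluded by the previous sentence. Hence we always land in the opposite-tangency case: $AD=T_Db$ is again tangent to $b$, and $a_A=c_C$, i.e.\ $C=A$ as points of $\ha=\hc$. The reflection law at the common vertex $A$ then says precisely that the two tangent lines $AB$ and $AD$ from $A$ to $b$ are symmetric with respect to $T_Aa$. Thus each generic member of $T_U$ yields one symmetric pair of tangent lines through its vertex $A$.

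Next I would show these pairs exhaust all tangent lines through a generic $A$. Let $I\subset\ha\times\hb$ be the incidence curve $\{(A,B)\ :\ A\in T_Bb\}$ cut out by the tangency condition, so that $T_U$ is, up to the locus $A=B$, the preimage of $I$ under the projection $p\colon U\to\ha\times\hb$. Since $p$ is birational (Lemma \ref{lbirat}), it is dominant and defined off a finite set, so $p(T_U)$ is dense in every component of $I$; moreover each component of $I$ dominates $\ha$ under the first projection. Therefore, for a generic $A\in\ha\setminus b$, every tangent line $\ell=AB$ from $A$ to $b$ gives a point $(A,B)$ lying in $p(T_U)$, and so, by the previous paragraph, the reflected line $AD$ is again tangent to $b$; here I write $s_A$ for the symmetry of the pencil $\mathcal M_A$ of lines through $A$ with respect to the non-isotropic line $T_Aa$, so that $AD=s_A(\ell)$.

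Since $s_A$ is an involution of $\mathcal M_A$, this means that for generic $A$ the finite set of tangent lines from $A$ to $b$ is $s_A$-invariant, i.e.\ symmetric about $T_Aa$. To conclude I would pass from the set to the divisor $\mathcal M_A\cap b^*$ of tangent lines counted with multiplicity: it depends algebraically on $A$, agrees with its $s_A$-image for generic $A$ (where all tangencies are simple and the two divisors already share the same support), and hence agrees for every $A\in\ha\setminus b$ that is not an isotropic tangency point, by continuity. This yields the corollary.

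I expect the only real difficulty to be the dominance in the second paragraph: Corollary \ref{connect} hands over, member by member, a single symmetric pair of tangent lines through whatever vertex that member happens to carry, whereas the assertion concerns \emph{all} tangent lines through a prescribed $A$. The crux is that the tangency degeneration in $U$ accounts for the whole incidence curve $I$, which is exactly what the birational identification $U\sim\ha\times\hb$ of Lemma \ref{lbirat} provides; once $p(T_U)$ is dense in $I$, each tangent line through a generic $A$ is forced to be the $AB$-edge of some degenerate quadrilateral, and its reflection is tangent by Corollary \ref{connect}. The attendant bookkeeping --- the finite indeterminacy locus of $p$ and the upgrade from set-theoretic symmetry to equality of divisors --- is routine.
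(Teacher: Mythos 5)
Your proposal is correct and takes essentially the same route as the paper: lift every tangent line $AB$ through a suitably generic $A\in\ha\setminus b$ to a degenerate quadrilateral $ABCD\in U$ with $AB=BC=T_Bb$, use Corollary \ref{connect} together with the absence of cusps of $b$ with non-isotropic tangent lines to force the opposite-tangency alternative, read the symmetric pair off the reflection law at $A=C$, and pass to limits for the statement with multiplicities. The only cosmetic differences are that you invoke the birationality of Lemma \ref{lbirat} to get dominance of the tangency locus over the incidence curve, where the paper needs only the epimorphicity of the projection $U\to\ha\times\hb$ from Proposition \ref{comp-set}, and that the paper makes your genericity bookkeeping explicit by excluding a concrete finite set of points $A$ (in particular those lying on lines tangent to $b$ at points of $b\cap c$, which is exactly what guarantees $B\neq C$ in the lifted quadrilateral).
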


\begin{proof} It suffices to prove the statement of the corollary for every $A\in\ha$ lying outside the finite set formed by the 
isotropic tangency points in $\ha$ and the intersection  of the curve $\ha$ with the union of the curve $b$, 
isotropic tangent lines to $b$ and the lines tangent to $b$ at the points of intersection $b\cap c$. 
The statement of the corollary remains valid after passing to 
limits. Fix an $A$ as above. 
For every $B\in\hb$ such that  $A\in T_Bb$ the pair $(A,B)$ lifts to a quadrilateral  $ABCD\in U$ with $AB=BC=T_Bb$ 
(Proposition \ref{comp-set}); one has $A\neq B$ and $B\neq C$, by construction.  
Then $AD=DC$ is the tangent line to $b$ symmetric to $AB$ with respect to  the line $T_Aa$, by Corollary  \ref{connect} and since 
$b$ has no cusps with non-isotropic tangent lines, as at the end of the above proof. This proves the corollary.
\end{proof}

\section{Quadraticity and confocality. End of  proof of Theorem \ref{tclass}}

Recall that we assume that in the 4-reflective billiard under consideration no mirror is a line. 
We have already shown that $a=c$, $b=d$ and they are two distinct rational curves with property (I) 
and common isotropic tangent lines. 
In the next subsection we prove that they are conics. Their confocality will be proved in the second 
subsection. The main tools we use here are Corollaries \ref{cnform},  \ref{cortang} and Proposition \ref{tang-asympt}. 

\subsection{Quadraticity of mirrors}
\begin{theorem} \label{caust-conic}
Let a pair of rational planar curves $a$ and $b$ different from lines form a 4-reflective billiard 
$a$, $b$, $a$, $b$. Then  $a$ and $b$ are conics.
\end{theorem}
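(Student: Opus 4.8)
The plan is to show that each mirror has degree $2$. Since $a$ and $b$ are rational curves with property (I) and common isotropic tangent lines, and since by property (I) the intersection index of a mirror with any of its isotropic tangent lines $L=T_Aa$ equals the mirror's degree, it suffices to exhibit one isotropic tangent line along which the tangency is \emph{quadratic}, i.e. of contact order $2$: then the degree is $2$ and the mirror is a conic. The argument splits into two stages — first ruling out cusps, then proving quadraticity of an isotropic tangency via the reflection symmetry of the pencil of tangent lines (Corollary \ref{cortang}). Note also that once cusps are excluded, a mirror with exactly two isotropic tangent lines is a conic directly by the normal forms of Corollary \ref{cnform}, so the quadraticity argument is only needed when there are at least three isotropic tangent lines (case (i) of Proposition \ref{pisotr}), where a finite isotropic tangency point is available.

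First I would prove that neither mirror has cusps. By Proposition \ref{pisotr} and Corollary \ref{cnform}, a rational property-(I) curve can have cusps only at its isotropic tangency points, it then lies in case (ii) of Proposition \ref{pisotr}, and it has at most two cusps, whose degrees are necessarily distinct. Suppose $b=d$ had a cusp $D$; its tangent line is then isotropic. Applying Corollary \ref{bircusp} to the one-parameter family of quadrilaterals $ABCD\in U$ with this fixed $D$ and variable $A$ produces a vertex $B\equiv const$ which is again a cusp of $b$ of the \emph{same} degree as $D$. A short argument excludes $B=D$: in that case the reflection law at $A$ forces the line $AB$ to be symmetric to itself with respect to $T_Aa$ for every $A$ in the family, hence either $AB\equiv T_Aa$ (impossible for a non-linear $a$) or $AB\perp T_Aa$, so $a$ is a complex circle centered at the cusp $B$; but then $a$ has two distinct \emph{finite} isotropic tangent lines through its center, whereas by Lemma \ref{commi} the curve $b$ must share them, and the normal form (\ref{form-pd}) of a property-(I) curve with a finite cusp leaves $b$ with only one finite isotropic tangent line — a contradiction. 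Thus $B\neq D$, so $b$ has two distinct cusps of equal degree, contradicting Corollary \ref{cnform}. Hence the mirrors have no cusps, and in particular every isotropic tangency occurs along a smooth branch.

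Next I would establish quadraticity. Fix an isotropic tangent line $L$; both $a$ and $b$ are tangent to it, and by property (I) each meets $L$ at a single point. Working in the affine chart of the addendum to Proposition \ref{plimits} with $L$ the $x$-axis, let $r_a\geq2$ be the (integer) contact order of $a$ with $L$ at its tangency point $A_0$; I claim $r_a=2$. Let $A\to A_0$ along $a$. By Corollary \ref{cortang} the set of tangent lines to $b$ through $A$ is invariant under the symmetry $\sigma_A$ with respect to $T_Aa$, which acts on azimuths by $\az\mapsto\var_A^2/\az$ with $\var_A=\az(T_Aa)$ of order $|x(A)-x(A_0)|^{r_a-1}$ (Proposition \ref{tang-asympt1}). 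If $a$ and $b$ are tangent to $L$ at \emph{distinct finite points}, an elementary local computation shows that each tangent line to $b$ through $A$ that approaches $L$ does so with azimuth of order exactly $|x(A)-x(A_0)|^{r_a}$, while the others have azimuth bounded away from $0$. Applying $\sigma_A$ to one of these bounded lines yields a line approaching $L$ with azimuth of order $|x(A)-x(A_0)|^{2(r_a-1)}$; by invariance it is again a tangent line to $b$ through $A$, hence one of the near-$L$ lines, all of which have azimuth order $r_a$. Therefore $2(r_a-1)=r_a$, forcing $r_a=2$. If instead $a$ and $b$ are isotropically tangent \emph{to each other} (the same tangency point on $L$), the identical balancing is carried out after extracting the asymptotics of the tangency points and of the corresponding tangent-line azimuths from Proposition \ref{tang-asympt}, and again yields $r_a=2$.

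In either configuration property (I) gives $\deg a=r_a=2$, so $a$ is a conic, and by the symmetric argument (or by the same reasoning applied to $b$) so is $b$; this proves the theorem. I expect the main obstacle to be the coincident-tangency case of the quadraticity step: there the near-$L$ tangent lines to $b$ emanate from the very point toward which $A$ is heading, so their rates are no longer given by an elementary computation but require the Newton-diagram analysis of the tangent-line family in Proposition \ref{tang-asympt}, and the bookkeeping of azimuth orders under the degenerating involution $\sigma_A$ must then be organized according to whether $r_a>r_b$, $r_a=r_b$, or $r_a<r_b$.
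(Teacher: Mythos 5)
Your overall route is the paper's own: cusps are excluded exactly as in Lemma \ref{lem-cusp} (via Corollary \ref{bircusp} and the normal forms of Corollary \ref{cnform}), and quadraticity of an isotropic tangency is forced by balancing azimuth orders under the tangent-line symmetry of Corollary \ref{cortang}, which is precisely the content of Propositions \ref{reg-dist} and \ref{tang-con}. However, two genuine gaps remain. First, your quadraticity step splits only into ``tangent to $L$ at distinct finite points'' and ``tangent to each other at the same point,'' omitting the configuration where $a$ touches $L$ at a finite point $A_0$ while $b$ touches $L$ at its infinite point (the isotropic point at infinity lying on $L$). Nothing you have said excludes this: a property-(I) curve such as $\{xy=1\}$ is tangent to an isotropic axis precisely at infinity, and in case (i) of Proposition \ref{pisotr} individual tangency points may be infinite even though at least one is finite. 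Your ``elementary local computation'' of the order-$|x(A)-x(A_0)|^{r_a}$ azimuths uses that the near-$L$ tangent lines pass near a \emph{finite} $B_0$, so it collapses in this configuration. The paper disposes of it in Proposition \ref{prop-cases}: if $B_0$ were infinite, property (I) of $b$ would make $L$ the only tangent line to $b$ through $B_0$, while Riemann--Hurwitz applied to the projection of the cusp-free curve $a$ from $B_0\notin a$ yields a second isotropic tangent line to $a$ through $B_0$, contradicting the coincidence of isotropic tangent lines. Some such argument must be supplied.

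Second, the coincident-tangency case is not ``the identical balancing'': when the common tangency point is $O$ and $\deg b=2$, \emph{every} tangent line to $b$ through $t\in a$ near $O$ collapses to $L$, so there is no bounded line to reflect and your argument has no input at all. The correct logical shape, as in Proposition \ref{tang-con}, is a contradiction argument: assume (with $r_a\le r_b$) that $r_b\ge3$; then Riemann--Hurwitz for the projection of $b$ from $O$ (degree $r_b-1\ge2$, no cusps) produces a tangent line $l\neq L$ to $b$ through $O$; reflecting the nearby tangent lines $l_t^*\to l$ gives azimuth $O(v^{2(r_a-1)})$, whereas by Proposition \ref{tang-asympt} any tangent line to $b$ through $t$ tending to $L$ has azimuth of exact order $v^{\nu}$ with $\nu=r_a(r_b-1)/r_b<r_a\le2(r_a-1)$ --- a contradiction. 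Without this Riemann--Hurwitz input your sketch cannot get started. (A small patch is also needed in your cusp step: if the coinciding cusp $B=D$ sits at an isotropic point at infinity, ``complex circle centered at $B$'' is meaningless; there the line $AB$ is isotropic and simply cannot be fixed by the symmetry in a non-isotropic $T_Aa$, which is how the paper treats that subcase.)
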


Theorem \ref{caust-conic} is proved below. First we show that the mirrors have no cusp (the next lemma). 
Then we split the proof of Theorem \ref{caust-conic} into three cases (Proposition \ref{prop-cases}), 
two of them will be treated in  separate propositions. 

\begin{lemma} \label{lem-cusp} In the conditions of Theorem \ref{caust-conic} none of the curves 
$a$ and $b$ has cusps.
\end{lemma}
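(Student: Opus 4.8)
The plan is to argue by contradiction. Suppose some mirror has a cusp; by the $a\leftrightarrow b$ symmetry of the billiard $a,b,a,b$ I may assume it is $b$. Since $b$ has property (I), it has no cusp with non-isotropic tangent line (Proposition \ref{pisotr} and Corollary \ref{cnform}), so every cusp of $b$ carries an isotropic tangent line. First I would fix such a cusp $D\in\hb=\hd$, with isotropic tangent line $L=T_Dd$. Because $U$ is two-dimensional and its projection to $\hd$ is epimorphic (Proposition \ref{comp-set}), there is a one-parameter analytic family $\Gamma\subset U$ of quadrilaterals $ABCD$ with this $D$ fixed and $A$ variable. By Corollary \ref{bircusp} the vertex $B$ is constant along $\Gamma$ and is a cusp of $b$ of the same degree as $D$.

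Next I would split into two cases. If $B\neq D$, then $B$ and $D$ are two distinct cusps of $b$ of equal degree, which directly contradicts Corollary \ref{cnform} (a rational property-(I) curve has at most two cusps and their degrees are distinct). So it only remains to exclude the degenerate possibility $B=D$, and this is the step I expect to be the main obstacle. Assume $B=D$. Then along $\Gamma$ the opposite vertices $A_2$ and $A_4$ of each quadrilateral coincide, so the two edges issuing from $A$ are one and the same line $AB$. For generic $A$ the tangent $T_Aa$ is non-isotropic, and the reflection law at $A$ (Proposition \ref{comp-set}) forces the self-symmetric alternative $AB=T_Aa$ or $AB\perp T_Aa$, which holds identically on $\Gamma$ after restricting to a sub-arc; meanwhile the reflection law at the isotropic vertex $B$ forces $AB=L$ or $BC=L$.

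I would then eliminate the subcases. The option $AB=T_Aa$ would make all tangent lines of $a$ pass through the fixed point $B$, impossible for a non-linear curve. The option $AB=L$ would force $A$ into the single point $a\cap L$ (property (I) and Lemma \ref{commi}), contradicting that $A$ varies. This leaves $AB\perp T_Aa$ together with $BC=L$: then $C$ is the fixed point $a\cap L$, while the normal of $a$ at $A$ passes through the fixed point $B$ for every $A$ of the family, so that $a$ is a complex circle centered at $B$. To rule out this last configuration I would invoke the coincidence of isotropic tangent lines: a circle has exactly two isotropic tangent lines, both finite and concurrent at its finite center $B$, and by Lemma \ref{commi} these must be the isotropic tangent lines of $b$. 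But by the normal forms (\ref{form-pd}), (\ref{form-pq}) of Corollary \ref{cnform} a cuspidal property-(I) curve either has $\oc_{\infty}$ among its isotropic tangent lines (form (\ref{form-pd})) or carries its cusps at the isotropic points at infinity (form (\ref{form-pq})); the former contradicts the finiteness of the isotropic tangent lines of $b$, the latter contradicts $B=D$ being the finite center of $a$.

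This excludes $B=D$, so only the case $B\neq D$ survives, yielding two distinct cusps of equal degree in contradiction with Corollary \ref{cnform}. Hence $b$, and by the symmetric argument $a$, has no cusp. The delicate point throughout is the degenerate coincidence $B=D$: the reflection-law analysis at the collapsed vertices $A_2=A_4$ and at the isotropic vertex $B$ does the bulk of the work, and the residual "circle" configuration must be removed by the tangent-line bookkeeping of Lemma \ref{commi} and Corollary \ref{cnform} rather than by a direct count.
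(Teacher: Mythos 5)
Your proposal is correct and follows essentially the paper's own proof: Corollary \ref{bircusp} produces a second cusp of the same degree, Proposition \ref{pr-cusp} (which is deduced from Corollary \ref{cnform}) gives the contradiction when the two cusps are distinct, and the coincidence case $B=D$ is excluded by the complexified-circle argument combined with the coincidence of isotropic tangent lines from Lemma \ref{commi}. The only real difference is bookkeeping: the paper dispatches a cusp at an isotropic point at infinity outright (reflection in a non-isotropic tangent line sends lines through $I_1$ to lines through $I_2$, so in that case $B\neq D$ automatically), whereas in your dichotomy this is only implicit --- both lines fixed by the symmetry at $A$ are non-isotropic, so $AB$ is non-isotropic and $B=D$ is forced to be a finite point --- a one-line remark you should insert before speaking of the ``finite center'' $B$ of the circle $a$.
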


In the proof of this lemma we use the following proposition.

\begin{proposition} \label{pr-cusp} 
 Let a rational curve  in $\cp^2$ with property (I) have a cusp. Then it has 
either only one cusp, or two cusps. In the latter case its local branches 
at the cusps have distinct degrees.
\end{proposition}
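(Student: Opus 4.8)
The plan is to combine the structural dichotomy of Proposition \ref{pisotr} with the explicit parametrizations of Corollary \ref{cnform}. Denote the curve by $a$. Since $a$ has a cusp, it cannot be of type (i) in Proposition \ref{pisotr}, whose curves are cusp-free; hence $a$ is of type (ii). Thus $a$ has exactly two distinct isotropic tangency points, and every cusp of $a$ must sit at one of these two points. In particular $a$ has at most two cusps, and together with the hypothesis that $a$ has a cusp we conclude that $a$ has either one or two of them. This settles the first assertion and reduces the second to computing the degrees of the two local branches at the isotropic tangency points.

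For the degree computation I would invoke Corollary \ref{cnform}, which in case (ii) realizes $a$, in appropriate isotropic coordinates, as one of the two normal forms (\ref{form-pd}) or (\ref{form-pq}), where $p$ and $d$ are relatively prime. The two candidate singular points correspond to $t=0$ and $t=\infty$, and the degree of each local branch is its intersection index with a generic line through the point, that is, the multiplicity of the branch, equal to the minimum of the orders of the two affine coordinates in a local parametrization centered at the point. For (\ref{form-pd}) the branch at the origin has degree $\min(p,d)=p$; at $t=\infty$ one passes to $s=1/t$ and reads $a$ as $X=s^{d-p}$, $Z=s^{d}$ in the chart $\{Y=1\}$ around $[0:1:0]$, giving degree $\min(d-p,d)=d-p$. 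For (\ref{form-pq}) the same homogenizations at $t=0$ (around $[1:0:0]$, where $Y=t^{d}$, $Z=t^{p}$) and at $t=\infty$ (around $[0:1:0]$, where $X=s^{d}$, $Z=s^{q}$) yield degrees $p$ and $q=d-p$ respectively. These amount to the routine substitutions sketched here; the one point to get right is the homogenization at the infinite isotropic point(s).

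Finally, a candidate point is a genuine cusp exactly when its branch degree exceeds $1$. In both normal forms the two candidate degrees are coprime: in (\ref{form-pd}) they are $p$ and $d-p$ with $\gcd(p,d-p)=\gcd(p,d)=1$, and in (\ref{form-pq}) they are $p$ and $q$ with $\gcd(p,q)=\gcd(p,d)=1$. Hence, if both points are cusps, their degrees are coprime integers each strictly greater than $1$, and therefore cannot coincide; this gives the claimed distinctness and completes the proof. The coprimality furnished by Corollary \ref{cnform} is what makes the argument close with no further case analysis, so the only genuinely computational step, and the main (mild) obstacle, is extracting the branch degrees $p,\,d-p$ and $p,\,q$ correctly at the infinite points.
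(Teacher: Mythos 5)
Your proposal is correct and follows essentially the same route as the paper: reduce to case (ii) of Proposition \ref{pisotr}, invoke the normal forms (\ref{form-pd}), (\ref{form-pq}) of Corollary \ref{cnform}, read off the branch degrees $p$ and $d-p$ (respectively $p$ and $q$), and kill the case of two cusps of equal degree by the coprimality of $p$ and $d$. The only difference is presentational: you make explicit the appeal to Proposition \ref{pisotr} for bounding the number of cusps and the homogenization at the infinite points, which the paper leaves implicit, and you phrase the final step directly (coprime integers exceeding $1$ cannot coincide) where the paper argues by contradiction via $d=2p$.
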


\begin{proof} The curve under consideration has one of the normal forms (\ref{form-pd}) 
or (\ref{form-pq}) in appropriate isotropic coordinates on the finite plane $\cc^2$. Consider, 
e.g.,  case (\ref{form-pd}): then $p$ and $d$ are relatively prime. 
Suppose the contrary: the curve has cusps both at the origin and 
at the infinity, and the corresponding local branches have equal degrees. The latter degrees 
are equal to $p>1$ and $d-p$ respectively. Hence, 
$p$ and $d=2p$ are not relatively prime, -- a contradiction. Case (\ref{form-pq}) is treated 
analogously.
\end{proof}

\begin{proof} {\bf of Lemma \ref{lem-cusp}.} 
Suppose the contrary: say, the curve $b$ has a cusp $B$, let us denote its lifting to $\hb$ also by $B$. Consider an irreducible 
algebraic curve $\Gamma$ consisting of 
quadrilaterals $ABCD\in U$ with fixed $B$ and variable $A$. Then $D\equiv const$ on $\Gamma$, and it is a cusp of the same degree, as 
$B$, by Corollary \ref{bircusp}. Let us show that $B\neq D$.  
The contradiction thus obtained to Proposition \ref{pr-cusp} will prove the lemma. 
%
%
The curve $b$ has one of the normal forms (\ref{form-pd}) or 
(\ref{form-pq}), and each its cusp is either an isotropic point at infinity, or the origin. 
If $B$ is an isotropic point at infinity, then the lines $AD$, which are symmetric to $AB$ with respect to the line $T_Aa$, 
should pass through 
the  other isotropic point at infinity, and the latter obviously coincides with $D$; thus $B\neq D$. 
Otherwise, $B$ is the origin in the normal form coordinates: then one has case (\ref{form-pd}), and the curve $b$ 
is tangent to the infinity line. If $B=D$, then 
for every $A$ the line $AB$ is orthogonal to the tangent line $T_Aa$. Hence $a$ 
is a complexified circle, and thus, is tangent to finite lines at both isotropic points at infinity. 
Therefore, the infinite tangent line to the curve $b$ is transverse to $a$, - a contradiction 
to the fact that $a$ and $b$ have common isotropic tangent lines. Hence, $B$ and $D$ are 
distinct cusps of equal degrees, -- a contradiction to Proposition \ref{pr-cusp}. 
This proves Lemma \ref{lem-cusp}.
\end{proof}  

\begin{proposition}  \label{prop-cases} Let $a$, $b$ be distinct rational curves with property (I),  
common isotropic tangent lines and no cusps. Then one of the following holds:

Case 1): some isotropic line is tangent to  $a$, $b$ at distinct finite points;

Case 2): $a$, $b$ are  tangent at  their common finite isotropic tangency point; 

Case 3): $a$ and $b$ are tangent to each other at both isotropic points at infinity; then they are conics.
\end{proposition}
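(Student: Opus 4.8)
The plan is to run a case analysis organized around the common isotropic tangent lines of $a$ and $b$ and the location of their tangency points on the two curves. Recall that every curve with property (I) has at least two distinct isotropic tangent lines, so I may fix a common isotropic tangent line $L$. Property (I) guarantees that $L$ meets $\ha$ at a single point $A$, with intersection index $\deg a$, and meets $\hb$ at a single point $B$, with index $\deg b$; in particular $L = T_A a = T_B b$. The desired trichotomy will be read off from whether $A$ and $B$ are finite and whether they coincide.

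First I would dispose of the purely finite configurations. If for some common isotropic tangent line the tangency points $A \in \ha$ and $B \in \hb$ are distinct and both finite, we are immediately in Case 1. If instead $A = B$ is a common finite point, then $a$ and $b$ are tangent to each other at a finite isotropic tangency point, which is Case 2. Thus it remains to treat the situation in which no common isotropic tangent line touches both curves at distinct finite points and none touches both at a coinciding finite point, and to show that this forces Case 3.

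In that remaining situation every common isotropic tangent line must have at least one of its two tangency points at infinity. The elementary observation I would use is that a point at infinity lying on an isotropic line $L \neq \oc_\infty$ through $I_j$ must be $I_j$ itself, because $L \cap \oc_\infty = I_j$; and if $L = \oc_\infty$ the tangency is a tangency with the infinity line. Combining this with Proposition \ref{pisotr} (which controls the number and the finite/infinite nature of the isotropic tangencies of each curve) and with the normal forms of Corollary \ref{cnform}, I would show that the only configuration consistent with avoiding Cases 1 and 2 is that both $a$ and $b$ pass through each of $I_1$ and $I_2$ and share a finite isotropic tangent line there, i.e. they are tangent to each other at both isotropic points at infinity. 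Feeding the no-cusp hypothesis into the parametrization (\ref{form-pq}) then forces $p = q = 1$, so that $a$ and $b$ are conics, which is Case 3.

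The main obstacle is precisely this last step: excluding the mixed and partial configurations, namely that a common isotropic tangent line touches one curve at an isotropic point at infinity while touching the other at a finite point, or that a curve is tangent to $\oc_\infty$ at an isotropic point as in the normal form (\ref{form-pd}). To rule these out I would use crucially that $a$ and $b$ share all their isotropic tangent lines and that each has property (I): matching the complete collections of tangency points across the two curves, and comparing them against the two admissible normal forms of Corollary \ref{cnform}, should leave only the symmetric configuration of Case 3. This bookkeeping across the shared tangent lines, rather than any single hard estimate, is where the real work lies.
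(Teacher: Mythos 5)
Your treatment of the easy configurations is fine and matches the statement directly: if some common isotropic tangent line touches both curves at finite points, you land in Case 1 or Case 2 according to whether the two tangency points are distinct or coincide. But the entire mathematical content of the proposition is the step you defer: excluding the mixed configuration in which a common isotropic tangent line $L$ is tangent to $a$ at a finite point $A_0$ while being tangent to $b$ at an isotropic point at infinity $B_0=I_j$. Your proposal asserts that matching the collections of tangency points against ``the two admissible normal forms of Corollary \ref{cnform}'' should leave only Case 3, but that is precisely the claim requiring proof, and the tools you name do not suffice as stated. Corollary \ref{cnform} gives normal forms only for curves in case (ii) of Proposition \ref{pisotr}, i.e.\ with exactly two isotropic tangency points; a curve in case (i) (three or more isotropic tangency points, at least one finite) has no normal form there, so ``comparison with the normal forms'' says nothing about the mixed configuration when one or both curves are of this type. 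To push your line-by-line bookkeeping through one needs property-(I) counting facts you never state -- chiefly that if a curve contains $I_j$, then any common isotropic tangent line through $I_j$ would meet its normalization in two points unless it is tangent exactly at $I_j$, so at most one common tangent line passes through $I_j$; only with this does one force the set of common isotropic tangent lines to have exactly two elements, both curves to be of type (\ref{form-pq}), and hence Case 3. As written, your argument stops exactly where the work begins.

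The paper closes this gap by a different and sharper mechanism, organized per curve rather than per line: assuming $a$ has a finite isotropic tangency point $A_0$ on $L$, it shows the tangency point $B_0$ of $b$ on $L$ cannot be infinite. Indeed, $B_0\neq A_0$ gives $B_0\notin a$ by property (I) of $a$; then the projection $a\to\cp^1$ from $B_0$ -- using that $a$ is rational, has no cusps, and that $L$ meets $\ha$ only at $A_0$ -- must by the Riemann--Hurwitz formula have a critical point besides $A_0$, i.e.\ a second tangent line $L^*\neq L$ to $a$ through $B_0$. Every line through the isotropic point $B_0$ is isotropic, yet property (I) of $b$ makes $L$ the unique tangent line to $b$ through $B_0$; hence $L^*$ is an isotropic tangent line of $a$ but not of $b$, contradicting the coincidence of isotropic tangent lines. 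This Riemann--Hurwitz step (or an equivalent counting argument of the kind sketched above) is the missing idea in your proposal; with it, the case where neither curve has a finite isotropic tangency is then immediate from Proposition \ref{pisotr}, Corollary \ref{cnform} and the absence of cusps, exactly as you say.
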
 

\begin{proof} Let us first assume that one of the curves, say $a$ has a finite isotropic tangency point $A_0$, set 
$L=T_{A_0}a$. Then $b$ is tangent to $L$ at some point $B_0$. If $A_0=B_0$, then we have 
case 2). Otherwise, $B_0\neq A_0$, hence $B_0\notin a$ (property (I) of the curve $a$). 
In this case $B_0$ cannot be infinite. Indeed, otherwise, $B_0$ would be an 
isotropic point at infinity and $L$ would be the unique tangent line to $b$ 
 through $B_0$ (property (I) of the curve $b$). But there is another  tangent line $L^*\neq L$  to $a$ 
 through $B_0$, since $B_0\notin a$, $a$ has no cusps and by Riemann--Hurwitz Formula for the projection 
 $a\to\cp^1$ from the point $B_0$. The line $L^*$ is isotropic and not tangent to $b$, by construction and property (I),  
 -- a contradiction to the coincidence of isotropic tangent lines of the curves $a$ and $b$. 
 Thus, we have case 1).  
If each one of the curves $a$ and $b$ has no finite  isotropic tangency point, then one has case 3) (Proposition  \ref{pisotr} and 
Corollary \ref{cnform}). 
Then the curves are conics, by the same corollary and absence of cusps. 
This proves the proposition.
\end{proof}

\begin{remark} \label{rdeg}
For every $g=a,b$ and every  isotropic tangency 
point $G\in g$ (which is not a cusp by Lemma \ref{lem-cusp})  
the Puiseaux exponent $r_g=r_g(G)$ equals the degree of the curve $g$, by property (I). 
\end{remark}

%

\begin{proposition} \label{reg-dist} Let a pair of distinct curves $a$ and $b$ that are not lines 
form a 4-reflective billiard $a$, $b$, $a$, $b$. Let there exist an isotropic 
line $L$ that is  tangent to $a$  and $b$ at distinct finite points.  Then $a$ and $b$ are conics.
 \end{proposition}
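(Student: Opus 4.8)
The plan is to reduce the whole statement to the two Puiseaux exponents. By property (I) and Remark \ref{rdeg}, the isotropic tangency exponents satisfy $r_a=r_a(A_0)=\deg a$ and $r_b=r_b(B_0)=\deg b$, both integers $\ge 2$ since neither curve is a line; so it suffices to prove $r_a=2$, after which interchanging the roles of $a$ and $b$ gives $r_b=2$ and both curves are conics. To set up the local analysis I would work in isotropic coordinates $(x,y)$ on $\cc^2$ with the isotropic line $L$ being the $x$-axis, measuring azimuths as in the Addendum to Proposition \ref{plimits} (so that for lines near $L$ the azimuth is of the order of the slope, the two isotropic points at infinity correspond to azimuths $0$ and $\infty$, and by Proposition \ref{tang-asympt1} the symmetry with respect to a non-isotropic line of azimuth $\var$ acts on azimuths by $z\mapsto\var^2/z$). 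I place $A_0=(0,0)$ and $B_0=(\beta,0)$ with $\beta\neq0$, and write the local branches as $a=\{y=\sigma_a x^{r_a}(1+o(1))\}$ near $A_0$ and $b=\{y=\sigma_b(x-\beta)^{r_b}(1+o(1))\}$ near $B_0$, with $\sigma_a,\sigma_b\neq0$.

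Next I would exploit Corollary \ref{cortang}. Fix a point $A=(u,\sigma_a u^{r_a}(1+o(1)))\in a$ with $u\to0$; it is neither an isotropic tangency point nor a point of $b$ (the latter meets $L$ only at $B_0\neq A_0$ by property (I), so $a$ does not meet $b$ near $A_0$), hence the collection of lines through $A$ tangent to $b$ is symmetric with respect to $T_Aa$. I would then locate the tangent lines to $b$ issuing from $A$ near $B_0$: writing a tangency point as $\tau=(\beta+w,\sigma_b w^{r_b}(1+o(1)))$, the condition $A\in T_\tau b$ gives to leading order $w^{r_b-1}=-\tfrac{\sigma_a u^{r_a}}{\sigma_b r_b\beta}(1+o(1))$, so there are $r_b-1\ge1$ such lines, all of azimuth $\sigma_b r_b w^{r_b-1}(1+o(1))=-\tfrac{\sigma_a}{\beta}u^{r_a}(1+o(1))$, i.e. of order $u^{r_a}$. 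On the other hand $\az(T_Aa)=\sigma_a r_a u^{r_a-1}(1+o(1))$ is of order $u^{r_a-1}$. I would also record that, by property (I), the only tangent lines to $b$ whose azimuth tends to $0$ are those tangent near $B_0$, and these all have azimuth of order exactly $u^{r_a}$.

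Finally I would apply the symmetry law. A line of azimuth $\zeta\sim u^{r_a}$ is sent by the symmetry with respect to $T_Aa$ to a line of azimuth $\az(T_Aa)^2/\zeta$, which is of order $u^{2(r_a-1)}/u^{r_a}=u^{r_a-2}$. By Corollary \ref{cortang} this image is again tangent to $b$ from $A$. Suppose $r_a>2$; then $r_a-2>0$, so the image azimuth still tends to $0$, whence by the last remark of the previous paragraph the image is tangent to $b$ near $B_0$ and must have azimuth of order $u^{r_a}$ — impossible, since $r_a-2<r_a$. Therefore $r_a\le2$, and being an integer $\ge2$ it equals $2$, so $a$ is a conic; the symmetric argument yields $r_b=2$, and $b$ is a conic as well.

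The main obstacle is the asymptotic bookkeeping of the middle paragraph: one must pin down correctly the orders in $u$ of the tangent azimuths to $b$ near $B_0$ and of $\az(T_Aa)$, and confirm that in the isotropic frame the symmetry acts by the exact inverse-square rule $z\mapsto\var^2/z$, so that the three relevant orders $r_a$, $r_a-1$, $r_a-2$ are unambiguous. Once these are in hand the inequality $r_a-2<r_a$ delivers the contradiction for $r_a>2$ immediately; the only care needed is to invoke property (I) to guarantee that a tangent line of $b$ degenerating to $L$ is necessarily tangent near the single point $B_0$.
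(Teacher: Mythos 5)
Your proof is correct, and it is built from the same ingredients as the paper's own argument: Corollary \ref{cortang}, the inverse-square action $z\mapsto\var^2/z$ of the symmetry on azimuths (Proposition \ref{tang-asympt1}), and property (I), which forces any tangent line to $b$ through $A$ whose azimuth tends to $0$ to have its tangency point converge to the finite point $B_0\neq A_0$ and hence to have azimuth of exact order $u^{r_a}$. The one structural difference is the direction in which the reflection is exploited. The paper takes a \emph{non-degenerating} family $l_t^*$ of tangent lines to $b$ through $t\in a$ converging to a second tangent line $l\neq L$ through $A_0$ -- whose existence is obtained from the absence of cusps plus the Riemann--Hurwitz formula -- and reflects it, getting a degenerating tangent line of azimuth $O(v_t^{2(r_a-1)})$; comparing with the exact order $v_t^{r_a}$ yields $2(r_a-1)\le r_a$. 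You instead reflect the \emph{degenerating} family itself: the $r_b-1$ tangents near $B_0$, whose existence and exact azimuth $-\frac{\sigma_a}{\beta}u^{r_a}(1+o(1))$ you extract from the tangency equation $w^{r_b-1}=-\frac{\sigma_a u^{r_a}}{\sigma_b r_b\beta}(1+o(1))$ (a single-edge Newton diagram, so these are all solutions with $w\to 0$); the image then has azimuth of exact order $u^{r_a-2}$, and for $r_a>2$ it is again a degenerating tangent line whose azimuth fails to be of order $u^{r_a}$ -- the same inequality, read as a contradiction. Since the symmetry is an involution, the two arguments pair up the very same lines; what yours buys is independence from the auxiliary tangent line $l$ (no appeal to Riemann--Hurwitz or, beyond Remark \ref{rdeg}, to Lemma \ref{lem-cusp}), at the cost of the explicit local computation of the tangents from $A$ near $B_0$, which the paper gets for free from the geometric remark that the reflected line passes through $B_t=l_t\cap L\to B_0$ and through $t$.
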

 
 \begin{proof} By symmetry, it suffices to prove that $a$ is a conic, i.e., $r_a=2$ (Remark \ref{rdeg}). 
 Let $A_0\neq B_0$ be respectively the tangency points of the line $L$ with $a$ and $b$. 
 Then $A_0\notin b$ (property (I) of the curve $b$).  
 There exists another  line $l\neq L$ through $A_0$ that is tangent to $b$, as in the above proof. 
 Fix isotropic coordinates $(x,y)$ on the finite plane $\cc^2$ with the origin at $A_0$, the $x$-axis  $L$ and $x(B_0)=1$. 
 For every $t\in a$ close to $A_0$ there exists a line $l_t^*$ tangent to $b$ through $t$  that tends to $l$, as $t\to A_0$. 
 Its image $l_t$ under the symmetry with respect to $T_ta$ tends to $L$, and 
 $\az(l_t)=O(v_t^{2(r_a-1)})$, $v_t=x(t)$ (Proposition \ref{tang-asympt1}).  On the other hand, the line $l_t$ 
 should  be tangent to $b$ (Corollary \ref{cortang}), and so is $L$. Therefore, the intersection point $B_t=l_t\cap L$ tends to a finite point $B_0\neq A_0$. Thus, $l_t$ is the line through the points $B_t=(\beta_t,0)$, $\beta_t\to x(B_0)=1$, and 
  $t=(v_t,\sigma v_t^{r_a}(1+o(1)))$, $\sigma\neq0$. Hence, the azimuth $\az (l_t)$ is of order $-y(t)\simeq-\sigma v_t^{r_a}$, 
 but on the other hand, it is $O(v_t^{2(r_a-1)})$. Thus,  
 $2(r_a-1)\leq r_a$ and $r_a=2$. Proposition \ref{reg-dist} is proved.
 \end{proof}
%

\begin{proposition} \label{tang-con} Let a pair of distinct curves $a$ and $b$ that are not lines 
form a 4-reflective billiard $a$, $b$, $a$, $b$. Let they be tangent to each other at their common finite isotropic tangency 
point $O$. Then they are conics.
\end{proposition}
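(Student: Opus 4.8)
The plan is to show that the two Puiseaux exponents $r_a=r_a(O)$ and $r_b=r_b(O)$ both equal $2$; since $O$ is an isotropic tangency point and the mirrors have property (I), Remark \ref{rdeg} identifies $r_a,r_b$ with the degrees of $a,b$, so this is exactly the assertion that $a$ and $b$ are conics. I would begin by fixing affine coordinates $(x,y)$ centered at $O$ with the common tangent line $L=T_Oa=T_Ob$ as the $x$-axis, so that near $O$ the curves are graphs $y=\sigma_a x^{r_a}(1+o(1))$ and $y=\sigma_b x^{r_b}(1+o(1))$ with $r_a,r_b$ integers $\ge2$. Here $O$ is a smooth point of each curve: property (I) forbids a second local branch through $O$ (such a branch would give $L$ a second intersection point with the normalization), and Lemma \ref{lem-cusp} rules out a cusp.

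The engine of the proof is Corollary \ref{cortang}: for $A=t\in a$ tending to $O$ (along a sequence avoiding the finite exceptional set of that corollary), the whole collection of tangent lines to $b$ through $A$ is symmetric with respect to $T_Aa$. I would record this symmetry through complex azimuths on $\oc_\infty$. With $v=x(t)$ one has $\az(T_Aa)=r_a\sigma_a v^{r_a-1}(1+o(1))$, and by Proposition \ref{tang-asympt1} (applied as in the proof of Proposition \ref{plimits}) the symmetry with respect to $T_Aa$ acts on azimuths near $L$ by $\alpha\mapsto \az(T_Aa)^2/\alpha\,(1+o(1))$. Hence it sends a line of azimuth order $v^{\mu}$ to one of azimuth order $v^{2(r_a-1)-\mu}$; in particular the \emph{far} tangent lines to $b$ --- those whose tangency point stays bounded away from $O$, so that their azimuth tends to a nonzero constant and $\mu=0$ --- are sent to lines of azimuth order $2(r_a-1)$.

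Assume now, for contradiction, that $r_b>2$. Because $b$ is a cuspless rational curve of degree $r_b$, its class is $2(r_b-1)$; deleting the $r_b$ local tangent lines (those touching $b$ near $O$, i.e.\ the branches of the germ $T_{ab}$ of Proposition \ref{tang-asympt}) leaves $r_b-2>0$ far tangent lines through $A$, so far tangent lines do exist. Their reflections are again tangent to $b$ by Corollary \ref{cortang}, so the order $2(r_a-1)\ge2$ must occur among the azimuth orders realized by tangent lines to $b$ through $A$. These realized orders are $0$ (far lines) together with the local orders furnished by Proposition \ref{tang-asympt}, namely $\{r_b-1,\,r_a-1\}$ when $r_a>r_b$, $\{r_a-1\}$ when $r_a=r_b$, and $\{\tfrac{r_a(r_b-1)}{r_b}\}$ when $r_a<r_b$. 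A short check using $r_a\ge2$ shows that $2(r_a-1)$ is none of these in any case (for example $2(r_a-1)=\tfrac{r_a(r_b-1)}{r_b}$ forces $r_a=\tfrac{2r_b}{r_b+1}<2$). This contradiction gives $r_b=2$; the symmetry of the billiard $a,b,a,b$ under the cyclic shift interchanging the roles of $a$ and $b$ turns Corollary \ref{cortang} into its counterpart for tangent lines to $a$ through points of $b$, and the same argument then yields $r_a=2$, proving that $a$ and $b$ are conics.

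The main obstacle I anticipate is the azimuth bookkeeping of the middle paragraph: justifying rigorously that the reflection transforms azimuth orders by $\mu\mapsto 2(r_a-1)-\mu$ (which rests on the leading-order identification of the azimuth with the coordinate of Proposition \ref{tang-asympt1}), and confirming that far tangent lines are present exactly when $r_b>2$ (which needs both the class $2(r_b-1)$ of a cuspless rational curve and the count $r_b$ of local tangent lines read off from the branches of $T_{ab}$). Once these asymptotics are in place, the final case analysis comparing $2(r_a-1)$ with the three families of local Puiseaux orders is the computational heart, but it is elementary.
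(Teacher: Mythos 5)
Your argument is essentially the paper's own proof: both hinge on Corollary \ref{cortang}, the reflection asymptotics of Proposition \ref{tang-asympt1} and the tangent-line asymptotics of Proposition \ref{tang-asympt}, and both derive the contradiction by reflecting a far tangent line to $b$ in $T_ta$ and comparing azimuth orders (the paper produces the far line through $O$ by Riemann--Hurwitz for the projection from $O$ rather than your class count $2(r_b-1)-r_b$, and shortens your three-case check by naming the mirrors so that $r_a\le r_b$, which leaves only $\nu=\frac{r_a(r_b-1)}{r_b}<r_a\le 2(r_a-1)$). One small point to tighten: your claim that far tangent lines have azimuths tending to \emph{finite} nonzero constants needs property (I) (an isotropic far tangent line through $O$ would meet $\hat b$ at two distinct points), or else you can replace the exact order $2(r_a-1)$ of the reflected line by the paper's upper bound $O(v_t^{2(r_a-1)})$, which already contradicts every realized azimuth order since all the local orders are strictly less than $2(r_a-1)$.
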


\begin{proof}  Let $(x,y)$ be 
affine coordinates on the finite plane with the origin at $O$, and the tangent line $L=T_Oa=T_Ob$ be the $x$-axis. Let $r_a$ and $r_b$ be the 
Puiseaux exponents at $O$ of the curves $a$ and $b$ respectively (degrees, see Remark \ref{rdeg}), $r_a\leq r_b$.  Suppose the contrary: $r_b\geq3$. 
 Then there exists 
another  line $l\neq L$ through $O$ that  is tangent to $b$, since the projection $b\to\cp^1$ from $O$ has degree $r_b-1\geq2$, $b$ 
has no cusps  and by Riemann--Hurwitz Formula. 
For every $t\in a$ close to $O$ let $l_t^*$   be the tangent line to $b$ through $t$ such that $l_t^*\to l$, as $t\to O$.  
Its image  $l_t$ under the  symmetry with respect to the line $T_ta$ is also tangent to $b$, tends to $L$, and 
$\az (l_t)=O(v_t^{2(r_a-1)})$, $v_t=x(t)$, as in the above proof. On the other hand, the latter azimuth should be a quantity of order 
$v_t^{\nu}$, $\nu=\frac{r_a(r_b-1)}{r_b}$, by Proposition \ref{tang-asympt}. This is impossible, since $\nu<r_a\leq 2(r_a-1)$.  Proposition \ref{tang-con} is proved.
 \end{proof}

\begin{proof} {\bf of Theorem \ref{caust-conic}.}  Lemma \ref{lem-cusp} shows that the curves $a$ and $b$ have no cusps. 
 Proposition \ref{prop-cases} lists all the possible
 cases 1)-3).  Propositions \ref{reg-dist} and \ref{tang-con}  respectively show that 
 in cases 1) and 2) the curves are conics. In case 3) the curves are automatically conics. 
  This proves Theorem \ref{caust-conic}.
 \end{proof}

\subsection{Confocality}

We have shown  that $a$ and $b$ are distinct conics with common isotropic tangent lines. 
Here we prove that they are confocal, by using confocality criterion given by Lemma \ref{propconf}. 
This will finish the proof of Theorem \ref{tclass}.

Case 1): one of the conics, say $a$ is transverse to the infinity line 
(i.e., a transverse hyperbola). Then  $b$ is also a transverse hyperbola confocal to $a$, since it has the same isotropic tangent lines
and by  Lemma \ref{propconf}, case 1).

Case 2): both $a$ and $b$ are tangent to the infinity line, and $a$ is tangent to it at a non-isotropic point $A_0$. Then 
$b$ is also tangent to it at a non-isotropic point $B_0$, by the coincidence of isotropic tangent lines.

\medskip

{\bf Claim 1. } $A_0=B_0$.

\begin{proof} 
Suppose the contrary: $A_0\neq B_0$. Let $I_{1,2}=(1:\pm i:0)$ be the isotropic points at infinity. Then $A_0$, $B_0$, $I_1$, $I_2$ are  
four distinct points on the infinity line.  
 Let us choose an affine chart  $(x,y)$ with the origin  $I_1$, the $x$-axis being  the infinity line with 
 $x=z$ there.  
 Fix an arbitrary point $A\in a$ close to $A_0$. Let $p_A$ denote the tangent line to $b$ through $A$ at a point close to 
 $B_0$: $p_A$ tends to the infinity line ($x$-axis), as $A\to A_0$. Let $p^*_A$ denote the  line  symmetric to $p_A$ with 
 respect to the line $T_Aa$. 
 
 \medskip
 
 {\bf Claim 2.} {\it The line $p_A^*$  tends to the infinity line, as $A\to A_0$.}
 
 \begin{proof} Let $x_0$, $x_1$, $x_2$  denote respectively the $x$-coordinates of the points of intersection of the $x$-axis with the lines 
 $p_A$, $T_Aa$ and $p_A^*$. Set $u=x(A_0)$, $v=x(B_0)$. 
 One has $x_0x_2=x_1^2$ (reflection law and Proposition \ref{tang-asympt1}),  and $x_0\to v$, 
 $x_1\to u\neq v$, as $A\to A_0$.  Therefore, $x_2\to w=\frac{u^2}v\neq u$. Thus, $p_A^*$ is the line through the points $A$ and $(x_2,0)$, 
  one has $(x_2,0)\to (w,0)\neq A_0$, as $A\to A_0$. Hence, $p_A^*$ tends to the $x$-axis, i.e., $\oc_{\infty}$. 
 This proves the claim. 
 \end{proof} 

Thus, the  line $p_A^*$ should be a tangent line to $b$ that tends to the infinity line, by Corollary \ref{cortang} and the above claim. 
There are two tangent lines to $b$ through the point $A_0$: the infinity line and a finite line, since $A_0\notin b$. 
Thus, there are two tangent lines to $b$ through $A$: the line $p_A$ and a line with a finite limit, as $A\to A_0$. Therefore,  
$p_A=p_A^*$, thus $p_A$ is orthogonal to $T_Aa$ for all $A$ close enough to $A_0$. In other words, every tangent line to $b$ close 
to the infinity line is orthogonal to the conic $a$ at both their intersection points. Hence, this is true for all the tangent lines to $b$, by 
algebraicity. Thus, 
 for every point $A\in a$ there is only one tangent line to the conic $b\neq a$ through $A$, which is impossible. This proves Claim 1. 
\end{proof} 

Thus, the conics $a$ and $b$ have common isotropic tangent lines and are tangent to the infinity line at the 
same non-isotropic point, by 
Claim 1. This  together with Lemma \ref{propconf} 
implies that $a$ and $b$ are confocal. 

Case 3): $a$ is tangent to the infinity line at an isotropic point, say  $I_1$. Then $b$ is also tangent to the infinity line at $I_1$, since 
the conics $a$ and $b$ have common isotropic tangent lines. Recall that $a\neq b$. 

\medskip

{\bf Claim 3.} {\it The conics $a$, $b$ are tangent  at $I_1$ with at least triple contact.}
\medskip

\begin{proof} 
Let   $(x,y)$ be affine coordinates with the origin $I_1$, the $x$-axis $\oc_{\infty}$ with $x=z$ there such that  
$b=\{ y=x^2(1+o(1))\}$, as $(x,y)\to(0,0)$. Then $a=\{ y=\sigma x^2(1+o(1))\}$, $\sigma\neq0$. 
We have to show that $\sigma=1$. 
For every $t\in a$ close to $I_1$  set $v_t=x(t)$. There are two tangent lines $p_{1,t}$ and $p_{2,t}$ to $b$ through $t$, and 
they tend to the $x$-axis, i.e., the infinity line, 
as $t\to I_1$. Their  azimuths have asymptotics $2s_jv_t(1+o(1))$, $j=1,2$, where $s_j$ are the roots of the quadratic equation 
 $s^2-2s+\sigma=0$, and their tangency points with $b$ have $x$-coordinates $u_t=s_jv_t(1+o(1))$ (Proposition \ref{tang-asympt}). 
 Therefore, the  latter tangent lines intersect the $x$-axis at points with coordinates $x_j=\frac{s_j}2v_t(1+o(1))$, while the tangent line 
 $T_ta$ intersects it at a point with coordinate $x_0=\frac{v_t}2(1+o(1))$ (Proposition \ref{pxt}). The pair of lines $p_{j,t}$ is symmetric with 
 respect to the line $T_ta$ (Corollary \ref{cortang}). The symmetry cannot fix each  $p_{j,t}$ for all $t$, 
 since otherwise either two distinct lines  $p_{1,t}$ and $p_{2,t}$ would be orthogonal to $T_ta$, or one of them 
 would be tangent to  both $a$ and $b$ for all $t$; none of the latter is possible.  Thus, the 
 symmetry permutes the above lines. Hence, $x_1x_2=x_0^2$ (Proposition \ref{tang-asympt1}), and thus, $s_1s_2=1$. On the other hand, 
 $s_1s_2=\sigma$, thus, $\sigma=1$. The claim is proved.
 \end{proof} 
 
   Claim 3 together with Proposition \ref{ptransl}  imply that in our case  the conics $a$ and $b$ 
 are obtained one from the other by translation of the finite plane. This together with the coincidence of their finite isotropic tangent lines 
 and Lemma \ref{propconf} implies that $a$ and $b$ are confocal. Theorem \ref{tclass} is proved.
 
 \section{Degenerate orbits of billiard on conics}
 Here we consider a 4-reflective billiard $a$, $b$, $a$, $b$ on pair of distinct confocal conics $a$, $b$ and prove the following 
 classification of degenerate quadrilateral orbits. Note that  smooth mirrors  coincide with their normalizations.
 
 \begin{theorem} \label{irrdeg} Let $U\subset a\times b\times a\times b$ be the 4-reflective set, and let $U_0\subset U$ be the subset of 
 4-periodic orbits. The complement $U\setminus U_0$ is the union of the following  algebraic curves:
 
 (i) the curve $\mct_a$ consisting of quadrilaterals $ABCB\in a\times b\times a\times b$ with $A,C\neq B$,  $AB=T_Aa$, $CB=T_Ca$ 
 and all the single-point quadrilaterals $AAAA$, $A\in a\cap b$; $AB\not\equiv CB$ on each component of the curve $\mathcal T_a$; 
 
 (ii) the curve $\mct_b$ defined analogously with $a$ and $b$ interchanged;
 
 (iii)  for every isotropic tangent line $L$ to $a$ and $b$ at points $A_0$ and $B_0$ respectively the curve $\Gamma_{ab}(L)$ 
 consisting of quadrilaterals $A_0B_0AB$ with variable $A$ and $B$; the curve $\Gamma_{ab}$ is rational, and the 
 natural projections $\Gamma_{ab}\to a$, $\Gamma_{ab}\to b$ to the positions of the variable vertices $A$  and $B$ are bijective;
  
 (iv)  three other  rational curves $\Gamma_{bc}(L)$, $\Gamma_{cd}(L)$, $\Gamma_{da}(L)$ 
 defined analogously to $\Gamma_{ab}(L)$ and consisting of quadrilaterals $AB_0A_0B$, $ABA_0B_0$ and $A_0BAB_0$ respectively with variable $A$ and $B$.   
  
  Each one of the curves $\mct_a$ and $\mct_b$ is 
  
  - elliptic, if $a$ and $b$ are not tangent to each other; 
  
  - rational with one transverse self-intersection, if $a$ and $b$ are quadratically tangent at a unique point;
  
  - rational with one cusp of degree two, if $a$ and $b$ are tangent with triple contact;
  
  - a union of two smooth rational curves, if $a$ and $b$ are tangent at two distinct points.
 \end{theorem}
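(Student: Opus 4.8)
The plan is to exploit that the $4$-reflective set $U\subset a\times b\times a\times b$ is an irreducible two-dimensional algebraic variety (Lemma \ref{lbirat}) on which $U_0$ is Zariski open and dense (Proposition \ref{comp-set}); hence $U\setminus U_0$ is a proper Zariski-closed subset, a finite union of algebraic curves, and it remains to identify its components and compute their geometry. At a generic point of any such component exactly one kind of degeneracy occurs, and I would classify the components accordingly, using throughout that $a=c$ and $b=d$ are smooth conics with common isotropic tangent lines and no cusps (Lemma \ref{commi}, Corollary \ref{tan-con}). If the degeneracy is an isotropic tangency vertex, the isotropic reflection law (Proposition \ref{comp-set}) forces one of its edges to lie along the corresponding isotropic tangent line $L$; the two consecutive vertices on $L$ are then the isotropic tangency points $A_0\in a$, $B_0\in b$ of $L$, and according to which of the four edges equals $L$ the component is one of $\Gamma_{ab}(L),\Gamma_{bc}(L),\Gamma_{cd}(L),\Gamma_{da}(L)$. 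If instead the degeneracy is a non-isotropic edge tangent to its mirror at an adjacent (non-marked) vertex, then Proposition \ref{tang} rules out this being the only degeneracy, while the opposite-tangency alternative of Corollary \ref{connect} (its cusp alternative being void for smooth conics) forces the opposite edge to be tangent too and the two opposite vertices to coincide, placing the component on $\mct_a$ or $\mct_b$. Pure vertex collisions, such as the single-point orbits $AAAA$ with $A\in a\cap b$, occur only as special limit points inside these families.

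Conversely I would check that each listed curve lies in $U\setminus U_0$. The tangency families $\mct_a$, $\mct_b$ are exactly the curves $\mct\subset U$ produced (for the two mirrors) in the proof of Corollary \ref{connect}. Each $\Gamma$-curve is, by construction, contracted to a single point by the projection of $U$ onto its pair of fixed vertices, which is birational (Lemma \ref{lbirat} and cyclic relabeling of the billiard $a,b,a,b$); hence each $\Gamma$-curve is a fiber of a birational morphism of projective surfaces and is therefore rational (Theorem \ref{trat}, as in Corollary \ref{contrat}). Its two projections to the variable vertices on $a$ and on $b$ are bijective: reflecting the line $B_0A$ off $T_Aa$ determines the edge $AB$ and hence $B$ from $A$, and symmetrically $A$ from $B$.

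The heart of the argument is the geometry of $\mct_a$ (and symmetrically $\mct_b$). I would realize $\mct_a$ as a double cover of $b\cong\cp^1$ via $B\mapsto\{A,C\}$, where $A,C$ are the contact points of the two tangent lines from $B$ to the conic $a$; the two orderings $(A,C)$, $(C,A)$ give the two sheets, and the cover branches exactly where these contact points collide, i.e. over $B\in a\cap b$. Writing $A,C$ as the roots of a quadratic in a parameter of $a$ whose coefficients depend on $B$, its discriminant $\Delta$ is a section of degree $\deg a\cdot\deg b=4$ over $b$ (B\'ezout) whose zero divisor is $a\cap b$. A local computation in coordinates adapted to $a$ at a point $P\in a\cap b$, in which $a=\{y=x^2\}$ and $\Delta$ equals $\beta^2-y_b(\beta)$ up to a nonzero factor (with $\beta=x(B)$ and $y=y_b(x)$ the local graph of $b$), shows that $\mathrm{ord}_P\Delta$ equals the local intersection multiplicity $I_P(a,b)$. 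Thus $\mct_a$ is the normalization of $\{w^2=\Delta\}$, carrying a node ($w^2\sim\beta^2$) at each even-order zero and a ramification point of the normalization (with a cusp of the image) at each odd-order zero.

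Reading this off against the contact orders supplied by Corollary \ref{tan-con} gives the four cases. For transverse $a,b$ there are four simple zeros, four branch points, and by Riemann--Hurwitz genus one, so $\mct_a$ is elliptic. A single quadratic tangency gives an order-two zero (a node, $w^2\sim\beta^2$) together with two remaining transverse points (two branch points, genus zero), so $\mct_a$ is rational with one node. A triple contact gives an order-three zero (a cusp of the image, $w^2\sim\beta^3$, of degree two) together with one transverse point, again genus zero. Two quadratic tangencies give two order-two zeros and no branch points, so $\Delta$ becomes a perfect square and the cover splits into two smooth rational components. The main obstacle I anticipate is twofold: first, since for confocal conics the tangencies occur at the isotropic points at infinity (Corollary \ref{tan-con}), the convenient affine model $a=\{y=x^2\}$ must be replaced by a chart computation there, and correctly distinguishing node from cusp from splitting at those points is delicate; second, establishing completeness of the classification—that every degenerate quadrilateral is caught by one of the three mechanisms above—requires applying the reflection and isotropic reflection laws together with Proposition \ref{tang} and Corollary \ref{connect} at all four vertices simultaneously, where the smoothness of the conics (which annihilates the cusp alternative of Corollary \ref{connect}) is exactly what makes the bookkeeping close up.
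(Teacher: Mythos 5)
Your overall strategy is essentially the paper's own: the same trichotomy of degeneracies (isotropic tangency vertex, non-isotropic edge tangency, neighbor-vertex collision) drives completeness via Proposition \ref{comp-set}, Proposition \ref{tang} and Corollary \ref{connect}; the $\Gamma$-curves are obtained, exactly as in the paper, as curves contracted by a birational projection and hence rational by Theorem \ref{trat}; and the structure of $\mct_a$, $\mct_b$ is read off from the same double cover $\mct_a\to b\simeq\cp^1$, $ABCB\mapsto B$, branched over $a\cap b$, with Riemann--Hurwitz. Your discriminant computation ($\operatorname{ord}_P\Delta=I_P(a,b)$, with node, cusp or splitting read from the order and parity of the zero) is a legitimate, slightly more algebraic substitute for the paper's local analysis, which instead uses the Puiseaux asymptotics of Proposition \ref{tang-asympt} (formula (\ref{case2})) to exhibit the two local branches of $\mct_a$ at a tangency point; the two computations are equivalent. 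The obstacle you flag about charts at infinity is harmless: the conditions defining $\mct_a$ are projectively invariant (they involve only tangency, not the Euclidean structure), so one may normalize $a$ to $\{y=x^2\}$ with the tangency point finite and run your computation there.

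The genuine gap is your one-sentence dismissal of pure vertex collisions (``single-point orbits $AAAA$ occur only as special limit points inside these families''). That is precisely a statement requiring proof, not a remark. A component of $U\setminus U_0$ could a priori consist of quadrilaterals $AACD$ with a coinciding pair of \emph{neighbor} vertices $A=B\in a\cap b$ (necessarily constant along the component, since $a\cap b$ is finite) and variable $C$, $D$. At such quadrilaterals the reflection law at the collided vertices is vacuous --- Proposition \ref{comp-set} imposes it only at vertices whose neighbors are distinct --- so neither of your two mechanisms (the isotropic reflection law, or Proposition \ref{tang} plus Corollary \ref{connect}) applies there, and such a component would escape your classification entirely. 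The paper closes exactly this hole with Proposition \ref{aaaa}: assuming a curve $H\subset U$ of quadrilaterals $AACD$ with variable $C$, it locates on $H$ a quadrilateral with $AC^*=T_Ab$, deforms it inside $U$ to a quadrilateral $A'B'C'D'$ with $B'C'=T_{B'}b$ and $A'\neq B',C'$, and contradicts Corollary \ref{connect}, which would force $\pi_a(A')=\pi_c(C')$; a further argument (biholomorphicity of $\psi_a,\psi_b$ at $(A,A)$ and $\psi_a(A,A)=(A,A)$) then shows that $AAAA$ really lies on $\mct_a\cap\mct_b$ and is the only quadrilateral in $U$ with that collision. Your proposal needs this step. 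A smaller slip: bijectivity of the projections $\Gamma_{ab}(L)\to a$, $\Gamma_{ab}(L)\to b$ does not follow from ``reflecting $B_0A$ off $T_Aa$ determines $B$'' --- the reflected line meets the conic $b$ at two points --- but it does follow from the birationality of Lemma \ref{lbirat}, which you invoke anyway.
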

  
 \begin{proof} Fix a quadrilateral $ABCD\in U\setminus U_0$. Let us show that it lies in the union of the above curves. 
  If its vertices are not isotropic tangency points and every two neighbor vertices are distinct, then it lies 
  in $\mct_a\cup\mct_b$, by Corollary \ref{connect}. 
 If it has an isotropic tangency vertex, then some its neighbor is also 
 an isotropic tangency vertex (reflection law and coincidence of isotropic tangent lines of the conics $a$ and $b$). 
 Therefore, given an isotropic line $L$ tangent to $a$ and $b$ at points $A_0$ and $B_0$ respectively, there exists a rational 
 curve $\Gamma_L=\Gamma_{ab}(L)$  of quadrilaterals $A_0B_0AB\in U\setminus U_0$ with variable $A$ (see the proof of Corollary  \ref{contrat}). Let us show that $B$ also varies along $\Gamma_L$. Indeed, in the contrary case, when $B\equiv const$ on $\Gamma_L$, one would have 
  either $B\equiv A_0$, or the variable line $AB$ is reflected to the constant  line $BA_0$ (which is thus isotropic and coincides with 
  $T_Bb$). In both cases $T_Bb=L$, by coincidence of isotropic tangent lines of the conics $a$ and $b$. Hence, $B\equiv B_0$ on 
  $\Gamma_L$. Therefore, $AB_0$ is orthogonal to $T_Aa$ for every $A\in a$, and $a$ is a complex circle centered at $B_0$. 
By confocality, $b$ is also a complex circle centered at $B_0\in b$, which is obviously impossible. The contradiction thus obtained proves 
that both $A$ and $B$ vary along the curve $\Gamma_L$. The projection of the curve $\Gamma_L$ to the position of either $A$, or $B$ is 
bijective, by birationality (Lemma \ref{lbirat}). 

 A priori, $ABCD$ may have yet another degeneracy: 
 coinciding pair  of neighbor vertices  that are not isotropic tangency points. First we show that the quadrilaterals in $U$ with the latter 
 degeneracy form a finite set of points  $AAAA\in \mct_a\cap\mct_b$, $A\in a\cap b$. Then we prove the last statements of the theorem 
 on the structure of the curves $\mct_a$ and $\mct_b$. 
 
 \begin{proposition} \label{aaaa} 
 Let $A=B\in a\cap b$ be an intersection point such that the line $T_Aa$ is not isotropic. Then the line $T_Bb$ is also 
 not isotropic, and the 4-reflective set $U$ contains no algebraic curve of quadrilaterals $AACD$ with fixed $A=B$ and variable $C$ or $D$. 
 \end{proposition}
 
 \begin{proof} The non-isotropicity of the tangent line $T_Bb$ follows immediately from the coincidence of isotropic tangent lines to $a$ 
 and $b$ (confocality). The line $T_Bb$ is transverse to $a$,  since each common tangent line to $a$ and $b$ is isotropic 
 (see the proof of Corollary \ref{tan-con}). Suppose the contrary to the second statement: there exists an algebraic curve $H\subset U$ consisting of 
 quadrilaterals $AACD$, say, with variable $C$. Let $C^*$ denote the point of intersection $a\cap T_Ab$ distinct from $A$. Then $H$ contains 
 a quadrilateral $x=AAC^*D^*$ with $AC^*=T_Ab$ and $C^*\neq A$. The  quadrilateral $x$ may be deformed to a 
  quadrilateral $A'B'C'D'\in U$ with $B'C'=T_{B'}b$ and $A'\neq B',C'$, -- 
 a contradiction to Corollary \ref{connect}. The proposition is proved.
 \end{proof}
 
 Let $A$ be as in Proposition \ref{aaaa}. The proposition immediately implies that  
 the birational isomorphisms $\psi_a$, $\psi_b$ and their inverses are biholomorphic at 
 $(A,A)$. Let us show that they send $(A,A)$ to $(A,A)$. Indeed, take a point $A'\in a$, the two tangent lines to $b$ through $A'$ and 
the tangency points $B'$ and $D'$. One has $A'B'A'D'\in\mct_b$, and $B',D'\to A$, as $A'\to A$. This together with biholomorphicity 
and Lemma \ref{lbirat} 
implies that $\psi_a(A,A)=\psi_b^{-1}(A,A)=(A,A)$, $AAAA\in \mct_b$ and $AAAA$ is the only quadrilateral in $U$ with a pair of neighbor 
vertices coinciding with $A$. Similarly, $AAAA\in\mct_a$. 

Let us prove the last statement of Theorem \ref{irrdeg} for the curve $\mct_a$; the case of the curve $\mct_b$ is symmetric. 
In the case, when the conics $a$ and $b$ are not tangent, the ellipticity of the curve $\mct_a$  is classical, see 
\cite{grifhar}. Namely, the projection $q:\mct_a\to b\simeq\oc$: $ABCB\mapsto B$ is a double covering over $\oc$, branching over the 
points of intersection $a\cap b$. If the conics are not tangent, then we have four distinct branching points, thus the curve $\mct_a$ 
is elliptic by Riemann--Hurwitz Formula.  The cases of tangent confocal conics are described by Corollary \ref{tan-con}. 
In the case, when $a$ and $b$ are quadratically tangent at some point $O$, 
the curve $\mct_a$ has two transversely 
intersected local branches through the point $(O,O)$. 
This easily follows from Proposition \ref{tang-asympt}, see (\ref{case2}). Therefore, the projection $q$ 
does not branch at the double intersection $O$ and has only two branching points; thus the curve $\mct_a$ is rational by Riemann--Hurwitz 
Formula. The case of two tangency points is treated analogously: the covering does not branch at all and  has two 
univalent sheets bijectively projected to $\oc$. In the case of triple tangency we have two distinct branching points: the point $O$ and 
another branching point. The implicit function $A=A(B)$ defined by the curve $\mathcal T_a$ is double-valued in the neighborhood of the point 
$O$, and its  both branches 
have unit derivative at $O$, by (\ref{case2}). This easily implies that the quadrilateral $OOOO$ corresponds to a degree two 
cusp  of the rational curve $\mct_a$.   Theorem \ref{irrdeg} is proved.
\end{proof}

 \section{Application: classification of 4-reflective real algebraic planar pseudo-billiards}
 
 \def\rp{\mathbb{RP}}
 \def\ur{U_{\mathbb R}}
 \def\uor{\ur^0}
 
 Here by {\it real analytic curve} we mean a curve  $a\subset\rp^2$ analytically parametrized by either $\mathbb R$, or $S^1$ that is not the 
 infinity line.  If a  curve $a$ has singularities (cusps or self-intersections),  
 we consider its maximal real analytic extension $\pi_a:\ha\to a$, where $\ha$ is either $\mathbb R$, or $S^1$, see 
 \cite[lemma 37, p.302]{gk2}. The parametrizing curve $\ha$ will be called here the {\it real normalization.} 
 The affine plane $\rr^2\subset\rp^2$ is equipped with Euclidean metric. 
 
 \begin{definition} A {\it real planar analytic (algebraic) pseudo-billiard} is  a collection of $k$ real irreducible 
 analytic (algebraic) curves $a_1,\dots,a_k\subset\rp^2$. 
 Its {\it $k$-periodic   orbit}  
  is a $k$-gon $A_1\dots A_k$, $A_j\in a_j\cap\rr^2$, such that for every $j=1,\dots,k$ one has $A_j\neq A_{j\pm1}$,
 $A_jA_{j\pm1}\neq T_{A_j}a_j$ and the lines $A_jA_{j-1}$, $A_jA_{j+1}$ are symmetric with respect to the tangent line $T_{A_j}a_j$. 
 The latter means that for every $j$ the triple  $A_{j-1}$, $A_j$, $A_{j+1}$ and the line $T_{A_j}a_j$ satisfy either usual, or skew 
 real reflection law. For brevity, we then say that usual (skew) reflection law is satisfied at the vertex $A_j$. 
 Here we set $a_{k+1}=a_1$, $A_{k+1}=A_1$, $a_0=a_k$, $A_0=A_k$. A real pseudo-billiard is called 
  {\it $k$-reflective,} if it has  an open set (i.e., a two-parameter family) of $k$-periodic  orbits.  The interior points of the set of $k$-periodic 
 orbits will be called {\it $k$-reflective orbits.} 
   \end{definition}
 
  \begin{remark} \label{rpseudo} The complexification of every real $k$-reflective planar analytic pseudo-billiard is a $k$-reflective analytic 
  complex planar billiard. 
 \end{remark}
 
%
 
 \begin{theorem} \label{class}  A   real planar algebraic pseudo-billiard  $a$, $b$, $c$, $d$  is   4-reflective, 
 if and only if it has one of the following types:
 
 1) $a=c$ is a line,  the curves $b,d\neq a$ are symmetric with respect to it;
 
 2) $a$, $b$, $c$, $d$ are distinct lines  through the same point $O\in\mathbb{RP}^2$, 
the line pairs $(a,b)$, $(d,c)$ are transformed one 
 into the other by  rotation around $O$ (translation, if $O$ is an infinite point); 
 
 3) $a=c$, $b=d$ and they are distinct confocal conics: either ellipses, or hyperbolas, or ellipse and hyperbola, or parabolas. 
 
 In every 4-reflective orbit the  reflection law at each pair of opposite vertices is the same; it is skew for at least one opposite vertex pair. 
 \end{theorem}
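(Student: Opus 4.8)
The plan is to deduce the classification from the complex Theorem \ref{tclass} by complexification, and then to determine the reflection pattern by a separate real-geometric argument. For the ``only if'' direction I would take a $4$-reflective real algebraic pseudo-billiard $a$, $b$, $c$, $d$ and pass to its complexification, which is a $4$-reflective complex algebraic billiard by Remark \ref{rpseudo}; hence it is one of the complex Cases 1)--3) of Theorem \ref{tclass}. Each complex case translates back to the real setting. A complex line that is the complexification of a real curve is the complexification of a real line, so complex Case 1) yields real type 1). In complex Case 2) the four concurrent lines are real and the unimodular complex isometry sending $(a,b)$ to $(d,c)$ restricts to a real isometry fixing $O$, namely a rotation about $O$ if $O$ is finite and a translation if $O$ is infinite, giving real type 2). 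Finally, complex confocal conics that are complexifications of real conics are complexifications of real confocal conics (confocality being a real notion preserved by complexification, cf.\ Definition \ref{defconf} and Lemma \ref{propconf}), whose real forms are exactly ellipses, hyperbolas, parabolas and mixed ellipse/hyperbola, giving real type 3).

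For the ``if'' direction I would check that each listed type carries a two-parameter family of real $4$-periodic orbits. Types 1) and 2) are the real analogues of Examples \ref{ex-lines} and \ref{ex-lines2} and visibly possess an open family of real orbits (symmetric across the line $a$, respectively lying on the two isometric real line pairs). For type 3), the complex billiard on the two confocal conics is $4$-reflective by Theorem \ref{tquad}, so its $4$-reflective set $U$ is a two-dimensional algebraic surface; since the conics are real, $U$ is defined over $\mathbb R$, and it suffices to produce one real $4$-periodic orbit at which $U$ is smooth, for then the real locus of $U$ is real two-dimensional nearby. Such a real orbit is supplied by Urquhart's construction (Theorem \ref{tell}) for confocal ellipses and by the analogous skew-reflection construction, as in Example \ref{ex-circles}, for the remaining confocal conic types.

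It then remains to settle the statement on reflection laws. By Remark \ref{rskew}, at each vertex of a real orbit exactly one of the usual or skew real reflection laws holds, according to whether the two neighbouring vertices lie on the same or opposite sides of the tangent line there; this discrete type is locally constant on the open set of nondegenerate real orbits, since it can only change when a neighbouring vertex crosses the tangent line, i.e.\ along the tangency-degenerate curves $\mathcal T_a$, $\mathcal T_b$ of Theorem \ref{irrdeg}. First, at least one vertex must be skew: if all four vertices obeyed the usual reflection law on an open family, the pseudo-billiard would be a genuine $4$-reflective real algebraic billiard, contradicting the nonexistence result of \cite{gk1, gk2}. Second, I would prove equality of types at opposite vertices. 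In type 1) the reflection law at the two vertices $A,C$ on the line $a$ forces the orbit to be symmetric across $a$ (so that $D$ is the mirror image of $B$), whence $A$ and $C$ are both skew and $B$, $D$ share the same type by that mirror symmetry; type 2) is handled by the same kind of direct analysis of concurrent-line orbits using the rotation carrying $(a,b)$ to $(d,c)$. In type 3), where $a=c$ and $b=d$, I would use the orbit-relabelling involution $ABCD\mapsto CBAD$, which interchanges the two mirror-$a$ vertices and again sends orbits to orbits, combined with local constancy of the type and the axial/confocal symmetry of the configuration, to identify the type at $A$ with that at $C$ (and symmetrically at $B$, $D$). Together with the previous point, at least one opposite vertex pair is entirely skew.

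I expect the equality of types at opposite vertices in type 3) to be the main obstacle. Local constancy only propagates the type within a connected component of nondegenerate orbits, so the argument must exclude components carrying an asymmetric pattern; this is genuinely geometric and I would control it through the confocal caustic structure and the explicit description of the separating degeneracy curves $\mathcal T_a$, $\mathcal T_b$ and $\Gamma_{\bullet}(L)$ provided by Theorem \ref{irrdeg}. By contrast, the ``only if'' and ``if'' directions are comparatively routine consequences of Theorems \ref{tclass} and \ref{tquad} together with the explicit Examples \ref{ex-lines}, \ref{ex-lines2} and \ref{ex-circles}.
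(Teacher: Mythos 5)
Your ``only if'' direction coincides with the paper's (complexify, apply Theorem \ref{tclass}, descend to the real forms), and your ``if'' direction is essentially the paper's as well: the paper simply notes that Urquhart's theorem, as proved in \cite{tab}, applies verbatim to all confocal conic types, which is cleaner than your complexify-then-find-a-smooth-real-orbit detour (where, incidentally, Example \ref{ex-circles} concerns concentric circles, so you would still owe a real orbit construction for hyperbolas and parabolas). Your appeal to \cite{gk1,gk2} to exclude an open family of all-usual orbits is a legitimate, non-circular shortcut that the paper does not take: the paper gets ``at least one skew pair'' as a byproduct of its explicit pattern classification, and in fact remarks that the algebraic case of the \cite{gk2} theorem follows from Theorem \ref{class} rather than the other way around.

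The genuine gap is exactly where you suspect it: equality of the reflection laws at opposite vertices in type 3). Neither of the mechanisms you propose can close it. The relabelling involution $ABCD\mapsto CBAD$ is vacuous for this purpose, because the reflection type at a given geometric vertex is intrinsic to the quadrilateral (it depends only on the two edges meeting there and the tangent line), so renaming which vertex is ``first'' cannot relate the type at $A$ to the type at $C$ within one orbit; similarly, reflecting an orbit in a common axis of the confocal conics produces a different orbit with the same types at corresponding vertices, and again compares nothing. What is actually required---and what occupies most of Section 6 of the paper---is a global analysis of the real surface $U_{\mathbb R}$: Proposition \ref{class-deg} lists all degeneracy curves (the tangency curves $\mathcal T_{a,\mathbb R}^{\pm}$, $\mathcal T_{b,\mathbb R}^{\pm}$ and the curves $\Gamma_g$, $\Gamma_{gh}^{\mathbb R}$ of orbits with infinite vertices), Proposition \ref{crossma} shows that near $\mathcal T_{a,\mathbb R}$ the laws at the two $a$-vertices are skew while those at the two $b$-vertices agree, and then a case-by-case deformation argument (ellipses, hyperbolas, ellipse--hyperbola, parabolas) tracks how patterns change when an orbit crosses a curve $\Gamma_g$, i.e.\ when a vertex passes through the infinity line: crossing $\Gamma_a$ flips the common law at $B$ and $D$ \emph{simultaneously}, which is precisely why opposite-pair equality survives in every component of the nondegenerate orbit set. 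Your sketch stops just before this analysis; note moreover that your own derivation of ``at least one skew pair'' needs opposite-pair equality to upgrade ``not all four usual'' (from \cite{gk2}) to ``some opposite pair is entirely skew'', so the unproved step propagates into the final statement of the theorem as well.
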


 {\bf Addendum 1.} {\it In Theorem \ref{class},  case 1) each 4-reflective orbit   $ABCD$  has the same type, as at Fig.1: 
 it is symmetric with respect to the line $a=c$ and the reflection law  is skew at $A$ and $C$. In the subcase, when 
 $b$, $d$ are lines parallel to $a$, 
 the reflection law is usual at $b$ and $d$.  In the subcase, when $b=d$ is a line orthogonal to $a$,  
the orbits are rhombi symmetric with respect to $a$ 
 and $b$: the reflection law is skew at each vertex, see Fig.11a). If none of the  latter subcases holds, then the billiard has both types of 
 4-reflective orbits $ABCD$: with usual reflection law at $B$, $D$ and with skew one.} 
 
 \medskip
 
 {\bf Addendum 2.} {\it Let a 4-reflective pseudo-billiard have type 2).  
 If the lines $a$, $b$, $c$, $d$  are parallel and $b$, $d$ lie between the lines $a$ and $c$, then  for every 4-reflective orbit 
 the reflection law is usual at $a$, $c$ and skew at $b$, $d$. Otherwise, 
there are three types of 4-reflective orbits: all the reflection laws are skew; usual reflection law at $a$, $c$ and skew at $b$, $d$; 
vice versa, see Fig.2.}

\medskip

%
{\bf Addendum 3.} {\it Let a 4-reflective pseudo-billiard $a$, $b$, $a$, $b$ have type 3). In the case, when $a$ and $b$ are ellipses, 
all the 4-reflective orbits have the same reflection laws, 
as at Fig.4: two usual ones and two skew ones.  In the case, when $a$ and $b$  are either 
 hyperbolas, or ellipse and hyperbola, or parabolas, all the possible reflection law combinations are given in the next figures, up to 
 symmetries with respect to the common  symmetry lines of the  conics $a$ and $b$, renaming 
 opposite mirrors and cyclic renaming of mirrors.}
 
 \begin{figure}[ht]
  \begin{center}
   \epsfig{file=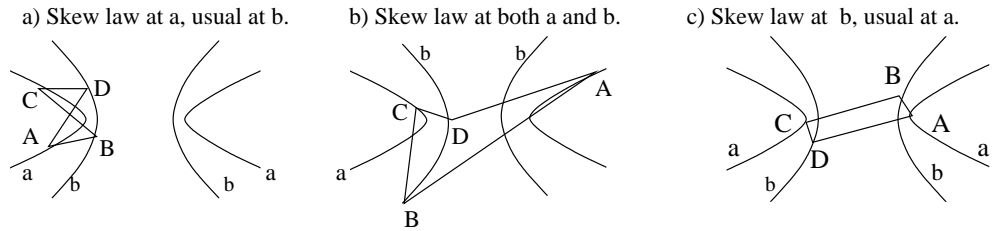}
    \caption{4-reflective  orbits on confocal hyperbolas: three types}
  \end{center}
\end{figure}

\begin{figure}[ht]
  \begin{center}
   \epsfig{file=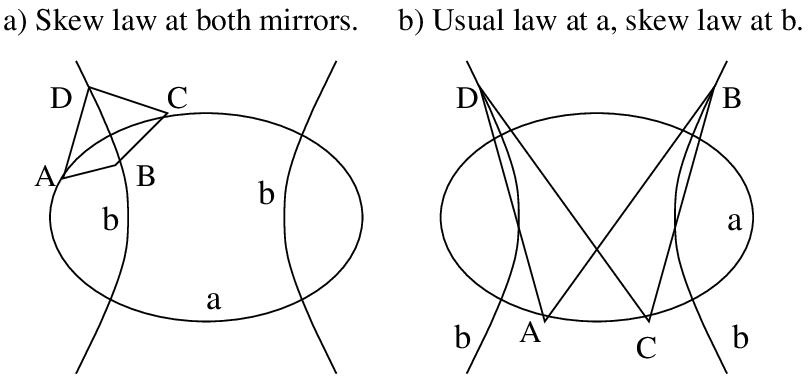}
    \caption{4-reflective orbits on confocal ellipse and hyperbola: two types}
  \end{center}
\end{figure}

\begin{figure}[ht]
  \begin{center}
   \epsfig{file=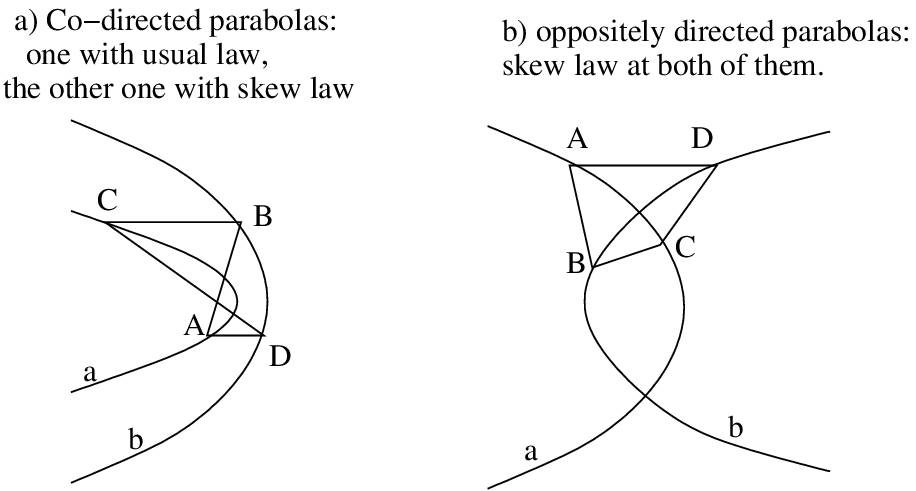}
    \caption{4-reflective orbits on confocal parabolas: one type}
  \end{center}
\end{figure}

\begin{remark} The main result of paper \cite{gk2} (theorem 2) concerns  usual real planar billiards with piecewise-smooth boundary; 
the reflection law is usual. 
It says that the set of quadrilateral orbits has measure zero. In the particular case of billiard with piecewise-algebraic boundary this 
statement follows from the last statement of Theorem \ref{class}. 
\end{remark}

\begin{proof} {\bf of  Theorem \ref{class} and its addendums.} 
The pseudo-billiard $a$, $b$, $c$, $d$ under question being 4-reflective, its complexification is 4-reflective 
(Remark \ref{rpseudo}). This together with Theorem \ref{tclass} 
implies that it has one of the above types 1)--3) (up to  cyclic renaming of the mirrors). Let us show that 
each type is realized and prove the addendums. 

{\bf Case of type 1).} Each 4-reflective orbit $ABCD$ is symmetric with respect to the line $a=AC$: 
the lines $AB$ and $AD$ are symmetric, the lines $CB$ and $CD$ are symmetric; hence, the 
intersection points $B=AB\cap CB$ and $D=AD\cap CD$ are symmetric. The statements of Addendum 1 on the reflection law follow  
immediately from symmetry. 

{\bf Case of type 2).} The 4-reflectivity  was proved in  
Example \ref{ex-lines2}; the proof applies to the real case. Let us prove  Addendum 2. 

Subcase 2a): $a$, $b$, $c$, $d$ are parallel lines, say horizontal. Then the line pair $(a,b)$ is sent to $(d,c)$   
 by translation. Hence, the lower and upper lines 
are opposite mirrors, say $a$ and $c$,  the other intermediate lines $b$ and $d$ are opposite, 
the reflection law at $a$ and $c$ is usual, and that at $b$ and $d$ is skew. 

Subcase 2b): the mirrors pass through the same finite point $O$. 

\medskip

{\bf Claim 1.} {\it The lines $b$ and $d$ punctured at $O$ are contained in a union of two opposite quadrants of the 
complement $\rr^2\setminus(a\cup c)$.}

\begin{proof} Suppose the contrary. Then  two rays $R_b$ and $R_d$  of the lines $b$ and 
$d$ respectively lie in the same half-plane with respect to the line $a$ and are separated by a ray $R_c$ of the line $c$; all the rays 
are issued from the point $O$. This implies that the rotation around the point $O$ sending the ray $R_c$ to $R_b$ cannot send $R_d$ 
to a ray of the line $a$, -- a contradiction to the condition that the pair $(a,b)$ is transformed to $(d,c)$ by rotation.
\end{proof}

{\bf Claim 2} {\it There exist no 4-reflective orbits with distinct reflection laws at some pair of opposite vertices.}

\begin{proof} Suppose the contrary: there exists a 4-reflective orbit $ABCD$, say, with usual reflection law at $A$ and skew at $C$. 
Then the rays $OB$ and $OD$ are separated by the line $c$ and are not separated by the line $a$, -- a contradiction to Claim 1. 
This proves Claim 2.
\end{proof}

{\bf Claim 3.} {\it There exists no 4-reflective orbit with usual reflection law at each vertex.}

\begin{proof} The reflection law  at the vertex with maximal distance to $O$ is obviously skew.
\end{proof}

Let $\alpha$ ($\beta$) denote the smallest angle between the lines $a$ and $b$ (respectively, $c$ and $b$). Then the smallest 
angle between the lines $d$ and $c$ ($a$ and $d$) equals $\alpha$ (respectively, $\beta$), by isometry of pairs $(a,b)$ and $(d,c)$. 
One has $\alpha\neq\beta$, since the lines are distinct. 
Let us name the mirrors so that $\alpha<\beta$: then the line $b$ is closer than $d$ to the line $a$. One can achieve this by 
interchanging opposite mirrors $a$ and $c$. 

{\bf Claim 4.} {\it There exists a 4-reflective orbit $ABCD$ with usual reflection law at $A$ (and hence, $C$).}

\begin{proof} Fix a $\gamma\in(\alpha,\beta)$, an $A\in a\setminus O$ and a $B\in b\setminus O$ such that the angle between the rays 
$OA$ and $AB$ equals $\gamma$. Then the line $AD$ symmetric to $AB$ with respect to the line $a$ intersects $d$ at a point $D$ 
lying together with $B$ in the same half-plane with respect to the line $a$, by the inequality 
$\angle(a,d)=\beta>\angle(a,AD)=\angle(a,AB)=\gamma$. 
The triangle $BAD$ thus constructed obviously extends to a 4-reflective orbit $ABCD$, since 
the composition of symmetries with respect to the mirrors is identity. By construction, the reflection law at $A$ is usual. The claim is proved. 
\end{proof}

{\bf Claim 5.} {\it There exists a 4-reflective orbit with skew reflection law at each vertex.}

\begin{proof} Let us take the above-constructed 
4-reflective orbit $ABCD$ with usual reflection law at $A$. Let $b(A)$ denote the line through $A$ 
parallel to $b$, let $B_{\infty}$ be their intersection point  with the infinity line. 
Let us degenerate $ABCD$ so that $A$ remains 
fixed and $B$ tends to $B_{\infty}$: the angle inequality $\alpha<\gamma<\beta$ remains valid. 
Then the vertex $C$ tends to the finite intersection 
point $C'$ of the line $c$ and a finite line $b'$ parallel to $b$: the line $b'$ contains the point symmetric to $A$ with respect to the line $b$. 
Let $b^*(A)$ denote the line symmetric to $b(A)$ with respect to $a$. 
The vertex $D$ tends to the intersection point $D'$ of the lines $d$ and $b^*(A)$.  
The limit $D'$ is finite, since the latter lines are not parallel, by the inequality $\alpha<\beta$. By construction, the vertices $A$ and $C'$ 
are separated by the line $b$, since they lie on lines parallel to $b$ and symmetric with respect to it. Therefore, as $B$ crosses 
the infinity line, the reflection law at $B$ remains skew, while the reflection law at $A$ changes from usual to skew. 
Finally, after crossing the infinity line by the vertex $B$ we obtain a new quadrilateral $ABCD$ with skew reflection law at $A$ and $B$. 
The reflection law at the other vertices $C$ and $D$ is also skew, by Claim 2. This proves Claim 5. 
\end{proof}
 
There exists a 4-reflective orbit with usual reflection law at $b$ and $d$, by Claim 4, which also applies to $B$ and $D$, by symmetry. 
This together with Claims 2--5 proves Addendum 2.

{\bf Case of type 3):} 
a pseudo-billiard $a$, $b$, $a$, $b$ with $a$ and $b$ being real confocal conics. The 4-reflectivity of such a pseudo-billiard 
in the case of ellipses is given by  Urquhart' Theorem \ref{tell}. Its proof given in \cite[p.59, corollary 4.6]{tab} applies to the other types of 
confocal conics as well, as was mentioned in loc. cit. Thus, all the pseudo-billiards listed in type 3) are realized. Let us prove the 
reflection law statement of Theorem \ref{class} and Addendum 3. To do this, we consider the 4-reflective orbit set 
$\uor\subset a\times b\times a\times b$ and its closure $\ur=\overline{\uor}$ in the usual topology. 

\begin{remark} The set $\ur$ is an algebraic surface 
with birational projection to $a\times b$ (to positions of any two neighbor vertices), by Proposition \ref{comp-set} and Lemma \ref{lbirat}. 
In the case, when $a$ and $b$ are not parabolas, the latter birational projection is a diffeomorphism, and the 
surface $\ur$ is a torus: the only indeterminacies of the complexified birational projection correspond to isotropic tangencies 
(by Proposition \ref{aaaa}), which 
do not lie in the real domain. 
\end{remark}

We use the next  proposition  
describing the complement $\ur\setminus \uor$. Then we analyze how the reflection laws change as a 4-reflective orbit
 crosses a component of the latter complement. The first step  of this analysis is given by Proposition \ref{crossma} below. 
%

\begin{proposition} \label{class-deg} The complement $\ur\setminus\uor$ is the union of the following sets:

Case (i): $a$ and $b$ are ellipses, $a$ is smaller. The union $\mct_{a,\rr}$ of two disjoint closed curves 
$\mct_{a,\rr}^{\pm}\subset\ur$ consisting of quadrilaterals of type 
$ABCB$ such that $AB$ and $CB$ are tangent to $a$ at $A$ and $C$ respectively. 

Case (ii): $a$ and $b$ are hyperbolas, and $b$ separates the branches of the hyperbola $a$. The union $\mct_{a,\rr}$ of 
two disjoint closed curves $\mct_{a,\rr}^{\pm}$ 
defined as above. Four closed curves $\Gamma_g$, $g=a,b,c,d$; each curve $\Gamma_g$  
consists of quadrilaterals $ABCD$ with infinite vertex $G$. 

Case (iii): $a$ and $b$ are respectively ellipse and hyperbola. The union $\mathcal T_{a,\mathbb R}$ of two disjoint closed 
curves $\mct_{a,\rr}^{\pm}$ and the union $\mathcal T_{b,\mathbb R}$ of two disjoint closed 
curves $\mct_{b,\rr}^{\pm}$ defined as 
in Case (i). Here and in the next cases 
the curves $\mathcal T_{a,\mathbb R}^\pm$ and $\mathcal T_{b,\mathbb R}^\pm$ contain appropriate single-point quadrilaterals 
$AAAA$, $A\in a\cap b$. Two closed curves $\Gamma_g$, $g=b,d$, defined as in Case (ii). 

Case (iv): $a$ and $b$ are codirected parabolas, $a$ lies inside the domain bounded by $b$. Two curves $\mct_{a,\rr}^{\pm}$ as in Case (i) 
that intersect at the point $OOOO$, $O=a\cap b$ is the infinite intersection point. Four closed curves $\Gamma_{ab}^\rr$, 
$\Gamma_{bc}^\rr$, $\Gamma_{cd}^\rr$, $\Gamma_{da}^\rr$ formed by quadrilaterals $OOAB$, $AOOB$, $ABOO$, $OBAO$ 
respectively with variable $A$ and $B$. 

Case (v): $a$ and $b$ are oppositely directed parabolas. The union $\mathcal T_{a,\mathbb R}$ of two closed curves $\mct_{a,\rr}^{\pm}$ 
and the union $\mathcal T_{b,\mathbb R}$ 
of two closed curves $\mct_{b,\rr}^{\pm}$ defined as in Case (i). Four closed curves $\Gamma_{gh}^\rr$ defined as above. 
The curves $\mct_{a,\rr}^{\pm}$ intersect only at the point $OOOO$, as do the curves $\mct_{b,\rr}^{\pm}$. 
\end{proposition}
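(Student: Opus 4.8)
The plan is to read off the real classification from the complex description of $U\setminus U_0$ in Theorem \ref{irrdeg}, separating two logically distinct contributions to the boundary $\ur\setminus\uor$. The first is the real trace of the complex boundary $U\setminus U_0$. The second consists of $4$-periodic orbits that are perfectly regular as complex orbits --- and hence lie in $U_0$, invisible to Theorem \ref{irrdeg} --- but are excluded from $\uor$ only because a real orbit is required to have all four vertices in the finite plane $\rr^2$; these are exactly the real orbits with a vertex on $\oci$, which occur precisely when a mirror crosses $\oci$ transversally at a real point, i.e.\ for hyperbolas.

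First I would dispose of the curves $\Gamma_{ab}(L),\dots,\Gamma_{da}(L)$ of Theorem \ref{irrdeg}, split according to the isotropic tangent line $L$. If $L$ is a finite isotropic line it passes through one of the complex points $I_1,I_2$, so its tangency point $A_0=\ha\cap L$ (unique by property (I)) is non-real, since the tangent line to a real conic at a real point is real; hence the whole curve $\Gamma(L)$ is non-real and contributes nothing. The one remaining possibility is $L=\oci$, which is an isotropic line by definition and is a tangent line only in the parabola case, where $a$ and $b$ are tangent to $\oci$ at the common real axis-point $O$. In that case $A_0=B_0=O$ is real, and the real points of $\Gamma_{ab}(\oci),\dots,\Gamma_{da}(\oci)$ are exactly the curves $\Gamma_{gh}^{\rr}$ of quadrilaterals with an adjacent pair of vertices equal to $O$; the coalescence of the two neighbouring vertices at $O$ reflects the tangency of the mirror to $\oci$, as in the isotropic reflection law (Proposition \ref{comp-set}).

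Next I would analyse the tangency curves $\mct_a,\mct_b$, whose tangent lines are non-isotropic and therefore real. A real point of $\mct_a$ is a genuine real orbit $ABCB$ in which both edges through $B$ are real tangents from $B\in b$ to the conic $a$; such real tangents exist exactly when $B$ lies in the exterior of $a$. Using the double covering $q\colon\mct_a\to b$, $ABCB\mapsto B$, branched over $a\cap b$ (from the proof of Theorem \ref{irrdeg}), I would restrict to the exterior arcs of $b$: over each such arc the two tangency points $A,C$ give the two sheets, which merge at the endpoints $B\in a\cap b$ into the single-point quadrilaterals $AAAA$. When $b$ lies entirely outside $a$ and $a\cap b$ has no real point --- the case of two ellipses with $a$ the smaller --- the real cover is unramified and its monodromy is trivial (the two tangents from $B$ admit a continuous global labelling), so $\mct_{a,\rr}$ splits into the two disjoint circles $\mct_{a,\rr}^{\pm}$; the remaining cases follow by the same mechanism, now with the arcs bounded by the real points of $a\cap b$. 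The local incidence at the coalescence points is governed by Proposition \ref{tang-asympt}, equation (\ref{case2}): two transverse branches for a quadratic tangency of $a,b$ and a degree-two cusp for a triple contact (Corollary \ref{tan-con}), which distinguishes the crossing of $\mct_{a,\rr}^{\pm}$ at $OOOO$ in the parabola cases. Finally, the hyperbola infinite-vertex orbits assemble, for each vertex slot $g$ lying on a hyperbola, into the closed curve $\Gamma_g$ obtained by sending that vertex to a real point of $g\cap\oci$ while the other three vertices vary.

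I expect the main obstacle to be the purely real bookkeeping in each of cases (i)--(v): proving that the list is complete (no real degenerate orbit escapes the two mechanisms above), computing the real monodromy of $q$ so that $\mct_{g,\rr}$ splits into exactly two arcs rather than one, distributing the single-point quadrilaterals $AAAA$ (which lie in $\mct_{a,\rr}\cap\mct_{b,\rr}$ by Proposition \ref{aaaa}) among the components, and matching the coalescing-vertex analysis at $O$ to the precise number and mutual incidence of the curves $\Gamma_{gh}^{\rr}$ and $\mct_{g,\rr}^{\pm}$ in the two parabola cases.
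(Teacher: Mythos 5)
Your proposal is correct and follows essentially the paper's own route: take the real points of the decomposition of $U\setminus U_0$ from Theorem \ref{irrdeg}, discard the curves attached to finite isotropic tangent lines because their fixed vertices are non-real, and split each real tangency curve $\mathcal T_{g,\mathbb R}$ into two closed components via the exterior-arc/two-tangent labelling (the paper parametrizes the components by the tangency vertex plus a choice of arc of the other conic, which is the same double-cover computation, and it is just as terse as you are about the case-by-case monodromy bookkeeping you flag as remaining). One point where you are actually more careful than the paper: the infinite-vertex curves $\Gamma_g$ of the hyperbola cases consist of \emph{regular} complex orbits, since a real point at infinity of a hyperbola and its tangent line (an asymptote) are non-isotropic, so these curves lie in $U_0$ rather than in $U\setminus U_0$ and are not literally produced by Theorem \ref{irrdeg} --- which the paper nonetheless cites for them --- whereas your explicit second mechanism (orbits excluded from $U_{\mathbb R}^0$ only because real pseudo-billiard orbits must have finite vertices) is the correct justification for their appearance in $U_{\mathbb R}\setminus U_{\mathbb R}^0$.
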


\begin{proof} The statements of the proposition in Cases (i) and (ii) follow immediately from Theorem \ref{irrdeg} and convexity. 
Let us 
prove them in Case (iii). The complement $\ur\setminus\uor$ consists of 
the analytic set $\mct_{a,\rr}$ of quadrilaterals of type $ABCB$ with tangencies to $a$ 
at $A$ and $C$, analogous analytic set $\mct_{b,\rr}$ and the above curves $\Gamma_g$, by Theorem \ref{irrdeg}. The set $\mct_{a,\rr}$  
consists of two disjoint closed components $\mct_{a,\rr}^{\pm}$. This follows from the same statement on the 
set of pairs $(A,B)\in a\times b$ such that the line $AB$ is tangent to $a$ at $A$. Indeed,  
each intersection point $B$ of a tangent line to the ellipse $a$ with the hyperbola $b$ lies in one of the two arcs forming the 
complement of $b$ to the interior domain of the ellipse $a$. Choice of arc defines a component of the set $\mct_{a,\rr}$. Each component is 
diffeomorphically parametrized by the 
tangency vertex $A\in a$ and hence, is closed. The proof of the same statements for the set $\mathcal T_{b,\mathbb R}$ is analogous. 
The proof in Cases (iv) and (v) is analogous; the curves $\Gamma_{gh}^{\mathbb R}$ are the real parts of the 
curves $\Gamma_{gh}$ from Theorem 5.1 corresponding to the infinity line. 
The proposition is proved.
\end{proof}
 
\begin{proposition} \label{crossma} Every quadrilateral $x=ABCB\in\mct_{a,\rr}^{\pm}$ with finite vertices has a neighborhood 
$V\subset U_{\mathbb R}$ 
such that for every quadrilateral $A'B'C'D'\in V\cap\uor$ the reflection laws at $A'$ and $C'$ are skew, and those at $B'$ and $D'$ are 
either both usual, or both skew, dependently only on $x$.
\end{proposition}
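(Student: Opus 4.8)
The plan is to analyze the two vertices $A',C'$ lying on $a$ and the two vertices $B',D'$ lying on $b$ separately, exploiting the fact that along $\mct_{a,\rr}^{\pm}$ the quadrilateral degenerates by the collision $B=D$ of the two $b$-vertices together with the tangencies $AB=T_Aa$, $CB=T_Ca$. Throughout I fix $x=ABCB$ with finite vertices, so that $A,C\neq B$ (Theorem \ref{irrdeg}) and the tangent lines $T_Aa$, $T_Ca$ are real, hence non-isotropic. First I would record that the edge $AB=T_Aa$ is transverse to $b$ at $B$: otherwise $AB$ would be a common tangent line of the confocal conics $a$ and $b$, which is necessarily isotropic (see the proof of Corollary \ref{tan-con}), contradicting its reality; the same applies to $CB$. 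In particular $A,C\notin T_Bb$, so the reflection at the limiting vertex $B$, with neighbours $A$, $C$ and mirror $T_Bb$, is non-degenerate. I then fix $V$ to be a small neighbourhood of $x$ in $\ur$.

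For the vertices on $a$ the argument is as follows. Take any genuine orbit $A'B'C'D'\in V\cap\uor$; by Remark \ref{rskew} the reflection at $A'$ is either usual or skew. As $A'B'C'D'\to x$ one has $A'\to A$ and $B',D'\to B$, so both edges $A'B'$ and $A'D'$ tend to the common line $AB=T_Aa$, while $B'$ and $D'$ tend to the single finite point $B$, which lies in a definite direction along this tangent line. Hence, after shrinking $V$, the points $B'$ and $D'$ lie on the same ray issued from $A'$. On the other hand, the two edges $A'B'$ and $A'D'$ are symmetric with respect to $T_{A'}a$ (complex reflection law, Proposition \ref{comp-set}) and, being distinct and close to $T_{A'}a$, make small nonzero opposite angles with it. Therefore the two points $B'$, $D'$ on the same forward ray lie on opposite sides of $T_{A'}a$, which is exactly the skew reflection law at $A'$. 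The identical argument at $C'$ shows the reflection there is skew as well, and this holds on all of $V\cap\uor$, i.e.\ on both sides of $\mct_{a,\rr}^{\pm}$.

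For the vertices on $b$ I would use continuity from the non-degenerate limit. At $x$ the repeated vertex $B=D$ carries a single reflection, with neighbours $A$, $C$ and mirror $T_Bb$, which is non-degenerate by the transversality established above; so it is strictly of one type, usual or skew, depending only on $x$. For an orbit $A'B'C'D'\in V\cap\uor$ close to $x$, the reflection at $B'$ (neighbours $A',C'$, mirror $T_{B'}b$) and the reflection at $D'$ (neighbours $C',A'$, mirror $T_{D'}b$) are both small perturbations of this same reflection at $B$; since the limiting reflection is non-degenerate, by continuity both perturbed reflections are of its type. Hence at $B'$ and $D'$ the reflection law is the same, both usual or both skew, and determined by $x$ alone, as required.

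The main obstacle I anticipate is the $a$-vertex analysis: one must be sure that the symmetric splitting of the two near-tangent edges $A'B'$, $A'D'$ really places $B'$ and $D'$ on opposite sides of $T_{A'}a$ rather than the same side. The key is that the two edges are symmetric about $T_{A'}a$ with equal and opposite small inclinations, while $B'$ and $D'$ lie on the common forward ray toward the finite limit $B$; once $V$ is shrunk so that both edges are genuinely distinct from and close to the tangent line, the opposite-side conclusion is forced. That genuine orbits near $x$ indeed have $B'\neq D'$ follows because $B'=D'$ with near-tangent, non-perpendicular edges would force $A'B'=T_{A'}a$, placing the quadrilateral in $\mct_{a,\rr}^{\pm}\subset\ur\setminus\uor$, which is excluded for points of $\uor$.
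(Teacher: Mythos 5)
Your core argument is sound, and it is worth noting that the paper itself offers essentially no argument here: its entire proof is the sentence ``The proposition follows from definition.'' What you wrote is precisely the direct unwinding of the definitions that the authors evidently considered immediate, and both halves are correct. At $A'$, $C'$: since $B'$ and $D'$ converge to the single finite point $B\neq A$ lying on $T_Aa$, while the edges $A'B'$, $A'D'$ are distinct, close to $T_{A'}a$, and symmetric about it, the equal-and-opposite small angles force $B'$ and $D'$ onto opposite sides of $T_{A'}a$, which is the skew law. At $B'$, $D'$: your derivation of $A,C\notin T_Bb$ from the fact that a common tangent line of confocal conics must be isotropic (proof of Corollary \ref{tan-con}) makes the limiting reflection at $B=D$ strictly non-degenerate, so its type propagates by continuity to both $B'$ and $D'$ and depends only on $x$. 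Your closing remarks correctly dispose of the two delicate points ($A'B'\neq A'D'$ and $B'\neq D'$ near $x$).

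There is, however, one genuine gap: your opening reduction ``$x=ABCB$ with finite vertices, so that $A,C\neq B$ (Theorem \ref{irrdeg})'' is not justified by that theorem. By Theorem \ref{irrdeg}(i), and explicitly by Proposition \ref{class-deg} (Cases (iii) and (v)), the curves $\mct_{a,\rr}^{\pm}$ also contain the single-point quadrilaterals $AAAA$ with $A\in a\cap b$; in the ellipse--hyperbola case and the oppositely-directed-parabola case these intersection points are real and \emph{finite}, so such $x$ satisfy the hypothesis of the proposition as literally stated but violate your standing assumption. Your argument genuinely breaks down there: there is no positive distance from $A'$ to the limit of $B'$, $D'$, and no fixed line $T_Bb$ off which $A$ and $C$ sit. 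These points need a separate (easy) treatment, e.g.\ using that confocal conics intersect orthogonally, so that orbits near $AAAA$ are near-rhombi in which all four reflections are skew -- consistent with the stated conclusion. Since every application of the proposition in the paper is made at tangency quadrilaterals with $A,C\neq B$, this omission is small and repairable, but as the statement is formulated it is a case your proof does not cover.
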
 

The proposition follows from definition.

 Let us prove statements of Addendum 3 for each case in Proposition \ref{class-deg} 

{\bf Subcase (i) of confocal ellipses:} Each 4-reflective orbit deforms in $\uor$ to the set 
$\mct_{a,\rr}$. This together with 
Proposition \ref{crossma} and convexity implies that the reflection law is usual at $b$ and skew at $a$ and proves Addendum 3.
 
 {\bf Subcase of confocal hyperbolas:} say, the hyperbola $b=d$ is inside the concave 
 domain between the two branches of the hyperbola $a=c$. The reflection laws for a quadrilateral $x\in\uor$ do not change, as 
 $x$ crosses one of the curves $\mct_{a,\rr}^{\pm}$, $\mct_{b,\rr}^{\pm}$ outside the curves $\Gamma_g$, by Proposition \ref{crossma}. 
 They may change, as $x$ crosses a curve $\Gamma_g$, i.e., some vertex crosses the infinity line. 
 Note that the union of intersection points of different curves 
 $\Gamma_g$ is discrete and does not separate domains. Below we classify reflection law combinations near arbitrary 
 quadrilateral $A_0B_0C_0D_0\in\ur$ with unique infinite vertex: either $A_0$, or $B_0$ (cases of $C_0$ or $D_0$ are analogous). 
 This together with the previous statement and connectivity of the surface $\ur$ 
 implies that this classification covers all the possible reflection law combinations. 
%
%
 
 a) $x=A_0B_0C_0D_0\in \Gamma_a$: only the vertex $A_0$ is infinite. 
 Set $L_a=T_aA_0$. There exists a 
 path $\gamma(t)=A_0B_tC_tD_t\subset \Gamma_a\setminus(\Gamma_b\cup\Gamma_d)$, $t\in[0,1]$, $\gamma(0)=x$ 
such that $B_1,D_1\in L_a\cap b$: then $\gamma(1)\in\mathcal T_{a,\mathbb R}$ and $B_1=D_1$. The deformed vertices 
$B_t$ and $D_t$ do not cross the infinity line, hence they lie on the same affine branch $b_0$ of the hyperbola $b$, as for $t=1$. Let $a_0$ 
denote the affine branch of the hyperbola $a$ that is contained in the convex domain bounded by $b_0$. For every $t$ one has $C_t\in a_0$, 
since the lines $A_0B_t$ and $A_0D_t$ are parallel and by focusing property of the convex mirror $b_0$. Hence, the path $\gamma$ is 
disjoint from the union $\Gamma_b\cup\Gamma_c\cup\Gamma_d$.   Fix a small neighborhood $W\subset U_{\mathbb R}\setminus
(\Gamma_b\cup\Gamma_c\cup\Gamma_d)$ of the point $\gamma(1)$. Every quadrilateral in $U_{\mathbb R}^0$ close to $x$ can be 
deformed to a quadrilateral in $W$ along a path in $U_{\mathbb R}^0$ close to $\gamma$; the reflection laws in the deformed quadrilaterals 
remain constant. Thus, it suffices to describe the types of quadrilaterals in $W\cap U_{\mathbb R}^0$. For every quadrilateral 
$ABCD\in W\cap U_{\mathbb R}^0$ the reflection laws at $B$ and $D$ coincide (easily follows from Proposition 6.7). As a quadrilateral in 
$W$ crosses $\Gamma_a$ (i.e., as $A$ crosses the infinity line), the reflection law at $B$ and $D$ changes to opposite. This implies that 
every quadrilateral in $W\cap U_{\mathbb R}^0$, and hence, every quadrilateral in $U_{\mathbb R}^0$ close to $x$ has one of the two  
types depicted at Fig. 8 a), b).

 b) $x=A_0B_0C_0D_0\in\Gamma_b$: only the vertex $B_0$ is infinite. 
 There exists a path $\gamma(t)=A_tB_0C_tD_t$, $t\in[0,1]$, $\gamma(0)=x$, $\gamma(1)=A_1B_0C_1D_1
\in\mathcal T_{a,\mathbb R}$ (thus, $D_1=B_0$), $\gamma[0,1)\subset\Gamma_b\setminus(\Gamma_a\cup\Gamma_c\cup\Gamma_d)$, as 
in the above discussion. As above, it suffices to describe the types of quadrilaterals in $U_{\mathbb R}^0$ close to $\gamma(1)$. The points 
$A_1$ and $C_1$ lie on different affine branches of the hyperbola $a$:  the lines $B_0A_1$ and $B_0C_1$ are parallel to the asymptotic 
line $T_{B_0}b$ and are symmetric with respect to it, and $T_{B_0}b$ separates the branches of the hyperbola $a$. In every 
quadrilateral $ABCD\in U_{\mathbb R}^0$ close to $\gamma(1)$ the vertices $B$ and $D$ are either on the same branch of the hyperbola $b$, 
 or  on different branches. Then we get   Fig.8b) or Fig.8c) respectively (up to interchanging names of either 
 $B$ and $D$, or $A$ and $C$, or both). This proves Addendum 3.

{\bf Subcase of confocal ellipse and hyperbola.} Let $a=c$ be an ellipse, and $b=d$ be a confocal hyperbola. It suffices to describe 
the types of quadrilaterals in $U_{\mathbb R}^0$ close to  a quadrilateral $x=A_0B_0C_0D_0$ with exactly one 
infinite point, say $B_0$, as in the previous case. The above deformation argument reduces this problem to the description of types of 
quadrilaterals in $U_{\mathbb R}^0$ close to a quadrilateral $y=A_1B_0C_1B_0\in\mathcal T_{a,\mathbb R}$. In every quadrilateral 
$ABCD\in U_{\mathbb R}^0$ close to $y$ the points $B$ and $D$ lie either on the same branch of the hyperbola $b$, or on different branches. 
We get Figures 9a) and 9b) respectively. This proves Addendum 3. 

{\bf Subcase of confocal parabolas.} Any two confocal parabolas are either codirected (Fig.10a)), or oppositely directed (Fig.10b)). 
Let us consider the first case of codirected parabolas, say, $a$ is contained inside the convex domain bounded by $b$. Fix a quadrilateral 
$x=A_0B_0C_0D_0\in\uor$. Its  reflection law at $b$ is usual, by convexity. We claim that the reflection law at $a$ is skew, i.e., 
$x$ is as at   Fig.10a). To do this, it suffices to show that $x$ deforms in $\uor$ 
to a quadrilateral $y\in\mct_{a,\rr}$  (Proposition \ref{crossma}). 
There exists a continuous deformation $A_0B_tC_tD_t$, $t\in[0,1]$ with finite $B_t$ and $D_t$ for all $t$ to 
a quadrilateral $y=A_0B_1C_1D_1$ with $A_0B_1=T_{A_0}a$, since 
the exterior parabola $b$ has only one infinite point. Then $y\in\mct_{a,\rr}$, hence $B_1=D_1$.  The 
vertices $C_t$ also remain finite:  each quadrilateral in $\ur$ with 
an infinite vertex has at least two infinite vertices, being contained   in one of the curves $\Gamma_{gh}^\rr$ from Propisiton \ref{class-deg}. 
This together with the above discussion implies that the quadrilateral $x$ is as at Fig.10a). 

%

Let us now consider the case of oppositely directed parabolas. It suffices to describe reflection laws only in those quadrilaterals in $\uor$ 
that are close to the union of curves $\Gamma_{gh}^\rr$, as in the case of hyperbolas. That is, fix a 
quadrilateral $x=A_0B_0C_0D_0\in\uor$, say, with $A_0$ and $B_0$ 
close to infinity:  $A_0$ ($B_0$)  lies outside the convex domain bounded by the parabola $b$ (respectively, $a$). 
The quadrilateral $x$ deforms  to $\mct_{a,\rr}$ in $\uor$, as in the above subcase. 
This implies that the reflection laws at $A_0$ and $C_0$ are skew, as in the same subcase. Similarly, $x$ deformes 
to $\mct_{b,\rr}$, hence the reflection laws at $B_0$ and $D_0$ are also skew. Thus, the quadrilateral $x$ is 
as at Fig.10b). The proof of Theorem \ref{tclass} and its addendums is complete. 
\end{proof}

\begin{example} The pseudo-billiard $a$, $b$, $a$, $b$ formed by two orthogonal lines $a$ and $b$ is 4-reflective of type 1), and 
its 4-reflective orbits are symmetric rhombi, see Fig.10a). It is a limit of a 4-reflective pseudo-billiard 
of type 3) on confocal ellipse and hyperbola, as the ellipse tends to a segment and the hyperbola tends to the orthogonal line through 
the center of the limit segment. It can be also viewed as a limit of a 4-reflective pseudo-billiard on a pair of 
confocal hyperbolas (parabolas) degenerating to orthogonal lines. 
\end{example}

\begin{figure}[ht]
  \begin{center}
   \epsfig{file=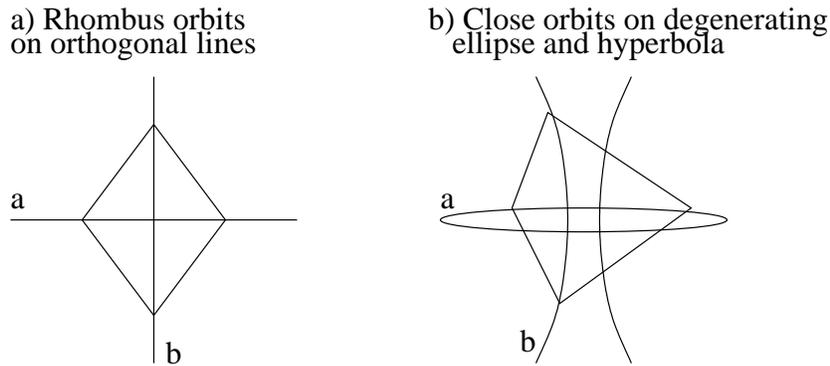}
    \caption{Orthogonal line billiard as a limit  of degenerating  confocal ellipse-hyperbola billiard}
  \end{center}
\end{figure}

\section{Acknowledgements}

I wish to thank Yu.S.Ilyashenko, Yu.G.Kudryashov and A.Yu.Plakhov, who had attracted my attention respectively to 
Ivrii's conjecture and its analogue in the invisibility theory. I am grateful to them and to S.V.Bolotin, E.Ghys,  A.V.Klimenko, 
V.S.Kulikov, K.A.Shramov, B.Sevennec, 
J.-C.Sikorav, S.L.Tabachnikov,  V.A.Timorin, R.Uribe-Vargas  for helpful discussions.  \  I \ wish \  to thank 
A.L.Gorodentsev and V.Zharnitsky for helpful  discussions and very valuable remarks on the text.

\end{document}